\DeclareMathOperator{\arccot}{arccot}
\newcommand\myeq{\mathrel{\stackrel{\makebox[0pt]{\mbox{\normalfont\tiny loc}}}{=}}}
\newcommand\opteq[1]{\mathrel{\mathpalette\opt@eq{#1}}}
\newcommand{\opt@eq}[2]{%
  \begingroup
  \sbox\z@{$#1#2$}%
  \sbox\tw@{\resizebox{!}{.5\ht\z@}{$\m@th#1($}}%
  \nonscript\hskip-\wd\tw@
  \mkern1mu
  \raisebox{-.35\ht\z@}[0pt][0pt]{\resizebox{!}{.5\ht\z@}{$\m@th#1($}}%
  \mkern-1mu
  {#2}%
  \mkern-1mu
  \raisebox{-.35\ht\z@}[0pt][0pt]{\resizebox{!}{.5\ht\z@}{$\m@th#1)$}}%
  \mkern1mu
  \nonscript\hskip-\wd\tw@
  \endgroup
}
\newcommand{\geoq}{\opteq{\geq}}
\newtheorem{theorem}{Theorem}[section]
\newtheorem*{theorem*}{Theorem}
\newtheorem{theorem-non}{Theorem}
\newtheorem{lemma-non}{Lemma}
\theoremstyle{definition} 
\newtheorem{thm}{Theorem}
\theoremstyle{definition} 
\newtheorem{corollarynon}{Corollary}
\newtheorem{conjecture-non}{Conjecture}
\newtheorem{corollary-non}{Corollary}
\newtheorem{proposition}[theorem]{Proposition}
\newtheorem{lemma}[theorem]{Lemma}
\newtheorem*{lemma*}{Lemma}
\newtheorem{corollary}[theorem]{Corollary}
\newtheorem{conjecture}[theorem]{Conjecture}
\newtheorem*{conjecture*}{Conjecture}
\newtheorem{problem}[theorem]{Problem}
\theoremstyle{definition}
\newtheorem{definition}[theorem]{Definition}
\newtheorem{example}[theorem]{Example}
\theoremstyle{remark}
\newtheorem{remark}[theorem]{Remark}
\DeclareMathOperator{\rank}{rank}
\numberwithin{equation}{section}
\begin{document}
\title[DHYM equation on rational homogeneous varieties]{Deformed Hermitian Yang-Mills equation on rational homogeneous varieties}

\author{Eder M. Correa}


\address{{IMECC-Unicamp, Departamento de Matem\'{a}tica. Rua S\'{e}rgio Buarque de Holanda 651, Cidade Universit\'{a}ria Zeferino Vaz. 13083-859, Campinas-SP, Brazil}}
\address{E-mail: {\rm ederc@unicamp.br}}

\begin{abstract} 
In this paper, we show that the deformed Hermitian Yang-Mills (dHYM) equation on a rational homogeneous variety, equipped with any invariant K\"{a}hler metric, always admits a solution. In particular, we describe the Lagrangian phase, with respect to any invariant K\"{a}hler metric, of every closed invariant $(1,1)$-form in terms of Lie theory. Building on this, we characterize all supercritical and hypercritical homogeneous solutions of the dHYM equation using the Cartan matrix associated with the underlying complex simple Lie algebra. Further, we provide an explicit formula, in terms of Lie theory, for the slope of holomorphic vector bundles over rational homogeneous varieties. Using this formula, we derive a new criterion for slope semistability through restrictions of holomorphic vector bundles to the generators of the associated cone of curves. Moreover, we provide a new characterization, in terms of central charges defined by rational curves, for slope (semi)stability of holomorphic vector bundles over rational homogeneous varieties. As an application of our main results, we describe all supercritical and hypercritical homogeneous solutions of the dHYM equation on the Fano threefold defined by the Wallach flag manifold ${\mathbbm{P}}(T_{{\mathbbm{P}^{2}}})$. In addition, we introduce a constructive method to obtain non-trivial examples of Hermitian-Einstein metrics on certain holomorphic vector bundles over ${\mathbbm{P}}(T_{{\mathbbm{P}^{2}}})$ from solutions of linear diophantine equations. In this last case, we describe explicitly all associated Hermitian Yang-Mills instantons. We also present some new insights that explore the interplay between intersection theory and number theory.
\end{abstract}

\maketitle

\hypersetup{linkcolor=black}

\hypersetup{linkcolor=black}
\section{Introduction}

Let $(X,\omega)$ be a compact connected K\"{a}hler manifold, such that $\dim_{\mathbbm{C}}(X) = n$, and $[\psi] \in H^{1,1}(X,\mathbbm{R})$. Motivated by mirror symmetry, in \cite{JacobYau2017} the following problem was introduced:
\begin{problem}
\label{problem1}
Does there exist a smooth $(1,1)$-form $\chi \in [\psi]$, such that 
\begin{equation}
\label{dHYMeq}
{\rm{Im}}\big ( \omega + \sqrt{-1}\chi\big)^{n} = \tan(\hat{\Theta}) {\rm{Re}}\big ( \omega + \sqrt{-1}\chi\big)^{n},
\end{equation}
where $\hat{\Theta}$ is a $S^{1}$-valued topological constant (``phase angle") depending on $[\omega],[\psi]$?
\end{problem}
The equation described above (Eq. (\ref{dHYMeq})) is known as the {\textit{deformed Hermitian-Yang-Mills (dHYM) equation}}. According to \cite{lin2022deformed}, it was discovered around the same time in \cite{marino2000nonlinear} and \cite{leung2000special} using different points of view, for a detailed discussion about the physical origin of the dHYM equation and its relation with string theory and mirror symmetry, we suggest \cite{Collins2018deformed}. As it can be shown, Eq. (\ref{dHYMeq}) has an alternative (equivalent) formulation in terms of the notion of {\textit{Lagrangian phase}} \cite{Collins2020}, more precisely, Eq. (\ref{dHYMeq}) is equivalent to the fully nonlinear elliptic equation
\begin{equation}
\label{dHYMeq2v}
\Theta_{\omega}(\chi) := \sum_{j = 1}^{n}\arctan(\lambda_{j}) = \hat{\Theta}\mod 2\pi,
\end{equation}
where $\lambda_{1},\ldots,\lambda_{n}$ are the eigenvalues of $\omega^{-1} \circ \chi$. In this last equation $\Theta_{\omega}(\chi)$ is called the Lagrangian phase of $\chi$ with respect to $\omega$. The topological constant $\hat{\Theta}$ can be obtained integrating Eq. (\ref{dHYMeq}), i.e., it is given by the principal argument of the complex number
\begin{equation}
Z_{[\omega]} := \int_{X}\frac{(\omega + \sqrt{-1}\psi)^{n}}{n!},
\end{equation}
which depends only on the classes $[\omega]$, $[\psi]$. Thus a necessary condition for existence of a solution is that $Z_{[\omega]} \neq 0$. In \cite{JacobYau2017}, it was shown that solutions of the dHYM equation are unique, up to addition of a constant. In view of the formulation provided by Eq. (\ref{dHYMeq2v}), since $\Theta_{\omega}(\chi)$ is real valued and $\hat{\Theta}$ is $S^{1}$-valued, we need to lift $\hat{\Theta}$ to $\mathbbm{R}$ study Eq. (\ref{dHYMeq2v}). Further, under the hypothesis of the supercritical condition, which means that the lifted angle $\hat{\Theta}$ satisfies $\hat{\Theta} > (n-2)\frac{\pi}{2}$, Collins–Jacob–Yau \cite{Chen2021j} showed that if there exists a supercritical $C$-subsolution, then the dHYM equation is solvable. Also, in \cite{Pingali2019}, some existence results for the dHYM equation were obtained for some ranges of the phase angle $\hat{\Theta}$ assuming the existence of a subsolution. 

As demonstrated in \cite{Collins2020, Collins2021moment, Collins2020stability}, it is possible to use algebraic geometry to study the problem related to the existence of solutions to the dHYM equation. More precisely, in the setting of Problem \ref{problem1}, given an analytic subvariety $Y \subseteq X$ of dimension $p$, one can define the {\textit{central charge}}
\begin{equation}
\label{centralchargedef}
Z_{Y}([\psi]):= -\int_{Y}{\rm{e}}^{-\sqrt{-1}(\omega + \sqrt{-1}\psi)} = -\frac{(-\sqrt{-1})^{p}}{p!}\int_{Y}(\omega + \sqrt{-1}\psi)^{p}.
\end{equation}
If $\dim_{\mathbbm{C}}(X) = 2$, it has been shown in \cite[Proposition 8.5]{Collins2020} that a solution to the dHYM equation exists in $[\psi] \in H^{1,1}(X,\mathbbm{R})$ if and only if, for every curve $C \subset X$ we have
\begin{equation}
{\rm{Im}} \bigg( \frac{Z_{C}([\psi])}{Z_{X}([\psi])}\bigg) > 0.
\end{equation}
More generally, we have the following conjecture.
\begin{conjecture}[Collins-Jacob-Yau \cite{Collins2020}]
\label{conjecture1}
There exists a solution to the deformed Hermitian Yang-Mills equation in the class $[\psi]$ with phase angle $\hat{\Theta}\in \big ( (n-2)\frac{\pi}{2},n\frac{\pi}{2}\big)$ if and only if 
\begin{equation}
{\rm{Im}} \bigg( \frac{Z_{Y}([\psi])}{Z_{X}([\psi])}\bigg) > 0.
\end{equation}
for all irreducible analytic subvariety $Y \varsubsetneq X_{P}$. 
\end{conjecture}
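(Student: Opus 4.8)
\emph{Proof proposal (for $X=X_{P}=G/P$ a rational homogeneous variety equipped with an invariant Kähler metric $\omega$).} The plan is to reduce the conjectured equivalence, in this homogeneous setting, to a single numerical condition on the root system of $\mathfrak{g}$. I would fix a maximal compact subgroup $K\subset G$, a Cartan subalgebra, the set $\mathrm{R}^{+}$ of positive roots, and the subset $\mathrm{R}_{\mathfrak{l}}^{+}\subset\mathrm{R}^{+}$ of roots of the Levi factor of $\mathfrak{p}$, so that $n=\dim_{\mathbbm{C}}X_{P}=\#(\mathrm{R}^{+}\setminus\mathrm{R}_{\mathfrak{l}}^{+})$ and $\mathfrak{g}/\mathfrak{p}\cong\bigoplus_{\alpha\in\mathrm{R}^{+}\setminus\mathrm{R}_{\mathfrak{l}}^{+}}\mathfrak{g}_{-\alpha}$ as a $\mathfrak{p}$-module.

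First I would show that the only candidate solution is the invariant one and that its Lagrangian phase is \emph{constant}. Averaging over $K$ (which preserves cohomology classes, as $G$ is connected) produces invariant representatives $\chi\in[\psi]$ and $\omega$; if $\chi'\in[\psi]$ solves Eq. (\ref{dHYMeq}), then $g^{\ast}\chi'$ does as well for every $g\in K$, and uniqueness in a fixed class \cite{JacobYau2017} forces $g^{\ast}\chi'=\chi'$, so any solution must be the invariant $\chi$. Since $\omega^{-1}\circ\chi$ is then a $G$-invariant, hence ($\mathfrak{p}$-equivariant at the base point) endomorphism of the holomorphic tangent bundle, Schur's lemma shows it acts on $\mathfrak{g}_{-\alpha}$ by the scalar $\lambda_{\alpha}=\langle[\psi],\alpha^{\vee}\rangle/\langle[\omega],\alpha^{\vee}\rangle$, where $\langle[\omega],\alpha^{\vee}\rangle>0$ because $[\omega]$ is Kähler. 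Hence
\begin{equation}
\label{phaseLie}
\Theta_{\omega}(\chi)=\sum_{\alpha\in\mathrm{R}^{+}\setminus\mathrm{R}_{\mathfrak{l}}^{+}}\arctan\!\left(\frac{\langle[\psi],\alpha^{\vee}\rangle}{\langle[\omega],\alpha^{\vee}\rangle}\right)
\end{equation}
is constant on $X_{P}$, and $(\omega+\sqrt{-1}\chi)^{n}$ is a constant positive multiple of $e^{\sqrt{-1}\,\Theta_{\omega}(\chi)}\,\omega^{n}$. Thus $\chi$ already satisfies Eq. (\ref{dHYMeq2v}) with $\hat{\Theta}=\Theta_{\omega}(\chi)$, so the dHYM equation always has a solution in $[\psi]$, and it has one of supercritical phase exactly when $\Theta_{\omega}(\chi)\in\big((n-2)\tfrac{\pi}{2},n\tfrac{\pi}{2}\big)$ modulo $2\pi$.

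Next I would compute the central charges in the same language. Integrating the identity above gives $\arg Z_{X_{P}}([\psi])=\Theta_{\omega}(\chi)-(n-2)\tfrac{\pi}{2}$ modulo $2\pi$; equivalently this follows from the Kostant--Weyl-dimension-type formula $\int_{X_{P}}(\omega+\sqrt{-1}\psi)^{n}/n!=\big(\prod_{\alpha}\langle\rho,\alpha^{\vee}\rangle\big)^{-1}\prod_{\alpha}\langle[\omega]+\sqrt{-1}[\psi],\alpha^{\vee}\rangle$ (products over $\alpha\in\mathrm{R}^{+}\setminus\mathrm{R}_{\mathfrak{l}}^{+}$), every factor of which has positive real part. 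Applying the same computation to the Schubert subvarieties would express each $Z_{Y}([\psi])$ as a Lie-theoretic partial product; in particular $Z_{C_{j}}([\psi])=\sqrt{-1}\,\langle[\omega]+\sqrt{-1}[\psi],\alpha_{j}^{\vee}\rangle$ for the Schubert curve $C_{j}$ attached to a crossed node $j$, and the $C_{j}$ generate the cone of curves of $X_{P}$.

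To close the equivalence, for one direction I would invoke the Collins--Yau part of the correspondence between the dHYM equation and $Z$-stability \cite{Collins2020}: a solution with supercritical phase forces $\mathrm{Im}\big(Z_{Y}([\psi])/Z_{X_{P}}([\psi])\big)>0$ for every proper irreducible analytic subvariety $Y$, and this uses no homogeneity. For the converse I would test the hypothesis on $Y$ a single point, where $Z_{Y}([\psi])=-1$: then $\mathrm{Im}\big(Z_{Y}([\psi])/Z_{X_{P}}([\psi])\big)>0$ is equivalent to $\mathrm{Im}\,Z_{X_{P}}([\psi])>0$, i.e.\ to $\arg Z_{X_{P}}([\psi])\in(0,\pi)$ modulo $2\pi$, which by the central-charge computation is exactly $\Theta_{\omega}(\chi)\in\big((n-2)\tfrac{\pi}{2},n\tfrac{\pi}{2}\big)$ modulo $2\pi$, hence, by the first step, guarantees that the invariant $\chi$ is a solution with supercritical phase. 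I expect the main work to lie not in this logical skeleton but in (i) the Lie-theoretic evaluation of the Lagrangian phase and of $Z_{X_{P}}$ and $Z_{Y}$ — making the Kostant-type formula and the identification of the constant $-(n-2)\tfrac{\pi}{2}$ fully precise, including the compatibility of the lifts of $\hat{\Theta}$ — and (ii) rewriting the condition $\Theta_{\omega}(\chi)\in\big((n-2)\tfrac{\pi}{2},n\tfrac{\pi}{2}\big)$ as explicit linear inequalities on the coefficients of $[\psi]$ in the fundamental-weight basis via the Cartan matrix of $\mathfrak{g}$ (and likewise for the companion (semi)stability statements phrased through restrictions to the curves $C_{j}$). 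The latter is the genuinely delicate point, and I would carry it out in closed form on the Fano threefold $\mathbbm{P}(T_{\mathbbm{P}^{2}})$ as a test case.
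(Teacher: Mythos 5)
You should first note what is being compared: the statement is the Collins--Jacob--Yau conjecture, quoted verbatim as Conjecture \ref{conjecture1}, and the paper does \emph{not} prove it, even for rational homogeneous varieties. What the paper establishes is the necessary (``easier'') direction in the homogeneous setting (Theorem \ref{theoremB}, adapted from \cite[Lemma 8.2]{Collins2020}, with its central-charge reformulation in item (4) of Theorem \ref{theoremD}), and Example \ref{computationWallach} is included precisely to show how delicate the statement is outside the supercritical window. Your first two steps --- invariance plus uniqueness forcing the $G$-invariant representative, constancy of the Lagrangian phase with eigenvalues $\langle\lambda([\psi]),\beta^{\vee}\rangle/\langle\lambda([\omega]),\beta^{\vee}\rangle$, and the product formula giving ${\rm{Arg}}\,Z_{X_{P}}([\psi])=\hat{\Theta}-(n-2)\tfrac{\pi}{2}$ mod $2\pi$ --- agree with Theorem \ref{theoremA}, Proposition \ref{eigenvalueatorigin} and Remark \ref{origineigenvalues}, and your ``only if'' direction is the known necessary condition, so those parts are sound (modulo the fact that Schur's lemma alone identifies the eigenvalue as a scalar on each isotropy summand but not its value; you still need the pairing with the curves $\mathbbm{P}^{1}_{\beta}$ as in the paper).

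The genuine gap is in your converse. Testing the hypothesis only at a point $Y$ (assuming points are even admitted by the conjecture's quantifier, which is not clear) yields ${\rm{Im}}\,Z_{X_{P}}([\psi])>0$, i.e. ${\rm{Arg}}\,Z_{X_{P}}([\psi])\in(0,\pi)$ mod $2\pi$, i.e. $\hat{\Theta}\in\big((n-2)\tfrac{\pi}{2},n\tfrac{\pi}{2}\big)$ \emph{only modulo} $2\pi$. But the lifted phase of the invariant solution ranges over $(-n\tfrac{\pi}{2},n\tfrac{\pi}{2})$, an interval of length $n\pi>2\pi$ for $n\geq 3$, so the mod-$2\pi$ information does not place the lift in the supercritical window, and your final inference fails. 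Concretely, on $\mathbbm{P}(T_{\mathbbm{P}^{2}})$ with $\omega_{0}=2({\bf{\Omega}}_{\alpha_{1}}+{\bf{\Omega}}_{\alpha_{2}})$ and $[\psi]=-4\big([{\bf{\Omega}}_{\alpha_{1}}]+[{\bf{\Omega}}_{\alpha_{2}}]\big)$ one has $\hat{\Theta}=3\arctan(-2)\approx-3.32\notin(\tfrac{\pi}{2},\tfrac{3\pi}{2})$, while ${\rm{Arg}}\,Z_{X_{P}}([\psi])=\tfrac{3\pi}{2}+3\arctan(-2)\in(0,\pi)$: your point test passes although the unique invariant solution is not supercritical, and the conjectural equivalence must instead be detected by other subvarieties (here the curve $\mathbbm{P}^{1}_{\alpha_{1}}$ gives $Z_{\mathbbm{P}^{1}_{\alpha_{1}}}([\psi])=4+2\sqrt{-1}$ and ${\rm{Im}}\big(Z_{\mathbbm{P}^{1}_{\alpha_{1}}}([\psi])/Z_{X_{P}}([\psi])\big)<0$, exactly parallel to the computation the paper carries out for $[\psi]=-([{\bf{\Omega}}_{\alpha_{1}}]+[{\bf{\Omega}}_{\alpha_{2}}])$). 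So the ``if'' direction is not established by your argument; a correct proof would have to extract the actual lift of $\hat{\Theta}$ from the full family of inequalities over curves, divisors, etc., and this is precisely the part of the conjecture that remains open --- the paper deliberately stops at the necessary direction and the explicit phase/central-charge formulas.
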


As mentioned in \cite{Collins2020}, the complex number $Z_{Y}(-)$ resembles various notions of central charge appearing in stability conditions in several physical and mathematical theories, e.g. \cite{douglas2001a, douglas2001b,douglas2002c} and \cite{thomas2002stability}. In the particular case that $[\psi] = c_{1}(L)$, for some $L \in {\rm{Pic}}(X)$, it is conjectured \cite{Collins2020stability} that the existence of a solution to the dHYM equation in the class $[\psi]$ should be equivalent to the Bridgeland stability \cite{Bridgeland2007stability} of the line bundle $L$, for an introduction to Bridgeland stability we suggest \cite{Macri2017lectures}. In this paper, we study the dHYM equation on projective manifolds defined by rational homogeneous varieties. In this setting, we show that the dHYM equation on a rational homogeneous variety, equipped with any invariant K\"{a}hler metric, always admits a solution. In particular, we describe the Lagrangian phase, with respect to any invariant K\"{a}hler metric, of every closed invariant $(1,1)$-form in terms of Lie theory. From this description, we characterize all supercritical and hypercritical homogeneous solutions of the dHYM equation using the Cartan matrix associated with the underlying complex simple Lie algebra. Further, we provide an explicit formula, in terms of Lie theory, for the slope of holomorphic vector bundles over rational homogeneous varieties. Using this formula, we derive a new criterion for slope semistability through restrictions of holomorphic vector bundles to the generators of the associated cone of curves. Moreover, we provide a new characterization, in terms of central charges defined by rational curves, for slope (semi)stability of holomorphic vector bundles over rational homogeneous varieties. As an application of our main results, we describe all supercritical and hypercritical homogeneous solutions of the dHYM equation on the Fano threefold defined by the Wallach flag manifold ${\mathbbm{P}}(T_{{\mathbbm{P}^{2}}})$. In addition, we introduce a constructive method to obtain non-trivial Hermitian-Einstein metrics on holomorphic vector bundles over ${\mathbbm{P}}(T_{{\mathbbm{P}^{2}}})$ from solutions of linear diophantine equations. In this last case, we describe explicitly all associated Hermitian Yang-Mills instantons. We also present some new insights that explore the interplay between intersection theory and number theory.

\subsection{Main results} A rational homogeneous variety can be described as a quotient $X_{P} = G^{\mathbbm{C}}/P$, where $G^{\mathbbm{C}}$ is a semisimple complex algebraic group with Lie algebra $\mathfrak{g}^{\mathbbm{C}} = {\rm{Lie}}(G^{\mathbbm{C}})$, and $P$ is a parabolic Lie subgroup (Borel-Remmert \cite{BorelRemmert}). Regarding $G^{\mathbbm{C}}$ as a complex analytic space, without loss of generality, we may assume that $G^{\mathbbm{C}}$ is a connected simply connected complex simple Lie group. Fixed a compact real form $G \subset G^{\mathbbm{C}}$, and considering $X_{P} = G/G \cap P$ as a $G$-space, we are interested in the $G$-invariant solutions of Eq. (\ref{dHYMeq}). Fixed a Cartan subalgebra $\mathfrak{h} \subset \mathfrak{g}^{\mathbbm{C}}$ and a simple root system $\Delta \subset \mathfrak{h}^{\ast}$, up to conjugation, we have $P \subset G^{\mathbbm{C}}$ completely determined by some $I \subset \Delta$, e.g. \cite[\S 3.1]{Akhiezer}. Moreover, considering the associated fundamental weights $\varpi_{\alpha} \in \mathfrak{h}^{\ast}$, $\alpha \in \Delta$, it follows that 
\begin{equation}
{\rm{Pic}}(X_{P}) \cong H^{1,1}(X_{P},\mathbbm{Z}) \cong \Lambda_{P}:= \bigoplus_{\alpha \in \Delta \backslash I}\mathbbm{Z}\varpi_{\alpha},
\end{equation}
see for instance \cite[Theorem 6.4]{Snowhomovec} (or Remark \ref{remark1_1}). From the aforementioned isomorphisms, we have a map
\begin{equation}
\label{map11character}
[\omega] \in H^{1,1}(X_{P},\mathbbm{Z}) \mapsto \lambda([\omega]) \in \Lambda_{P}.
\end{equation}
Denoting by $\Phi = \Phi^{+} \cup \Phi^{-}$ the root system associated with $\Delta \subset \mathfrak{h}^{\ast}$, and considering  $\Phi_{I}^{\pm}:= \Phi^{\pm} \backslash \langle I \rangle^{\pm}$, we have the following result.
\begin{thm}
\label{theoremA}
Given a  K\"{a}hler class $[\omega] \in \mathcal{K}(X_{P})$, then for every $[\psi] \in H^{1,1}(X_{P},\mathbbm{R})$ we have 
\begin{equation}
\label{phaseangleCartan}
\hat{\Theta}: = {\rm{Arg}} \int_{X_{P}}\frac{(\omega + \sqrt{-1}\psi)^{n}}{n!} = \sum_{\beta \in \Phi_{I}^{+}} \arctan \bigg( \frac{\langle \lambda([\psi]),\beta^{\vee} \rangle}{\langle \lambda([\omega]),\beta^{\vee} \rangle}\bigg) \mod 2 \pi,
\end{equation}
such that $\lambda([\psi]), \lambda([\omega]) \in \Lambda_{P} \otimes \mathbbm{R}$. In particular, fixed the unique $G$-invariant representative $\omega_{0} \in [\omega]$, there exists $\phi \in C^{\infty}(X_{P})$, such that $\chi_{\phi}:= \psi + \sqrt{-1}\partial \overline \partial \phi$ satisfies the deformed Hermitian Yang-Mills equation
\begin{equation}
{\rm{Im}}\big ( \omega_{0} + \sqrt{-1}\chi_{\phi}\big)^{n} = \tan(\hat{\Theta}) {\rm{Re}}\big ( \omega_{0} + \sqrt{-1}\chi_{\phi}\big)^{n}.
\end{equation}
\end{thm}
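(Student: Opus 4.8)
The plan is to reduce everything to an explicit computation in the cohomology ring of $X_P$ using the Borel–Weil description of line bundles and the known structure of $H^*(X_P)$ via Schubert classes / Lie theory. First I would fix the Cartan subalgebra $\mathfrak{h}$, the simple roots $\Delta$, and the subset $I \subset \Delta$ determining $P$, so that every invariant $(1,1)$-class is encoded by its image $\lambda([\,\cdot\,]) \in \Lambda_P \otimes \mathbbm{R} = \bigoplus_{\alpha \in \Delta \setminus I} \mathbbm{R}\varpi_\alpha$. The key structural input is that for an invariant Kähler class $[\omega]$ with weight $\lambda([\omega]) = \mu$, the unique invariant representative $\omega_0$ is, at the base point $eP$, the restriction to $\mathfrak{m} = \bigoplus_{\beta \in \Phi_I^+}\mathfrak{g}_{-\beta}$ of the Kostant–Kirillov form associated to $\mu$; concretely, in the standard root-space decomposition one has $\omega_0 = \sqrt{-1}\sum_{\beta \in \Phi_I^+} \langle \mu, \beta^\vee\rangle\, \theta_\beta \wedge \bar\theta_\beta$ (up to a positive normalization of the $\theta_\beta$), where $\{\theta_\beta\}_{\beta \in \Phi_I^+}$ is a basis of invariant $(1,0)$-forms dual to the chosen root vectors. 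The positivity/Kähler condition is precisely $\langle \mu, \beta^\vee\rangle > 0$ for all $\beta \in \Phi_I^+$. The same computation applied to $[\psi]$ with weight $\lambda([\psi]) = \nu$ gives an invariant (not necessarily positive) representative $\psi_0 = \sqrt{-1}\sum_{\beta \in \Phi_I^+}\langle \nu, \beta^\vee\rangle\, \theta_\beta \wedge \bar\theta_\beta$.

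Next I would observe that $\omega_0$ and $\psi_0$ are \emph{simultaneously diagonalized} in the basis $\{\theta_\beta\}$: at the base point the endomorphism $\omega_0^{-1}\circ \psi_0$ acts on the $\beta$-eigenplane by the scalar $\langle \nu, \beta^\vee\rangle / \langle \mu, \beta^\vee\rangle$. Hence $\omega_0 + \sqrt{-1}\psi_0 = \sqrt{-1}\sum_{\beta}\big(\langle\mu,\beta^\vee\rangle + \sqrt{-1}\langle\nu,\beta^\vee\rangle\big)\theta_\beta\wedge\bar\theta_\beta$, and taking the $n$-th power (with $n = |\Phi_I^+| = \dim_{\mathbbm{C}} X_P$) picks out the top wedge product and yields, after integrating over $X_P$,
\begin{equation}
\int_{X_P} \frac{(\omega_0 + \sqrt{-1}\psi_0)^n}{n!} = c \prod_{\beta \in \Phi_I^+}\big(\langle \mu, \beta^\vee\rangle + \sqrt{-1}\langle \nu,\beta^\vee\rangle\big),
\end{equation}
where $c > 0$ is the volume of the top invariant form $\prod_\beta \sqrt{-1}\,\theta_\beta\wedge\bar\theta_\beta$ against the fundamental class — positivity of $c$ is what lets us drop it when taking $\mathrm{Arg}$. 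Since the integral depends only on the classes, I may replace $\psi$ by $\psi_0$. Taking the principal argument of a product gives $\hat\Theta = \sum_{\beta \in \Phi_I^+}\mathrm{Arg}\big(\langle\mu,\beta^\vee\rangle + \sqrt{-1}\langle\nu,\beta^\vee\rangle\big) = \sum_{\beta \in \Phi_I^+}\arctan\big(\langle\nu,\beta^\vee\rangle / \langle\mu,\beta^\vee\rangle\big) \bmod 2\pi$, using that $\langle\mu,\beta^\vee\rangle > 0$ places each factor in the right half-plane where $\mathrm{Arg}(a+\sqrt{-1}b) = \arctan(b/a)$. This is Eq.~(\ref{phaseangleCartan}).

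Finally, for the existence statement: the eigenvalues of $\omega_0^{-1}\circ\psi_0$ are the constants $\lambda_\beta := \langle\nu,\beta^\vee\rangle/\langle\mu,\beta^\vee\rangle$, so the Lagrangian phase $\Theta_{\omega_0}(\psi_0) = \sum_\beta \arctan(\lambda_\beta)$ is a \emph{constant function} on $X_P$ equal to the above lift of $\hat\Theta$; thus $\psi_0$ itself solves Eq.~(\ref{dHYMeq2v}) with this phase, and taking $\phi$ with $\sqrt{-1}\partial\bar\partial\phi = \psi_0 - \psi$ (possible since $\psi_0, \psi$ lie in the same class and $X_P$ is compact Kähler) gives $\chi_\phi = \psi_0$, which by the equivalence of Eq.~(\ref{dHYMeq}) and Eq.~(\ref{dHYMeq2v}) satisfies the dHYM equation against $\omega_0$. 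The main obstacle I anticipate is bookkeeping rather than conceptual: pinning down the precise normalization constants relating the invariant Kähler form in the class $[\omega]$ to the pairing $\langle\lambda([\omega]),\beta^\vee\rangle$ (this requires the Borel–Weil identification of $c_1$ of the line bundle $L_\mu$ with the Kostant–Kirillov class, and care that the chosen $(1,0)$-frame makes all $\omega_0, \psi_0$ simultaneously diagonal with real eigenvalues), and verifying that the residual positive constant $c$ genuinely factors out of the integral so that only the product over $\Phi_I^+$ governs $\mathrm{Arg}$. Once the simultaneous diagonalization at the base point is set up correctly, the rest is a direct computation.
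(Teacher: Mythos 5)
Your proposal is correct and follows essentially the same route as the paper: simultaneously diagonalizing the invariant representatives at the base point via the root-space decomposition (the paper's Proposition \ref{eigenvalueatorigin} and Remark \ref{origineigenvalues}), obtaining the integral as a positive multiple of $\prod_{\beta \in \Phi_{I}^{+}}\big(\langle\lambda([\omega]),\beta^{\vee}\rangle + \sqrt{-1}\langle\lambda([\psi]),\beta^{\vee}\rangle\big)$, taking arguments factorwise using positivity of $\langle\lambda([\omega]),\beta^{\vee}\rangle$, and then noting that the invariant representative has constant Lagrangian phase so that the $\partial\overline{\partial}$-potential relating $\psi$ to it yields the dHYM solution. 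The only difference is packaging (you invoke the Kostant--Kirillov normal form where the paper derives the eigenvalues from the Azad--Biswas potential), and the normalization bookkeeping you flag is exactly what the paper resolves there, with the constants cancelling in the eigenvalue ratios.
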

We notice that the sum on the right-hand side of Eq. (\ref{phaseangleCartan}) can be explicitly described in concrete cases through the coefficients of $\lambda([\psi]), \lambda([\omega]) \in \Lambda_{P} \otimes \mathbbm{R}$ with respect to the basis $\{\varpi_{\alpha} \ | \ \alpha \in \Delta \backslash I\}$ and the Cartan matrix of $\mathfrak{g}^{\mathbbm{C}}$ (e.g. \cite{Humphreys}). As a consequence of the above theorem, we have the following corollary.
\begin{corollarynon}
In the setting of the previous theorem, given $[\psi] \in H^{1,1}(X_{P},\mathbbm{R})$, considering $\eta_{\phi} = \psi + \sqrt{-1}\partial \overline{\partial} \phi$, for each $\phi \in C^{\infty}(X_{P})$, we have that the space of the almost calibrated $(1,1)$-forms 
\begin{equation}
\label{spacealmostcalibrated}
\mathcal{H} := \big \{ \phi \in C^{\infty}(X_{P}) \ \big | \ {\rm{Re}} \big ( {\rm{e}}^{\sqrt{-1} \hat{\Theta}}(\omega_{0} + \sqrt{-1} \eta_{\phi})^{n} \big) > 0 \big \},
\end{equation}
is non-empty ($\mathcal{H} \neq \emptyset$). In particular, we have the unique lift $\hat{\Theta}([\psi]) \in (-n\frac{\pi}{2},n\frac{\pi}{2})$ of $\hat{\Theta}$ given by
\begin{equation}
\label{uniquelifted}
\hat{\Theta}([\psi]):= \sum_{\beta \in \Phi_{I}^{+}} \arctan \bigg( \frac{\langle \lambda([\psi]),\beta^{\vee} \rangle}{\langle \lambda([\omega]),\beta^{\vee} \rangle}\bigg),
\end{equation}
such that $\lambda([\psi]), \lambda([\omega]) \in \Lambda_{P} \otimes \mathbbm{R}$.
\end{corollarynon}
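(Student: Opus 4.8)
The plan is to deduce the corollary directly from Theorem~\ref{theoremA} together with elementary properties of the Lagrangian phase. First I would fix the unique $G$-invariant representative $\omega_{0}\in[\omega]$ and the unique $G$-invariant representative $\psi_{0}\in[\psi]$, the latter obtained by averaging over the compact group $G$ any smooth representative of $[\psi]$. Since $\omega_{0}$ and $\psi_{0}$ are $G$-invariant and $G$ acts transitively on $X_{P}$, the eigenvalues of $\omega_{0}^{-1}\circ\psi_{0}$ are $G$-invariant, hence constant, functions on $X_{P}$; by the computation in the proof of Theorem~\ref{theoremA} these eigenvalues are precisely $\lambda_{\beta}:=\langle\lambda([\psi]),\beta^{\vee}\rangle/\langle\lambda([\omega]),\beta^{\vee}\rangle$, $\beta\in\Phi_{I}^{+}$ --- the denominators being positive since $[\omega]\in\mathcal{K}(X_{P})$ --- and $n=|\Phi_{I}^{+}|$. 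Hence the Lagrangian phase $\Theta_{\omega_{0}}(\psi_{0})=\sum_{\beta\in\Phi_{I}^{+}}\arctan(\lambda_{\beta})$ is the constant function equal to the right-hand side of Eq.~\eqref{uniquelifted}; being a sum of $n$ values of $\arctan$ it lies in $\bigl(-n\frac{\pi}{2},n\frac{\pi}{2}\bigr)$, and by Eq.~\eqref{phaseangleCartan} it reduces to $\hat{\Theta}$ modulo $2\pi$. Write $\hat{\Theta}([\psi])$ for this constant.

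Next I would show $\mathcal{H}\neq\emptyset$. By the $\partial\overline{\partial}$-lemma on the compact K\"ahler manifold $X_{P}$ there is $\phi_{0}\in C^{\infty}(X_{P})$ with $\psi_{0}=\psi+\sqrt{-1}\,\partial\overline{\partial}\phi_{0}$, so that $\eta_{\phi_{0}}=\psi_{0}$. Using the pointwise identity
\begin{equation*}
(\omega_{0}+\sqrt{-1}\,\eta_{\phi_{0}})^{n}=\Bigl(\prod_{\beta\in\Phi_{I}^{+}}(1+\sqrt{-1}\,\lambda_{\beta})\Bigr)\omega_{0}^{n}=\Bigl(\prod_{\beta\in\Phi_{I}^{+}}\sqrt{1+\lambda_{\beta}^{2}}\,\Bigr)\,{\rm{e}}^{\sqrt{-1}\,\hat{\Theta}([\psi])}\,\omega_{0}^{n}
\end{equation*}
together with the congruence $\hat{\Theta}([\psi])\equiv\hat{\Theta}\bmod 2\pi$ from the previous step, a direct computation shows that the $(n,n)$-form inside ${\rm{Re}}(\,\cdot\,)$ in \eqref{spacealmostcalibrated} equals the strictly positive multiple $\bigl(\prod_{\beta\in\Phi_{I}^{+}}\sqrt{1+\lambda_{\beta}^{2}}\,\bigr)\,\omega_{0}^{n}$ of the volume form. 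Hence $\phi_{0}\in\mathcal{H}$, and in particular $\mathcal{H}\neq\emptyset$. (In passing, this also recovers the last assertion of Theorem~\ref{theoremA}, since $\psi_{0}$ has constant Lagrangian phase $\equiv\hat{\Theta}\bmod 2\pi$.)

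Finally, for the unique lift: for any $\phi\in\mathcal{H}$ the Lagrangian phase $\Theta_{\omega_{0}}(\eta_{\phi})=\sum_{j=1}^{n}\arctan(\lambda_{j})$ is a continuous real-valued function on the connected manifold $X_{P}$ with values in $\bigl(-n\frac{\pi}{2},n\frac{\pi}{2}\bigr)$, and the positivity in \eqref{spacealmostcalibrated} forces $\Theta_{\omega_{0}}(\eta_{\phi})(x)-\hat{\Theta}\in\bigl(-\frac{\pi}{2},\frac{\pi}{2}\bigr)$ modulo $2\pi$ for every $x$; by connectedness there is a unique $k_{\phi}\in\mathbbm{Z}$ with $\Theta_{\omega_{0}}(\eta_{\phi})(X_{P})\subset\bigl(\hat{\Theta}+2\pi k_{\phi}-\frac{\pi}{2},\hat{\Theta}+2\pi k_{\phi}+\frac{\pi}{2}\bigr)$. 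I would then invoke the fact that $k_{\phi}$ does not depend on the choice of $\phi\in\mathcal{H}$ --- equivalently, that a non-empty almost calibrated class $\mathcal{H}$ singles out a well-defined real lift of $\hat{\Theta}$ --- which is standard in the theory of the dHYM equation (see, e.g., \cite{Collins2020}); since $\phi_{0}$ constructed above realizes the constant value $\hat{\Theta}([\psi])$, this lift is precisely $\hat{\Theta}([\psi])\in\bigl(-n\frac{\pi}{2},n\frac{\pi}{2}\bigr)$, which is the right-hand side of Eq.~\eqref{uniquelifted}. The main obstacle is exactly this last point: the independence of $k_{\phi}$ on $\phi$ is not formal --- the natural remedy of averaging $\eta_{\phi}$ over $G$ need not preserve the almost calibrated condition, since the defining inequality of $\mathcal{H}$ is not convex in $\phi$ --- so the argument has to borrow it from the general dHYM theory rather than from the Lie-theoretic content of Theorem~\ref{theoremA}.
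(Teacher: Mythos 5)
Your argument is correct and follows essentially the same route as the paper: the unique $G$-invariant representative of $[\psi]$ has constant eigenvalues $\langle\lambda([\psi]),\beta^{\vee}\rangle/\langle\lambda([\omega]),\beta^{\vee}\rangle$, so its Lagrangian phase is the constant $\sum_{\beta\in\Phi_{I}^{+}}\arctan(\cdot)\in(-n\frac{\pi}{2},n\frac{\pi}{2})$ congruent to $\hat{\Theta}$ mod $2\pi$, the resulting positivity of ${\rm{Re}}\big({\rm{e}}^{\sqrt{-1}\hat{\Theta}}(\omega_{0}+\sqrt{-1}\chi)^{n}\big)$ gives $\mathcal{H}\neq\emptyset$, and the well-definedness of the lift singled out by $\mathcal{H}$ is, exactly as in the paper, imported from the general dHYM theory of Collins et al. rather than reproved. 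Your explicit $\partial\overline{\partial}$-lemma step and the final discussion of the branch constant $k_{\phi}$ merely spell out what the paper delegates to its citations, so there is no substantive difference.
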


In the above setting, given $[\psi] \in H^{1,1}(X_{P},\mathbbm{R})$, for the sake of simplicity, we shall identify the associated phase angle $\hat{\Theta}$ with its unique lift $\hat{\Theta}([\psi])$ described in Eq. (\ref{uniquelifted}). The space $\mathcal{H}$ described in Eq. (\ref{spacealmostcalibrated}) plays an important role in the study of mirror symmetry \cite{Collins2021moment}, it is related by mirror symmetry to the space of positive (or almost calibrated) Lagrangians studied by Solomon \cite{Solomon2013calabi, Solomon2014curvature}. In \cite{Chu2021space}, it was shown that $\mathcal{H}$ is an infinite dimensional Riemannian manifold with non-positive sectional curvature and, in the hypercritical phase, which means that $\hat{\Theta}$ satisfies $\hat{\Theta} > (n-1)\frac{\pi}{2}$, it has a well-defined metric structure, and its completion is a ${\rm{CAT}}(0)$ geodesic metric space. We observe that, since $\lambda([\psi]), \lambda([\omega]) \in \Lambda_{P} \otimes \mathbbm{R}$ depend only on the cohomology classes involved, the unique lifted angle $\hat{\Theta}([\psi]) \in (-n\frac{\pi}{2},n\frac{\pi}{2})$ coincides with the Lagrangian phase of the unique $G$-invariant representative in the class $[\psi]$. Inspired by Conjecture \ref{conjecture1}, we prove the following result.

\begin{thm}
\label{theoremB}
For every irreducible analytic subvariety $Y \subset X_{P}$, define
\begin{equation}
\label{phasesubvariety}
\Theta_{Y} := {\rm{Arg}}\int_{Y}(\omega + \sqrt{-1}\psi)^{\dim(Y)},
\end{equation}
such that $[\omega] \in \mathcal{K}(X_{P})$ and $[\psi] \in H^{1,1}(X_{P},\mathbbm{R})$. If
\begin{equation}
\label{hypercritical}
\sum_{\beta \in \Phi_{I}^{+}} \arccot \bigg( \frac{\langle \lambda([\psi]),\beta^{\vee} \rangle}{\langle \lambda([\omega]),\beta^{\vee} \rangle}\bigg) < \pi,
\end{equation}
then, for every proper irreducible analytic subvariety $Y \subset X_{P}$, the following holds
\begin{equation}
\label{phasecharge}
\Theta_{Y} > \Theta_{X_{P}} - \big (n -\dim(Y) \big ) \frac{\pi}{2}.
\end{equation}
\end{thm}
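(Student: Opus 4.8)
The plan is to compute $z_{Y}:=\int_{Y}(\omega+\sqrt{-1}\,\psi)^{\dim Y}$ — which depends only on the classes $[\omega]\in\mathcal{K}(X_{P})$ and $[\psi]$ — using the unique $G$-invariant representatives together with the root-space picture behind Theorem~\ref{theoremA}, and then to locate $\mathrm{Arg}\,z_{Y}$. Write $p:=\dim Y$, and for $\beta\in\Phi_{I}^{+}$ set $a_{\beta}:=\langle\lambda([\omega]),\beta^{\vee}\rangle$ (which is positive because $[\omega]$ is K\"ahler) and $b_{\beta}:=\langle\lambda([\psi]),\beta^{\vee}\rangle$. First I would rewrite the hypothesis: since $n=\dim_{\mathbbm{C}}X_{P}=|\Phi_{I}^{+}|$ and $\arccot(t)=\tfrac{\pi}{2}-\arctan(t)$ for all real $t$, summing over $\Phi_{I}^{+}$ shows that \eqref{hypercritical} is equivalent to $\hat{\Theta}([\psi]):=\sum_{\beta\in\Phi_{I}^{+}}\arctan(b_{\beta}/a_{\beta})>(n-2)\tfrac{\pi}{2}$, and by the Corollary to Theorem~\ref{theoremA} this real number is the canonical lift of $\Theta_{X_{P}}$; from now on $\Theta_{X_{P}}$ denotes it, and $\Theta_{Y}$ will denote the lift of $\mathrm{Arg}\,z_{Y}$ singled out by the argument.

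The next step is an elementary angle estimate for the complex numbers $w_{S}:=\prod_{\beta\in S}(a_{\beta}+\sqrt{-1}\,b_{\beta})$, where $S$ runs over the $p$-element subsets of $\Phi_{I}^{+}$. Since each $a_{\beta}>0$, the factor $a_{\beta}+\sqrt{-1}\,b_{\beta}$ has argument $\arctan(b_{\beta}/a_{\beta})\in(-\tfrac{\pi}{2},\tfrac{\pi}{2})$, so, taking the natural lift, $\arg w_{S}=\sum_{\beta\in S}\arctan(b_{\beta}/a_{\beta})$. On the one hand this equals $\hat{\Theta}([\psi])-\sum_{\beta\in\Phi_{I}^{+}\setminus S}\arctan(b_{\beta}/a_{\beta})>\hat{\Theta}([\psi])-(n-p)\tfrac{\pi}{2}$, because $Y$ is proper, so there are $n-p\geq1$ omitted terms and each is $<\tfrac{\pi}{2}$. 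On the other hand it is $<p\,\tfrac{\pi}{2}\leq\hat{\Theta}([\psi])-(n-p-2)\tfrac{\pi}{2}=\hat{\Theta}([\psi])-(n-p)\tfrac{\pi}{2}+\pi$, the middle inequality being a restatement of $\hat{\Theta}([\psi])\geq(n-2)\tfrac{\pi}{2}$, which is ensured by the hypothesis. Hence every $w_{S}$ lies in the set $\mathcal{C}$ of nonzero complex numbers whose argument lies, modulo $2\pi$, in the open interval $\big(\Theta_{X_{P}}-(n-p)\tfrac{\pi}{2},\ \Theta_{X_{P}}-(n-p)\tfrac{\pi}{2}+\pi\big)$; as this interval has length $\pi$, the set $\mathcal{C}$ is an open half-plane through the origin, in particular a convex cone.

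It remains to exhibit $z_{Y}$ as a non-negative, not identically zero, combination of the $w_{S}$. Building on the description of $G$-invariant $(1,1)$-forms used to prove Theorem~\ref{theoremA}, the invariant representatives satisfy $\omega_{0}+\sqrt{-1}\,\psi_{0}=\sum_{\beta\in\Phi_{I}^{+}}(a_{\beta}+\sqrt{-1}\,b_{\beta})\,\xi_{\beta}$, where $\xi_{\beta}=\sqrt{-1}\,\theta_{\beta}\wedge\overline{\theta_{\beta}}$ are the rank-one positive invariant $(1,1)$-forms attached to the root lines in $T^{1,0}_{eP}X_{P}$. These $\xi_{\beta}$ are $2$-forms with $\xi_{\beta}\wedge\xi_{\beta}=0$, so expanding the $p$-th power collapses to $(\omega_{0}+\sqrt{-1}\,\psi_{0})^{p}=p!\sum_{|S|=p}w_{S}\,\Xi_{S}$, where $\Xi_{S}:=\bigwedge_{\beta\in S}\xi_{\beta}=(\sqrt{-1})^{p^{2}}\,\alpha_{S}\wedge\overline{\alpha_{S}}$ for the decomposable $(p,0)$-form $\alpha_{S}=\bigwedge_{\beta\in S}\theta_{\beta}$; in particular each $\Xi_{S}$ is a strongly positive $(p,p)$-form. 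Integrating over the smooth locus of $Y$ and using that a strongly positive $(p,p)$-form restricts to a non-negative multiple of the volume form on any $p$-dimensional complex submanifold, we obtain $z_{Y}=p!\sum_{|S|=p}w_{S}\,d_{S}$ with $d_{S}:=\int_{Y}\Xi_{S}\geq0$ (the integral over the smooth locus); moreover $\int_{Y}\omega_{0}^{p}>0$ ($\omega_{0}$ is K\"ahler and $Y$ has positive volume), and this integral equals $p!\sum_{|S|=p}\big(\prod_{\beta\in S}a_{\beta}\big)d_{S}$ with all $a_{\beta}>0$, so some $d_{S}$ is positive. Thus $z_{Y}$ is a nonzero non-negative combination of points of the convex cone $\mathcal{C}$, hence lies in $\mathcal{C}$; equivalently $\mathrm{Arg}\,z_{Y}$ lies, modulo $2\pi$, in the interval above, and choosing $\Theta_{Y}$ to be that lift yields $\Theta_{Y}>\Theta_{X_{P}}-(n-\dim Y)\tfrac{\pi}{2}$, which is \eqref{phasecharge}.

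I expect the main obstacle to be the consistent bookkeeping of phase lifts rather than any single hard estimate: one must fix throughout that $\Theta_{X_{P}}$ is the lift $\hat{\Theta}([\psi])$ from the Corollary to Theorem~\ref{theoremA} and that $\Theta_{Y}$ is the lift of $\mathrm{Arg}\,z_{Y}$ constructed above, so that \eqref{phasecharge} is a genuine inequality in $\mathbbm{R}$. The remaining inputs — the root-space expansion $\omega_{0}+\sqrt{-1}\,\psi_{0}=\sum_{\beta}(a_{\beta}+\sqrt{-1}\,b_{\beta})\xi_{\beta}$, the strong positivity of $\Xi_{S}$, and the integrability of $\Xi_{S}$ over the smooth locus of $Y$ — are implicit in the proof of Theorem~\ref{theoremA} and in standard positivity theory. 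It is worth emphasizing that this approach avoids the Schubert-basis or effectivity description of $H_{2p}(X_{P},\mathbbm{Z})$ altogether: the sign of each coefficient $d_{S}$ is read off directly from integrating a manifestly positive invariant form over $Y$.
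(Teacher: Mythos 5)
Your reduction of the hypothesis to $\hat{\Theta}([\psi])>(n-2)\tfrac{\pi}{2}$, your half-plane estimate for the products $w_{S}=\prod_{\beta\in S}(a_{\beta}+\sqrt{-1}b_{\beta})$, and your handling of the lifts are all correct, and they are the same convexity mechanism used in the paper's proof of this theorem. The genuine gap is in the step that produces the nonnegative coefficients $d_{S}$. You treat $\xi_{\beta}=\sqrt{-1}\,\theta_{\beta}\wedge\overline{\theta_{\beta}}$ as ``rank-one positive invariant $(1,1)$-forms'' and then integrate $\Xi_{S}=\bigwedge_{\beta\in S}\xi_{\beta}$ over $Y$. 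But the $\theta_{\beta}$ are a coframe at the single point $x_{0}=eP$; an individual $\xi_{\beta}$ extends to a $G$-invariant form on $X_{P}$ only if $\sqrt{-1}\,\theta_{\beta}\wedge\overline{\theta_{\beta}}$ is invariant under the full isotropy representation of $G\cap P$ on $\mathfrak{m}$. That holds when $P=B$ (isotropy a torus), but fails for a general parabolic, where the isotropy representation mixes the root spaces $\mathfrak{g}_{-\beta}$ inside its irreducible summands: for instance on $\mathbbm{P}^{2}=SU(3)/U(2)$ one has $|\Phi_{I}^{+}|=2$ while the space of invariant real $2$-forms is one-dimensional, so no invariant $\xi_{\beta}$ exist. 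Consequently $\Xi_{S}$ is not a globally defined form, the identity $(\omega_{0}+\sqrt{-1}\psi_{0})^{p}=p!\sum_{|S|=p}w_{S}\,\Xi_{S}$ is valid only at $x_{0}$ (or pointwise after a point-dependent choice of coframe), and $d_{S}:=\int_{Y}\Xi_{S}$ has no meaning as written; hence the key decomposition $z_{Y}=p!\sum_{S}w_{S}d_{S}$ with $d_{S}\geq 0$ is not established.

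The gap is reparable, and the repair is exactly where your route and the paper's diverge. The paper dualizes: instead of decomposing the form, it replaces $Y$ by a $G$-invariant representative $T_{Y}$ of the positive current of integration $[Y_{\text{reg}}]$ (averaging over $G$ preserves positivity and the class), so that $T_{Y}\wedge(\omega_{0}+\sqrt{-1}\chi)^{p}$ is an invariant $(n,n)$-current which can be evaluated at $x_{0}$ alone; there the positivity of $T_{Y}$ supplies the nonnegative coefficients $T_{J^{c},J^{c}}(x_{0})$, Wirtinger's theorem gives that at least one is strictly positive, and the rest is your half-plane estimate verbatim. Alternatively, you can keep your decomposition but make it local: choose local sections of $G\to X_{P}$ over a trivializing cover, define $\Xi_{S}$ chart by chart by translating the model at $x_{0}$, observe that the decomposition of $(\omega_{0}+\sqrt{-1}\psi_{0})^{p}$ holds on each chart with the same constants $w_{S}$, and define $d_{S}$ using a partition of unity; strong positivity then gives $d_{S}\geq 0$, and your comparison with $\int_{Y}\omega_{0}^{p}>0$ still forces some $d_{S}>0$. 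Either fix completes your argument; as written, however, the invariance claim for the $\xi_{\beta}$ is false for general $P$, so the integration step does not go through.
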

As it can be seen, Eq. (\ref{hypercritical}) is equivalent to the supercritical condition, thus the inequality provided by Eq. (\ref{phasecharge}) is a necessary condition for a solution of Eq. (\ref{dHYMeq}) on $X_{P}$ to be supercritical. The proof which we present for Theorem \ref{theoremB} is adapted to the homogeneous setting and based on the ideas introduced in \cite[Lemma 8.2]{Collins2020}. From above, we have the following characterization for the unique lifted angle provided by (Eq. (\ref{uniquelifted})) in terms of central charges.
\begin{corollarynon}
In the setting of the previous theorem, the unique lifted angle $\hat{\Theta} \in (-n\frac{\pi}{2},n\frac{\pi}{2})$ of $[\psi] \in H^{1,1}(X_{P},\mathbbm{R})$ is given by
\begin{equation}
\hat{\Theta} =  \sum_{\beta \in \Phi_{I}^{+}} \Theta_{{\mathbbm{P}}^{1}_{\beta}},
\end{equation}
such that $\mathbbm{P}_{\beta}^{1} = \overline{\exp(\mathfrak{g}_{-\beta})P} \subset X_{P}$, $\forall \beta \in \Phi_{I}^{+}$.
\end{corollarynon}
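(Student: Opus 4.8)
The plan is to compute each summand $\Theta_{\mathbbm{P}^{1}_{\beta}}$ explicitly and identify it, term by term, with the corresponding summand of the unique lift $\hat{\Theta}([\psi])$ given by Eq.~(\ref{uniquelifted}). Since each curve $\mathbbm{P}^{1}_{\beta} = \overline{\exp(\mathfrak{g}_{-\beta})P}$ has complex dimension one, Eq.~(\ref{phasesubvariety}) specializes to
\begin{equation*}
\Theta_{\mathbbm{P}^{1}_{\beta}} = {\rm{Arg}}\bigg( \int_{\mathbbm{P}^{1}_{\beta}}\omega + \sqrt{-1}\int_{\mathbbm{P}^{1}_{\beta}}\psi\bigg),
\end{equation*}
so the whole statement reduces to evaluating the two intersection numbers $\int_{\mathbbm{P}^{1}_{\beta}}\omega$ and $\int_{\mathbbm{P}^{1}_{\beta}}\psi$ and matching them with the pairings in Theorem~\ref{theoremA}.

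The first ingredient is that, for each $\beta \in \Phi_{I}^{+}$, the $\mathfrak{sl}_{2}$-triple $\{e_{\beta},h_{\beta},e_{-\beta}\}$ generates a subgroup $S_{\beta} \subset G^{\mathbbm{C}}$ whose orbit through the base point $eP$ is closed and isomorphic to $\mathbbm{P}^{1}$: indeed $\beta \notin \langle I\rangle$ forces $\mathfrak{g}_{-\beta} \not\subset \mathfrak{p}$, so $S_{\beta}\cap P$ is a Borel subgroup of $S_{\beta}$, and since $\exp(\mathfrak{g}_{-\beta})P$ is the open affine cell of that orbit we get $\mathbbm{P}^{1}_{\beta} = S_{\beta}\cdot eP \cong \mathbbm{P}^{1}$ (and $|\Phi_{I}^{+}| = \dim_{\mathbbm{C}}(X_{P}) = n$). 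The core of the argument is then the intersection identity: for every $[\theta] \in H^{1,1}(X_{P},\mathbbm{R})$ with image $\lambda([\theta]) \in \Lambda_{P}\otimes\mathbbm{R}$ under the map~(\ref{map11character}),
\begin{equation*}
\int_{\mathbbm{P}^{1}_{\beta}}\theta = \langle \lambda([\theta]),\beta^{\vee}\rangle .
\end{equation*}
For $[\theta]$ integral this amounts to the fact that the $G^{\mathbbm{C}}$-equivariant line bundle attached to $\lambda([\theta])$ restricts to the homogeneous curve $\mathbbm{P}^{1}_{\beta}\cong S_{\beta}/(S_{\beta}\cap P)$ as the equivariant line bundle of weight $\lambda([\theta])(h_{\beta}) = \langle\lambda([\theta]),\beta^{\vee}\rangle$ on $\mathbbm{P}^{1}$, hence of that degree; the real case then follows by $\mathbbm{R}$-linearity. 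This $\mathfrak{sl}_{2}$-reduction is exactly the computation already underlying the pairings in Theorem~\ref{theoremA} and is the only step where Lie theory genuinely enters — I expect it to be the main (if routine) point, and I would either quote it from the preceding sections or dispatch it with the one-line weight computation above.

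Granting the identity, the conclusion is immediate. Since $[\omega]\in\mathcal{K}(X_{P})$ is a K\"{a}hler class it integrates strictly positively over every irreducible curve, so $\langle\lambda([\omega]),\beta^{\vee}\rangle = \int_{\mathbbm{P}^{1}_{\beta}}\omega > 0$ for all $\beta\in\Phi_{I}^{+}$; hence the complex number $\langle\lambda([\omega]),\beta^{\vee}\rangle + \sqrt{-1}\,\langle\lambda([\psi]),\beta^{\vee}\rangle$ has strictly positive real part, its principal argument lies in $(-\tfrac{\pi}{2},\tfrac{\pi}{2})$, and
\begin{equation*}
\Theta_{\mathbbm{P}^{1}_{\beta}} = \arctan\!\bigg( \frac{\langle\lambda([\psi]),\beta^{\vee}\rangle}{\langle\lambda([\omega]),\beta^{\vee}\rangle}\bigg) \in \Big(-\tfrac{\pi}{2},\tfrac{\pi}{2}\Big).
\end{equation*}
Summing over $\beta\in\Phi_{I}^{+}$ and using Eq.~(\ref{uniquelifted}) yields $\sum_{\beta\in\Phi_{I}^{+}}\Theta_{\mathbbm{P}^{1}_{\beta}} = \hat{\Theta}([\psi])$; and because the left-hand side is a sum of $n = |\Phi_{I}^{+}|$ real numbers each in $(-\tfrac{\pi}{2},\tfrac{\pi}{2})$, it automatically lies in $(-n\tfrac{\pi}{2},n\tfrac{\pi}{2})$, so it is the unique lift of $\hat{\Theta}$, as claimed. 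I note in passing that the supercritical hypothesis~(\ref{hypercritical}) of Theorem~\ref{theoremB} is not used here: only the positivity of a K\"{a}hler class on curves enters.
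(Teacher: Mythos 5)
Your proposal is correct and follows essentially the same route as the paper: the paper also reduces the claim to the identity $\int_{\mathbbm{P}^{1}_{\beta}}(\omega+\sqrt{-1}\psi)=\langle\lambda([\omega]),\beta^{\vee}\rangle+\sqrt{-1}\langle\lambda([\psi]),\beta^{\vee}\rangle$ (via Remark \ref{dykinlineroot}, i.e.\ $[\mathbbm{P}^{1}_{\beta}]=\sum_{\alpha\in\Delta\backslash I}\langle\varpi_{\alpha},\beta^{\vee}\rangle[\mathbbm{P}^{1}_{\alpha}]$, which rests on Lemma \ref{funddynkinline}), and then reads off $\Theta_{\mathbbm{P}^{1}_{\beta}}=\arctan\big(\langle\lambda([\psi]),\beta^{\vee}\rangle/\langle\lambda([\omega]),\beta^{\vee}\rangle\big)$ and sums against Eq.~(\ref{uniquelifted}). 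Your $\mathfrak{sl}_{2}$-weight computation of the degree of the restricted equivariant line bundle is exactly the content of Lemma \ref{funddynkinline} (there proved by an Iwasawa-decomposition calculation), so the two arguments coincide in substance.
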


The corollary above shows that the unique lifted angle associated with some $[\psi] \in H^{1,1}(X_{P},\mathbbm{R})$ can be completely described in terms of the central charges $Z_{{\mathbbm{P}}^{1}_{\beta}}([\psi])$, $\beta \in \Phi_{I}^{+}$, notice that
\begin{equation}
\Theta_{{\mathbbm{P}}^{1}_{\beta}}={\rm{Arg}}\big ({\rm{e}}^{-\frac{ \pi}{2} \sqrt{-1}}
Z_{{\mathbbm{P}}^{1}_{\beta}}([\psi])\big) \mod 2\pi, \ \ \forall \beta \in \Phi_{I}^{+},
\end{equation}
see Eq. (\ref{centralchargedef}) and Eq. (\ref{phasesubvariety}). In the setting of the Problem \ref{problem1}, restricting our attention to the case that $[\psi] = c_{1}({\bf{E}})$, for some holomorphic vector bundle ${\bf{E}} \to X_{P}$, we investigate the relationship between slope (semi)stability and central charges defined by certain rational curves. Let us recall that, fixed a K\"{a}hler class $\xi \in \mathcal{K}(X_{P})$, the slope $\mu_{\xi}({\bf{E}})$ of a holomorphic vector bundle ${\bf{E}} \to X_{P}$, with respect to $\xi$, is defined by
\begin{equation}
\mu_{\xi}({\bf{E}}):= \frac{\int_{X_{P}}c_{1}({\bf{E}}) \wedge \xi^{n-1}}{\rank({\bf{E}})}.
\end{equation}
From above, a holomorphic vector bundle ${\bf{E}} \to X_{P}$ is said to be $\xi$-(semi)stable, in the sense of Mumford-Takemoto, if 
\begin{equation}
\mu_{\xi}({\bf{E}}) \geoq \mu_{\xi}(\mathcal{F}),
\end{equation}
for every subbundle $0 \neq \mathcal{F} \varsubsetneq {\bf{E}}$. In order to provide a criterion for (semi)stability in terms of central charges, we prove firstly the following result.
\begin{thm}
\label{theoremC}
Let ${\bf{E}} \to X_{P}$ be a holomorphic vector bundle with $\rank({\bf{E}}) = r$. Then, the slope of ${\bf{E}}$ with respect to a K\"{a}hler class $\xi \in \mathcal{K}(X_{P})$ is given by
\begin{equation}
\label{slopeform}
\mu_{\xi}({\bf{E}}) = \frac{(n-1)!}{r} \Bigg [ \sum_{\beta \in \Phi_{I}^{+} } \frac{\langle \lambda({\bf{E}}), \beta^{\vee} \rangle}{\langle \lambda(\xi), \beta^{\vee}\rangle} \Bigg ] \Bigg [ \prod_{\beta \in \Phi_{I}^{+}} \frac{\langle \lambda(\xi),\beta^{\vee} \rangle}{\langle \varrho^{+},\beta^{\vee} \rangle}\Bigg],
\end{equation}
such that $\lambda({\bf{E}}) \in \Lambda_{P}$, and $\lambda(\xi) \in \Lambda_{P} \otimes \mathbbm{R}$. Moreover, if the restriction of ${\bf{E}}$ to each generator $[{\mathbbm{P}}^{1}_{\alpha}]$, $\alpha \in \Delta \backslash I$, of the cone of curves ${\rm{NE}}(X_{P})$ is semistable, then ${\bf{E}}$ is $\xi$-semistable with respect to any K\"{a}hler class $\xi \in \mathcal{K}(X_{P})$.
\end{thm}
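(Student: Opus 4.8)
The plan is to obtain the slope formula (\ref{slopeform}) by polarizing a Weyl--dimension-type volume formula on $X_P$, and then to read off the semistability criterion from the shape of the resulting expression together with the structure of the cone of curves $\mathrm{NE}(X_P)$.

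\emph{Step 1 (self-intersection formula).} For a dominant $\lambda\in\Lambda_P$ the line bundle $L_\lambda\to X_P$ is globally generated, and Borel--Weil together with the Weyl dimension formula gives, for all $m\geq 0$,
\[
\dim H^{0}\big(X_P,L_{m\lambda}\big)=\prod_{\beta\in\Phi_I^{+}}\frac{\langle m\lambda+\varrho^{+},\beta^{\vee}\rangle}{\langle\varrho^{+},\beta^{\vee}\rangle},
\]
the factors indexed by roots in $\langle I\rangle^{+}$ being identically $1$ since $\langle\lambda,\gamma^{\vee}\rangle=0$ for $\gamma\in I$. The right-hand side is a polynomial in $m$ of degree $n=\dim_{\mathbbm{C}}(X_P)=|\Phi_I^{+}|$ whose leading coefficient, by asymptotic Riemann--Roch, is $\tfrac{1}{n!}\int_{X_P}c_1(L_\lambda)^{n}$; hence
\[
\int_{X_P}c_1(L_\lambda)^{n}=n!\prod_{\beta\in\Phi_I^{+}}\frac{\langle\lambda,\beta^{\vee}\rangle}{\langle\varrho^{+},\beta^{\vee}\rangle}.
\]
Both sides are polynomial in $\lambda$ and agree on the full-dimensional ample cone, so this holds for every $\lambda\in\Lambda_P\otimes\mathbbm{R}$. (It is also the identity obtained by setting $\psi=0$ in the computation of $\int_{X_P}(\omega+\sqrt{-1}\psi)^{n}/n!$ underlying Theorem~\ref{theoremA}.)

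\emph{Step 2 (slope formula).} The intersection pairing on $H^{1,1}(X_P,\mathbbm{R})$ is symmetric multilinear and $\mu\mapsto\int_{X_P}c_1(L_\mu)^{n}$ is its restriction to the diagonal, so polarization gives
\[
\int_{X_P}c_1(\mathbf{E})\wedge\xi^{\,n-1}=\frac{1}{n}\,\frac{d}{dt}\Big|_{t=0}\int_{X_P}c_1\!\big(L_{\lambda(\xi)+t\lambda(\mathbf{E})}\big)^{n}.
\]
Plugging in the formula of Step 1 and differentiating the product logarithmically (all $\langle\lambda(\xi),\beta^{\vee}\rangle>0$ because $\xi$ is Kähler) yields
\[
\int_{X_P}c_1(\mathbf{E})\wedge\xi^{\,n-1}=(n-1)!\,\bigg[\prod_{\beta\in\Phi_I^{+}}\frac{\langle\lambda(\xi),\beta^{\vee}\rangle}{\langle\varrho^{+},\beta^{\vee}\rangle}\bigg]\sum_{\beta\in\Phi_I^{+}}\frac{\langle\lambda(\mathbf{E}),\beta^{\vee}\rangle}{\langle\lambda(\xi),\beta^{\vee}\rangle},
\]
and dividing by $r=\rank(\mathbf{E})$ is exactly (\ref{slopeform}).

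\emph{Step 3 (semistability from the Mori generators).} Let $0\neq\mathcal{F}\subsetneq\mathbf{E}$ be a subbundle of rank $s$ and set $\nu:=r\,\lambda(\mathcal{F})-s\,\lambda(\mathbf{E})\in\Lambda_P$. Applying Step 2 to $L_\nu$ and using $-c_1(L_\nu)\leftrightarrow s\lambda(\mathbf{E})-r\lambda(\mathcal{F})$,
\[
rs\big(\mu_\xi(\mathbf{E})-\mu_\xi(\mathcal{F})\big)=-\!\int_{X_P}\!c_1(L_\nu)\wedge\xi^{\,n-1}=-(n-1)!\,\bigg[\prod_{\beta\in\Phi_I^{+}}\frac{\langle\lambda(\xi),\beta^{\vee}\rangle}{\langle\varrho^{+},\beta^{\vee}\rangle}\bigg]\sum_{\beta\in\Phi_I^{+}}\frac{\langle\nu,\beta^{\vee}\rangle}{\langle\lambda(\xi),\beta^{\vee}\rangle}.
\]
Since the bracketed product and each $\langle\lambda(\xi),\beta^{\vee}\rangle$ are positive, it suffices to show $\langle\nu,\beta^{\vee}\rangle\leq 0$ for every $\beta\in\Phi_I^{+}$. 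Writing $\lambda=\sum_{\alpha\in\Delta\backslash I}\langle\lambda,\alpha^{\vee}\rangle\,\varpi_\alpha$ for $\lambda\in\Lambda_P\otimes\mathbbm{R}$ (valid because $\langle\varpi_\alpha,\gamma^{\vee}\rangle=\delta_{\alpha\gamma}$ for $\alpha,\gamma\in\Delta$), we get, for any $\beta\in\Phi^{+}$,
\[
\langle\lambda,\beta^{\vee}\rangle=\sum_{\alpha\in\Delta\backslash I}\langle\varpi_\alpha,\beta^{\vee}\rangle\,\langle\lambda,\alpha^{\vee}\rangle,\qquad\langle\varpi_\alpha,\beta^{\vee}\rangle\geq 0,
\]
the nonnegativity because $\varpi_\alpha$ is dominant and $\beta^{\vee}$ a positive coroot (equivalently $[\mathbbm{P}^{1}_{\beta}]\in\mathrm{NE}(X_P)=\sum_{\alpha\in\Delta\backslash I}\mathbbm{R}_{\geq 0}[\mathbbm{P}^{1}_{\alpha}]$, with $c_1(L_\lambda)\cdot[\mathbbm{P}^{1}_{\beta}]=\langle\lambda,\beta^{\vee}\rangle$ since $L_\lambda$ restricts to $\mathcal{O}(\langle\lambda,\beta^{\vee}\rangle)$ on $\mathbbm{P}^{1}_{\beta}\cong\SL_2/B$). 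Hence it is enough that $\langle\nu,\alpha^{\vee}\rangle\leq 0$ for each generator $\alpha\in\Delta\backslash I$. Restricting the exact sequence $0\to\mathcal{F}\to\mathbf{E}\to\mathbf{E}/\mathcal{F}\to 0$ of vector bundles to $\mathbbm{P}^{1}_{\alpha}$ keeps it exact, so $\mathcal{F}|_{\mathbbm{P}^{1}_{\alpha}}$ is a subbundle of the semistable bundle $\mathbf{E}|_{\mathbbm{P}^{1}_{\alpha}}$, whence $\tfrac1s\langle\lambda(\mathcal{F}),\alpha^{\vee}\rangle=\mu\big(\mathcal{F}|_{\mathbbm{P}^{1}_{\alpha}}\big)\leq\mu\big(\mathbf{E}|_{\mathbbm{P}^{1}_{\alpha}}\big)=\tfrac1r\langle\lambda(\mathbf{E}),\alpha^{\vee}\rangle$, i.e.\ $\langle\nu,\alpha^{\vee}\rangle\leq 0$, and $\mathbf{E}$ is $\xi$-semistable for every $\xi\in\mathcal{K}(X_P)$.

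\emph{Main difficulty.} Steps 1 and 2 are bookkeeping once the Weyl dimension formula is available. The substance is Step 3: the sum in (\ref{slopeform}) runs over all of $\Phi_I^{+}$, whereas the hypothesis constrains only the finitely many Mori generators, and the point is that every positive coroot restricts on $\Lambda_P$ to a \emph{nonnegative} combination of the $\alpha^{\vee}$, $\alpha\in\Delta\backslash I$; alongside this one needs the elementary but essential fact that a subbundle restricts to a subbundle on each $\mathbbm{P}^{1}_{\alpha}$, so one-dimensional semistability genuinely applies. If instead arbitrary coherent subsheaves are allowed in the definition of semistability, an extra step is required to pass to saturations and to arrange that the relevant curves avoid the codimension-two locus where the saturation fails to be locally free; I expect this to be handled by a short separate argument.
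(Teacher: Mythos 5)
Your proposal is correct, and its semistability half (Step 3) is essentially the paper's own argument: restrict to the Mori generators $\mathbbm{P}^{1}_{\alpha}$, obtain $\tfrac{1}{s}\langle \lambda(\mathcal{F}),\alpha^{\vee}\rangle \leq \tfrac{1}{r}\langle \lambda({\bf{E}}),\alpha^{\vee}\rangle$ from one-dimensional semistability, propagate to all $\beta \in \Phi_{I}^{+}$ using $\langle \varpi_{\alpha},\beta^{\vee}\rangle \geq 0$, and sum against the positive weights $1/\langle \lambda(\xi),\beta^{\vee}\rangle$; your packaging via $\nu = r\lambda(\mathcal{F}) - s\lambda({\bf{E}})$ is only cosmetic. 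Where you genuinely diverge is the derivation of the slope formula (\ref{slopeform}). The paper works pointwise with invariant representatives: it computes the constant eigenvalues of $\omega_{0}^{-1}\circ {\bf{\Omega}}_{\alpha}$ (Proposition \ref{eigenvalueatorigin}), hence the contraction $\Lambda_{\omega_{0}}({\bf{\Omega}}_{\alpha})$ of Eq. (\ref{contractiongenerators}), and combines $\int_{X_{P}}{\bf{\Omega}}_{\alpha}\wedge \omega^{n-1} = \tfrac{1}{n}\Lambda_{\omega_{0}}({\bf{\Omega}}_{\alpha})\int_{X_{P}}\omega^{n}$ with the volume formula (\ref{VolKahler}), which is itself obtained by comparison with the K\"{a}hler--Einstein metric $\rho_{0}$. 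You instead prove the top self-intersection identity $\int_{X_{P}}c_{1}(L_{\lambda})^{n} = n!\prod_{\beta\in\Phi_{I}^{+}}\langle \lambda,\beta^{\vee}\rangle/\langle \varrho^{+},\beta^{\vee}\rangle$ from Borel--Weil, the Weyl dimension formula and asymptotic Riemann--Roch, extend it by polynomiality, and then polarize in the direction $\lambda({\bf{E}})$. The two routes carry the same Lie-theoretic content (your Step 1 identity is precisely Eq. (\ref{VolKahler}) in disguise), but yours is purely cohomological and representation-theoretic, trading the paper's metric computations for Serre/Kodaira vanishing, asymptotic Riemann--Roch and a density argument, whereas the paper's route has the side benefit of producing the eigenvalue and contraction formulas that are reused for the dHYM results (Theorems \ref{theoremA} and \ref{theoremD}). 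Your closing caveat about coherent subsheaves is sensible, but note that the paper states $\xi$-(semi)stability for subbundles only, so no saturation step is required for the statement as given.
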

Notice that $\lambda({\bf{E}}):= \lambda(c_{1}({\bf{E}})) \in \Lambda_{P}$, see Eq. (\ref{map11character}), and $\varrho^{+} = \frac{1}{2}\sum_{\alpha \in \Phi^{+}} \alpha$. The so-called restriction theorems for slope (semi)stability play an important role in the moduli theory of sheaves, see for instance \cite{mehta1982semistable, mehta1984restriction}, \cite{flenner1984restrictions}, and \cite{langer2004semistable}. In fact, such theorems are useful tools used to reduce the study of sheaves to lower dimensions, e.g. \cite[Theorem 7.3.1]{huybrechts2010geometry}. Besides the explicit formula (in terms of Lie theory) for the slope of holomorphic vector bundles over rational homogeneous varieties, the result above provides a new criterion to slope semistability through restrictions of holomorphic vector bundles to the generators of the underlying cone of curves. It is worth pointing out that the expression provided by Eq. (\ref{slopeform}) depends only on certain intersection numbers, namely,
\begin{equation}
\langle \lambda({\bf{E}}), \beta^{\vee} \rangle = \langle c_{1}({\bf{E}}),[\mathbbm{P}^{1}_{\beta}] \rangle \ \ {\text{and}} \ \ \langle \lambda(\xi), \beta^{\vee} \rangle = \langle \xi,[\mathbbm{P}^{1}_{\beta}] \rangle,
\end{equation}
such that $\mathbbm{P}_{\beta}^{1} = \overline{\exp(\mathfrak{g}_{-\beta})P} \subset X_{P}$, $\forall \beta \in \Phi_{I}$. This fact leads us to investigate some aspects of intersection theory underlying the definition of certain central charges. In this setting, we obtain the following result.

\begin{thm}
\label{theoremD}
Fixed a $G$-invariant K\"{a}hler metric $\omega_{0}$ on $X_{P}$, for every holomorphic vector bundle ${\bf{E}} \to X_{P}$ and every irreducible analytic subvariety $Y \subset X_{P}$, define
\begin{equation}
Z_{[\omega_{0}]}({\bf{E}},Y):= - \int_{Y}{\rm{e}}^{-\sqrt{-1}[\omega_{0}]}{\rm{ch}}({\bf{E}}).
\end{equation}
Then, the following hold:
\begin{enumerate}
\item In the particular case that ${\bf{E}} \in {\rm{Pic}}(X_{P})$ and $Y \in {\rm{Div}}(X_{P})$, we have
\begin{equation}
Z_{[\omega_{0}]}({\bf{E}},Y) = -\sum_{\alpha \in \Phi_{I}^{+}}  \Bigg [ \prod_{\beta \in \Phi_{I}^{+}\backslash \{\alpha\}}\Bigg (\frac{ \langle \lambda({\bf{E}}), \beta^{\vee} \rangle}{\langle \lambda([\omega_{0}]), \beta^{\vee} \rangle}\frac{ \langle \lambda_{Y}, \alpha^{\vee} \rangle}{\langle \lambda([\omega_{0}]), \alpha^{\vee} \rangle} - \sqrt{-1}\frac{ \langle \lambda_{Y}, \alpha^{\vee} \rangle}{\langle \lambda([\omega_{0}]), \alpha^{\vee} \rangle} \Bigg ) \Bigg ]V_{0},
\end{equation}
such that $V_{0} = {\rm{Vol}}(X_{P},\omega_{0})$, $\lambda({\bf{E}}),\lambda_{Y} \in \Lambda_{P}$, and $\lambda([\omega_{0}]) \in \Lambda_{P} \otimes \mathbbm{R}$;
\item For every ${\bf{E}} \in {\rm{Pic}}(X_{P})$, such that $c_{1}({\bf{E}}) \neq 0$, we have
\begin{equation}
\label{conditionB}
\frac{|Z_{[\omega_{0}]}({\bf{E}},X_{P})|}{||{\rm{ch}}(\bf{E})||} > 0,
\end{equation}
where $||\cdot||$ is any norm on the finite dimensional vector space $H^{{\text{even}}}(X_{P},\mathbbm{R})$;
\item For every holomorphic vector bundle ${\bf{E}} \to X_{P}$, define
\begin{equation}
\label{slopeangle}
\hat{\mu}_{[\omega_{0}]}({\bf{E}}) := \sum_{\beta \in \Phi_{I}^{+} } \tan \big ( \Theta_{\omega_{0}}({\bf{E}},\mathbbm{P}_{\beta}^{1})\big),
\end{equation}
such that 
\begin{equation}
\Theta_{\omega_{0}}({\bf{E}},\mathbbm{P}_{\beta}^{1}) := {\rm{Arg}} \big (Z_{[\omega_{0}]}({\bf{E}},\mathbbm{P}_{\beta}^{1}) \big) - \frac{\pi}{2}, 
\end{equation}
where $\mathbbm{P}_{\beta}^{1} = \overline{\exp(\mathfrak{g}_{-\beta})P} \subset X_{P}$, $\forall \beta \in \Phi_{I}^{+}$. Then, we have that ${\bf{E}}$ is $[\omega_{0}]$-(semi)stable if, and only if,
\begin{equation}
\hat{\mu}_{[\omega_{0}]}({\bf{E}}) \geoq \hat{\mu}_{[\omega_{0}]}(\mathcal{F}),
\end{equation}
for every subbundle $0 \neq \mathcal{F} \varsubsetneq {\bf{E}}$;
\item Given ${\bf{E}} \in {\rm{Pic}}(X_{P})$, if
\begin{equation}
\label{supercritivalbundle}
\frac{\pi (n-2)}{2} < \Theta_{\omega_{0}}({\bf{E}}):= \sum_{\beta \in \Phi_{I}^{+} }  \Theta_{\omega_{0}}({\bf{E}},\mathbbm{P}_{\beta}^{1}) < \frac{n\pi}{2},
\end{equation}
such that $\lambda({\bf{E}}) \in \Lambda_{P}$ and $\lambda([\omega_{0}]) \in \Lambda_{P} \otimes \mathbbm{R}$, then
\begin{equation}
\label{inequalitysupercritical}
{\rm{Im}} \bigg( \frac{Z_{[\omega_{0}]}({\bf{E}},Y)}{Z_{[\omega_{0}]}({\bf{E}},X_{P})}\bigg) > 0,
\end{equation}
for every irreducible analytic subvariety $Y \varsubsetneq X_{P}$;
\item Given a holomorphic vector bundle ${\bf{E}} \to X_{P}$, if
\begin{equation}
\label{stabilityinequalities}
{\rm{Arg}} \big (Z_{[\omega_{0}]}({\bf{E}},\mathbbm{P}_{\beta}^{1}) \big)  \geoq {\rm{Arg}} \big (Z_{[\omega_{0}]}(\mathcal{F},\mathbbm{P}_{\beta}^{1}) \big), 
\end{equation}
for every subbundle $0 \neq \mathcal{F} \varsubsetneq {\bf{E}}$ and every $\beta \in \Phi_{I}^{+}$, then ${\bf{E}}$ is $[\omega_{0}]$-(semi)stable.
\end{enumerate}
\end{thm}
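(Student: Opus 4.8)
The plan is to run everything through the same computational engine behind Theorems~\ref{theoremA} and~\ref{theoremC}: since $X_{P}$ is a flag variety, the intersection numbers of invariant classes are governed by the Weyl-dimension-type product formula
\[
\int_{X_{P}}\frac{\mu^{n}}{n!}=\prod_{\beta\in\Phi_{I}^{+}}\frac{\langle \mu,\beta^{\vee}\rangle}{\langle\varrho^{+},\beta^{\vee}\rangle}\qquad(\mu\in\Lambda_{P}\otimes\mathbbm{C}),
\]
together with its polarized (mixed) version, bearing in mind that $|\Phi_{I}^{+}|=n=\dim_{\mathbbm{C}}X_{P}$ and that $\langle\lambda(\xi),\beta^{\vee}\rangle=\langle\xi,[\mathbbm{P}^{1}_{\beta}]\rangle>0$ for K\"ahler $\xi$; into this I would feed the expansion of $\mathrm{e}^{-\sqrt{-1}[\omega_{0}]}\mathrm{ch}(\mathbf{E})$. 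For (1), $\mathbf{E}\in\mathrm{Pic}(X_{P})$ gives $\mathrm{ch}(\mathbf{E})=\mathrm{e}^{c_{1}(\mathbf{E})}$, so using $\dim Y=n-1$ and $\int_{Y}(\cdot)=\int_{X_{P}}\lambda_{Y}\wedge(\cdot)$ one gets
\[
Z_{[\omega_{0}]}(\mathbf{E},Y)=-\frac{1}{(n-1)!}\int_{X_{P}}\lambda_{Y}\wedge\big(c_{1}(\mathbf{E})-\sqrt{-1}[\omega_{0}]\big)^{n-1};
\]
applying the polarized product formula to this mixed intersection of invariant classes, collecting the term linear in $\lambda_{Y}$, pulling $\langle\lambda([\omega_{0}]),\beta^{\vee}\rangle/\langle\varrho^{+},\beta^{\vee}\rangle$ out of each remaining factor, and recognizing $V_{0}=\prod_{\beta}\langle\lambda([\omega_{0}]),\beta^{\vee}\rangle/\langle\varrho^{+},\beta^{\vee}\rangle$ produces the stated closed form. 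Part (2) is then immediate: $\|\mathrm{ch}(\mathbf{E})\|>0$ since the degree-zero component of $\mathrm{ch}(\mathbf{E})$ is $\mathrm{rk}(\mathbf{E})=1$, while the product formula gives $Z_{[\omega_{0}]}(\mathbf{E},X_{P})=-\prod_{\beta}\big(\langle\lambda(\mathbf{E}),\beta^{\vee}\rangle-\sqrt{-1}\langle\lambda([\omega_{0}]),\beta^{\vee}\rangle\big)/\langle\varrho^{+},\beta^{\vee}\rangle\neq 0$, each factor having strictly negative imaginary part.

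For (3) and (5) I would first compute, for a curve $\mathbbm{P}^{1}_{\beta}$, the degree-one part of $\mathrm{e}^{-\sqrt{-1}[\omega_{0}]}\mathrm{ch}(\mathbf{E})$, obtaining $Z_{[\omega_{0}]}(\mathbf{E},\mathbbm{P}^{1}_{\beta})=-\langle\lambda(\mathbf{E}),\beta^{\vee}\rangle+\sqrt{-1}\,r\langle\lambda([\omega_{0}]),\beta^{\vee}\rangle$, which has positive imaginary part; hence $\mathrm{Arg}(Z_{[\omega_{0}]}(\mathbf{E},\mathbbm{P}^{1}_{\beta}))\in(0,\pi)$, $\Theta_{\omega_{0}}(\mathbf{E},\mathbbm{P}^{1}_{\beta})\in(-\tfrac{\pi}{2},\tfrac{\pi}{2})$, and
\[
\tan\Theta_{\omega_{0}}(\mathbf{E},\mathbbm{P}^{1}_{\beta})=-\cot\mathrm{Arg}\big(Z_{[\omega_{0}]}(\mathbf{E},\mathbbm{P}^{1}_{\beta})\big)=\frac{\langle\lambda(\mathbf{E}),\beta^{\vee}\rangle}{r\langle\lambda([\omega_{0}]),\beta^{\vee}\rangle}.
\]
Summing over $\Phi_{I}^{+}$ and comparing with the slope formula of Theorem~\ref{theoremC} (with $\xi=[\omega_{0}]$) yields $\hat{\mu}_{[\omega_{0}]}(\mathbf{E})=\frac{1}{(n-1)!\,V_{0}}\mu_{[\omega_{0}]}(\mathbf{E})$, and the identical relation for any subbundle $\mathcal{F}$ (with its own rank $\geq 1$). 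Since $(n-1)!\,V_{0}>0$ is independent of $\mathbf{E}$ and $\mathcal{F}$, the condition $\hat{\mu}_{[\omega_{0}]}(\mathbf{E})\geoq\hat{\mu}_{[\omega_{0}]}(\mathcal{F})$ for all $\mathcal{F}$ is equivalent to $\mu_{[\omega_{0}]}(\mathbf{E})\geoq\mu_{[\omega_{0}]}(\mathcal{F})$ for all $\mathcal{F}$, i.e. to $[\omega_{0}]$-(semi)stability; this is (3). Part (5) follows at once: both $Z_{[\omega_{0}]}(\mathbf{E},\mathbbm{P}^{1}_{\beta})$ and $Z_{[\omega_{0}]}(\mathcal{F},\mathbbm{P}^{1}_{\beta})$ have positive imaginary part, so the hypothesis $\mathrm{Arg}(Z_{[\omega_{0}]}(\mathbf{E},\mathbbm{P}^{1}_{\beta}))\geoq\mathrm{Arg}(Z_{[\omega_{0}]}(\mathcal{F},\mathbbm{P}^{1}_{\beta}))$ for each $\beta$ forces $\Theta_{\omega_{0}}(\mathbf{E},\mathbbm{P}^{1}_{\beta})\geoq\Theta_{\omega_{0}}(\mathcal{F},\mathbbm{P}^{1}_{\beta})$, hence $\tan\Theta_{\omega_{0}}(\mathbf{E},\mathbbm{P}^{1}_{\beta})\geoq\tan\Theta_{\omega_{0}}(\mathcal{F},\mathbbm{P}^{1}_{\beta})$ by monotonicity of $\tan$ on $(-\tfrac{\pi}{2},\tfrac{\pi}{2})$; summing over $\beta$ gives $\hat{\mu}_{[\omega_{0}]}(\mathbf{E})\geoq\hat{\mu}_{[\omega_{0}]}(\mathcal{F})$, and (3) applies.

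For (4), since $\mathbf{E}$ is a line bundle $\mathrm{e}^{-\sqrt{-1}[\omega_{0}]}\mathrm{ch}(\mathbf{E})=\mathrm{e}^{-\sqrt{-1}([\omega_{0}]+\sqrt{-1}c_{1}(\mathbf{E}))}$, so $Z_{[\omega_{0}]}(\mathbf{E},Y)=Z_{Y}(c_{1}(\mathbf{E}))$ in the sense of Eq.~(\ref{centralchargedef}) with base class $[\omega_{0}]$; and by the $\arctan$--$\mathrm{Arg}$ identity above, $\Theta_{\omega_{0}}(\mathbf{E})=\sum_{\beta\in\Phi_{I}^{+}}\arctan\big(\langle\lambda(\mathbf{E}),\beta^{\vee}\rangle/\langle\lambda([\omega_{0}]),\beta^{\vee}\rangle\big)$, which by Theorem~\ref{theoremA} and its corollary is the unique lift in $(-n\tfrac{\pi}{2},n\tfrac{\pi}{2})$ of the phase angle of $c_{1}(\mathbf{E})$. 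Thus the hypothesis $\tfrac{(n-2)\pi}{2}<\Theta_{\omega_{0}}(\mathbf{E})<\tfrac{n\pi}{2}$ is exactly the supercritical condition, equivalently $\sum_{\beta}\arccot\big(\langle\lambda(\mathbf{E}),\beta^{\vee}\rangle/\langle\lambda([\omega_{0}]),\beta^{\vee}\rangle\big)<\pi$ (since $\arctan x+\arccot x=\tfrac{\pi}{2}$ and $|\Phi_{I}^{+}|=n$), which is precisely the hypothesis of Theorem~\ref{theoremB}. Theorem~\ref{theoremB} then yields $\Theta_{Y}>\Theta_{X_{P}}-(n-\dim Y)\tfrac{\pi}{2}$ for every proper irreducible $Y$, and it remains to convert this into $\mathrm{Im}\big(Z_{[\omega_{0}]}(\mathbf{E},Y)/Z_{[\omega_{0}]}(\mathbf{E},X_{P})\big)>0$: writing each central charge as $-(-\sqrt{-1})^{\dim(\cdot)}\int(\omega_{0}+\sqrt{-1}c_{1}(\mathbf{E}))^{\dim(\cdot)}/\dim(\cdot)!$, this positivity is equivalent to $\sin\big(\tfrac{(n-\dim Y)\pi}{2}+\Theta_{Y}-\Theta_{X_{P}}\big)>0$, which one deduces from the inequality of Theorem~\ref{theoremB} together with the complementary a priori localization of $\Theta_{Y}$ for irreducible subvarieties produced in the same argument (in the spirit of \cite[Lemma~8.2]{Collins2020}).

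I expect the routine parts to be (2), (3) and (5) — formal once the product formula and Theorem~\ref{theoremC} are available — and (1), which is a bookkeeping computation with the polarized product formula. The main obstacle should be the last step of (4): upgrading the one-sided phase inequality supplied by Theorem~\ref{theoremB} to the positivity of $\mathrm{Im}(Z_{[\omega_{0}]}(\mathbf{E},Y)/Z_{[\omega_{0}]}(\mathbf{E},X_{P}))$ requires a careful tracking of the lifts of the various arguments and the complementary bound on $\Theta_{Y}$ valid for \emph{arbitrary} (in general non-homogeneous) irreducible subvarieties $Y$ — the one ingredient not already packaged in Theorems~\ref{theoremA}--\ref{theoremC}, which one imports from the Collins--Jacob--Yau analysis.
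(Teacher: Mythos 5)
Your proposal is correct and follows essentially the same route as the paper: invariant representatives and the diagonalization/product formula for items (1)--(2), the computation $Z_{[\omega_{0}]}({\bf{E}},\mathbbm{P}^{1}_{\beta}) = -\langle \lambda({\bf{E}}),\beta^{\vee}\rangle + \sqrt{-1}\,r\langle \lambda([\omega_{0}]),\beta^{\vee}\rangle$ together with the proportionality $\hat{\mu}_{[\omega_{0}]} = \mu_{[\omega_{0}]}/\big((n-1)!\,{\rm{Vol}}(X_{P},\omega_{0})\big)$ for items (3) and (5), and the identification of Eq. (\ref{supercritivalbundle}) with the supercritical hypothesis of Theorem \ref{theoremB} for item (4). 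The ``obstacle'' you flag at the end of (4) is resolved exactly as you anticipate: the two-sided phase control (equivalently the positivity of ${\rm{Im}}$ of the normalized ratio) is precisely what the proof of Theorem \ref{theoremB} establishes for arbitrary proper irreducible subvarieties, and the paper's own argument for (4) imports it from there.
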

From item (1) of the above theorem, we have a general formula for the central charges defined by pairs $(Y,{\bf{E}})$, such that $Y \in {\rm{Div}}(X_{P})$ and ${\bf{E}} \in {\rm{Pic}}(X_{P})$, in terms of Lie theory. In view of the results provided in \cite{duan2003degree}, it seems to be possible to obtain an explicit formula for all central charges defined by Schubert varieties and line bundles. The result provided by item (2) follows directly from Theorem \ref{theoremA}. In particular, it shows that the homogeneous representative of $c_{1}({\bf{E}})$, for every ${\bf{E}} \in {\rm{Pic}}(X_{P})$, minimizes the functional
\begin{equation}
\chi \in c_{1}({\bf{E}}) \mapsto \int_{X_{P}}r_{\omega_{0}}(\chi)\frac{\omega_{0}^{n}}{n!},
\end{equation}
where $r_{\omega_{0}}(\chi)= |\det \big (\mathbbm{1} + \sqrt{-1}(\omega_{0}^{-1} \circ \chi)\big )|$, $\forall \chi \in c_{1}({\bf{E}})$, see for instance \cite{JacobYau2017}. It is worth mentioning that Eq. (\ref{conditionB}) is precisely the support property required in the definition of a Bridgeland stability condition, cf. \cite[\S 2.2]{bayer2011bridgeland}, \cite[\S 5.2]{Macri2017lectures}. From item (3) of the above theorem, we obtain a new characterization for (semi)stability of holomorphic vector bundles over rational homogeneous varieties through the argument of certain central charges defined by rational curves. In order to prove item (3), we show that
\begin{equation}
\mu_{[\omega_{0}]}({\bf{E}}) = -(n-1)!\Bigg [ \sum_{\beta \in \Phi_{I}^{+} } \tan \big ( \Theta_{\omega_{0}}({\bf{E}},\mathbbm{P}_{\beta}^{1}) \big )\Bigg ]{\rm{Vol}}(X_{P},\omega_{0}).
\end{equation}
It follows from the above expression that the $[\omega_{0}]$-slope of a holomorphic vector bundle over a rational homogeneous variety can be obtained through the standard notion of slope (inclination) of lines in the complex plane. The expression on the right-hand side of Eq. (\ref{slopeangle}) also can be written in the following way
\begin{equation}
\hat{\mu}_{[\omega_{0}]}({\bf{E}}) = - \sum_{\beta \in \Phi_{I}^{+}} \frac{{\rm{Re}}\big (Z_{[\omega_{0}]}({\bf{E}},\mathbbm{P}_{\beta}^{1})\big )}{{\rm{Im}}\big (Z_{[\omega_{0}]}({\bf{E}},\mathbbm{P}_{\beta}^{1})\big )}.
\end{equation}
We observe that the above description of $\hat{\mu}_{[\omega_{0}]}({\bf{E}})$ resembles the notion of $Z$-slope (or generalized slope) used to define stability conditions on abelian categories, cf. \cite[\S 4]{Macri2017lectures}. In fact, considering the Grothendieck group of coherent sheaves $K_{0}(X_{P})$, we have that
\begin{equation}
Z_{[\omega_{0}]}(-,\mathbbm{P}_{\beta}^{1}) \colon K_{0}(X_{P}) \to \mathbbm{C},
\end{equation}
defines an additive homomorphism, for every $\beta \in \Phi_{I}^{+}$, see Remark \ref{additivecentralcharge}. The result provided by item (4) is a consequence of the inequality obtained in Eq. (\ref{phasecharge}), cf. Conjecture \ref{conjecture1}. It is worth pointing out that Eq. (\ref{supercritivalbundle}) is equivalent to the supercritical condition on the unique solution of the dHYM equation in $c_{1}({\bf{E}})$. Thus, the inequality described in Eq. (\ref{inequalitysupercritical}) is a necessary condition for a solution of Eq. (\ref{dHYMeq}) in $c_{1}({\bf{E}})$ to be supercritical. The proof of item (5) follows directly from item (3). The criterion for $[\omega_{0}]$-(semi)stability obtained through the inequalities given in Eq. (\ref{stabilityinequalities}) resembles the notion of stability for a given stability function in the setting of Bridgeland stability \cite{Bridgeland2007stability}, \cite[\S 4]{Macri2017lectures}. From Kobayashi-Hitchin correspondence \cite{donaldson1985anti,donaldson1987infinite},\cite{uhlenbeck1986existence}, we have the following differential-geometric counterpart of item (5) of the above theorem.
\begin{corollarynon}
Given a $G$-invariant K\"{a}hler metric $\omega_{0}$ on $X_{P}$ and a simple holomorphic vector bundle ${\bf{E}} \to X_{P}$, if
\begin{equation}
\label{argumentinequalities}
{\rm{Arg}} \big (Z_{[\omega_{0}]}({\bf{E}},\mathbbm{P}_{\beta}^{1}) \big) > {\rm{Arg}} \big (Z_{[\omega_{0}]}(\mathcal{F},\mathbbm{P}_{\beta}^{1}) \big), 
\end{equation}
for every subbundle $0 \neq \mathcal{F} \varsubsetneq {\bf{E}}$ and every $\beta \in \Phi_{I}^{+}$, then ${\bf{E}}$ admits a Hermitian metric $H$ solving the Hermitian-Yang-Mills equation
\begin{equation}
\sqrt{-1}\Lambda_{\omega_{0}}F(H) = c \mathbbm{1}_{{\bf{E}}}.
\end{equation}
\end{corollarynon}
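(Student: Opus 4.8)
The plan is to derive this corollary as a direct consequence of item (5) of Theorem \ref{theoremD} combined with the Kobayashi--Hitchin correspondence. First I would observe that the strict inequalities in Eq. (\ref{argumentinequalities}) imply in particular the non-strict inequalities of Eq. (\ref{stabilityinequalities}) for every subbundle $0 \neq \mathcal{F} \varsubsetneq {\bf{E}}$ and every $\beta \in \Phi_{I}^{+}$; hence item (5) of Theorem \ref{theoremD} already yields that ${\bf{E}}$ is $[\omega_{0}]$-semistable. To upgrade semistability to stability, I would argue by contradiction: if ${\bf{E}}$ were strictly $[\omega_{0}]$-semistable but not stable, there would exist a subbundle (or, more precisely, a saturated coherent subsheaf) $0 \neq \mathcal{F} \varsubsetneq {\bf{E}}$ with $\mu_{[\omega_{0}]}(\mathcal{F}) = \mu_{[\omega_{0}]}({\bf{E}})$, i.e.\ with equality of slopes. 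Using the identity established in the proof of item (3), namely $\mu_{[\omega_{0}]}({\bf{E}}) = -(n-1)! \big[ \sum_{\beta \in \Phi_{I}^{+}} \tan\big(\Theta_{\omega_{0}}({\bf{E}},\mathbbm{P}_{\beta}^{1})\big)\big]{\rm{Vol}}(X_{P},\omega_{0})$, equality of slopes translates into $\hat{\mu}_{[\omega_{0}]}(\mathcal{F}) = \hat{\mu}_{[\omega_{0}]}({\bf{E}})$, i.e.\ $\sum_{\beta} \tan\big(\Theta_{\omega_{0}}(\mathcal{F},\mathbbm{P}_{\beta}^{1})\big) = \sum_{\beta} \tan\big(\Theta_{\omega_{0}}({\bf{E}},\mathbbm{P}_{\beta}^{1})\big)$. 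On the other hand, the strict inequality (\ref{argumentinequalities}) at each $\beta$, together with monotonicity of $\tan$ on the relevant range of Lagrangian phases, forces $\tan\big(\Theta_{\omega_{0}}({\bf{E}},\mathbbm{P}_{\beta}^{1})\big) > \tan\big(\Theta_{\omega_{0}}(\mathcal{F},\mathbbm{P}_{\beta}^{1})\big)$ for every $\beta \in \Phi_{I}^{+}$, and summing gives a strict inequality, contradicting the equality above. Therefore no such destabilizing $\mathcal{F}$ exists and ${\bf{E}}$ is $[\omega_{0}]$-stable.

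Having established $[\omega_{0}]$-stability of ${\bf{E}}$, the conclusion follows by invoking the Kobayashi--Hitchin correspondence of Donaldson \cite{donaldson1985anti, donaldson1987infinite} and Uhlenbeck--Yau \cite{uhlenbeck1986existence}: a holomorphic vector bundle over a compact K\"{a}hler manifold that is stable with respect to a K\"{a}hler class admits a Hermitian metric $H$ solving $\sqrt{-1}\Lambda_{\omega_{0}}F(H) = c\,\mathbbm{1}_{{\bf{E}}}$, where the constant $c$ is determined by $\mu_{[\omega_{0}]}({\bf{E}})$ and ${\rm{Vol}}(X_{P},\omega_{0})$. I would remark that the hypothesis that ${\bf{E}}$ is simple is not strictly needed for the existence part (stability already implies simplicity), but it is consistent with the standard phrasing of the correspondence and guarantees uniqueness of $H$ up to a positive scalar.

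The main subtlety, and the only point requiring care, is the monotonicity argument in the strict-stability step: one must verify that the phases $\Theta_{\omega_{0}}({\bf{E}},\mathbbm{P}_{\beta}^{1})$ and $\Theta_{\omega_{0}}(\mathcal{F},\mathbbm{P}_{\beta}^{1})$ lie in a common interval of length $\pi$ on which the argument-to-tangent passage is order-preserving, so that ${\rm{Arg}}(Z_{[\omega_{0}]}({\bf{E}},\mathbbm{P}_{\beta}^{1})) > {\rm{Arg}}(Z_{[\omega_{0}]}(\mathcal{F},\mathbbm{P}_{\beta}^{1}))$ genuinely implies the strict inequality of tangents rather than merely a congruence mod $2\pi$. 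Since $\mathbbm{P}_{\beta}^{1}$ is a rational curve, $Z_{[\omega_{0}]}(\mathcal{F},\mathbbm{P}_{\beta}^{1})$ is computed from $c_{1}(\mathcal{F})\cdot[\mathbbm{P}_{\beta}^{1}]$ and $\langle \lambda([\omega_{0}]),\beta^{\vee}\rangle > 0$, so $\Theta_{\omega_{0}}(\mathcal{F},\mathbbm{P}_{\beta}^{1}) = \arctan\big(c_{1}(\mathcal{F})\cdot[\mathbbm{P}_{\beta}^{1}]/(\rank(\mathcal{F})\langle\lambda([\omega_{0}]),\beta^{\vee}\rangle)\big)$ lies in $(-\tfrac{\pi}{2},\tfrac{\pi}{2})$, and likewise for ${\bf{E}}$; on this interval $\tan$ is strictly increasing, which closes the argument. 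Everything else is bookkeeping already carried out in the proof of Theorem \ref{theoremD}.
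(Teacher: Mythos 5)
Your proposal is correct and follows essentially the same route as the paper: the strict inequalities in Eq. (\ref{argumentinequalities}), via item (5) of Theorem \ref{theoremD} (whose parenthesized strict case already records that strict argument inequalities give $[\omega_{0}]$-stability, precisely because each $\Theta_{\omega_{0}}(-,\mathbbm{P}_{\beta}^{1})$ lies in $(-\tfrac{\pi}{2},\tfrac{\pi}{2})$ where $\tan$ is strictly increasing), yield stability, and the Kobayashi--Hitchin correspondence then produces the Hermitian--Yang--Mills metric. Your detour through semistability and a contradiction argument merely unrolls the strict case of item (5); otherwise the argument coincides with the paper's.
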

Further, combining \cite[Proposition 7.11]{kobayashi_dif_cplx_vec} with item (5) of the previous theorem, we obtain the following.

\begin{corollarynon}
Let ${\bf{E}}$ and ${\bf{F}}$ be $[\omega_{0}]$-semistable holomorphic vector bundles over $X_{P}$, for some $G$-invariant K\"{a}hler metric $\omega_{0}$ on $X_{P}$. If
\begin{equation}
{\rm{Arg}} \big (Z_{[\omega_{0}]}({\bf{E}},\mathbbm{P}_{\beta}^{1}) \big) > {\rm{Arg}} \big (Z_{[\omega_{0}]}({\bf{F}},\mathbbm{P}_{\beta}^{1}) \big), 
\end{equation}
for every $\beta \in \Phi_{I}^{+}$, then ${\rm{Hom}}({\bf{E}},{\bf{F}}) = 0$.
\end{corollarynon}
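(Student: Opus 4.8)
The statement is a short consequence of item (3) of Theorem \ref{theoremD} together with the classical slope argument, so I will only outline the plan. The goal is to deduce from the hypothesis the ordinary Mumford--Takemoto inequality $\mu_{[\omega_{0}]}({\bf{E}}) > \mu_{[\omega_{0}]}({\bf{F}})$ and then invoke \cite[Proposition 7.11]{kobayashi_dif_cplx_vec}: if ${\bf{E}}$ and ${\bf{F}}$ are $[\omega_{0}]$-semistable and $\mu_{[\omega_{0}]}({\bf{E}}) > \mu_{[\omega_{0}]}({\bf{F}})$, then $\Hom({\bf{E}},{\bf{F}}) = 0$. For completeness, the reason is the usual one: a nonzero $f \in \Hom({\bf{E}},{\bf{F}})$ would have $\mathcal{G} := \im(f) \neq 0$, a quotient sheaf of ${\bf{E}}$ and a subsheaf of ${\bf{F}}$, so $[\omega_{0}]$-semistability forces $\mu_{[\omega_{0}]}({\bf{E}}) \leq \mu_{[\omega_{0}]}(\mathcal{G}) \leq \mu_{[\omega_{0}]}({\bf{F}})$, a contradiction. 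Hence it suffices to establish the slope inequality.

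To obtain it I would argue as follows. First, for every holomorphic vector bundle $\mathcal{G} \to X_{P}$ and every $\beta \in \Phi_{I}^{+}$, the curve class $[\mathbbm{P}_{\beta}^{1}]$ is effective and $[\omega_{0}]$ is K\"{a}hler, so ${\rm{Im}}\, Z_{[\omega_{0}]}(\mathcal{G},\mathbbm{P}_{\beta}^{1}) = \rk(\mathcal{G})\,\langle [\omega_{0}],[\mathbbm{P}_{\beta}^{1}]\rangle > 0$; thus ${\rm{Arg}}\, Z_{[\omega_{0}]}(\mathcal{G},\mathbbm{P}_{\beta}^{1}) \in (0,\pi)$ and $\Theta_{\omega_{0}}(\mathcal{G},\mathbbm{P}_{\beta}^{1}) = {\rm{Arg}}\, Z_{[\omega_{0}]}(\mathcal{G},\mathbbm{P}_{\beta}^{1}) - \tfrac{\pi}{2} \in \left(-\tfrac{\pi}{2},\tfrac{\pi}{2}\right)$. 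Since the map $\theta \mapsto \tan\left(\theta - \tfrac{\pi}{2}\right)$ is strictly increasing on $(0,\pi)$, the hypothesis ${\rm{Arg}}\, Z_{[\omega_{0}]}({\bf{E}},\mathbbm{P}_{\beta}^{1}) > {\rm{Arg}}\, Z_{[\omega_{0}]}({\bf{F}},\mathbbm{P}_{\beta}^{1})$ yields $\tan\big(\Theta_{\omega_{0}}({\bf{E}},\mathbbm{P}_{\beta}^{1})\big) > \tan\big(\Theta_{\omega_{0}}({\bf{F}},\mathbbm{P}_{\beta}^{1})\big)$ for each $\beta \in \Phi_{I}^{+}$. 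Summing over the finite set $\Phi_{I}^{+}$ and using the definition in Eq. (\ref{slopeangle}), this gives $\hat{\mu}_{[\omega_{0}]}({\bf{E}}) > \hat{\mu}_{[\omega_{0}]}({\bf{F}})$. Finally, I would invoke the identity $\mu_{[\omega_{0}]}(\mathcal{G}) = (n-1)!\,{\rm{Vol}}(X_{P},\omega_{0})\,\hat{\mu}_{[\omega_{0}]}(\mathcal{G})$ recorded in connection with item (3) of Theorem \ref{theoremD}, valid for every holomorphic vector bundle $\mathcal{G}$: since the constant $(n-1)!\,{\rm{Vol}}(X_{P},\omega_{0})$ is strictly positive, $\hat{\mu}_{[\omega_{0}]}({\bf{E}}) > \hat{\mu}_{[\omega_{0}]}({\bf{F}})$ implies $\mu_{[\omega_{0}]}({\bf{E}}) > \mu_{[\omega_{0}]}({\bf{F}})$, completing the argument.

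I do not expect a genuine obstacle; the proof is a chain of elementary implications once Theorem \ref{theoremD} is available. The two points that need a little care are: (i) the positivity ${\rm{Im}}\, Z_{[\omega_{0}]}(\cdot,\mathbbm{P}_{\beta}^{1}) > 0$, which is what places every relevant phase in the strictly monotone range of $\tan(\cdot - \tfrac{\pi}{2})$ and thereby turns the termwise strict inequalities into a strict inequality of generalized slopes after summation; and (ii) the reduction from subbundles to arbitrary coherent subsheaves in the slope argument of \cite[Proposition 7.11]{kobayashi_dif_cplx_vec}, which is standard over the smooth projective variety $X_{P}$, since passing to the saturation does not decrease the slope. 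If one prefers to avoid the generalized slope, the same conclusion follows directly from Theorem \ref{theoremC}: the hypothesis is equivalent to $\langle c_{1}({\bf{E}}),[\mathbbm{P}_{\beta}^{1}]\rangle / \rk({\bf{E}}) > \langle c_{1}({\bf{F}}),[\mathbbm{P}_{\beta}^{1}]\rangle / \rk({\bf{F}})$ for every $\beta \in \Phi_{I}^{+}$, and Theorem \ref{theoremC} expresses $\mu_{[\omega_{0}]}$ as a positive linear combination of these quantities.
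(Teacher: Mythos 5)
Your proposal is correct and follows essentially the same route as the paper: the corollary is obtained by combining \cite[Proposition 7.11]{kobayashi_dif_cplx_vec} with the argument underlying items (3) and (5) of Theorem \ref{theoremD}, i.e.\ passing from the hypothesis on ${\rm{Arg}}\big(Z_{[\omega_{0}]}(\cdot,\mathbbm{P}_{\beta}^{1})\big)$ to $\Theta_{\omega_{0}}(\cdot,\mathbbm{P}_{\beta}^{1})\in\left(-\tfrac{\pi}{2},\tfrac{\pi}{2}\right)$, using strict monotonicity of $\tan$ to get $\hat{\mu}_{[\omega_{0}]}({\bf{E}})>\hat{\mu}_{[\omega_{0}]}({\bf{F}})$, and then the identity $\mu_{[\omega_{0}]}=(n-1)!\,{\rm{Vol}}(X_{P},\omega_{0})\,\hat{\mu}_{[\omega_{0}]}$ to deduce $\mu_{[\omega_{0}]}({\bf{E}})>\mu_{[\omega_{0}]}({\bf{F}})$. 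Your sign convention for this identity (a strictly positive constant) is the consistent one, matching Theorem \ref{theoremC} and the worked example, so no gap remains.
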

The above result resembles Schur’s property required in the definition of a stability condition on a triangulated category, see \cite[Definition 1.1, item (c)]{Bridgeland2007stability}. As an application of our main results, among other explicit computations, we classify all supercritical and hypercritical solutions of the dHYM equation on $({\mathbbm{P}}(T_{{\mathbbm{P}^{2}}}),\omega_{0})$, such that $\omega_{0} \in c_{1}({\mathbbm{P}}(T_{{\mathbbm{P}^{2}}}))$. In particular, we give a new example which illustrates that the “easier” direction of Collins–Jacob–Yau’s conjecture (Conjectures \ref{conjecture1}) holds only in the supercritical case (cf. \cite[Remark 1.10]{Chen2021j}). Further, we provide a constructive method to obtain non-trivial examples of Hermitian-Einstein metrics on certain holomorphic vector bundles over ${\mathbbm{P}}(T_{{\mathbbm{P}^{2}}})$ from solutions of linear diophantine equations. The general idea behind this last result is proved in Appendix \ref{appendix}. Additionally, in Appendix \ref{appendix}, we also present some new insights that explore the interplay between intersection theory and number theory. Among other results, we provide a complete description for the Hodge-Riemann bilinear form 
\begin{equation}
\mathcal{Q}_{\omega_{0}} \colon H^{2}(X_{P},\mathbbm{Z}) \times H^{2}(X_{P},\mathbbm{Z}) \to \mathbbm{Z}, \ \ \mathcal{Q}_{\omega_{0}}(a,b) := \int_{X_{P}} a \wedge b \wedge [\omega_{0}]^{n-2},
\end{equation}
defined by an integral K\"{a}hler class $[\omega_{0}] \in \mathcal{K}(X_{P})$ (e.g. \cite{VoisinBook1}, \cite{peters2008mixed}), in terms of the Cartan matrix associated with the complex simple Lie algebra underlying $X_{P}$. From this description, fixed some integral K\"{a}hler class $[\omega_{0}] \in \mathcal{K}(X_{P})$, and considering
\begin{equation}
\mathcal{A}_{[\omega_{0}]}(n) := \Bigg \{ m \in  \mathbbm{Z}_{>0}  \ \ \Bigg | \ \ \begin{array}{l}
 \ \ \ \ 1 \leq m \leq n, \\
\exists \ {\bf{E}} \in {\rm{Pic}}(X_{P}), \ {\text{s.t.}} \ \mu_{[\omega_{0}]}({\bf{E}}) = m.
\end{array}\Bigg \},
\end{equation}
for every $n \in \mathbbm{Z}_{>0}$, we obtain the following result.
\begin{thm}
\label{densityslope}
For every integral K\"{a}hler class $[\omega_{0}] \in \mathcal{K}(X_{P})$, we have
\begin{equation}
\lim_{n \to +\infty}\frac{\big | \mathcal{A}_{[\omega_{0}]}(n)\big |}{n} = \frac{1}{\tau([\omega_{0}])},
\end{equation}
such that $\tau([\omega_{0}]):= {\rm{gcd}}\big \{ \deg_{\omega_{0}}(\mathscr{O}_{\alpha}(1)) \ \big | \ \alpha \in \Delta \backslash I\big\}$, where $\mathscr{O}_{\alpha}(1)$, $\alpha \in \Delta \backslash I$, are the generators of the Picard group ${\rm{Pic}}(X_{P})$.
\end{thm}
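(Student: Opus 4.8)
The plan is to reduce the assertion to an elementary counting argument, once one recognises that, restricted to line bundles, the slope map is a group homomorphism whose image is a nonzero ideal of $\mathbbm{Z}$. Write $N := \dim_{\mathbbm{C}} X_{P}$. For $\mathbf{E} \in {\rm{Pic}}(X_{P})$ one has $\rank(\mathbf{E}) = 1$, hence
\[
\mu_{[\omega_{0}]}(\mathbf{E}) = \int_{X_{P}} c_{1}(\mathbf{E}) \wedge [\omega_{0}]^{N-1} = \mathcal{Q}_{\omega_{0}}\big(c_{1}(\mathbf{E}),[\omega_{0}]\big),
\]
so $\mathbf{E} \mapsto \mu_{[\omega_{0}]}(\mathbf{E})$ is $\mathbbm{Z}$-linear in $c_{1}(\mathbf{E})$ and therefore defines a group homomorphism $\mu_{[\omega_{0}]} \colon {\rm{Pic}}(X_{P}) \to \mathbbm{R}$; since $[\omega_{0}]$ is an integral class, it takes values in $\mathbbm{Z}$ (this is exactly where the explicit description of $\mathcal{Q}_{\omega_{0}}$ preceding the statement is used). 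Consequently $\mathcal{A}_{[\omega_{0}]}(n)$ is precisely the set of integers in $[1,n]$ that lie in the image subgroup $\mu_{[\omega_{0}]}({\rm{Pic}}(X_{P})) \subseteq \mathbbm{Z}$.

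Next one identifies that image. By the isomorphism ${\rm{Pic}}(X_{P}) \cong \Lambda_{P} = \bigoplus_{\alpha \in \Delta \backslash I}\mathbbm{Z}\varpi_{\alpha}$ recalled in the introduction, ${\rm{Pic}}(X_{P})$ is free abelian on the generators $\mathscr{O}_{\alpha}(1)$ (with $\lambda(\mathscr{O}_{\alpha}(1)) = \varpi_{\alpha}$), so by additivity $\mu_{[\omega_{0}]}({\rm{Pic}}(X_{P}))$ is the subgroup of $\mathbbm{Z}$ generated by the integers $\deg_{\omega_{0}}(\mathscr{O}_{\alpha}(1))$, $\alpha \in \Delta \backslash I$. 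Each of these is strictly positive: applying Theorem \ref{theoremC} with $\mathbf{E} = \mathscr{O}_{\alpha}(1)$ and $\xi = [\omega_{0}]$, Eq. (\ref{slopeform}) exhibits $\deg_{\omega_{0}}(\mathscr{O}_{\alpha}(1))$ as a product of positive quantities — the factor $(N-1)!$, the product $\prod_{\beta \in \Phi_{I}^{+}} \langle \lambda([\omega_{0}]),\beta^{\vee}\rangle / \langle \varrho^{+},\beta^{\vee}\rangle$, and the sum $\sum_{\beta \in \Phi_{I}^{+}} \langle \varpi_{\alpha},\beta^{\vee}\rangle / \langle \lambda([\omega_{0}]),\beta^{\vee}\rangle$ — where positivity of the last sum follows because each term is $\geq 0$ ($[\omega_{0}]$ is K\"{a}hler, so $\langle\lambda([\omega_{0}]),\beta^{\vee}\rangle = \langle[\omega_{0}],[\mathbbm{P}^{1}_{\beta}]\rangle > 0$, and $\varpi_{\alpha}$ is dominant) while the term with $\beta = \alpha \in \Phi_{I}^{+}$ equals $1/\langle\lambda([\omega_{0}]),\alpha^{\vee}\rangle > 0$. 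Hence, by the classification of subgroups of $\mathbbm{Z}$, $\mu_{[\omega_{0}]}({\rm{Pic}}(X_{P})) = \tau([\omega_{0}])\,\mathbbm{Z}$ with $\tau([\omega_{0}]) = {\rm{gcd}}\{\deg_{\omega_{0}}(\mathscr{O}_{\alpha}(1)) : \alpha \in \Delta \backslash I\} \in \mathbbm{Z}_{>0}$.

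Combining the two steps, $\mathcal{A}_{[\omega_{0}]}(n) = \{ m \in \mathbbm{Z}_{>0} : 1 \leq m \leq n, \ \tau([\omega_{0}]) \mid m \}$, so $|\mathcal{A}_{[\omega_{0}]}(n)| = \lfloor n/\tau([\omega_{0}]) \rfloor$ for every $n \in \mathbbm{Z}_{>0}$; dividing by $n$ and using $0 \leq 1/\tau([\omega_{0}]) - \lfloor n/\tau([\omega_{0}])\rfloor/n \leq 1/n \to 0$ yields the stated limit $1/\tau([\omega_{0}])$. The only genuinely non-formal ingredient is the positivity and integrality of the numbers $\deg_{\omega_{0}}(\mathscr{O}_{\alpha}(1))$ — equivalently, that the slope homomorphism on ${\rm{Pic}}(X_{P})$ has image a genuine nonzero principal ideal of $\mathbbm{Z}$ — and this is extracted directly from Theorem \ref{theoremC} together with the hypothesis that $[\omega_{0}]$ is integral; I do not anticipate any further obstacle, beyond keeping the counting parameter $n$ notationally distinct from $\dim_{\mathbbm{C}} X_{P}$.
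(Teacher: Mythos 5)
Your argument is correct and follows essentially the same route as the paper: your identification of the image of the slope homomorphism on ${\rm{Pic}}(X_{P})$ with $\tau([\omega_{0}])\mathbbm{Z}$ is exactly the content of Proposition \ref{diophantineprescribedslope} (the paper phrases it as solvability of the linear diophantine equation $\sum_{\alpha}\deg_{\omega_{0}}(\mathscr{O}_{\alpha}(1))x_{\alpha}=m_{0}$, which is the same gcd/Bezout fact), and your floor-function count of multiples of $\tau([\omega_{0}])$ in $[1,n]$ matches the division-algorithm count in the proof of Theorem \ref{naturaldensity}. Your added remark on the strict positivity of the $\deg_{\omega_{0}}(\mathscr{O}_{\alpha}(1))$ is a harmless (and welcome) explicit justification of a point the paper leaves implicit.
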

The above result shows that the subset 
\begin{equation}
\mathcal{A}_{[\omega_{0}]} = \big \{m \in \mathbbm{Z}_{>0} \ \big | \ \mu_{[\omega_{0}]}({\bf{E}}) = m, \ {\text{for some}} \ {\bf{E}} \in {\rm{Pic}}(X_{P}) \big\},
\end{equation}
has asymptotic density (a.k.a. natural density) 
\begin{equation}
d(\mathcal{A}_{[\omega_{0}]}) := {\displaystyle{\lim_{n\to +\infty}}}\frac{\big |\mathcal{A}_{[\omega_{0}]} \cap [1,n] \big |}{n} = \frac{1}{\tau([\omega_{0}])}. 
\end{equation}
In a suitable sense, $d(\mathcal{A}_{[\omega_{0}]})$ measures the proportion of natural numbers that belong to $\mathcal{A}_{[\omega_{0}]}$, i.e., the proportion of natural numbers which can be realized as the $[\omega_{0}]$-slope of some holomorphic line bundle over $X_{P}$, for some integral K\"{a}hler class $[\omega_{0}] \in \mathcal{K}(X_{P})$. For more details about natural density, we suggest \cite{nathanson2008elementary} and references therein. Also as an application of the description of $\mathcal{Q}_{\omega_{0}}$ provided in Appendix \ref{appendix}, we prove tha following theorem.
\begin{thm}
\label{Kgroup}
Given an integral K\"{a}hler class $[\omega_{0}] \in \mathcal{K}(X_{P})$, then we have 
\begin{equation}
K_{0}(X_{P}) \cong SK_{0}(X_{P}) \oplus {\rm{Pic}}^{0}_{\omega_{0}}(X_{P}) \oplus \tau([\omega_{0}])\mathbbm{Z},
\end{equation}
such that 
\begin{enumerate}
\item $SK_{0}(X_{P}) := \ker \big ( \det \colon K_{0}(X_{P}) \to {\rm{Pic}}(X_{P})\big)$,
\item ${\rm{Pic}}^{0}_{\omega_{0}}(X_{P}) := \big \{ {\bf{E}} \in {\rm{Pic}}(X_{P}) \ \big | \ \deg_{\omega_{0}}({\bf{E}}) = 0 \big \}$,
\item $\tau([\omega_{0}]):= {\rm{gcd}}\big \{ \deg_{\omega_{0}}(\mathscr{O}_{\alpha}(1)) \ \big | \ \alpha \in \Delta \backslash I\big\}$,
\end{enumerate}
where $\mathscr{O}_{\alpha}(1)$, $\alpha \in \Delta \backslash I$, are the generators of ${\rm{Pic}}(X_{P})$. Moreover, the generators of ${\rm{Pic}}^{0}_{\omega_{0}}(X_{P})$ are completely determined by the Hodge-Riemann bilinear form ${\mathcal{Q}}_{\omega_{0}}$.
\end{thm}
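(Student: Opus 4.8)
The plan is to realize the claimed decomposition as the concatenation of two split short exact sequences of finitely generated abelian groups. The structural facts I use are those recalled in the introduction: ${\rm{Pic}}(X_{P}) \cong \Lambda_{P} = \bigoplus_{\alpha \in \Delta \backslash I}\mathbbm{Z}\varpi_{\alpha}$ is a free abelian group of finite rank, with $\mathbbm{Z}$-basis the generators $\mathscr{O}_{\alpha}(1)$, $\alpha \in \Delta \backslash I$. First I would split off ${\rm{Pic}}(X_{P})$ from $K_{0}(X_{P})$ via the determinant. The map $\det \colon K_{0}(X_{P}) \to {\rm{Pic}}(X_{P})$ is a well-defined group homomorphism, because $\det(\mathcal{F}) \cong \det(\mathcal{F}') \otimes \det(\mathcal{F}'')$ for every short exact sequence $0 \to \mathcal{F}' \to \mathcal{F} \to \mathcal{F}'' \to 0$ of coherent sheaves on $X_{P}$ (equivalently, it is the composition of the additive first Chern class $c_{1} \colon K_{0}(X_{P}) \to H^{2}(X_{P},\mathbbm{Z})$ with the identification $H^{2}(X_{P},\mathbbm{Z}) \cong {\rm{Pic}}(X_{P})$); it is surjective since $\det([L]) = L$ for every line bundle $L$, and its kernel is $SK_{0}(X_{P})$ by definition. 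Since ${\rm{Pic}}(X_{P})$ is free, the exact sequence $0 \to SK_{0}(X_{P}) \to K_{0}(X_{P}) \xrightarrow{\det} {\rm{Pic}}(X_{P}) \to 0$ splits, so $K_{0}(X_{P}) \cong SK_{0}(X_{P}) \oplus {\rm{Pic}}(X_{P})$.

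Next I would split off $\tau([\omega_{0}])\mathbbm{Z}$ from ${\rm{Pic}}(X_{P})$ via the $\omega_{0}$-degree. The homomorphism $\deg_{\omega_{0}} \colon {\rm{Pic}}(X_{P}) \to \mathbbm{Z}$, ${\bf{E}} \mapsto \int_{X_{P}} c_{1}({\bf{E}}) \wedge [\omega_{0}]^{n-1}$, is integer-valued because $[\omega_{0}]$ is an integral class, and its kernel is ${\rm{Pic}}^{0}_{\omega_{0}}(X_{P})$ by definition. As ${\rm{Pic}}(X_{P})$ is generated by the $\mathscr{O}_{\alpha}(1)$, $\alpha \in \Delta \backslash I$, the image of $\deg_{\omega_{0}}$ equals the subgroup of $\mathbbm{Z}$ generated by $\{ \deg_{\omega_{0}}(\mathscr{O}_{\alpha}(1)) \ | \ \alpha \in \Delta \backslash I \}$, i.e.\ exactly $\tau([\omega_{0}])\mathbbm{Z}$; this is nonzero, since writing the integral K\"{a}hler class as $[\omega_{0}] = \sum_{\alpha} n_{\alpha}\varpi_{\alpha}$ with all $n_{\alpha} > 0$ gives $\sum_{\alpha} n_{\alpha} \deg_{\omega_{0}}(\mathscr{O}_{\alpha}(1)) = \int_{X_{P}} [\omega_{0}]^{n} > 0$. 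Since $\tau([\omega_{0}])\mathbbm{Z} \cong \mathbbm{Z}$ is free, the exact sequence $0 \to {\rm{Pic}}^{0}_{\omega_{0}}(X_{P}) \to {\rm{Pic}}(X_{P}) \xrightarrow{\deg_{\omega_{0}}} \tau([\omega_{0}])\mathbbm{Z} \to 0$ splits, whence ${\rm{Pic}}(X_{P}) \cong {\rm{Pic}}^{0}_{\omega_{0}}(X_{P}) \oplus \tau([\omega_{0}])\mathbbm{Z}$; substituting this into the isomorphism of the previous paragraph yields the asserted decomposition of $K_{0}(X_{P})$.

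For the final assertion, observe that $\deg_{\omega_{0}}({\bf{E}}) = \int_{X_{P}} c_{1}({\bf{E}}) \wedge [\omega_{0}] \wedge [\omega_{0}]^{n-2} = \mathcal{Q}_{\omega_{0}}(c_{1}({\bf{E}}),[\omega_{0}])$, so that under the identification ${\rm{Pic}}(X_{P}) \cong \Lambda_{P}$ the subgroup ${\rm{Pic}}^{0}_{\omega_{0}}(X_{P})$ is precisely the $\mathcal{Q}_{\omega_{0}}$-orthogonal complement $\{ x \in \Lambda_{P} \ | \ \mathcal{Q}_{\omega_{0}}(x,[\omega_{0}]) = 0 \}$ of $[\omega_{0}]$. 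Feeding in the explicit description of $\mathcal{Q}_{\omega_{0}}$ in terms of the Cartan matrix of $\mathfrak{g}^{\mathbbm{C}}$ obtained in the Appendix turns this into a single linear equation with integer coefficients in the coordinates of $x$ relative to the basis $\{\varpi_{\alpha}\}_{\alpha \in \Delta \backslash I}$, and an explicit $\mathbbm{Z}$-basis of its solution lattice --- hence a generating set for ${\rm{Pic}}^{0}_{\omega_{0}}(X_{P})$ --- can be read off, for instance from the Smith normal form of the coefficient row. I expect no essential obstacle: the two splittings are automatic because ${\rm{Pic}}(X_{P})$ and $\tau([\omega_{0}])\mathbbm{Z}$ are free, and the only genuinely computational ingredient --- on which the effectiveness of the last statement rests --- is the Lie-theoretic evaluation of $\mathcal{Q}_{\omega_{0}}$ carried out in the Appendix; the only steps that need a word of care are the well-definedness and surjectivity of $\det$ on $K_{0}(X_{P})$ and the integrality of $\deg_{\omega_{0}}$.
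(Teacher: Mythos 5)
Your proposal is correct and follows essentially the same route as the paper: both split $K_{0}(X_{P})$ via the surjection $\det \colon K_{0}(X_{P}) \to {\rm{Pic}}(X_{P})$ and then split ${\rm{Pic}}(X_{P})$ via $\deg_{\omega_{0}}$ onto $\tau([\omega_{0}])\mathbbm{Z}$, with both splittings automatic from freeness of ${\rm{Pic}}(X_{P})$ and of $\tau([\omega_{0}])\mathbbm{Z}$. The only (cosmetic) difference is in the final assertion: the paper quotes explicit generators of ${\rm{Pic}}^{0}_{\omega_{0}}(X_{P})$ from its Appendix remark, whereas you identify ${\rm{Pic}}^{0}_{\omega_{0}}(X_{P})$ with the kernel lattice of the integral linear form $\mathcal{Q}_{\omega_{0}}(\cdot,[\omega_{0}])$ and extract a $\mathbbm{Z}$-basis via Smith normal form, which is an equally valid (indeed safer) way of saying that the generators are determined by $\mathcal{Q}_{\omega_{0}}$.
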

The result above shows that every integral K\"{a}hler class $[\omega_{0}] \in \mathcal{K}(X_{P})$ induces a decomposition on the Grothendieck group $K_{0}(X_{P})$. Considering $X_{P}= {\mathbbm{P}}(T_{{\mathbbm{P}^{2}}})$, we illustrate the results provided in Theorem \ref{densityslope} and Theorem \ref{Kgroup} through explicit computations.

{\bf{Acknowledgements.}} E. M. Correa is partially supported by FAEPEX/Unicamp grant 2528/22 and by S\~{a}o Paulo Research Foundation FAPESP grant 2022/10429-3.

\section{Generalities on flag varieties}\label{generalities}
In this section, we review some basic facts about flag varieties. For more details on the subject presented in this section, we suggest \cite{Akhiezer}, \cite{Flagvarieties}, \cite{HumphreysLAG}, \cite{BorelRemmert}.
\subsection{The Picard group of flag varieties}
\label{subsec3.1}
Let $G^{\mathbbm{C}}$ be a connected, simply connected, and complex Lie group with simple Lie algebra $\mathfrak{g}^{\mathbbm{C}}$. By fixing a Cartan subalgebra $\mathfrak{h}$ and a simple root system $\Delta \subset \mathfrak{h}^{\ast}$, we have a decomposition of $\mathfrak{g}^{\mathbbm{C}}$ given by
\begin{center}
$\mathfrak{g}^{\mathbbm{C}} = \mathfrak{n}^{-} \oplus \mathfrak{h} \oplus \mathfrak{n}^{+}$, 
\end{center}
where $\mathfrak{n}^{-} = \sum_{\alpha \in \Phi^{-}}\mathfrak{g}_{\alpha}$ and $\mathfrak{n}^{+} = \sum_{\alpha \in \Phi^{+}}\mathfrak{g}_{\alpha}$, here we denote by $\Phi = \Phi^{+} \cup \Phi^{-}$ the root system associated with the simple root system $\Delta \subset \mathfrak{h}^{\ast}$. Let us denote by $\kappa$ the Cartan-Killing form of $\mathfrak{g}^{\mathbbm{C}}$. From this, for every  $\alpha \in \Phi^{+}$, we have $h_{\alpha} \in \mathfrak{h}$, such  that $\alpha = \kappa(\cdot,h_{\alpha})$, and we can choose $x_{\alpha} \in \mathfrak{g}_{\alpha}$ and $y_{-\alpha} \in \mathfrak{g}_{-\alpha}$, such that $[x_{\alpha},y_{-\alpha}] = h_{\alpha}$. From these data, we can define a Borel subalgebra\footnote{A maximal solvable subalgebra of $\mathfrak{g}^{\mathbbm{C}}$.} by setting $\mathfrak{b} = \mathfrak{h} \oplus \mathfrak{n}^{+}$. 

\begin{remark}
In the above setting, $\forall \phi \in \mathfrak{h}^{\ast}$, we also denote $\langle \phi, \alpha \rangle = \phi(h_{\alpha})$, $\forall \alpha \in \Phi^{+}$.
\end{remark}

Now we consider the following result (see for instance \cite{Flagvarieties}, \cite{HumphreysLAG}):
\begin{theorem}
Any two Borel subgroups are conjugate.
\end{theorem}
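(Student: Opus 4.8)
The plan is to deduce the statement from Borel's fixed point theorem, following the classical route (see \cite{HumphreysLAG}, \cite{Flagvarieties}). Recall that a Borel subgroup of $G^{\mathbbm{C}}$ is a maximal connected solvable closed subgroup; such subgroups exist by the Lie--Kolchin theorem, a concrete example being the connected subgroup $B$ with Lie algebra $\mathfrak{b} = \mathfrak{h} \oplus \mathfrak{n}^{+}$. The core of the argument is to produce \emph{one} Borel subgroup $B$ for which the quotient $G^{\mathbbm{C}}/B$ is a complete (projective) variety; granting this, the conjugacy of all Borel subgroups follows quickly.

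First I would establish completeness of $G^{\mathbbm{C}}/B$. By Chevalley's theorem there is a finite-dimensional rational representation $\rho \colon G^{\mathbbm{C}} \to \GL(V)$ together with a line $\ell \subset V$ whose stabilizer is exactly $B$, so that $G^{\mathbbm{C}}/B$ is realized as the orbit $G^{\mathbbm{C}} \cdot [\ell] \subset \mathbbm{P}(V)$. To see this orbit is closed one argues by induction on $\dim G^{\mathbbm{C}}$: a non-closed orbit would have in its boundary an orbit of strictly smaller dimension, whose stabilizer (identity component) is a connected solvable subgroup properly containing a conjugate of $B$, contradicting the maximality of $B$; the induction hypothesis handles the combinatorics of the boundary orbits. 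Hence $G^{\mathbbm{C}}/B$ embeds as a closed subvariety of $\mathbbm{P}(V)$ and is therefore projective. I expect this to be the main obstacle, since it is the one place where projective geometry, rather than purely group-theoretic bookkeeping, is needed.

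Granting completeness, let $B'$ be any Borel subgroup. Since $B'$ is connected and solvable and acts algebraically on the complete variety $G^{\mathbbm{C}}/B$, Borel's fixed point theorem furnishes a fixed point $gB \in G^{\mathbbm{C}}/B$. Then $B' g B = g B$, i.e. $g^{-1} B' g \subseteq B$. But $g^{-1} B' g$ is again a maximal connected solvable subgroup, and it sits inside the connected solvable group $B$, so maximality forces $g^{-1} B' g = B$. Thus every Borel subgroup is conjugate to $B$, and consequently any two Borel subgroups are conjugate to each other, completing the proof.
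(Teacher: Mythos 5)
The paper does not actually prove this statement -- it quotes it as a standard fact with a pointer to \cite{Flagvarieties}, \cite{HumphreysLAG} -- so the comparison can only be with the classical argument you are reconstructing. Your overall strategy (Chevalley embedding, completeness of $G^{\mathbbm{C}}/B$, then Borel's fixed point theorem, then maximality to upgrade $g^{-1}B'g \subseteq B$ to equality) is indeed the route those references take, and the second half of your argument is correct as written.

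The gap is in your proof of completeness, i.e.\ the closedness of the orbit $G^{\mathbbm{C}}\cdot[\ell] \subset \mathbbm{P}(V)$. You claim that a boundary orbit would have a stabilizer whose identity component is ``a connected solvable subgroup properly containing a conjugate of $B$,'' contradicting maximality. Neither half of that claim is justified: one can produce a $B$-fixed point in the boundary (apply the fixed point theorem to a closed orbit inside the boundary, which is projective), and its stabilizer does contain $B$ with strictly larger dimension, but there is no reason for that stabilizer to be \emph{solvable} -- generically it is a parabolic-type subgroup -- so no contradiction with the maximality of $B$ among connected solvable subgroups arises. Moreover, the announced induction on $\dim G^{\mathbbm{C}}$ is never actually set up: the boundary orbits are orbits of the \emph{same} group, so there is no smaller group to which an induction hypothesis could apply. (The orbit of the Chevalley line is in fact closed, being a highest-weight-line orbit, but proving that requires a genuine argument -- e.g.\ a big-cell or one-parameter-subgroup limit argument -- and is usually a consequence of, not an input to, completeness of $G^{\mathbbm{C}}/B$.) The standard repair, as in \cite{HumphreysLAG}: choose $B$ of maximal dimension among Borel subgroups, use Lie--Kolchin to extend $\ell$ to a $B$-stable full flag $f$ in $V$, and replace $\mathbbm{P}(V)$ by the variety of full flags. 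There every stabilizer is solvable (its image stabilizes a full flag, and the kernel of the representation lies in $B$), so a dimension count shows $G^{\mathbbm{C}}\cdot f \cong G^{\mathbbm{C}}/B$ has minimal orbit dimension; orbits of minimal dimension are closed, hence $G^{\mathbbm{C}}/B$ is projective, and your concluding fixed-point argument then goes through verbatim.
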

From the result above, given a Borel subgroup $B \subset G^{\mathbbm{C}}$, up to conjugation, we can always suppose that $B = \exp(\mathfrak{b})$. In this setting, given a parabolic Lie subgroup\footnote{i.e., a Lie subgroup which contains some Borel subgroup.} $P \subset G^{\mathbbm{C}}$, without loss of generality, we can suppose that
\begin{center}
$P  = P_{I}$, \ for some \ $I \subset \Delta$,
\end{center}
where $P_{I} \subset G^{\mathbbm{C}}$ is the parabolic subgroup which integrates the Lie subalgebra 
\begin{center}

$\mathfrak{p}_{I} = \mathfrak{n}^{+} \oplus \mathfrak{h} \oplus \mathfrak{n}(I)^{-}$, \ with \ $\mathfrak{n}(I)^{-} = \displaystyle \sum_{\alpha \in \langle I \rangle^{-}} \mathfrak{g}_{\alpha}$. 

\end{center}
By definition, it is straightforward to show that $P_{I} = N_{G^{\mathbbm{C}}}(\mathfrak{p}_{I})$, where $N_{G^{\mathbbm{C}}}(\mathfrak{p}_{I})$ is the normalizer in  $G^{\mathbbm{C}}$ of $\mathfrak{p}_{I} \subset \mathfrak{g}^{\mathbbm{C}}$, see for instance \cite[\S 3.1]{Akhiezer}. In what follows, it will be useful for us to consider the following basic chain of Lie subgroups

\begin{center}

$T^{\mathbbm{C}} \subset B \subset P \subset G^{\mathbbm{C}}$.

\end{center}
For each element in the aforementioned chain of Lie subgroups we have the following characterization: 

\begin{itemize}

\item $T^{\mathbbm{C}} = \exp(\mathfrak{h})$;  \ \ (complex torus)

\item $B = N^{+}T^{\mathbbm{C}}$, where $N^{+} = \exp(\mathfrak{n}^{+})$; \ \ (Borel subgroup)

\item $P = P_{I} = N_{G^{\mathbbm{C}}}(\mathfrak{p}_{I})$, for some $I \subset \Delta \subset \mathfrak{h}^{\ast}$. \ \ (parabolic subgroup)

\end{itemize}
Now let us recall some basic facts about the representation theory of $\mathfrak{g}^{\mathbbm{C}}$, a detailed exposition on the subject can be found in \cite{Humphreys}. For every $\alpha \in \Phi$, we set 
$$\alpha^{\vee} := \frac{2}{\langle \alpha, \alpha \rangle}\alpha.$$ 
The fundamental weights $\{\varpi_{\alpha} \ | \ \alpha \in \Delta\} \subset \mathfrak{h}^{\ast}$ of $(\mathfrak{g}^{\mathbbm{C}},\mathfrak{h})$ are defined by requiring that $\langle \varpi_{\alpha}, \beta^{\vee} \rangle= \delta_{\alpha \beta}$, $\forall \alpha, \beta \in \Delta$. We denote by 
$$\Lambda^{+} = \bigoplus_{\alpha \in \Delta}\mathbbm{Z}_{\geq 0}\varpi_{\alpha},$$ 
the set of integral dominant weights of $\mathfrak{g}^{\mathbbm{C}}$. Let $V$ be an arbitrary finite dimensional $\mathfrak{g}^{\mathbbm{C}}$-module. By considering its weight space decomposition
\begin{center}
$\displaystyle{V = \bigoplus_{\mu \in \Phi(V)}V_{\mu}},$ \ \ \ \ 
\end{center}
such that $V_{\mu} = \{v \in V \ | \ h \cdot v = \mu(h)v, \ \forall h \in \mathfrak{h}\} \neq \{0\}$, $\forall \mu \in \Phi(V) \subset \mathfrak{h}^{\ast}$, we have the following definition.

\begin{definition}
A highest weight vector (of weight $\lambda$) in a $\mathfrak{g}^{\mathbbm{C}}$-module $V$ is a non-zero vector $v_{\lambda}^{+} \in V_{\lambda}$, such that 
\begin{center}
$x \cdot v_{\lambda}^{+} = 0$, \ \ \ \ \ ($\forall x \in \mathfrak{n}^{+}$).
\end{center}
A weight $\lambda \in \Phi(V)$ associated with a highest weight vector is called highest weight of $V$.
\end{definition}

From above, we consider the following standard results.

\begin{theorem}
Every finite dimensional irreducible $\mathfrak{g}^{\mathbbm{C}}$-module $V$ admits a highest weight vector $v_{\lambda}^{+}$. Moreover, $v_{\lambda}^{+}$ is the unique highest weight vector of $V$, up to non-zero scalar multiples. 
\end{theorem}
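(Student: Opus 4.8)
The plan is to run the classical highest-weight argument, relying on the triangular decomposition $\mathfrak{g}^{\mathbbm{C}} = \mathfrak{n}^{-} \oplus \mathfrak{h} \oplus \mathfrak{n}^{+}$ recalled above, the Poincar\'e--Birkhoff--Witt theorem for $U(\mathfrak{g}^{\mathbbm{C}})$, and the finite-dimensionality of $V$.

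\emph{Existence.} First I would use the weight-space decomposition $V = \bigoplus_{\mu \in \Phi(V)} V_{\mu}$, valid because the elements of the Cartan subalgebra $\mathfrak{h}$ act on the finite-dimensional module $V$ as commuting semisimple operators (a consequence of Weyl's complete reducibility, or of the fact that $\ad$-semisimple elements act semisimply in every finite-dimensional representation). Since $\dim_{\mathbbm{C}} V < \infty$, the set $\Phi(V) \subset \mathfrak{h}^{\ast}$ is finite and nonempty, so it admits a maximal element $\lambda$ with respect to the dominance order $\mu \preceq \nu \iff \nu - \mu \in \sum_{\alpha \in \Delta} \mathbbm{Z}_{\geq 0}\,\alpha$. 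For $x \in \mathfrak{g}_{\alpha}$ with $\alpha \in \Phi^{+}$ and $v \in V_{\lambda}$, the relation $[\mathfrak{h},\mathfrak{g}_{\alpha}] \subseteq \mathfrak{g}_{\alpha}$ gives $x\cdot v \in V_{\lambda + \alpha}$; maximality of $\lambda$ forces $V_{\lambda+\alpha} = 0$, hence $\mathfrak{n}^{+}\cdot v = 0$, and any nonzero $v^{+}_{\lambda} \in V_{\lambda}$ is a highest weight vector.

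\emph{Uniqueness.} Let $w \in V_{\mu}$ be an arbitrary highest weight vector. By PBW, $U(\mathfrak{g}^{\mathbbm{C}}) = U(\mathfrak{n}^{-})\,U(\mathfrak{h})\,U(\mathfrak{n}^{+})$; since $\mathfrak{n}^{+}\cdot w = 0$ and $\mathfrak{h}$ acts on $w$ by the character $\mu$, the submodule $U(\mathfrak{g}^{\mathbbm{C}})\cdot w$ equals $U(\mathfrak{n}^{-})\cdot w$. A monomial $y_{-\beta_{1}}\cdots y_{-\beta_{k}}$ with $\beta_{i} \in \Phi^{+}$ carries $V_{\mu}$ into $V_{\mu - (\beta_{1}+\cdots+\beta_{k})}$, so every weight of $U(\mathfrak{n}^{-})\cdot w$ is $\preceq \mu$, with $\mu$-weight space exactly $\mathbbm{C}w$. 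Irreducibility of $V$ gives $U(\mathfrak{g}^{\mathbbm{C}})\cdot w = V$, so every weight of $V$ is $\preceq \mu$; applying this both to $w$ and to the vector $v^{+}_{\lambda}$ produced in the existence step yields $\lambda \preceq \mu$ and $\mu \preceq \lambda$, hence $\mu = \lambda$. Consequently $\lambda$ is the unique maximal weight of $V$, the space $V_{\lambda} = \mathbbm{C}v^{+}_{\lambda}$ is one-dimensional, and every highest weight vector is a nonzero scalar multiple of $v^{+}_{\lambda}$.

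I do not expect a genuine obstacle, since the argument is entirely standard. The only points requiring care are (i) justifying the $\mathfrak{h}$-weight decomposition of $V$ — the semisimplicity of the $\mathfrak{h}$-action, which one cites rather than reproves — and (ii) noticing that the \emph{uniqueness} of the maximal weight is not assumed but extracted from the PBW/irreducibility argument: a finite poset may a priori possess several maximal elements, and it is precisely the irreducibility of $V$ that collapses them to a single $\lambda$.
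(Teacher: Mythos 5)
Your argument is correct: the existence step (semisimple $\mathfrak{h}$-action, weight decomposition, a maximal weight in the dominance order forces $\mathfrak{n}^{+}\cdot V_{\lambda}=0$) and the uniqueness step (PBW plus irreducibility showing $V=U(\mathfrak{n}^{-})\cdot w$, so the maximal weight is unique and its weight space is one-dimensional) together give exactly the stated result, and you correctly flag that the uniqueness of the maximal weight is a conclusion, not a hypothesis. Note, however, that the paper does not prove this theorem at all: it is listed among the standard facts recalled in Section \ref{generalities} with a citation to Humphreys, and your proof is precisely the classical textbook argument being cited there, so there is nothing in the paper to compare it against beyond that reference.
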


\begin{theorem}
Let $V$ and $W$ be finite dimensional irreducible $\mathfrak{g}^{\mathbbm{C}}$-modules with highest weight $\lambda \in \mathfrak{h}^{\ast}$. Then, $V$ and $W$ are isomorphic.
\end{theorem}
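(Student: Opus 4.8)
The plan is to run the standard ``diagonal submodule'' argument inside $V \oplus W$. First, by the preceding theorem choose highest weight vectors $v^{+} \in V_{\lambda}$ and $w^{+} \in W_{\lambda}$, each annihilated by $\mathfrak{n}^{+}$ and of $\mathfrak{h}$-weight $\lambda$. Equip $V \oplus W$ with its natural $\mathfrak{g}^{\mathbbm{C}}$-module structure and set $u := (v^{+}, w^{+})$. Since $x \cdot u = (x \cdot v^{+}, x \cdot w^{+}) = 0$ for all $x \in \mathfrak{n}^{+}$ and $h \cdot u = \lambda(h)\, u$ for all $h \in \mathfrak{h}$, the vector $u$ is again a highest weight vector of weight $\lambda$; in particular $u \neq 0$.

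Next, let $M := U(\mathfrak{g}^{\mathbbm{C}}) \cdot u \subseteq V \oplus W$ be the $\mathfrak{g}^{\mathbbm{C}}$-submodule generated by $u$. Using the Poincar\'e--Birkhoff--Witt theorem together with $\mathfrak{n}^{+} \cdot u = 0$ and $\mathfrak{h} \cdot u \subseteq \mathbbm{C} u$, one gets $M = U(\mathfrak{n}^{-}) \cdot u$; hence $M$ is spanned by $\mathfrak{h}$-weight vectors whose weights have the form $\lambda - \sum_{\alpha \in \Delta} n_{\alpha}\alpha$ with $n_{\alpha} \in \mathbbm{Z}_{\geq 0}$. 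In particular the $\lambda$-weight space of $M$ is exactly $M_{\lambda} = \mathbbm{C} u$, which is one-dimensional.

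Now consider the canonical projections $\pi_{V} \colon M \to V$ and $\pi_{W} \colon M \to W$; both are morphisms of $\mathfrak{g}^{\mathbbm{C}}$-modules. Since $\pi_{V}(u) = v^{+} \neq 0$, the image $\pi_{V}(M)$ is a nonzero submodule of the irreducible module $V$, so $\pi_{V}(M) = V$; likewise $\pi_{W}(M) = W$. It remains to see that $\pi_{V}$ is injective. Its kernel is $\ker \pi_{V} = M \cap (0 \oplus W)$, a $\mathfrak{g}^{\mathbbm{C}}$-submodule of $M$ which, viewed inside the second factor, is a submodule of the irreducible module $W$; if it were nonzero it would equal $0 \oplus W$, hence contain $(0, w^{+})$, a vector of weight $\lambda$ that is not a scalar multiple of $u = (v^{+}, w^{+})$ (because $v^{+} \neq 0$), contradicting $\dim_{\mathbbm{C}} M_{\lambda} = 1$. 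Therefore $\ker \pi_{V} = 0$ and $\pi_{V} \colon M \to V$ is an isomorphism. The identical argument produces an isomorphism $\pi_{W} \colon M \to W$, and composing gives $V \cong M \cong W$.

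The one step I would be most careful about is the one-dimensionality of the top weight space $M_{\lambda}$: this is exactly where PBW and the highest weight condition are genuinely used, and once it is in hand everything else is formal (irreducibility forces the projections to be surjective, and the weight bookkeeping forces them to be injective). Note that finite-dimensionality of $V$ and $W$ is not actually needed for this argument; it suffices that each is an irreducible highest weight module of highest weight $\lambda$.
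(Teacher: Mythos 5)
Your proof is correct. Note that the paper does not actually prove this statement: it is recalled in Section \ref{generalities} as standard background material, with the details deferred to the cited reference (Humphreys). Your argument is essentially the classical diagonal-submodule proof from that source: form $u=(v^{+},w^{+})\in V\oplus W$, generate $M=U(\mathfrak{g}^{\mathbbm{C}})\cdot u$, and show both projections are isomorphisms. The one genuine variation is at the injectivity step: instead of invoking the fact that a standard cyclic (highest weight) module has a unique maximal submodule and identifying both kernels with it, you deduce $\ker\pi_{V}=0$ directly from the irreducibility of $W$ together with $\dim_{\mathbbm{C}}M_{\lambda}=1$, which you correctly isolate as the point where PBW and the highest weight condition are used. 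This shortcut is valid, and, as you observe, it does not use finite-dimensionality, so it proves the stronger statement that any two irreducible highest weight modules of the same highest weight are isomorphic; for the purposes of the paper the finite-dimensional case is all that is needed.
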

\begin{remark}
We will denote by $V(\lambda)$ a finite dimensional irreducible $\mathfrak{g}^{\mathbbm{C}}$-module with highest weight $\lambda \in \mathfrak{h}^{\ast}$.
\end{remark}
\begin{theorem} In the above setting, the following hold:
\begin{itemize}
\item[(1)] If $V$ is a finite dimensional irreducible $\mathfrak{g}^{\mathbbm{C}}$-module with highest weight $\lambda \in \mathfrak{h}^{\ast}$, then $\lambda \in \Lambda^{+}$.

\item[(2)] If $\lambda \in \Lambda^{+}$, then there exists a finite dimensional irreducible $\mathfrak{g}^{\mathbbm{C}}$-module $V$, such that $V = V(\lambda)$. 
\end{itemize}
\end{theorem}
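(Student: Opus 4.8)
The statement is the classical theorem of the highest weight, so the plan is to follow the standard Cartan--Weyl argument (cf. \cite[\S 20--21]{Humphreys}), reducing everything to the representation theory of $\mathfrak{sl}(2,\mathbbm{C})$.

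For part (1), I would fix a simple root $\alpha \in \Delta$ and pass to the copy of $\mathfrak{sl}(2,\mathbbm{C})$ inside $\mathfrak{g}^{\mathbbm{C}}$ spanned by $x_{\alpha}, y_{-\alpha}$ and a suitable rescaling of $h_{\alpha}$, normalized so that the Cartan element acts on $\mathfrak{g}_{\alpha}$ by the scalar $2$. Since a highest weight vector $v_{\lambda}^{+}$ is annihilated by $x_{\alpha} \in \mathfrak{n}^{+}$ and is an eigenvector of this Cartan element with eigenvalue $\langle \lambda, \alpha^{\vee}\rangle$, it generates a finite dimensional highest weight $\mathfrak{sl}(2,\mathbbm{C})$-submodule of $V$ (finite dimensional because $V$ is); the classification of such modules then forces $\langle \lambda,\alpha^{\vee}\rangle \in \mathbbm{Z}_{\geq 0}$. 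Letting $\alpha$ range over $\Delta$ yields $\lambda \in \Lambda^{+}$.

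For part (2), I would construct $V$ as the irreducible quotient of a Verma module. Concretely: take the one dimensional $\mathfrak{b}$-module $\mathbbm{C}_{\lambda}$ on which $\mathfrak{h}$ acts by $\lambda$ and $\mathfrak{n}^{+}$ acts by zero, set $M(\lambda) := \mathcal{U}(\mathfrak{g}^{\mathbbm{C}}) \otimes_{\mathcal{U}(\mathfrak{b})} \mathbbm{C}_{\lambda}$, and observe (via Poincar\'{e}--Birkhoff--Witt) that $M(\lambda)$ is free of rank one over $\mathcal{U}(\mathfrak{n}^{-})$, has finite dimensional weight spaces, and admits a unique maximal proper submodule $N(\lambda)$; then $L(\lambda) := M(\lambda)/N(\lambda)$ is irreducible with highest weight $\lambda$. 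The remaining, and hardest, step is to prove that $L(\lambda)$ is finite dimensional. The idea is: for each $\alpha \in \Delta$, writing $m_{\alpha} := \langle \lambda,\alpha^{\vee}\rangle$, the vector $y_{-\alpha}^{m_{\alpha}+1}\cdot v_{\lambda}^{+}$ is killed by $\mathfrak{n}^{+}$ (an $\mathfrak{sl}(2,\mathbbm{C})$-computation), hence lies in $N(\lambda)$ and vanishes in $L(\lambda)$; this makes $L(\lambda)$ a locally finite $\mathfrak{sl}(2,\mathbbm{C})_{\alpha}$-module for every simple $\alpha$, from which one deduces that the set of weights of $L(\lambda)$ is stable under the Weyl group $W$ and contained in the convex hull of $W\cdot\lambda$, hence finite; since the weight spaces are finite dimensional, $\dim_{\mathbbm{C}}L(\lambda) < \infty$. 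Taking $V = L(\lambda)$ and invoking the uniqueness-up-to-isomorphism theorem stated above finishes part (2).

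The main obstacle is precisely this finite dimensionality of $L(\lambda)$: one has to upgrade the ``local'' integrability supplied by the simple $\mathfrak{sl}(2,\mathbbm{C})$-triples to the ``global'' finiteness of the weight set, and the pivotal input is the Weyl-group symmetry of the weights, which is itself extracted from that integrability.
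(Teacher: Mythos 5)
Your argument is correct and is precisely the standard Cartan--Weyl proof (integrality of $\langle\lambda,\alpha^{\vee}\rangle$ via the simple $\mathfrak{sl}(2,\mathbbm{C})$-triples, then the Verma module $M(\lambda)$, its irreducible quotient $L(\lambda)$, and finite dimensionality from local $\mathfrak{sl}(2)$-integrability and Weyl-group invariance of the weight set). The paper states this theorem as background without proof, referring to \cite{Humphreys}, and your proposal coincides with the treatment given there, so there is nothing to correct or compare further.
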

From the above theorem, it follows that the map $\lambda \mapsto V(\lambda)$ induces an one-to-one correspondence between $\Lambda^{+}$ and the isomorphism classes of finite dimensional irreducible $\mathfrak{g}^{\mathbbm{C}}$-modules.

\begin{remark} In what follows, it will be useful also to consider the following facts:
\begin{enumerate}
\item[(i)] For all $\lambda \in \Lambda^{+}$, we have $V(\lambda) = \mathfrak{U}(\mathfrak{g}^{\mathbbm{C}}) \cdot v_{\lambda}^{+}$, where $\mathfrak{U}(\mathfrak{g}^{\mathbbm{C}})$ is the universal enveloping algebra of $\mathfrak{g}^{\mathbbm{C}}$;
\item[(ii)] The fundamental representations are defined by $V(\varpi_{\alpha})$, $\alpha \in \Delta$; 
\item[(iii)] For all $\lambda \in \Lambda^{+}$, we have the following equivalence of induced irreducible representations
\begin{center}
$\varrho \colon G^{\mathbbm{C}} \to {\rm{GL}}(V(\lambda))$ \ $\Longleftrightarrow$ \ $\varrho_{\ast} \colon \mathfrak{g}^{\mathbbm{C}} \to \mathfrak{gl}(V(\lambda))$,
\end{center}
such that $\varrho(\exp(x)) = \exp(\varrho_{\ast}x)$, $\forall x \in \mathfrak{g}^{\mathbbm{C}}$, notice that $G^{\mathbbm{C}} = \langle \exp(\mathfrak{g}^{\mathbbm{C}}) \rangle$.
\end{enumerate}
\end{remark}
Given a representation $\varrho \colon G^{\mathbbm{C}} \to {\rm{GL}}(V(\lambda))$, for the sake of simplicity, we shall denote $\varrho(g)v = gv$, for all $g \in G^{\mathbbm{C}}$, and all $v \in V(\lambda)$. Let $G \subset G^{\mathbbm{C}}$ be a compact real form for $G^{\mathbbm{C}}$. Given a complex flag variety $X_{P} = G^{\mathbbm{C}}/P$, regarding $X_{P}$ as a homogeneous $G$-space, that is, $X_{P} = G/G\cap P$, the following theorem allows us to describe all $G$-invariant K\"{a}hler structures on $X_{P}$ through elements of representation theory.
\begin{theorem}[Azad-Biswas, \cite{AZAD}]
\label{AZADBISWAS}
Let $\omega \in \Omega^{1,1}(X_{P})^{G}$ be a closed invariant real $(1,1)$-form, then we have

\begin{center}

$\pi^{\ast}\omega = \sqrt{-1}\partial \overline{\partial}\varphi$,

\end{center}
where $\pi \colon G^{\mathbbm{C}} \to X_{P}$ is the natural projection, and $\varphi \colon G^{\mathbbm{C}} \to \mathbbm{R}$ is given by 
\begin{center}
$\varphi(g) = \displaystyle \sum_{\alpha \in \Delta \backslash I}c_{\alpha}\log \big (||gv_{\varpi_{\alpha}}^{+}|| \big )$, \ \ \ \ $(\forall g \in G^\mathbbm{C})$
\end{center}
with $c_{\alpha} \in \mathbbm{R}$, $\forall \alpha \in \Delta \backslash I$. Conversely, every function $\varphi$ as above defines a closed invariant real $(1,1)$-form $\omega_{\varphi} \in \Omega^{1,1}(X_{P})^{G}$. Moreover, $\omega_{\varphi}$ defines a $G$-invariant K\"{a}hler form on $X_{P}$ if, and only if, $c_{\alpha} > 0$,  $\forall \alpha \in \Delta \backslash I$.
\end{theorem}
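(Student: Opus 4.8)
The plan is to realize $X_{P}$ as the unique closed $G^{\mathbbm{C}}$-orbit inside a product of projectivized fundamental representations, pull back Fubini--Study potentials there, and then compare dimensions to see that this produces \emph{all} closed invariant $(1,1)$-forms. First I would record the one representation-theoretic input: for each $\alpha \in \Delta\backslash I$ the highest weight line $\mathbbm{C}v_{\varpi_{\alpha}}^{+} \subset V(\varpi_{\alpha})$ is $P$-stable. Indeed $\mathfrak{n}^{+}v_{\varpi_{\alpha}}^{+}=0$, $\mathfrak{h}$ acts by scalars, and since $\langle \varpi_{\alpha},\gamma^{\vee}\rangle = 0$ for every $\gamma \in I$ the $\mathfrak{sl}_{2}$-string of $v_{\varpi_{\alpha}}^{+}$ in the $\gamma$-direction has length one, so $\mathfrak{g}_{-\gamma}v_{\varpi_{\alpha}}^{+}=0$ for $\gamma \in I$ and hence $\mathfrak{n}(I)^{-}v_{\varpi_{\alpha}}^{+}=0$ by bracketing; as $\mathfrak{p}_{I}=\mathfrak{n}^{+}\oplus\mathfrak{h}\oplus\mathfrak{n}(I)^{-}$ generates the connected group $P=P_{I}$, the line is $P$-stable and $P$ acts on it through a holomorphic character $\chi_{\alpha}\colon P \to \mathbbm{C}^{\ast}$ (restricting to $\varpi_{\alpha}$ on $T^{\mathbbm{C}}$). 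Consequently $gP \mapsto \big([gv_{\varpi_{\alpha}}^{+}]\big)_{\alpha\in\Delta\backslash I}$ is a well-defined $G^{\mathbbm{C}}$-equivariant holomorphic map $X_{P}\to \prod_{\alpha\in\Delta\backslash I}\mathbbm{P}(V(\varpi_{\alpha}))$, and it is an embedding onto a closed orbit because the kernels of the factor actions intersect exactly in $\mathfrak{p}_{I}$, i.e. $P_{I}=\bigcap_{\alpha\in\Delta\backslash I}P_{\Delta\backslash\{\alpha\}}$.

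Next I would fix a $G$-invariant Hermitian inner product on each $V(\varpi_{\alpha})$ (available since $G$ is a compact real form) and pull back the Fubini--Study forms. With a suitable normalization this gives $\pi^{\ast}\omega_{\varphi}=\sqrt{-1}\partial\overline{\partial}\varphi$ with $\varphi(g)=\sum_{\alpha\in\Delta\backslash I}c_{\alpha}\log\|gv_{\varpi_{\alpha}}^{+}\|$: the form $\omega_{\varphi}$ is $G$-invariant because $G$ acts unitarily, so $\varphi$ is left $G$-invariant, and it is closed, being locally $\sqrt{-1}\partial\overline{\partial}$-exact. The one point requiring care is that $\sqrt{-1}\partial\overline{\partial}\varphi$ actually descends along $\pi$: under right translation by $p\in P$ one computes $\varphi(gp)=\varphi(g)+\sum_{\alpha}c_{\alpha}\log|\chi_{\alpha}(p)|$, a function of $p$ alone, and since $\log|\chi_{\alpha}|$ is pluriharmonic on $P$ this simultaneously shows $\sqrt{-1}\partial\overline{\partial}\varphi$ is right-$P$-invariant and that it restricts to zero on each fibre $gP$ of $\pi$, hence is basic. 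The converse half of the first assertion is then immediate: any $\varphi$ of the stated form produces such a closed invariant $\omega_{\varphi}$ by the same computation.

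For the K\"{a}hler criterion, each summand of $\omega_{\varphi}$ is, up to a positive constant, the pullback of the Fubini--Study form under $X_{P}\to G^{\mathbbm{C}}/P_{\Delta\backslash\{\alpha\}}$, hence semi-positive with null distribution the relative tangent bundle of that fibration; when all $c_{\alpha}>0$ these null distributions intersect trivially (again $P_{I}=\bigcap_{\alpha}P_{\Delta\backslash\{\alpha\}}$), so $\omega_{\varphi}>0$, i.e. K\"{a}hler. Conversely $[\omega_{\varphi}]=\sum_{\alpha}c_{\alpha}c_{1}(\mathscr{O}_{\alpha}(1))$ and $\langle c_{1}(\mathscr{O}_{\beta}(1)),[\mathbbm{P}^{1}_{\alpha}]\rangle$ is a positive multiple of $\delta_{\alpha\beta}$, so a K\"{a}hler $\omega_{\varphi}$ must integrate positively over the rational curve $\mathbbm{P}^{1}_{\alpha}$, forcing $c_{\alpha}>0$.

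Finally, to see that the $\omega_{\varphi}$'s exhaust all closed invariant real $(1,1)$-forms I would run a dimension count. On the simply connected homogeneous K\"{a}hler manifold $X_{P}$ there are no nonzero $G$-invariant $1$-forms (the isotropy representation on the real tangent space has no trivial summand, being built from nontrivial $T$-weight spaces), so averaging the Hodge decomposition over $G$ shows every closed invariant $(1,1)$-form is harmonic; hence the space of such forms injects into $H^{1,1}(X_{P},\mathbbm{R})\cong \Lambda_{P}\otimes\mathbbm{R}$, of dimension $|\Delta\backslash I|$. Since $(c_{\alpha})\mapsto[\omega_{\varphi}]$ is injective, the linear family $\{\omega_{\varphi}\}$ already has dimension $|\Delta\backslash I|$ inside that space, and therefore fills it. I expect the main obstacle to be the descent step — proving rigorously that the right-$P$ transformation law of $\varphi$ forces $\sqrt{-1}\partial\overline{\partial}\varphi$ to be the pullback of a global form on $X_{P}$ — together with making the harmonicity/dimension argument airtight; the representation theory and the positivity computations are routine.
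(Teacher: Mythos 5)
The paper itself does not prove this statement: it is quoted from Azad--Biswas \cite{AZAD}, so there is no internal proof to measure you against, and your argument in fact reconstructs the standard one. Your three ingredients are all sound: the $P$-stability of the highest weight lines (hence the equivariant map to $\prod_{\alpha\in\Delta\backslash I}\mathbbm{P}(V(\varpi_{\alpha}))$, using $P_{I}=\bigcap_{\alpha\in\Delta\backslash I}P_{\Delta\backslash\{\alpha\}}$), the positivity discussion via the null spaces of the pulled-back Fubini--Study forms together with the pairing against the curves $\mathbbm{P}^{1}_{\alpha}$ (which is exactly the paper's Lemma \ref{funddynkinline} and is independent of the present theorem, so no circularity), and the exhaustion step: no nonzero $G$-invariant $1$-forms (the isotropy module has only nonzero $T$-weights), hence closed invariant $2$-forms inject into $H^{2}(X_{P},\mathbbm{R})\cong\mathbbm{R}^{|\Delta\backslash I|}$, which the linear family $\{\omega_{\varphi}\}$ already fills; this harmonicity observation is the same one the paper records later by citing \cite{MR528871}. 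The only place where your wording is looser than the argument needs is the descent: saying the form is $P$-invariant and ``restricts to zero on each fibre, hence is basic'' is not a valid general implication, since vanishing on fibres only controls $\eta(X,Y)$ for \emph{both} arguments vertical, while basicness needs $\iota_{X}\eta=0$ for every vertical $X$. This is repaired inside your own setup in either of two ways: (i) note that $g\mapsto[gv_{\varpi_{\alpha}}^{+}]$ factors through $\pi$ because the line is $P$-stable, so each summand $\sqrt{-1}\partial\overline{\partial}\log\|gv_{\varpi_{\alpha}}^{+}\|$ is literally $\pi^{\ast}$ of the restricted Fubini--Study form; or (ii) use a local holomorphic trivialization $g=s(z)p(w)$ and your transformation law to write $\varphi=\psi(z)+\sum_{\alpha}c_{\alpha}\log|\chi_{\alpha}(p(w))|$, so that pluriharmonicity of $\log|\chi_{\alpha}|$ kills both the fibre--fibre and the mixed $\partial_{z}\partial_{\bar w}$ terms. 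With that one clarification your proof is complete and matches the approach of the cited source.
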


\begin{remark}
\label{innerproduct}
It is worth pointing out that the norm $|| \cdot ||$ considered in the above theorem is a norm induced from some fixed $G$-invariant inner product $\langle \cdot, \cdot \rangle_{\alpha}$ on $V(\varpi_{\alpha})$, $\forall \alpha \in \Delta \backslash I$. 
\end{remark}

\begin{remark}
An important consequence of Theorem \ref{AZADBISWAS} is that it allows us to describe the local K\"{a}hler potential for any homogeneous K\"{a}hler metric in a quite concrete way, for some examples of explicit computations, we suggest \cite{CorreaGrama}, \cite{Correa}.
\end{remark}

By means of the above theorem we can describe the unique $G$-invariant representative of each integral class in $H^{2}(X_{P},\mathbbm{Z})$. In fact, consider the associated $P$-principal bundle $P \hookrightarrow G^{\mathbbm{C}} \to X_{P}$. By choosing a trivializing open covering $X_{P} = \bigcup_{i \in J}U_{i}$, in terms of $\check{C}$ech cocycles we can write 
\begin{center}
$G^{\mathbbm{C}} = \Big \{(U_{i})_{i \in J}, \psi_{ij} \colon U_{i} \cap U_{j} \to P \Big \}$.
\end{center}
Given $\varpi_{\alpha} \in \Lambda^{+}$, we consider the induced character $\vartheta_{\varpi_{\alpha}} \in {\text{Hom}}(T^{\mathbbm{C}},\mathbbm{C}^{\times})$, such that $({\rm{d}}\vartheta_{\varpi_{\alpha}})_{e} = \varpi_{\alpha}$. From the homomorphism $\vartheta_{\varpi_{\alpha}} \colon P \to \mathbbm{C}^{\times}$ one can equip $\mathbbm{C}$ with a structure of $P$-space, such that $pz = \vartheta_{\varpi_{\alpha}}(p)^{-1}z$, $\forall p \in P$, and $\forall z \in \mathbbm{C}$. Denoting by $\mathbbm{C}_{-\varpi_{\alpha}}$ this $P$-space, we can form an associated holomorphic line bundle $\mathscr{O}_{\alpha}(1) = G^{\mathbbm{C}} \times_{P}\mathbbm{C}_{-\varpi_{\alpha}}$, which can be described in terms of $\check{C}$ech cocycles by
\begin{equation}
\label{linecocycle}
\mathscr{O}_{\alpha}(1) = \Big \{(U_{i})_{i \in J},\vartheta_{\varpi_{\alpha}}^{-1} \circ \psi_{i j} \colon U_{i} \cap U_{j} \to \mathbbm{C}^{\times} \Big \},
\end{equation}
that is, $\mathscr{O}_{\alpha}(1) = \{g_{ij}\} \in \check{H}^{1}(X_{P},\mathcal{O}_{X_{P}}^{\ast})$, such that $g_{ij} = \vartheta_{\varpi_{\alpha}}^{-1} \circ \psi_{i j}$, $\forall i,j \in J$. 
\begin{remark}
\label{parabolicdec}
We observe that, if we have a parabolic Lie subgroup $P \subset G^{\mathbbm{C}}$, such that $P = P_{I}$, for some $I \subset \Delta$, the decomposition 
\begin{equation}
P_{I} = \big[P_{I},P_{I} \big]T(\Delta \backslash I)^{\mathbbm{C}}, \ \  {\text{such that }} \ \ T(\Delta \backslash I)^{\mathbbm{C}} = \exp \Big \{ \displaystyle \sum_{\alpha \in  \Delta \backslash I}a_{\alpha}h_{\alpha} \ \Big | \ a_{\alpha} \in \mathbbm{C} \Big \},
\end{equation}
e.g. \cite[Proposition 8]{Akhiezer}, shows us that ${\text{Hom}}(P,\mathbbm{C}^{\times}) = {\text{Hom}}(T(\Delta \backslash I)^{\mathbbm{C}},\mathbbm{C}^{\times})$. Therefore, if we take $\varpi_{\alpha} \in \Lambda^{+}$, such that $\alpha \in I$, it follows that $\mathscr{O}_{\alpha}(1) = X_{P} \times \mathbbm{C}$, i.e., the associated holomorphic line bundle $\mathscr{O}_{\alpha}(1)$ is trivial.
\end{remark}

\begin{remark}
Throughout this paper we shall use the following notation
\begin{equation}
\mathscr{O}_{\alpha}(k) := \mathscr{O}_{\alpha}(1)^{\otimes k},
\end{equation}
for every $k \in \mathbbm{Z}$ and every $\alpha \in \Delta \backslash I$. 
\end{remark}

Given $\mathscr{O}_{\alpha}(1) \in {\text{Pic}}(X_{P})$, such that $\alpha \in \Delta \backslash I$, as described above, if we consider an open covering $X_{P} = \bigcup_{i \in J} U_{i}$ which trivializes both $P \hookrightarrow G^{\mathbbm{C}} \to X_{P}$ and $ \mathscr{O}_{\alpha}(1) \to X_{P}$, by taking a collection of local sections $(s_{i})_{i \in J}$, such that $s_{i} \colon U_{i} \to G^{\mathbbm{C}}$, we can define $q_{i} \colon U_{i} \to \mathbbm{R}^{+}$, such that 
\begin{equation}
\label{functionshermitian}
q_{i} :=  {\mathrm{e}}^{-2\pi (\varphi_{\varpi_{\alpha}} \circ s_{i})} = \frac{1}{||s_{i}v_{\varpi_{\alpha}}^{+}||^{2}},
\end{equation}
for every $i \in J$. Since $s_{j} = s_{i}\psi_{ij}$ on $U_{i} \cap U_{j} \neq \emptyset$, and $pv_{\varpi_{\alpha}}^{+} = \vartheta_{\varpi_{\alpha}}(p)v_{\varpi_{\alpha}}^{+}$, for every $p \in P$, and every $\alpha \in \Delta \backslash I$, the collection of functions $(q_{i})_{i \in J}$ satisfy $q_{j} = |\vartheta_{\varpi_{\alpha}}^{-1} \circ \psi_{ij}|^{2}q_{i}$ on $U_{i} \cap U_{j} \neq \emptyset$. Hence, we obtain a collection of functions $(q_{i})_{i \in J}$ which satisfies on the overlaps $U_{i} \cap U_{j} \neq \emptyset$ the following relation
\begin{equation}
\label{collectionofequ}
q_{j} = |g_{ij}|^{2}q_{i},
\end{equation}
such that $g_{ij} = \vartheta_{\varpi_{\alpha}}^{-1} \circ \psi_{i j}$, $\forall i,j \in J$. From this, we can define a Hermitian structure ${\bf{h}}$ on $\mathscr{O}_{\alpha}(1)$ by taking on each trivialization $f_{i} \colon \mathscr{O}_{\alpha}(1) \to U_{i} \times \mathbbm{C}$ the metric defined by
\begin{equation}
\label{hermitian}
{\bf{h}}(f_{i}^{-1}(x,v),f_{i}^{-1}(x,w)) = q_{i}(x) v\overline{w},
\end{equation}
for every $(x,v),(x,w) \in U_{i} \times \mathbbm{C}$. The Hermitian metric above induces a Chern connection $\nabla \myeq {\rm{d}} + \partial \log {\bf{h}}$ with curvature $F_{\nabla}$ satisfying (locally)
\begin{equation}
\displaystyle \frac{\sqrt{-1}}{2\pi}F_{\nabla} \myeq \frac{\sqrt{-1}}{2\pi} \partial \overline{\partial}\log \Big ( \big | \big | s_{i}v_{\varpi_{\alpha}}^{+}\big | \big |^{2} \Big).
\end{equation}
Therefore, by considering the closed $G$-invariant $(1,1)$-form ${\bf{\Omega}}_{\alpha} \in \Omega^{1,1}(X_{P})^{G}$, which satisfies $\pi^{\ast}{\bf{\Omega}}_{\alpha} = \sqrt{-1}\partial \overline{\partial} \varphi_{\varpi_{\alpha}}$, where $\pi \colon G^{\mathbbm{C}} \to G^{\mathbbm{C}} / P = X_{P}$, and $\varphi_{\varpi_{\alpha}}(g) = \frac{1}{2\pi}\log||gv_{\varpi_{\alpha}}^{+}||^{2}$, $\forall g \in G^{\mathbbm{C}}$, we have 
\begin{equation}
{\bf{\Omega}}_{\alpha} |_{U_{i}} = (\pi \circ s_{i})^{\ast}{\bf{\Omega}}_{\alpha} = \frac{\sqrt{-1}}{2\pi}F_{\nabla} \Big |_{U_{i}},
\end{equation}
i.e., $c_{1}(\mathscr{O}_{\alpha}(1)) = [ {\bf{\Omega}}_{\alpha}]$, $\forall \alpha \in \Delta \backslash I$.

\begin{remark}
In what follows, given $I \subset \Delta$, we shall denote $\Phi_{I}^{\pm}:= \Phi^{\pm} \backslash \langle I \rangle^{\pm}$.
\end{remark}

\begin{remark}
\label{bigcellcosntruction}
In order to perform some local computations we shall consider the open set $U^{-}(P) \subset X_{P}$ defined by the ``opposite" big cell in $X_{P}$. This open set is a distinguished coordinate neighbourhood $U^{-}(P) \subset X_{P}$ of $x_{0} = eP \in X_{P}$ defined as follows
\begin{equation}
\label{bigcell}
 U^{-}(P) =  B^{-}x_{0} = R_{u}(P_{I})^{-}x_{0} \subset X_{P},  
\end{equation}
 where $B^{-} = \exp(\mathfrak{h} \oplus \mathfrak{n}^{-})$, and
 
 \begin{center}
 
 $R_{u}(P_{I})^{-} = \displaystyle \prod_{\alpha \in \Phi_{I}^{+}}N_{\alpha}^{-}$, \ \ (opposite unipotent radical)
 
 \end{center}
with $N_{\alpha}^{-} = \exp(\mathfrak{g}_{-\alpha})$, $\forall \alpha \in \Phi_{I}^{+}$, e.g. \cite[\S 3]{Lakshmibai2},\cite[\S 3.1]{Akhiezer}. It is worth mentioning that the opposite big cell defines a contractible open dense subset in $X_{P}$, thus the restriction of any vector bundle (principal bundle) over this open set is trivial.
\end{remark}

The next lemma is fundamental for the ideas that will be developed later on this work.

\begin{lemma}
\label{funddynkinline}
Consider $\mathbbm{P}_{\beta}^{1} = \overline{\exp(\mathfrak{g}_{-\beta})x_{0}} \subset X_{P}$, such that $\beta \in \Phi_{I}^{+}$. Then, 
\begin{equation}
\int_{\mathbbm{P}_{\beta}^{1}} {\bf{\Omega}}_{\alpha} = \langle \varpi_{\alpha}, \beta^{\vee}  \rangle, \ \forall \alpha \in \Delta \backslash I.
\end{equation}

\end{lemma}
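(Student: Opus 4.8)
The plan is to reduce the computation of $\int_{\mathbbm{P}_\beta^1}{\bf\Omega}_\alpha$ to an explicit integral over a single $\mathfrak{sl}_2$-triple, using the embedding $\mathbbm{P}_\beta^1 = \overline{\exp(\mathfrak{g}_{-\beta})x_0}\hookrightarrow X_P$ and the description of ${\bf\Omega}_\alpha$ as a curvature form. First I would observe that since $\beta\in\Phi_I^+$, the curve $\mathbbm{P}_\beta^1$ is a nonconstant rational curve in $X_P$ (not contracted to $x_0$), and $\exp(\mathfrak{g}_{-\beta})x_0$ is its big (affine) cell sitting inside the opposite big cell $U^-(P)$ from Remark \ref{bigcellcosntruction}; so the restriction of $\mathscr{O}_\alpha(1)$ to $\mathbbm{P}_\beta^1$ is a line bundle of some degree, and that degree is exactly $\int_{\mathbbm{P}_\beta^1}{\bf\Omega}_\alpha$ since ${\bf\Omega}_\alpha$ represents $c_1(\mathscr{O}_\alpha(1))$. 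The strategy is to pull the Hermitian metric of Eq. (\ref{hermitian})–(\ref{functionshermitian}) back along a local section and compute the resulting integral of $\frac{\sqrt{-1}}{2\pi}\partial\overline\partial\log\|{\cdot}\,v_{\varpi_\alpha}^+\|^2$ over $\mathbbm{P}_\beta^1$ directly.

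Concretely, I would parametrize $\exp(\mathfrak{g}_{-\beta})x_0$ by $z\in\mathbbm{C}\mapsto \exp(z y_{-\beta})x_0$, using the chosen root vector $y_{-\beta}\in\mathfrak{g}_{-\beta}$. Then
\[
\int_{\mathbbm{P}_\beta^1}{\bf\Omega}_\alpha = \frac{\sqrt{-1}}{2\pi}\int_{\mathbbm{C}}\partial\overline\partial\log\big\|\exp(z y_{-\beta})v_{\varpi_\alpha}^+\big\|^2,
\]
the point at infinity contributing nothing since it is a single point and the potential extends appropriately (or one checks the integrand is integrable and uses Stokes). Now $v_{\varpi_\alpha}^+$ is the highest weight vector, and the orbit $\exp(zy_{-\beta})v_{\varpi_\alpha}^+$ lives in the finite-dimensional $\mathfrak{sl}_2$-submodule of $V(\varpi_\alpha)$ generated by $v_{\varpi_\alpha}^+$ under the triple $\{x_\beta,y_{-\beta},h_\beta\}$; this submodule is irreducible of highest weight $\langle\varpi_\alpha,\beta^\vee\rangle =: m\geq 0$. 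Since the $G$-invariant inner product $\langle\cdot,\cdot\rangle_\alpha$ restricts to a unitary structure on this $\mathfrak{sl}_2$-submodule, the vectors $v_{\varpi_\alpha}^+, y_{-\beta}v_{\varpi_\alpha}^+,\dots, y_{-\beta}^m v_{\varpi_\alpha}^+$ are orthogonal, and expanding $\exp(zy_{-\beta})v_{\varpi_\alpha}^+ = \sum_{k=0}^m \frac{z^k}{k!}y_{-\beta}^k v_{\varpi_\alpha}^+$ gives $\|\exp(zy_{-\beta})v_{\varpi_\alpha}^+\|^2 = \sum_{k=0}^m c_k|z|^{2k}$ for positive constants $c_k$ (with $c_0=\|v_{\varpi_\alpha}^+\|^2$). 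This is, up to a positive-constant Fubini–Study-type potential, exactly the pulled-back potential computing the degree of $\mathscr{O}_{\mathbbm{P}^m}(1)$ restricted along a rational normal curve of degree $m$; equivalently, a direct residue/Stokes computation gives $\frac{\sqrt{-1}}{2\pi}\int_{\mathbbm{C}}\partial\overline\partial\log\big(\sum_{k=0}^m c_k|z|^{2k}\big) = m$, because $\log$ of such a polynomial in $|z|^2$ behaves like $\log c_0$ near $0$ and like $m\log|z|^2 + O(1)$ near $\infty$, so the total mass is $m$.

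Assembling, $\int_{\mathbbm{P}_\beta^1}{\bf\Omega}_\alpha = m = \langle\varpi_\alpha,\beta^\vee\rangle$, which is the claim. I expect the main obstacle to be the careful justification of the boundary behaviour at the point at infinity of $\mathbbm{P}_\beta^1$ — i.e., that passing from the affine integral over $\exp(\mathfrak{g}_{-\beta})x_0$ to the integral over the compact curve $\mathbbm{P}_\beta^1$ introduces no extra contribution, and that $\mathbbm{P}_\beta^1$ is genuinely a smooth rational curve on which $\mathscr{O}_\alpha(1)$ restricts to $\mathscr{O}_{\mathbbm{P}^1}(m)$. This is handled by noting that $\overline{\exp(\mathfrak{g}_{-\beta})x_0}$ is the image of the $\mathbbm{P}^1$ coming from the minimal parabolic associated to $\beta$ (or the $\mathrm{SL}_2$ or $\mathrm{SU}(2)$ subgroup generated by the triple), so it is a smooth rational curve, and the potential $\log\|{\cdot}\,v_{\varpi_\alpha}^+\|^2$ is globally the potential of the Fubini–Study metric on $\mathbbm{P}(V(\varpi_\alpha))$ restricted to this curve, whose degree is read off from the weight $m$. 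The remaining Lie-theoretic input — that the $\mathfrak{sl}_2$-submodule generated by $v_{\varpi_\alpha}^+$ has highest weight $\langle\varpi_\alpha,\beta^\vee\rangle$, and in particular has dimension $\langle\varpi_\alpha,\beta^\vee\rangle+1$ — is standard $\mathfrak{sl}_2$ representation theory (e.g. \cite{Humphreys}).
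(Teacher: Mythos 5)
Your proposal is correct, and it follows the same overall reduction as the paper: both proofs parametrize the affine cell $\exp(\mathfrak{g}_{-\beta})x_{0}$ by $z\mapsto \exp(zy_{-\beta})x_{0}$, note that the point at infinity is negligible, and reduce the lemma to evaluating $\tfrac{\sqrt{-1}}{2\pi}\int_{\mathbbm{C}}\partial\overline\partial\log\|\exp(zy_{-\beta})v^{+}_{\varpi_{\alpha}}\|^{2}$. Where you diverge is in the key computational step. The paper uses the Iwasawa decomposition of ${\rm{SL}}_{2}(\mathbbm{C})$ together with $G$-invariance of the norm and $\mathfrak{n}^{+}\cdot v^{+}_{\varpi_{\alpha}}=0$ to show that the norm is \emph{exactly} $(1+|z|^{2})^{\langle\varpi_{\alpha},\beta^{\vee}\rangle/2}\,\|v^{+}_{\varpi_{\alpha}}\|$, so the integral becomes $\langle\varpi_{\alpha},\beta^{\vee}\rangle$ times the explicit Fubini--Study integral, computed in polar coordinates. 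You instead expand $\exp(zy_{-\beta})v^{+}_{\varpi_{\alpha}}$ in the finite $\mathfrak{sl}_{2}$-string generated by the highest weight vector, use orthogonality of the distinct weight vectors under the invariant inner product to get $\|\exp(zy_{-\beta})v^{+}_{\varpi_{\alpha}}\|^{2}=\sum_{k=0}^{m}c_{k}|z|^{2k}$ with $c_{k}>0$ and $m=\langle\varpi_{\alpha},\beta^{\vee}\rangle$, and then extract the total mass $m$ from the logarithmic growth at infinity via Stokes (equivalently, by recognizing the potential of $\mathscr{O}_{\mathbbm{P}^{1}}(m)$ in the chart at infinity). The Lie-theoretic input is the same in both arguments ($e_{\beta}\cdot v^{+}=0$, the $h_{\beta}$-weight of $v^{+}$ is $\langle\varpi_{\alpha},\beta^{\vee}\rangle$, invariance of the inner product); your route avoids the explicit Iwasawa factorization and only needs the top-degree asymptotics of the norm, which makes it slightly more flexible, while the paper's route yields a closed-form potential and an entirely elementary one-variable integral. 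Your sketch would be complete once the Stokes/asymptotics step is written out (noting that in the coordinate $w=1/z$ the function $\log\bigl(c_{m}+c_{m-1}|w|^{2}+\cdots+c_{0}|w|^{2m}\bigr)$ is smooth near $w=0$, so no boundary contribution survives), which is routine.
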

\begin{proof}
By definition, we have $\mathbbm{P}_{\beta}^{1} = \overline{N_{\beta}^{-}x_{0}}$, $\forall \beta \in \Phi_{I}^{+}$, thus
\begin{center}
$\displaystyle \int_{\mathbbm{P}_{\beta}^{1}} {\bf{\Omega}}_{\alpha} = \int_{N_{\beta}^{-}x_{0}} {\bf{\Omega}}_{\alpha}$.
\end{center}
Consider the local section $s_{U} \colon U^{-}(P) \to G^{\mathbbm{C}}$, such that $s_{U}(gP) = g$, $\forall gP \in U^{-}(P)$, and the parameterization $u_{\beta} \colon \mathbbm{C} \to N_{\beta}^{-}x_{0}$, such that
\begin{equation}
u_{-\beta}(z) = \phi_{\beta} \begin{pmatrix} 1 & 0 \\
z & 1 \end{pmatrix}, \ \ \ \forall z \in \mathbbm{C},
\end{equation}
where $ \phi_{\beta}$ is the natural Lie group isomorphism between ${\rm{SL}}_{2}(\mathbbm{C})$ and the Lie subgroup corresponding to the Lie subalgebra $\mathfrak{g}_{\beta} \oplus [\mathfrak{g}_{\beta},\mathfrak{g}_{-\beta}] \oplus \mathfrak{g}_{-\beta}$, i.e.,
\begin{equation}
\phi_{\beta \ast} \colon \begin{pmatrix}0 & 1 \\
0 & 0 \end{pmatrix} \mapsto e_{\beta}, \ \  \phi_{\beta \ast} \colon \begin{pmatrix}0 & 0 \\
1 & 0 \end{pmatrix} \mapsto f_{-\beta}, \ \ \phi_{\beta \ast} \colon \begin{pmatrix}1 & \ \ 0 \\
0 & -1 \end{pmatrix} \mapsto \frac{2}{\langle \beta,\beta \rangle}h_{\beta},
\end{equation}
such that $e_{\beta} \in \mathfrak{g}_{\beta}$, $f_{-\beta} \in \mathfrak{g}_{-\beta}$, and $[e_{\beta},f_{-\beta}] = \frac{2}{\langle \beta,\beta \rangle}h_{\beta}$. From above, since $N_{\beta}^{-}x_{0} \subset U^{-}(P)$, we have
\begin{equation}
\int_{N_{\beta}^{-}x_{0}}{\bf{\Omega}}_{\alpha} = \int_{\mathbbm{C}} u_{-\beta}^{\ast}({\bf{\Omega}}_{\alpha}|_{U^{-}(P)}) =  \int_{\mathbbm{C}} u_{-\beta}^{\ast} \circ (\pi \circ s_{U})^{\ast}{\bf{\Omega}}_{\alpha} = \frac{\sqrt{-1}}{2\pi}\int_{\mathbbm{C}}\partial \overline{\partial} \log ||u_{-\beta}(z)v_{\varpi_{\alpha}}^{+}||^{2}.
\end{equation}
From the Iwasawa decomposition of ${\rm{SL}}_{2}(\mathbbm{C})$, we have 
\begin{equation}
u_{-\beta}(z) = \phi_{\beta}(k)\phi_{\beta} \begin{pmatrix}(1+|z|^{2})^{\frac{1}{2}} & 0 \\
0 & (1+|z|^{2})^{-\frac{1}{2}}
\end{pmatrix}\phi_{\beta} \begin{pmatrix} 1 & z \\
0 & 1 \end{pmatrix},
\end{equation}
such that $k \in {\rm{SU}}(2)$, so $\phi_{\beta}(k) \in G$. Since $||\cdot||$ is obtained from some $G$-invariant inner product on $V(\varpi_{\alpha})$, and $\mathfrak{n}^{+} \cdot v_{\varpi_{\alpha}}^{+} = \{ 0\}$, we have
\begin{equation}
\int_{\mathbbm{P}_{\beta}^{1}}{\bf{\Omega}}_{\alpha} = \frac{\sqrt{-1}}{2\pi}\int_{\mathbbm{C}}\partial \overline{\partial} \log \Bigg | \Bigg| (\vartheta_{\varpi_{\alpha}} \circ \phi_{\beta}) \begin{pmatrix}(1+|z|^{2})^{\frac{1}{2}} & 0 \\
0 & (1+|z|^{2})^{-\frac{1}{2}}
\end{pmatrix}\Bigg | \Bigg|^{2}.
\end{equation}
Observing that $(\vartheta_{\varpi_{\alpha}} \circ \phi_{\beta})|_{T_{\mathbbm{C}}^{1}}$ is a character of the complex torus $ T_{\mathbbm{C}}^{1} \subset {\rm{SL}}_{2}(\mathbbm{C})$, such that 
\begin{equation}
T_{\mathbbm{C}}^{1} = \Bigg \{ \begin{pmatrix}a & 0 \\
0 & \ \ \ a^{-1}
\end{pmatrix} \in {\rm{SL}}_{2}(\mathbbm{C}) \ \ \Bigg | \ \ a \in \mathbbm{C}^{\times}\Bigg\},
\end{equation}
it follows that
\begin{equation}
(\vartheta_{\varpi_{\alpha}} \circ \phi_{\beta})\begin{pmatrix}a & 0 \\
0 & \ \ \ a^{-1}
\end{pmatrix} = a^{({\rm{d}}\vartheta_{\varpi_{\alpha}})_{e}(\phi_{\beta \ast}\begin{pmatrix}1 & \ \ 0 \\
0 & -1 \end{pmatrix})} = a^{\langle \varpi_{\alpha},\beta^{\vee}\rangle}, \ \ \forall a \in \mathbbm{C}^{\times}.
\end{equation}
Thus, we have
\begin{equation}
\int_{\mathbbm{P}_{\beta}^{1}}{\bf{\Omega}}_{\alpha} =   \frac{\sqrt{-1} \langle \varpi_{\alpha},\beta^{\vee} \rangle}{2\pi}\int_{\mathbbm{C}}\partial \overline{\partial} \log (1+|z|^{2}) = \frac{\sqrt{-1} \langle \varpi_{\alpha},\beta^{\vee} \rangle}{2\pi}\int_{\mathbbm{C}}\frac{{\rm{d}z \wedge {\rm{d}}\overline{z}}}{(1+|z|^{2})^{2}}.
\end{equation}
Considering polar coordinates $(r,\theta)$ on $\mathbbm{C}$, we conclude that 
\begin{equation}
\int_{\mathbbm{P}_{\beta}^{1}}{\bf{\Omega}}_{\alpha} = \frac{\langle \varpi_{\alpha},\beta^{\vee} \rangle}{\pi}\int_{0}^{\infty}\int_{0}^{2\pi} \frac{r{\rm{d}}r \wedge {\rm{d}}\theta}{(1+r^{2})^{2}} = \langle \varpi_{\alpha},\beta^{\vee} \rangle \int_{1}^{\infty} \frac{{\rm{d}}u}{u^{2}} = \langle \varpi_{\alpha},\beta^{\vee} \rangle.
\end{equation}
\end{proof}

 From the ideas described above we have the following result.

\begin{proposition}
\label{C8S8.2Sub8.2.3P8.2.6}
Let $X_{P}$ be a complex flag variety associated with some parabolic Lie subgroup $P = P_{I}$. Then, we have
\begin{equation}
\label{picardeq}
{\text{Pic}}(X_{P}) = H^{1,1}(X_{P},\mathbbm{Z}) = H^{2}(X_{P},\mathbbm{Z}) = \displaystyle \bigoplus_{\alpha \in \Delta \backslash I}\mathbbm{Z}[{\bf{\Omega}}_{\alpha} ].
\end{equation}
\end{proposition}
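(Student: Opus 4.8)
The plan is to establish the chain of equalities in Eq.~(\ref{picardeq}) by combining standard topological facts about rational homogeneous varieties with the explicit invariant representatives ${\bf{\Omega}}_\alpha$ constructed above. First I would recall that $X_P$ admits a Bruhat (Schubert cell) decomposition into even-dimensional affine cells indexed by $W_P \backslash W$, where $W$ is the Weyl group; consequently $X_P$ is a smooth projective rational variety with no odd cohomology and $H^\ast(X_P,\mathbbm{Z})$ is free. In particular, from the degeneration of the Hodge--de Rham spectral sequence (or simply from $h^{0,2} = h^{2,0} = 0$, since $X_P$ is Fano and rationally connected) one gets $H^2(X_P,\mathbbm{Z}) = H^{1,1}(X_P,\mathbbm{Z})$, and since $H^1(X_P,\mathcal{O}_{X_P}) = 0$ the exponential sequence gives the isomorphism ${\rm{Pic}}(X_P) = H^1(X_P,\mathcal{O}_{X_P}^\ast) \cong H^2(X_P,\mathbbm{Z})$. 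This handles the first two equalities and reduces the statement to identifying a $\mathbbm{Z}$-basis.

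Next I would show that the classes $\{[{\bf{\Omega}}_\alpha] \mid \alpha \in \Delta \backslash I\}$ generate and are linearly independent over $\mathbbm{Z}$. The key tool is Lemma~\ref{funddynkinline}, which computes the pairing $\int_{\mathbbm{P}^1_\beta} {\bf{\Omega}}_\alpha = \langle \varpi_\alpha, \beta^\vee\rangle$ for $\beta \in \Phi_I^+$. Restricting the pairing to the subset of curves $\mathbbm{P}^1_\alpha$ with $\alpha \in \Delta \backslash I$ (the simple roots not in $I$), the definition of the fundamental weights gives $\langle \varpi_\alpha, \alpha'^\vee\rangle = \delta_{\alpha\alpha'}$ for $\alpha,\alpha' \in \Delta \backslash I$; hence the matrix pairing $[{\bf{\Omega}}_\alpha]$ against $[\mathbbm{P}^1_{\alpha'}]$ is the identity. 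This immediately forces the $[{\bf{\Omega}}_\alpha]$ to be $\mathbbm{Z}$-linearly independent in $H^2(X_P,\mathbbm{Z})$, and it shows that the homology classes $[\mathbbm{P}^1_\alpha]$, $\alpha \in \Delta \backslash I$, are part of a dual setup. To get that they generate all of $H^2(X_P,\mathbbm{Z})$, I would invoke the classical fact (Chevalley; see e.g.~the references on flag varieties cited in the excerpt) that the Schubert divisor classes $[X_{s_\alpha}]$, $\alpha \in \Delta \backslash I$, form a $\mathbbm{Z}$-basis of $H^2(X_P,\mathbbm{Z}) \cong \operatorname{Pic}(X_P)$, together with the observation that the line bundles $\mathscr{O}_\alpha(1)$ (equivalently the classes $[{\bf{\Omega}}_\alpha]$) are precisely the dual basis: their restrictions to the Schubert curves pair as the identity matrix, so the change-of-basis matrix between $\{[{\bf{\Omega}}_\alpha]\}$ and the Schubert divisor basis has determinant $\pm 1$.

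Alternatively, and perhaps more self-containedly, I would argue: $\operatorname{rank} H^2(X_P,\mathbbm{Z})$ equals $|\Delta \backslash I|$ (the number of $2$-cells in the Bruhat decomposition, which are indexed by minimal-length coset representatives of length one, i.e.\ by $s_\alpha$ with $\alpha \in \Delta\backslash I$); the $|\Delta\backslash I|$ classes $[{\bf{\Omega}}_\alpha]$ are linearly independent by the pairing above, hence form a $\mathbbm{Q}$-basis; and they form a $\mathbbm{Z}$-basis because the pairing with the integral homology classes $[\mathbbm{P}^1_\alpha]$ realizes them as a unimodular subset — any class $\xi \in H^2(X_P,\mathbbm{Z})$ writes as $\xi = \sum_\alpha \langle \xi, [\mathbbm{P}^1_\alpha]\rangle [{\bf{\Omega}}_\alpha]$ with integer coefficients, since subtracting this combination gives a class pairing to zero with a spanning set of $H_2$, hence torsion, hence zero (as $H_2$ is torsion-free). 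I expect the main obstacle to be the last point: one must know that the curve classes $[\mathbbm{P}^1_\alpha]$, $\alpha \in \Delta\backslash I$, actually span $H_2(X_P,\mathbbm{Z})$ (not just $H_2(X_P,\mathbbm{Q})$), which again comes down to the Bruhat decomposition identifying them with the Schubert curve classes that form the dual $\mathbbm{Z}$-basis of $H_2$. Once this integral duality between the ${\bf{\Omega}}_\alpha$'s and the Schubert curves is in hand, Eq.~(\ref{picardeq}) follows.
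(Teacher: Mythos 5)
Your proof is correct, and it reaches the same conclusion through a route that differs from the paper's in how the ambient groups are identified, while sharing the same computational core. The paper's proof (which is only sketched) computes $H_{2}(X_{P},\mathbbm{Z})$ homotopy-theoretically: it uses the fibration $P \hookrightarrow G^{\mathbbm{C}} \to X_{P}$ together with the Levi-type decomposition $P = [P,P]\,T(\Delta\backslash I)^{\mathbbm{C}}$ to get $\pi_{2}(X_{P}) \cong \pi_{1}(T(\Delta\backslash I)^{\mathbbm{C}}) \cong \mathbbm{Z}^{|\Delta\backslash I|}$, then invokes Hurewicz, and finally passes from $H^{2}$ to ${\rm{Pic}}$ via the Lefschetz theorem on $(1,1)$-classes plus ${\rm{rk}}({\rm{Pic}}^{0}(X_{P})) = 0$. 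You instead obtain the structure of $H_{2}(X_{P},\mathbbm{Z})$ from the Bruhat cell decomposition (only even cells, $2$-cells indexed by $r_{\alpha}$, $\alpha \in \Delta\backslash I$, with $X_{P}(r_{\alpha}) = \mathbbm{P}^{1}_{\alpha}$), and you identify ${\rm{Pic}}(X_{P}) \cong H^{2}(X_{P},\mathbbm{Z}) = H^{1,1}(X_{P},\mathbbm{Z})$ via $h^{1,0}=h^{2,0}=0$ and the exponential sequence — essentially the same vanishing the paper hides inside the Lefschetz $(1,1)$ citation, but made explicit. Both arguments then hinge on the same key input, Lemma \ref{funddynkinline}, giving $\int_{\mathbbm{P}^{1}_{\beta}}{\bf{\Omega}}_{\alpha} = \langle\varpi_{\alpha},\beta^{\vee}\rangle = \delta_{\alpha\beta}$ for $\alpha,\beta\in\Delta\backslash I$, which exhibits the $[{\bf{\Omega}}_{\alpha}]$ as the dual $\mathbbm{Z}$-basis to the Schubert curve classes. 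The point you flag as the potential obstacle — that the curves $[\mathbbm{P}^{1}_{\alpha}]$ span $H_{2}(X_{P},\mathbbm{Z})$ integrally — is exactly what your Bruhat-decomposition step settles (and what the paper settles via $\pi_{2}$ and Hurewicz), so there is no gap; your cell-decomposition route arguably makes the freeness and torsion-freeness of $H_{2}$ more transparent, while the paper's homotopy argument is shorter given the Lie-theoretic setup already in place.
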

\begin{proof}

In order to introduce some ideas and some notations, let us sketch the proof. The last equality on the right-hand side of Eq. (\ref{picardeq}) follows from the following facts:

\begin{itemize}

\item[(i)] $\pi_{2}(X_{P}) \cong \pi_{1}(T(\Delta \backslash I)^{\mathbbm{C}}) = \mathbbm{Z}^{|\Delta \backslash I|}$, where $T(\Delta \backslash I)^{\mathbbm{C}}$ is given as in Remark \ref{parabolicdec};

\item[(ii)] Since $X_{P}$ is simply connected, it follows that $H_{2}(X_{P},\mathbbm{Z}) \cong \pi_{2}(X_{P})$ (Hurewicz's theorem);

\item[(iii)] By taking the (Schubert) curves $\mathbbm{P}_{\beta}^{1} \hookrightarrow X_{P}$, such that 
\begin{equation}
\label{Scurve}
\mathbbm{P}_{\alpha}^{1} = \overline{\exp(\mathfrak{g}_{-\beta})x_{0}} \subset X_{P},
\end{equation}
for all $\beta \in \Delta \backslash I$, where $x_{0} = eP \in X_{P}$, it follows from Lemma \ref{funddynkinline} that 
\begin{equation}
\big \langle c_{1}(\mathscr{O}_{\alpha}(1)), [ \mathbbm{P}_{\beta}^{1}] \big \rangle = \displaystyle \int_{\mathbbm{P}_{\beta}^{1}} {\bf{\Omega}}_{\alpha} = \langle \varpi_{\alpha},\beta^{\vee} \rangle = \delta_{\alpha \beta},
\end{equation}
for every $\alpha,\beta \in \Delta \backslash I$. Hence, we obtain
\begin{center}

$\pi_{2}(X_{P}) = \displaystyle \bigoplus_{\alpha \in \Delta \backslash I} \mathbbm{Z} [ \mathbbm{P}_{\alpha}^{1}],$ \ \ and \ \ $H^{2}(X_{P},\mathbbm{Z}) = \displaystyle \bigoplus_{\alpha \in \Delta \backslash I}  \mathbbm{Z} c_{1}(\mathscr{O}_{\alpha}(1))$.
\end{center}
\end{itemize}
Since $c_{1}(\mathscr{O}_{\alpha}(1)) \in H^{1,1}(X_{P},\mathbbm{Z}), \forall \alpha \in \Delta \backslash I$, it follows that $H^{1,1}(X_{P},\mathbbm{Z}) = H^{2}(X_{P},\mathbbm{Z})$. In order to conclude the proof, from the Lefschetz theorem on (1,1)-classes \cite{MR2093043}, and from the fact that ${\text{rk}}({\text{Pic}}^{0}(X_{P})) = 0$, we obtain ${\text{Pic}}(X_{P}) = H^{1,1}(X_{P},\mathbbm{Z})$.
\end{proof}

\begin{remark}
\label{dykinlineroot}
Combining the above result with Lemma \ref{funddynkinline}, we obtain the following
\begin{equation}
[\mathbbm{P}_{\beta}^{1}] = \sum_{\alpha \in \Delta \backslash I}\langle \varpi_{\alpha},\beta^{\vee} \rangle [\mathbbm{P}_{\alpha}^{1}] \ \ \ (\forall \beta \in \Phi_{I}^{+}).
\end{equation}
\end{remark}

\begin{remark}
\label{remark1_1}
In the above setting, we consider the weights of $P = P_{I}$ as being  
\begin{center}
$\displaystyle \Lambda_{P} := \bigoplus_{\alpha \in \Delta \backslash I}\mathbbm{Z}\varpi_{\alpha}$. 
\end{center}
From this, the previous result provides $\Lambda_{P} \cong {\rm{Hom}}(P,\mathbbm{C}^{\times}) \cong {\rm{Pic}}(X_{P})$, such that 
\begin{enumerate}
\item$ \displaystyle \lambda = \sum_{\alpha \in \Delta \backslash I}k_{\alpha}\varpi_{\alpha} \mapsto \prod_{\alpha \in \Delta \backslash I} \vartheta_{\varpi_{\alpha}}^{k_{\alpha}} \mapsto \bigotimes_{\alpha \in \Delta \backslash I} \mathscr{O}_{\alpha}(k_{\alpha})$;
\item $ \displaystyle {\bf{E}} \mapsto \vartheta_{{\bf{E}}}: = \prod_{\alpha \in \Delta \backslash I} \vartheta_{\varpi_{\alpha}}^{\langle c_{1}({\bf{L}}),[\mathbbm{P}^{1}_{\alpha}] \rangle} \mapsto \lambda({\bf{E}}) := \sum_{\alpha \in \Delta \backslash I}\langle c_{1}({\bf{E}}),[\mathbbm{P}^{1}_{\alpha}] \rangle\varpi_{\alpha}$.
\end{enumerate}
Thus, $\forall {\bf{E}} \in {\rm{Pic}}(X_{P})$, we have $\lambda({\bf{E}}) \in \Lambda_{P}$. More generally, $\forall \xi \in H^{1,1}(X_{P},\mathbbm{R})$, we have $\lambda (\xi) \in \Lambda_{P}\otimes \mathbbm{R}$, such that
\begin{equation}
\label{weightcohomology}
\lambda(\xi) := \sum_{\alpha \in \Delta \backslash I}\langle \xi,[\mathbbm{P}^{1}_{\alpha}] \rangle\varpi_{\alpha}.
\end{equation}
From above, for every holomorphic vector bundle ${\bf{E}} \to X_{P}$, we define $\lambda({\bf{E}}) \in \Lambda_{P}$, such that 
\begin{equation}
\label{weightholomorphicvec}
\lambda({\bf{E}}) := \sum_{\alpha \in \Delta \backslash I} \langle c_{1}({\bf{E}}),[\mathbbm{P}_{\alpha}^{1}] \rangle \varpi_{\alpha},
\end{equation}
where $c_{1}({\bf{E}}) = c_{1}(\bigwedge^{r}{\bf{E}})$, such that $r = \rank({\bf{E}})$.
\end{remark}

\begin{remark}[Harmonic 2-forms on $X_{P}$]Given any $G$-invariant Riemannian metric $g$ on $X_{P}$, let us denote by $\mathscr{H}^{2}(X_{P},g)$ the space of real harmonic 2-forms on $X_{P}$ with respect to $g$, and by $\mathscr{I}_{G}^{1,1}(X_{P})$ the space of closed invariant real $(1,1)$-forms. Combining the result of Proposition \ref{C8S8.2Sub8.2.3P8.2.6} with \cite[Lemma 3.1]{MR528871}, we obtain 
\begin{equation}
\mathscr{I}_{G}^{1,1}(X_{P}) = \mathscr{H}^{2}(X_{P},g). 
\end{equation}
Therefore, the closed $G$-invariant real $(1,1)$-forms described in Theorem \ref{AZADBISWAS} are harmonic with respect to any $G$-invariant Riemannian metric on $X_{P}$.
\end{remark}

\begin{remark}[K\"{a}hler cone of $X_{P}$]It follows from Eq. (\ref{picardeq}) and Theorem \ref{AZADBISWAS} that the K\"{a}hler cone of a complex flag variety $X_{P}$ is given explicitly by
\begin{equation}
\mathcal{K}(X_{P}) = \displaystyle \bigoplus_{\alpha \in \Delta \backslash I} \mathbbm{R}^{+}[ {\bf{\Omega}}_{\alpha}].
\end{equation}
\end{remark}

\begin{remark}[Cone of curves of $X_{P}$] It is worth observing that the cone of curves ${\rm{NE}}(X_{P})$ of a flag variety $X_{P}$ is generated by the rational curves $[\mathbbm{P}_{\alpha}^{1}] \in \pi_{2}(X_{P})$, $\alpha \in \Delta \backslash I$, see for instance \cite[\S 18.3]{Timashev} and references therein.
\end{remark}

\begin{proposition}
\label{eigenvalueatorigin}
Let $X_{P}$ be a flag variety and let $\omega_{0}$ be a $G$-invariant K\"{a}hler metric on $X_{P}$. Then, for every closed $G$-invariant real $(1,1)$-form $\psi$, the eigenvalues of the endomorphism $\omega_{0}^{-1} \circ \psi$ are given by 
\begin{equation}
\label{eigenvalues}
{\bf{q}}_{\beta}(\omega_{0}^{-1} \circ \psi) = \frac{ \langle \lambda([\psi]), \beta^{\vee} \rangle}{\langle \lambda([\omega_{0}]), \beta^{\vee} \rangle}, \ \ \beta \in \Phi_{I}^{+}.
\end{equation}

\end{proposition}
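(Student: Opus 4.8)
The plan is to reduce to a single point, namely the origin $x_0 = eP \in X_P$, using $G$-invariance, and then to diagonalize $\omega_0^{-1}\circ\psi$ simultaneously for all invariant forms by exploiting the weight-space decomposition of the isotropy representation on $T_{x_0}X_P$. First I would recall that any $G$-invariant real $(1,1)$-form is determined by its value at $x_0$, and that the complexified tangent space $T_{x_0}^{\mathbbm C}X_P$ decomposes under the action of $T^{\mathbbm C}$ (equivalently, under $\mathfrak h$) as $\bigoplus_{\beta\in\Phi_I^+}(\mathfrak g_{-\beta}\oplus\mathfrak g_{\beta})$, with $T_{x_0}^{1,0}X_P \cong \bigoplus_{\beta\in\Phi_I^+}\mathfrak g_{-\beta}$ (this is exactly the description underlying the opposite big cell in Remark \ref{bigcellcosntruction}). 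The key point is that a closed $G$-invariant real $(1,1)$-form is $T$-invariant, hence its associated Hermitian form on $T_{x_0}^{1,0}X_P$ is diagonal with respect to this root-space decomposition: the off-diagonal blocks pairing $\mathfrak g_{-\beta}$ with $\mathfrak g_{-\gamma}$ for $\beta\neq\gamma$ must vanish because the weights $-\beta-\bar{(-\gamma)} = \gamma-\beta$ are nonzero on $\mathfrak h$. Therefore both $\omega_0$ and $\psi$ are represented at $x_0$ by diagonal Hermitian matrices in this fixed basis, so $\omega_0^{-1}\circ\psi$ is diagonal, with eigenvalue on the line $\mathfrak g_{-\beta}$ equal to the ratio $\psi_\beta/\omega_{0,\beta}$ of the corresponding diagonal entries.

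Next I would identify these diagonal entries with the intersection numbers appearing in the statement. For this, use Lemma \ref{funddynkinline} together with Proposition \ref{C8S8.2Sub8.2.3P8.2.6} and Remark \ref{dykinlineroot}: for any closed $G$-invariant real $(1,1)$-form $\eta$ with cohomology class $[\eta]$, one has
\begin{equation}
\int_{\mathbbm P_\beta^1}\eta = \langle [\eta],[\mathbbm P_\beta^1]\rangle = \langle \lambda([\eta]),\beta^\vee\rangle,
\end{equation}
where the last equality follows by expanding $[\mathbbm P_\beta^1] = \sum_{\alpha\in\Delta\backslash I}\langle\varpi_\alpha,\beta^\vee\rangle[\mathbbm P_\alpha^1]$ and $[\eta] = \sum_\alpha \langle[\eta],[\mathbbm P_\alpha^1]\rangle [{\bf\Omega}_\alpha]$ and pairing, recalling $\langle\varpi_\alpha,\beta^\vee\rangle = \langle\lambda([\eta]),\beta^\vee\rangle$-coefficients via Remark \ref{remark1_1}. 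On the other hand, restricting $\eta$ to the curve $\mathbbm P_\beta^1 = \overline{\exp(\mathfrak g_{-\beta})x_0}$ and computing the integral in the holomorphic coordinate $z$ on $\mathfrak g_{-\beta}$ (as in the proof of Lemma \ref{funddynkinline}, where the Fubini--Study-type integral $\frac{\sqrt{-1}}{2\pi}\int_{\mathbbm C}\partial\overline\partial\log(1+|z|^2) = 1$ is evaluated), one sees that $\int_{\mathbbm P_\beta^1}\eta$ equals exactly the diagonal entry $\eta_\beta$ of $\eta$ at $x_0$ relative to the normalized basis vector in $\mathfrak g_{-\beta}$ — the same normalization being used for both $\omega_0$ and $\psi$. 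Hence $\omega_{0,\beta} = \langle\lambda([\omega_0]),\beta^\vee\rangle$ and $\psi_\beta = \langle\lambda([\psi]),\beta^\vee\rangle$, and dividing gives Eq. (\ref{eigenvalues}).

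I expect the main obstacle to be the bookkeeping of normalizations: one must check that the single $\mathrm{SU}(2)$-normalized basis vector $f_{-\beta}\in\mathfrak g_{-\beta}$ used to read off the diagonal entries of both $\omega_0$ and $\psi$ is the same one whose integral computation yields the factor $1$ in Lemma \ref{funddynkinline}, so that the common normalization constant cancels in the ratio. This is not conceptually deep but requires care: it hinges on the fact that the $G$-invariant Kähler form $\omega_0$ is \emph{itself} built (via Theorem \ref{AZADBISWAS}) from the same potentials $\varphi_{\varpi_\alpha}$ that define the ${\bf\Omega}_\alpha$, so that on the curve $\mathbbm P_\beta^1$ its restriction is literally $\sum_\alpha c_\alpha \cdot (\text{the form computed in Lemma \ref{funddynkinline}})$ with $\langle\lambda([\omega_0]),\beta^\vee\rangle = \sum_\alpha c_\alpha\langle\varpi_\alpha,\beta^\vee\rangle$. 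Once this is in place, since an invariant $(1,1)$-form is harmonic and uniquely determined by its cohomology class (by the remarks following Proposition \ref{C8S8.2Sub8.2.3P8.2.6}), the identification of diagonal entries with the stated pairings is complete, and the eigenvalue formula follows immediately.
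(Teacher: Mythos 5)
Your proposal is correct, and its skeleton matches the paper's proof of Proposition \ref{eigenvalueatorigin}: reduce to the point $x_{0}=eP$ by invariance, note that every $(T^{\mathbbm{C}}\cap G)$-invariant Hermitian form on $T^{1,0}_{x_{0}}X_{P}\cong\bigoplus_{\beta\in\Phi_{I}^{+}}\mathfrak{g}_{-\beta}$ is diagonal in the root-space basis (the paper states exactly this orthogonality for the vectors $Y^{\ast}_{\beta}$), and then identify the diagonal entries of $\omega_{0}$ and $\psi$ in each direction $\beta$ with $\langle\lambda([\omega_{0}]),\beta^{\vee}\rangle$ and $\langle\lambda([\psi]),\beta^{\vee}\rangle$ up to a common constant, so that the ratio gives the eigenvalue. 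The one place where you genuinely diverge is how those diagonal entries are pinned down: the paper quotes the explicit Azad--Biswas value $\mathcal{H}_{\varphi}(Y^{\ast}_{\beta},Y^{\ast}_{\beta})=\sum_{\alpha}\tfrac{c_{\alpha}}{2}\langle\varpi_{\alpha},\beta^{\vee}\rangle$ for the invariant potential of Theorem \ref{AZADBISWAS} and converts the coefficients $c_{\alpha}$ into intersection numbers via $\omega_{\varphi}=\sum_{\alpha}\pi c_{\alpha}{\bf{\Omega}}_{\alpha}$, whereas you restrict the form to the curve $\mathbbm{P}^{1}_{\beta}$, use transitivity of $\phi_{\beta}(\mathrm{SU}(2))$ on the curve (the same structure exploited in the proof of Lemma \ref{funddynkinline}) to see that the restriction is the invariant area form scaled by the diagonal entry, and then read that entry off from Lemma \ref{funddynkinline} and Remark \ref{dykinlineroot}. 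Your route is a bit more self-contained, since it never needs the precise constant from the cited formula, only that the normalization is the same for $\omega_{0}$ and $\psi$; the paper's route is more explicit, producing the actual eigenvalues of each invariant form (Eq. (\ref{eigenvalue})) rather than only their ratios. One small overstatement to fix: $\int_{\mathbbm{P}^{1}_{\beta}}\eta$ does not literally \emph{equal} the pointwise diagonal entry of $\eta$ at $x_{0}$; with the paper's conventions they differ by a normalization factor depending on $\beta$ and on the chosen tangent vector in $\mathfrak{g}_{-\beta}$ (note that $y_{-\beta}$ with $[x_{\beta},y_{-\beta}]=h_{\beta}$ and $f_{-\beta}$ with $[e_{\beta},f_{-\beta}]=\tfrac{2}{\langle\beta,\beta\rangle}h_{\beta}$ are differently normalized). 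Since, as you yourself note, this factor is common to $\omega_{0}$ and $\psi$ for each fixed $\beta$, it cancels in the quotient and your conclusion stands.
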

\begin{proof}
Given a closed $G$-invariant real $(1,1)$-form $\omega \in \Omega^{1,1}(X_{P})^{G}$, it follows from Theorem \ref{AZADBISWAS} that $\omega = \omega_{\varphi}$, such that 
\begin{center}
$\varphi(g) = \displaystyle \sum_{\alpha \in \Delta \backslash I}c_{\alpha}\log \big (||gv_{\varpi_{\alpha}}^{+}|| \big )$, \ \ \ \ $(\forall g \in G^\mathbbm{C})$
\end{center}
where $c_{\alpha} \in \mathbbm{R}$, $\forall \alpha \in \Delta \backslash I$. Let ${\mathcal{H}}_{\varphi}$ be the Hermitian form induced by $\omega_{\varphi}$ on the holomorphic tangent bundle $T^{1,0}X_{P}$, that is, 
\begin{center}
${\mathcal{H}}_{\varphi}(Y,Y) := -\sqrt{-1}\omega_{\varphi}(Y,\overline{Y})$,
\end{center}
for all $Y\in T^{1,0}X_{P}$. Considering the underlying canonical complex structure $J_{0}$ on $X_{P}$, a straightforward computation shows that 
\begin{center}
$\omega_{\varphi}(v,J_{0}v) = {\mathcal{H}}_{\varphi}\big (\frac{1}{2}(v - \sqrt{-1}J_{0}v),\frac{1}{2}(v - \sqrt{-1}J_{0}v) \big ),$ 
\end{center}
for all $\forall v \in TX_{P}$. By considering the coordinate neighborhood $U^{-}(P) \subset X_{P}$ of $x_{0} \in X_{P}$ defined by the opposite big cell (see Eq. (\ref{bigcell})), we obtain a suitable basis for $T_{x_{0}}^{1,0}X_{P}$, given by $Y^{\ast}_{\beta} = \frac{\partial}{\partial z}|_{z = 0}\exp(zy_{-\beta})x_{0}$, $\beta \in \Phi_{I}^{+}$. The vectors $Y_{\beta}^{\ast}$, $\beta \in \Phi_{I}^{+}$, are orthogonal relative to any $(T^{\mathbbm{C}} \cap G)$-invariant Hermitian form. Moreover, we have 
\begin{equation}
{\mathcal{H}}_{\varphi}(Y_{\beta}^{\ast},Y_{\beta}^{\ast}) = \sum_{\alpha \in \Delta \backslash I } \frac{c_{\alpha}}{2} \langle \varpi_{\alpha},\beta^{\vee} \rangle,
\end{equation}
for every $\beta \in \Phi_{I}^{+}$, see for instance \cite{AZAD}. From this, considering the associated dual basis $\theta_{\beta}$, $\beta \in \Phi_{I}^{+}$, at $x_{0} \in X_{P}$, we have 
\begin{equation}
\omega_{\varphi} = \sum_{\beta \in \Phi_{I}^{+}} \frac{\sqrt{-1}}{2} \Bigg (  \sum_{\alpha \in \Delta \backslash I } \frac{c_{\alpha}}{2} \langle \varpi_{\alpha},\beta^{\vee} \rangle\Bigg) \theta_{\beta} \wedge \overline{\theta_{\beta}}.
\end{equation}
On the other hand, from Proposition \ref{picardeq}, a straightforward computation shows that 
\begin{equation}
\omega_{\varphi} = \sum_{\alpha \in \Delta \backslash I} \pi c_{\alpha} {\bf{\Omega}}_{\alpha},
\end{equation}
such that $ c_{\alpha} =  \frac{\langle [\omega_{\varphi}],[\mathbbm{P}_{\alpha}^{1}] \rangle}{\pi}$, $ \forall \alpha \in  \Delta \backslash I$. Joining the above descriptions, we conclude that the eigenvalues of the endomorphism associated with $\omega_{\varphi}$ at $x_{0} \in X_{P}$ are given by 
\begin{equation}
\label{eigenvalue}
{\bf{q}}_{\beta}(\omega_{\varphi}) = - \sum_{\alpha \in \Delta \backslash I } \frac{\big \langle [\omega_{\varphi}],[\mathbbm{P}_{\alpha}^{1}] \big \rangle}{4\pi \sqrt{-1}} \langle \varpi_{\alpha},\beta^{\vee} \rangle = -\frac{\langle\lambda([\omega_{\varphi}]),\beta^{\vee}\rangle}{4\pi \sqrt{-1}} ,
\end{equation}
for every $\beta \in \Phi_{I}^{+}$, such that $\lambda([\omega_{\varphi}])\in \Lambda_{P}\otimes \mathbbm{R}$ (see Eq (\ref{weightcohomology})). Since $\omega_{\varphi}$ is $G$-invariant, it follows that the eigenvalues of the endomorphism $\omega_{\varphi} \colon T^{1,0}X_{P} \to T^{1,0}X_{P}$ are constant. Therefore, given some $G$-invariant K\"{a}hler metric $\omega_{0}$, for every closed $G$-invariant real $(1,1)$-form $\psi$, it follows that the eigenvalues of the endomorphism $\omega_{0}^{-1} \circ \psi \colon T^{1,0}X_{P} \to T^{1,0}X_{P}$ are given by
\begin{equation}
\label{eigenvalue2}
{\bf{q}}_{\beta}(\omega_{0}^{-1} \circ \psi) = \frac{{\bf{q}}_{\beta}(\psi)}{{\bf{q}}_{\beta}(\omega_{0})} = \frac{\langle \lambda([\psi]), \beta^{\vee} \rangle}{\langle \lambda([\omega_{0}]), \beta^{\vee} \rangle} ,
\end{equation}
for every $\beta \in \Phi_{I}^{+}$, which concludes the proof.
\end{proof}

\begin{remark}
\label{primitivecalc}
In the setting of the last proposition, since $n \psi\wedge \omega_{0}^{n-1} = \Lambda_{\omega_{0}}(\psi)\omega_{0}^{n}$, such that $n=\dim_{\mathbbm{C}}(X_{P})$, and $\Lambda_{\omega_{0}}(\psi)={\rm{tr}}(\omega_{0}^{-1} \circ \psi)$, it follows that
\begin{equation}
\label{contractiongenerators}
\Lambda_{\omega_{0}}({\bf{\Omega}}_{\alpha})=\sum_{\beta \in \Phi_{I}^{+}} \frac{\langle \varpi_{\alpha}, \beta^{\vee} \rangle}{\langle \lambda([\omega_{0}]), \beta^{\vee}\rangle},
\end{equation}
for every $\alpha \in \Delta \backslash I$. In particular, for every ${\bf{E}} \in {\rm{Pic}}(X_{P})$, we have a Hermitian structure ${\bf{h}}$, such that the curvature $F_{\nabla}$ of the Chern connection $\nabla \myeq {\rm{d}} + \partial \log ({\bf{h}})$, satisfies 
\begin{equation}
\frac{\sqrt{-1}}{2\pi} \Lambda_{\omega_{0}}(F_{\nabla}) = \sum_{\beta \in \Phi_{I}^{+} } \frac{\langle \lambda({\bf{E}}), \beta^{\vee} \rangle}{\langle \lambda([\omega_{0}]), \beta^{\vee}\rangle}.
\end{equation}
From this, we have that $\nabla$ is a Hermitian-Yang-Mills (HYM) connection. Notice that 
\begin{equation}
c_{1}({\bf{E}}) = \sum_{\alpha \in \Delta \backslash I}\langle \lambda({\bf{E}}), \alpha^{\vee} \rangle [{\bf{\Omega}}_{\alpha}],
\end{equation}
for every ${\bf{E}} \in {\rm{Pic}}(X_{P})$
\end{remark}

\begin{remark}
\label{origineigenvalues}
In the setting of the proof of Proposition \ref{eigenvalueatorigin}, if we consider 
\begin{equation}
\zeta_{\beta} := \sqrt{\frac{\langle \lambda([\omega_{0}]), \beta^{\vee}\rangle}{2\pi}} \theta_{\beta}, \ \ \beta \in \Phi_{I}^{+},
\end{equation}
we obtain obtain from the previous result the following description
\begin{equation}
 \omega_{0} = \sum_{\beta \in \Phi_{I}^{+}} \frac{\sqrt{-1}}{2} \zeta_{\beta} \wedge \overline{\zeta_{\beta}} \ \ \ \ {\text{and}} \ \ \ \ \displaystyle \psi = \sum_{\beta \in \Phi_{I}^{+}} \frac{\sqrt{-1}}{2} {\bf{q}}_{\beta}(\omega_{0}^{-1} \circ \psi)\zeta_{\beta} \wedge \overline{\zeta_{\beta}},
\end{equation}
 for every closed $G$-invariant real $(1,1)$-form $\psi \in \Omega^{1,1}(X_{P})$.
\end{remark}

\subsection{The first Chern class of flag varieties} In this subsection, we shall review some basic facts related with the Ricci form of $G$-invariant K\"{a}hler metrics on flag varieties. Let $X_{P}$ be a complex flag variety associated with some parabolic Lie subgroup $P = P_{I} \subset G^{\mathbbm{C}}$. By considering the identification $T_{x_{0}}^{1,0}X_{P} \cong \mathfrak{m} \subset \mathfrak{g}^{\mathbbm{C}}$, such that 

\begin{center}
$\mathfrak{m} = \displaystyle \sum_{\alpha \in \Phi_{I}^{-}} \mathfrak{g}_{\alpha}$,
\end{center}
 we can realize $T^{1,0}X_{P}$ as being a holomoprphic vector bundle, associated with the $P$-principal bundle $P \hookrightarrow G^{\mathbbm{C}} \to X_{P}$, given by

\begin{center}

$T^{1,0}X_{P} = \Big \{(U_{i})_{i \in J}, \underline{{\rm{Ad}}}\circ \psi_{i j} \colon U_{i} \cap U_{j} \to {\rm{GL}}(\mathfrak{m}) \Big \}$,

\end{center}
where $\underline{{\rm{Ad}}} \colon P \to {\rm{GL}}(\mathfrak{m})$ is the isotropy representation. From this, we obtain 
\begin{equation}
\label{canonicalbundleflag}
{\bf{K}}_{X_{P}}^{-1} = \det \big(T^{1,0}X_{P} \big) = \Big \{(U_{i})_{i \in J}, \det (\underline{{\rm{Ad}}}\circ \psi_{i j}) \colon U_{i} \cap U_{j} \to \mathbbm{C}^{\times} \Big \}.
\end{equation}
Since $P= [P,P] T(\Delta \backslash I)^{\mathbbm{C}}$, regarding $\det \circ \underline{{\rm{Ad}}} \in {\text{Hom}}(T(\Delta \backslash I)^{\mathbbm{C}},\mathbbm{C}^{\times})$, we have 
\begin{equation}
\det \underline{{\rm{Ad}}}(\exp({\bf{t}})) = {\rm{e}}^{{\rm{tr}}({\rm{ad}}({\bf{t}})|_{\mathfrak{m}})} = {\rm{e}}^{- \langle \delta_{P},{\bf{t}}\rangle },
\end{equation}
$\forall {\bf{t}} \in {\rm{Lie}}(T(\Delta \backslash I)^{\mathbbm{C}})$, such that $\delta_{P} = \sum_{\alpha \in \Phi_{I}^{+} } \alpha$. Denoting $\vartheta_{\delta_{P}}^{-1} = \det \circ \underline{{\rm{Ad}}}$, it follows that 
\begin{equation}
\label{charactercanonical}
\vartheta_{\delta_{P}} = \displaystyle \prod_{\alpha \in \Delta \backslash I} \vartheta_{\varpi_{\alpha}}^{\langle \delta_{P},\alpha^{\vee} \rangle} \Longrightarrow {\bf{K}}_{X_{P}}^{-1} = \bigotimes_{\alpha \in \Delta \backslash I}\mathscr{O}_{\alpha}(\ell_{\alpha}),
\end{equation}
such that $\ell_{\alpha} = \langle \delta_{P}, \alpha^{\vee} \rangle, \forall \alpha \in \Delta \backslash I$. Notice that $\lambda({\bf{K}}_{X_{P}}^{-1}) = \delta_{P}$, see Eq. (\ref{weightholomorphicvec}). If we consider the invariant K\"{a}hler metric $\rho_{0} \in \Omega^{1,1}(X_{P})^{G}$, describe by
\begin{equation}
\label{riccinorm}
\rho_{0} = \sum_{\alpha \in \Delta \backslash I}2 \pi \langle \delta_{P}, \alpha^{\vee} \rangle {\bf{\Omega}}_{\alpha},
\end{equation}
it follows that
\begin{equation}
\label{ChernFlag}
c_{1}(X_{P}) = \Big [ \frac{\rho_{0}}{2\pi}\Big].
\end{equation}
By the uniqueness of $G$-invariant representative of $c_{1}(X_{P})$, we have 
\begin{center}
${\rm{Ric}}(\rho_{0}) = \rho_{0}$, 
\end{center}
i.e., $\rho_{0} \in \Omega^{1,1}(X_{P})^{G}$ defines a $G$-ivariant K\"{a}hler-Einstein metric on $X_{P}$ (cf. \cite{MATSUSHIMA}). 
\begin{remark}
Given any $G$-invariant K\"{a}hler metric $\omega$ on $X_{P}$, we have ${\rm{Ric}}(\omega) = \rho_{0}$. Thus, it follows that the smooth function $\frac{\det(\omega)}{\det(\rho_{0})}$ is constant. From this, we obtain
\begin{equation}
{\rm{Vol}}(X_{P},\omega) = \frac{1}{n!}\int_{X_{P}}\omega^{n}  =  \frac{\det(\rho_{0}^{-1} \circ \omega)}{n!} \int_{X_{P}}\rho_{0}^{n}.
\end{equation}
Since $\det(\rho_{0}^{-1} \circ \omega) = \frac{1}{(2\pi)^{n}} \prod_{\beta \in \Phi_{I}^{+}}\frac{\langle \lambda([\omega]),\beta^{\vee} \rangle}{\langle \delta_{P},\beta^{\vee} \rangle}$ and $\frac{1}{n!}\int_{X_{P}}c_{1}(X_{P})^{n} = \prod_{\beta \in \Phi_{I}^{+}} \frac{\langle \delta_{P},\beta^{\vee} \rangle}{\langle \varrho^{+},\beta^{\vee}\rangle}$, we conclude that\footnote{cf. \cite{AZAD}.} 
\begin{equation}
\label{VolKahler}
{\rm{Vol}}(X_{P},\omega) = \prod_{\beta \in \Phi_{I}^{+}} \frac{\langle \lambda([\omega]),\beta^{\vee} \rangle}{\langle \varrho^{+},\beta^{\vee} \rangle},
\end{equation}
where $\varrho^{+} = \frac{1}{2} \sum_{\alpha \in \Phi^{+}}\alpha$. Combining the above formula with the ideas introduced in Remark \ref{primitivecalc} we obtain the following expression for the degree of a holomorphic vector bundle ${\bf{E}} \to X_{P}$ with respect to some $G$-invariant K\"{a}hler metric $\omega$ on $X_{P}$:
\begin{equation}
\label{degreeVB}
\deg_{\omega}({\bf{E}}) = \int_{X_{P}}c_{1}({\bf{E}}) \wedge [\omega]^{n-1} = (n-1)!\Bigg [\sum_{\beta \in \Phi_{I}^{+} } \frac{\langle \lambda({\bf{E}}), \beta^{\vee} \rangle}{\langle \lambda([\omega]), \beta^{\vee}\rangle}\Bigg ] \Bigg [\prod_{\beta \in \Phi_{I}^{+}} \frac{\langle \lambda([\omega]),\beta^{\vee} \rangle}{\langle \varrho^{+},\beta^{\vee} \rangle} \Bigg ],
\end{equation}
such that $\lambda({\bf{E}}) \in \Lambda_{P}$, and $\lambda([\omega]) \in \Lambda_{P} \otimes \mathbbm{R}$. It is worth to point out that, as far as the author is aware about, the explicit formula for the degree of a holomorphic vector bundle over complex flag variety provided above is new in the literature. For more details and further results, see the proof of Theorem \ref{TheoA1}.

\end{remark}

\subsection{Schubert cycles and divisors} 
\label{divisorsandcycles}
The aim of this subsection is to recall some general well-known facts on Schubert cycles and their relationship with divisors and line bundles. The details about the facts covered in this subsection can be found in \cite{BernsteinGelfand}, \cite{FultonWoodward}, \cite{Brion}, \cite{Popov} see also \cite[\S 17 and \S 18]{Timashev}. 

Following the notation of the previous sections, for every $\alpha \in \Phi^{+}$, consider the root reflection $r_{\alpha} \colon \mathfrak{h}^{\ast} \to \mathfrak{h}^{\ast}$, defined by
\begin{equation}
r_{\alpha}(\phi) = \phi - \langle \phi,h_{\alpha}^{\vee} \rangle \alpha, \ \ \ \ \forall \phi \in \mathfrak{h}^{\ast}.
\end{equation}
From above, the Weyl group associated with the root system $\Phi$ is defined by 
\begin{center}
$\mathscr{W} = \big \langle r_{\alpha} \ | \ \alpha \in \Delta \big \rangle$.
\end{center}
Under the identification $\mathscr{W} \cong N_{G^{\mathbbm{C}}}(T^{\mathbbm{C}})/T^{\mathbbm{C}}$, by abuse of notation, for any $w \in \mathscr{W}$, we still denote by $w \in G^{\mathbbm{C}}$ one of its representative in $G^{\mathbbm{C}}$. Given a parabolic subgroup $P = P_{I} \subset G^{\mathbbm{C}}$, we denote by $\mathscr{W}_{P}$ the subgroup of $\mathscr{W}$ generated by the reflections $r_{\alpha}$, $\alpha \in I$, and by $\mathscr{W}^{P}$ the quotient $\mathscr{W}/\mathscr{W}_{P}$. Also, we identify $\mathscr{W}^{P}$ with the set of minimal length representatives in $\mathscr{W}$. By considering the $B$-orbit $Bwx_{0} \subset X_{P}$ (Bruhat cell), for every $w \in \mathscr{W}^{P}$, we have a cellular decomposition for $X_{P}$ given by
\begin{equation}
X_{P} = \coprod_{w \in \mathscr{W}^{P}}Bwx_{0}, \ \ \ ({\text{Bruhat decomposition}})
\end{equation}
In the above decomposition we have $Bwx_{0} \cong \mathbbm{C}^{\ell(w)}$, for every $w \in \mathscr{W}^{P}$, where $\ell(w)$ is the length\footnote{$\ell(w)$ denotes the length of a reduced (i.e. minimal) decomposition of $w$ as a product of simple reflections, e.g. \cite{Humphreys}.} of $w \in \mathscr{W}^{P}$. The Schubert varieties are defined by the closure of the above cells; we denote them by $X_{P}(w) = \overline{Bwx_{0}}$, $\forall w \in \mathscr{W}^{P}$. Notice that $\mathbbm{P}_{\alpha}^{1} = X_{P}(r_{\alpha})$, $\forall \alpha \in \Delta \backslash I$ (cf. Eq. (\ref{Scurve})). Similarly, we let $Y_{P}(w) = \overline{B^{-}wx_{0}}$ be the opposite Schubert variety associated with $w \in \mathscr{W}^{P}$; it is a variety of codimension $\ell(w)$, and denoting by $w_{0} \in \mathscr{W}$ the element of maximal length, it follows that $Y_{P}(w) = w_{0}X_{P}(w_{0}w)$, for all $w \in \mathscr{W}^{P}$. For the sake of simplicity, we shall denote $w^{\vee} := w_{0}w$, for all $w \in \mathscr{W}^{P}$. The irreducible $B$-stable divisors of $X_{P}$ are the Schubert varieties of codimension 1 (Schubert divisors). We shall denote them by
\begin{equation}
D_{\alpha} := X_{P}(r_{\alpha}^{\vee}) = w_{0}Y_{P}(r_{\alpha}), \ \ \ \ \forall \alpha \in \Delta \backslash I.
\end{equation}
By considering the homology cycles induced, respectively, by the Schubert varieties and opposite Schubert varieties, it follows that 
\begin{center}
$[X_{P}(w)] \in H_{2\ell(w)}(X_{P}, \mathbbm{Z})$ \ \ \ \ \ and \ \ \ \ \ $[Y_{P}(w)] = [X_{P}(w^{\vee})] \in H_{2n-2\ell(w)}(X_{P}, \mathbbm{Z})$,
\end{center}
$\forall w \in \mathscr{W}^{P}$. From above, we have the following well-known facts:
\begin{enumerate}
\item[(i)] $H_{\bullet}(X_{P},\mathbbm{Z}) = \bigoplus_{w \in \mathscr{W}^{P}}\mathbbm{Z}[X_{P}(w)]$;

\item[(ii)] If $w,u \in \mathscr{W}^{P}$, such that $\ell(w) = \ell(u)$, then $[Y_{P}(w)] \cdot [X_{P}(u)] = \delta_{wu}$;

\item[(iii)] Considering $[Y_{P}(w)] \in {\rm{Hom}}(H_{2\ell(w)}(X_{P}, \mathbbm{Z});\mathbbm{Z})$, $\forall w \in \mathscr{W}^{P}$, we have 
\begin{equation}
H^{\bullet}(X_{P},\mathbbm{Z}) = \bigoplus_{w \in \mathscr{W}^{P}}\mathbbm{Z}[Y_{P}(w)];
\end{equation}
\item[(iv)] The divisor group ${\rm{Div}}(X_{P}) = H^{0}(X_{P},\mathcal{M}_{X_{P}}^{\ast}/\mathcal{O}_{X_{P}}^{\ast})$ is freely generated by the Schubert divisors;

\item[(v)]Under the map $\mathcal{O} \colon {\rm{Div}}(X_{P}) \to {\rm{Pic}}(X_{P})$, $D \mapsto \mathcal{O}(D)$, we have $\mathcal{O}(D_{\alpha}) = \mathscr{O}_{\alpha}(1)$, $\forall \alpha \in \Delta \backslash I$;
\item[(vi)] Considering the divisor class group\footnote{The symbol ``$\sim$" stands for linear equivalence. Notice that, since $H^{2}(X_{P},\mathbbm{Z})$ is torsion-free, from Lefschetz theorem on $(1,1)$-classes we have that numerically equivalent divisors are in fact linearly equivalent, see for instance \cite{Lazarsfeld}.} ${\rm{Cl}}(X_{P}) = {\rm{Div}}(X_{P})/\sim$, it follows that 
\begin{equation}
{\rm{Cl}}(X_{P}) = \bigoplus_{\alpha \in \Delta \backslash I}\mathbbm{Z}[D_{\alpha}].
\end{equation}
\end{enumerate}
\begin{remark}
From above, given $[D] \in {\rm{Cl}}(X_{P})$, we have $D \sim \sum_{\alpha \in \Delta \backslash I}(D \cdot \mathbbm{P}_{\alpha}^{1})D_{\alpha}$, where $(D \cdot \mathbbm{P}_{\alpha}^{1}) := [D] \cdot [\mathbbm{P}_{\alpha}^{1}], \forall \alpha \in  \Delta \backslash I$. Thus, we obtain a group isomorphism ${\text{Hom}}(P,\mathbbm{C}^{\times}) \cong {\rm{Cl}}(X_{P})$, such that
\begin{equation}
\label{characterdivisor}
\vartheta \mapsto  [D_{\vartheta}] :=  \sum_{\alpha \in \Delta \backslash I}\langle \vartheta,\alpha^{\vee}\rangle[D_{\alpha}] , \ \ \ \ [D]  \mapsto \vartheta_{D} := \prod_{\alpha \in \Delta \backslash I} \vartheta_{\varpi_{\alpha}}^{(D \cdot \mathbbm{P}_{\alpha}^{1})},
\end{equation}
for all $\vartheta \in {\text{Hom}}(P,\mathbbm{C}^{\times})$, and for all $[D] \in {\rm{Cl}}(X_{P})$, where $\langle \vartheta,\alpha^{\vee}\rangle := \langle ({\rm{d}}\vartheta)_{e},\alpha^{\vee} \rangle$, $\forall \alpha \in \Delta \backslash I$. 

\end{remark}
\begin{remark}
As in the case of holomorphic line bundles, one can attach to each divisor class $[D] \in {\rm{Cl}}(X_{P})$ a weight $\lambda_{D}:= \lambda(\mathcal{O}(D)) \in \Lambda_{P}$. From this, we obtain the following expression for the degree of a divisor $D \in {\rm{Div}}(X_{P})$ with respect to some $G$-invariant K\"{a}hler metric $\omega$ on $X_{P}$:
\begin{equation}
\label{degreeDiv}
\deg_{\omega}(D) = \int_{D}\omega^{n-1} = (n-1)!\Bigg [\sum_{\beta \in \Phi_{I}^{+} } \frac{\langle \lambda_{D}, \beta^{\vee} \rangle}{\langle \lambda([\omega]), \beta^{\vee}\rangle}\Bigg ] \Bigg [\prod_{\beta \in \Phi_{I}^{+}} \frac{\langle \lambda([\omega]),\beta^{\vee} \rangle}{\langle \varrho^{+},\beta^{\vee} \rangle} \Bigg ],
\end{equation}
such that $\lambda([\omega]) \in \Lambda_{P} \otimes \mathbbm{R}$. Notice that, in the particular case that $D \in {\rm{Div}}(X_{P})$ is very ample, one can choose $[\omega] = c_{1}(\mathcal{O}(D))$ so that $\lambda_{D} = \lambda([\omega])$. Thus, in this last case we obtain the well-known formula for the degree of the induced projectve embedding $X_{P} \hookrightarrow \mathbbm{P}(V(\lambda_{D}))$, e.g. \cite[Example 18.13]{Timashev}. It is worth mentioning that, as far as the author is aware about, the explicit formula for the degree of an arbitrary divisor with respect to an arbitrary $G$-invariant K\"{a}hler metric on a flag variety as above is new in the literature, cf. \cite{BorelHizebruch}, \cite[Example 18.13]{Timashev}.
\end{remark}

\section{Proof of main results}
In this section, we shall restate and prove the main results presented in the introduction. Let us start by proving Theorem \ref{theoremA}.

\begin{theorem}
\label{ProofTheo1}
Given a  K\"{a}hler class $[\omega] \in \mathcal{K}(X_{P})$, then for every $[\psi] \in H^{1,1}(X_{P},\mathbbm{R})$ we have 
\begin{equation}
\label{liftphase}
\hat{\Theta}: = {\rm{Arg}} \int_{X_{P}}\frac{(\omega + \sqrt{-1}\psi)^{n}}{n!} = \sum_{\beta \in \Phi_{I}^{+}} \arctan \bigg( \frac{\langle \lambda([\psi]),\beta^{\vee} \rangle}{\langle \lambda([\omega]),\beta^{\vee} \rangle}\bigg) \mod 2\pi,
\end{equation}
such that $\lambda([\psi]), \lambda([\omega_{0}]) \in \Lambda_{P} \otimes \mathbbm{R}$. In particular, fixed the unique $G$-invariant representative $\omega_{0} \in [\omega]$, there exists $\phi \in C^{\infty}(X_{P})$, such that $\chi_{\phi}:= \psi + \sqrt{-1}\partial \overline \partial \phi$ satisfies the deformed Hermitian Yang-Mills equation
\begin{equation}
\label{DHYMeqTeo}
{\rm{Im}}\big ( \omega_{0} + \sqrt{-1}\chi_{\phi}\big)^{n} = \tan(\hat{\Theta}) {\rm{Re}}\big ( \omega_{0} + \sqrt{-1}\chi_{\phi}\big)^{n}.
\end{equation}
\end{theorem}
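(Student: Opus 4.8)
The plan is to reduce the whole statement to a pointwise linear-algebra computation, using the simultaneous diagonalization available on $X_{P}$. Since $\hat{\Theta}$ depends only on the cohomology classes $[\omega]$ and $[\psi]$, I would first replace $\psi$ by the unique $G$-invariant representative $\psi_{0} \in [\psi]$ (obtained by averaging over the compact group $G$, or equivalently via Theorem \ref{AZADBISWAS}), and likewise use $\omega_{0} \in [\omega]$. By Proposition \ref{eigenvalueatorigin}, the endomorphism $\omega_{0}^{-1}\circ\psi_{0}$ then has the constant eigenvalues ${\bf{q}}_{\beta} = \langle \lambda([\psi]),\beta^{\vee}\rangle / \langle \lambda([\omega]),\beta^{\vee}\rangle$, $\beta \in \Phi_{I}^{+}$; these are well defined because $[\omega] \in \mathcal{K}(X_{P})$ forces $\langle \lambda([\omega]),\beta^{\vee}\rangle = \langle [\omega],[\mathbbm{P}^{1}_{\beta}]\rangle > 0$ for every effective Schubert-line class $[\mathbbm{P}^{1}_{\beta}]$.

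Next I would carry out the integral. As top-degree forms one has $(\omega_{0} + \sqrt{-1}\psi_{0})^{n} = \det\!\big(\mathbbm{1} + \sqrt{-1}(\omega_{0}^{-1}\circ\psi_{0})\big)\,\omega_{0}^{n}$, and since $n = |\Phi_{I}^{+}| = \dim_{\mathbbm{C}}(X_{P})$ and the eigenvalues are constant,
\[
\int_{X_{P}}\frac{(\omega_{0}+\sqrt{-1}\psi_{0})^{n}}{n!} = \Bigg[\prod_{\beta \in \Phi_{I}^{+}}\big(1 + \sqrt{-1}\,{\bf{q}}_{\beta}\big)\Bigg]\,{\rm{Vol}}(X_{P},\omega_{0}).
\]
Because $1 + \sqrt{-1}t \neq 0$ for every $t \in \mathbbm{R}$, this already shows that $Z_{[\omega]} \neq 0$ automatically on a flag variety. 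Taking principal arguments, using that ${\rm{Arg}}(1+\sqrt{-1}t) = \arctan t$ (as ${\rm{Re}}(1+\sqrt{-1}t) = 1 > 0$) together with ${\rm{Vol}}(X_{P},\omega_{0}) > 0$, yields Eq. (\ref{liftphase}).

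For the existence statement, the $\partial\overline{\partial}$-lemma on the compact Kähler manifold $X_{P}$ provides $\phi \in C^{\infty}(X_{P})$ with $\psi + \sqrt{-1}\partial\overline{\partial}\phi = \psi_{0}$, so $\chi_{\phi} = \psi_{0}$ is $G$-invariant. Writing $W := \prod_{\beta\in\Phi_{I}^{+}}(1+\sqrt{-1}{\bf{q}}_{\beta}) = |W|\,{\rm{e}}^{\sqrt{-1}\hat{\Theta}}$, the computation above gives $(\omega_{0}+\sqrt{-1}\chi_{\phi})^{n} = W\,\omega_{0}^{n}$ pointwise, hence ${\rm{Re}}(\omega_{0}+\sqrt{-1}\chi_{\phi})^{n} = |W|\cos\hat{\Theta}\,\omega_{0}^{n}$ and ${\rm{Im}}(\omega_{0}+\sqrt{-1}\chi_{\phi})^{n} = |W|\sin\hat{\Theta}\,\omega_{0}^{n}$, which is precisely Eq. (\ref{DHYMeqTeo}) whenever $\cos\hat{\Theta} \neq 0$; in the degenerate case $\cos\hat{\Theta}=0$ one uses instead the equivalent Lagrangian-phase formulation Eq. (\ref{dHYMeq2v}), which $\chi_{\phi}=\psi_{0}$ satisfies by the same eigenvalue computation.

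I expect the only genuinely delicate point to be bookkeeping rather than substance: the heavy lifting (simultaneous diagonalizability of any invariant $(1,1)$-form against an invariant Kähler metric, with constant eigenvalues expressed through the pairings with the Schubert lines $\mathbbm{P}^{1}_{\beta}$) is already packaged in Proposition \ref{eigenvalueatorigin} and Remark \ref{origineigenvalues}. One should be careful to (i) justify passing to the invariant representative $\psi_{0}$ for a class that is merely real and not Kähler, which is where the $\partial\overline{\partial}$-lemma enters, and (ii) track the branch of $\arctan$ and ${\rm{Arg}}$, noting that no $\mathrm{mod}\ 2\pi$ ambiguity arises in each individual factor precisely because the real part of each $1+\sqrt{-1}{\bf{q}}_{\beta}$ is positive --- the feature that later produces the clean unique lift in the corollary.
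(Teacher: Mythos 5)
Your proposal is correct and follows essentially the same route as the paper: pass to $G$-invariant representatives, use the constant eigenvalues ${\bf{q}}_{\beta}$ from Proposition \ref{eigenvalueatorigin} and Remark \ref{origineigenvalues} to get $(\omega_{0}+\sqrt{-1}\chi)^{n}=\prod_{\beta}(1+\sqrt{-1}{\bf{q}}_{\beta})\,\omega_{0}^{n}$, integrate, and read off the argument factorwise. Your two extra remarks (justifying the invariant representative of a merely real class via averaging/the $\partial\overline{\partial}$-lemma, and handling the degenerate case $\cos\hat{\Theta}=0$ through the Lagrangian-phase formulation) are welcome refinements of details the paper leaves implicit, not a different argument.
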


\begin{proof}
Since $\hat{\Theta}$ depends only on the cohomology classes $[\omega]$ and $[\psi]$, we can consider $G$-invariant representatives $\omega_{0} \in [\omega]$ and $\chi \in [\psi]$. From this, at $x_{0} = eP \in X_{P}$, we have 
\begin{equation}
 \omega_{0} = \sum_{\beta \in \Phi_{I}^{+}} \frac{\sqrt{-1}}{2} \zeta_{\beta} \wedge \overline{\zeta_{\beta}} \ \ \ \ {\text{and}} \ \ \ \ \displaystyle \chi = \sum_{\beta \in \Phi_{I}^{+}} \frac{\sqrt{-1}}{2} {\bf{q}}_{\beta}(\omega_{0}^{-1} \circ \chi)\zeta_{\beta} \wedge \overline{\zeta_{\beta}},
\end{equation}
see Remark \ref{origineigenvalues}. Hence, we obtain
\begin{equation}
\label{lagrangianphase}
(\omega_{0} + \sqrt{-1}\chi)^{n} = \prod_{\beta \in \Phi_{I}^{+}} \Big ( 1 + \sqrt{-1}{\bf{q}}_{\beta}(\omega_{0}^{-1} \circ \chi)\Big )\omega_{0}^{n} = r_{\omega_{0}}(\chi){\rm{e}}^{\sqrt{-1}\Theta_{\omega_{0}}(\chi)}\omega_{0}^{n},
\end{equation}
such that 
\begin{enumerate}
\item[(a)] $\displaystyle r_{\omega_{0}}(\chi) = \prod_{\beta \in \Phi_{I}^{+}}\sqrt{\Big(1 + {\bf{q}}_{\beta}(\omega_{0}^{-1} \circ \chi)^{2}\Big )}$;
\item[(b)] $\displaystyle \Theta_{\omega_{0}}(\chi) = \sum_{\beta \in \Phi_{I}^{+}} \arctan \Big({\bf{q}}_{\beta}(\omega_{0}^{-1} \circ \chi)\Big ) \mod 2\pi.$
\end{enumerate}
Since the eigenvalues ${\bf{q}}_{\beta}(\chi)$ and ${\bf{q}}_{\beta}(\omega_{0})$ are constant, $\forall \beta \in \Phi_{I}^{+}$, we conclude that 
\begin{equation}
\int_{X_{P}}\frac{(\omega_{0} + \sqrt{-1}\chi)^{n}}{n!} =  r_{\omega_{0}}(\chi){\rm{e}}^{\sqrt{-1}\Theta_{\omega_{0}}(\chi)}{\rm{Vol}}(X_{P},\omega_{0})\neq 0.
\end{equation}
Therefore, we have
\begin{equation}
{\rm{Arg}} \int_{X_{P}}\frac{(\omega + \sqrt{-1}\psi)^{n}}{n!} = \sum_{\beta \in \Phi_{I}^{+}} \arctan \bigg( \frac{\langle \lambda([\chi]),\beta^{\vee} \rangle}{\langle \lambda([\omega_{0}]),\beta^{\vee} \rangle}\bigg) \mod 2\pi.
\end{equation}
Since $\lambda([\chi]) = \lambda([\psi])$ and $\lambda([\omega_{0}]) = \lambda([\omega])$, we obtain the desired equality. For the second part, we just need to observe that $\chi = \psi + \sqrt{-1}\partial \overline{\partial} \phi$, for some $\phi \in C^{\infty}(X_{P})$. Since Eq. (\ref{DHYMeqTeo}) is equivalent to $\Theta_{\omega_{0}}(\chi) = \hat{\Theta} \mod 2\pi$, we conclude the proof.
\end{proof}

By keeping the notation of the above result, it follows that 
\begin{equation}
{\rm{Re}} \big ( {\rm{e}}^{\sqrt{-1} \hat{\Theta}}(\omega_{0} + \sqrt{-1} \chi)^{n} \big) = \bigg (\prod_{\beta \in \Phi_{I}^{+}}\sqrt{\Big(1 + {\bf{q}}_{\beta}(\omega_{0}^{-1} \circ \chi)^{2}\Big )} \bigg) \omega_{0}^{n} > 0,
\end{equation}
where $\chi \in [\psi]$ is the unique $G$-invariant representative. Hence, following \cite{Collins2018deformed, Collins2020stability}, we obtain the following corollary.
\begin{corollary}
In the setting of the previous theorem, given $[\psi] \in H^{1,1}(X_{P},\mathbbm{R})$, considering $\eta_{\phi} = \psi + \sqrt{-1}\partial \overline{\partial} \phi$, for each $\phi \in C^{\infty}(X_{P})$, we have that the space of the almost calibrated $(1,1)$-forms 
\begin{equation}
\mathcal{H} := \big \{ \phi \in C^{\infty}(X_{P}) \ \big | \ {\rm{Re}} \big ( {\rm{e}}^{\sqrt{-1} \hat{\Theta}}(\omega_{0} + \sqrt{-1} \eta_{\phi})^{n} \big) > 0 \big \},
\end{equation}
is non-empty ($\mathcal{H} \neq \emptyset$). In particular, we have the unique lift $\hat{\Theta}([\psi]) \in (-n\frac{\pi}{2},n\frac{\pi}{2})$ of $\hat{\Theta}$ given by
\begin{equation}
\hat{\Theta}([\psi]):= \sum_{\beta \in \Phi_{I}^{+}} \arctan \bigg( \frac{\langle \lambda([\psi]),\beta^{\vee} \rangle}{\langle \lambda([\omega]),\beta^{\vee} \rangle}\bigg),
\end{equation}
such that $\lambda([\psi]), \lambda([\omega]) \in \Lambda_{P} \otimes \mathbbm{R}$.
\end{corollary}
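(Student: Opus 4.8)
The plan is to read off both assertions from Theorem \ref{ProofTheo1}, the factorization (\ref{lagrangianphase}), and Proposition \ref{eigenvalueatorigin}; the only genuinely new ingredient is a careful choice of representative in $[\psi]$. First I would note that $[\psi]\in H^{1,1}(X_{P},\mathbbm{R})$ contains a $G$-invariant representative $\chi$: one is produced by averaging any smooth representative over the compact group $G$, and it is unique because a closed invariant real $(1,1)$-form on $X_{P}$ is harmonic with respect to every $G$-invariant metric, so two such forms in a single cohomology class agree by Hodge theory. By the $\partial\overline{\partial}$-lemma we then get $\chi=\psi+\sqrt{-1}\partial\overline{\partial}\phi$ for some $\phi\in C^{\infty}(X_{P})$, i.e.\ $\eta_{\phi}=\chi$.

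For the non-emptiness of $\mathcal{H}$ I would feed this $\phi$ into the computation carried out immediately before the statement, which is an instance of the factorization (\ref{lagrangianphase}) from the proof of Theorem \ref{ProofTheo1}. By Proposition \ref{eigenvalueatorigin} and Remark \ref{origineigenvalues} the eigenvalues ${\bf{q}}_{\beta}(\omega_{0}^{-1}\circ\chi)$, $\beta\in\Phi_{I}^{+}$, are constant, whence ${\rm{Re}}\big({\rm{e}}^{\sqrt{-1}\hat{\Theta}}(\omega_{0}+\sqrt{-1}\eta_{\phi})^{n}\big)=\big(\prod_{\beta\in\Phi_{I}^{+}}\sqrt{1+{\bf{q}}_{\beta}(\omega_{0}^{-1}\circ\chi)^{2}}\big)\,\omega_{0}^{n}$ is a strictly positive multiple of the volume form (using $\hat{\Theta}\equiv\Theta_{\omega_{0}}(\chi)\bmod 2\pi$, again by Theorem \ref{ProofTheo1}). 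Hence $\phi\in\mathcal{H}$, so $\mathcal{H}\neq\emptyset$.

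For the second assertion I would observe that, for the invariant representative $\chi$, the Lagrangian phase $\Theta_{\omega_{0}}(\chi)=\sum_{\beta\in\Phi_{I}^{+}}\arctan\big({\bf{q}}_{\beta}(\omega_{0}^{-1}\circ\chi)\big)$ is a \emph{constant} function, whose value --- by Proposition \ref{eigenvalueatorigin}, $[\omega_{0}]=[\omega]$, $[\chi]=[\psi]$ and (\ref{weightcohomology}) --- is precisely the right-hand side of (\ref{uniquelifted}). Since $n=\dim_{\mathbbm{C}}(X_{P})=\dim_{\mathbbm{C}}\mathfrak{m}=|\Phi_{I}^{-}|=|\Phi_{I}^{+}|$ and every principal-branch $\arctan$ lies in $(-\tfrac{\pi}{2},\tfrac{\pi}{2})$, this number lies in $(-n\tfrac{\pi}{2},n\tfrac{\pi}{2})$, and by Theorem \ref{ProofTheo1} it reduces to $\hat{\Theta}$ modulo $2\pi$. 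It then remains only to recognize this constant as \emph{the} canonical real lift of $\hat{\Theta}$ attached to a class with $\mathcal{H}\neq\emptyset$: since $\phi\in\mathcal{H}$, the almost-calibrated inequality forces any admissible real lift to differ pointwise from the (constant) $\Theta_{\omega_{0}}(\chi)$ by strictly less than $\tfrac{\pi}{2}$, whereas two real lifts of $\hat{\Theta}$ differ by a nonzero multiple of $2\pi$; hence the admissible lift equals $\Theta_{\omega_{0}}(\chi)$, as in the standard lifting discussion for the dHYM equation (cf.\ \cite{Collins2018deformed, Chu2021space}). I expect this last identification --- matching the explicit sum of arctangents with the canonically lifted phase --- to be the only step requiring care; the non-emptiness of $\mathcal{H}$ and the congruence $\hat{\Theta}\equiv\Theta_{\omega_{0}}(\chi)\bmod 2\pi$ follow immediately from the material above.
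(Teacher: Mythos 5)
Your proposal is correct and follows essentially the same route as the paper: take the unique $G$-invariant representative $\chi=\psi+\sqrt{-1}\partial\overline{\partial}\phi$, use the pointwise factorization $(\omega_{0}+\sqrt{-1}\chi)^{n}=r_{\omega_{0}}(\chi){\rm{e}}^{\sqrt{-1}\Theta_{\omega_{0}}(\chi)}\omega_{0}^{n}$ with constant eigenvalues to get ${\rm{Re}}\big({\rm{e}}^{\sqrt{-1}\hat{\Theta}}(\omega_{0}+\sqrt{-1}\eta_{\phi})^{n}\big)=r_{\omega_{0}}(\chi)\,\omega_{0}^{n}>0$, hence $\mathcal{H}\neq\emptyset$, and then identify the constant $\Theta_{\omega_{0}}(\chi)\in(-n\frac{\pi}{2},n\frac{\pi}{2})$ as the canonical lift. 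The only difference is that you spell out the averaging/harmonicity argument for the invariant representative and the standard lifting identification, which the paper handles by earlier remarks and by citing the references.
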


\begin{remark}
In what follows, for the sake of simplicity, we shall denote $\hat{\Theta} = \hat{\Theta}([\psi])$, $[\psi] \in H^{1,1}(X_{P},\mathbbm{R})$, when the dependence on the cohomology class was not relevant.
\end{remark}

The next result to be proved is Theorem \ref{theoremB}.
\begin{theorem}
\label{T2}
For every irreducible analytic subvariety $Y \subset X_{P}$, define
\begin{equation}
\Theta_{Y} := {\rm{Arg}}\int_{Y}(\omega + \sqrt{-1}\psi)^{\dim(Y)},
\end{equation}
such that $[\omega] \in \mathcal{K}(X_{P})$ and $[\psi] \in H^{1,1}(X_{P},\mathbbm{R})$. If
\begin{equation}
\label{conditioncot}
 \sum_{\beta \in \Phi_{I}^{+}} \arccot \bigg( \frac{\langle \lambda([\psi]),\beta^{\vee} \rangle}{\langle \lambda([\omega]),\beta^{\vee} \rangle}\bigg) < \pi,
\end{equation}
then, for every proper irreducible analytic subvariety $Y \subset X_{P}$, the following holds
\begin{equation}
\Theta_{Y} > \Theta_{X_{P}} - \big (n -\dim(Y) \big ) \frac{\pi}{2}.
\end{equation}
\end{theorem}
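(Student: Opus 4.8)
\emph{Proof proposal.} The plan is to reduce the statement to a uniform pointwise estimate comparing the Lagrangian phase of $\omega+\sqrt{-1}\psi$ restricted to the tangent planes of $Y$ with the global phase $\Theta_{X_P}$, in the spirit of \cite[Lemma 8.2]{Collins2020}. Since $\Theta_Y$ and $\Theta_{X_P}$ depend only on $[\omega]\in\mathcal K(X_P)$ and $[\psi]\in H^{1,1}(X_P,\mathbbm R)$, I would first replace $\omega,\psi$ by their unique $G$-invariant representatives $\omega_0\in[\omega]$ (K\"ahler, by Theorem \ref{AZADBISWAS}) and $\chi\in[\psi]$. By Proposition \ref{eigenvalueatorigin} the $\omega_0$-self-adjoint endomorphism $\omega_0^{-1}\circ\chi$ has \emph{constant} eigenvalues $q_\beta=\langle\lambda([\psi]),\beta^\vee\rangle/\langle\lambda([\omega]),\beta^\vee\rangle$ for $\beta\in\Phi_I^{+}$, with $|\Phi_I^{+}|=n$. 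I would order them $q_1\ge\cdots\ge q_n$ and set $s_k=\arctan q_k$ and $t_k=\arccot q_k=\tfrac\pi2-s_k\in(0,\pi)$, so that by Theorem \ref{ProofTheo1} one has $\Theta_{X_P}=\sum_{k=1}^{n}s_k$ (the natural lift), while hypothesis (\ref{conditioncot}) reads $\sum_{k=1}^{n}t_k<\pi$.

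Next I would carry out the pointwise computation on the smooth locus $Y^{\mathrm{sm}}$. Fix $y\in Y^{\mathrm{sm}}$, put $W=T_y^{1,0}Y$ (complex dimension $p=\dim Y$), and let $\mu_1(y)\ge\cdots\ge\mu_p(y)$ be the eigenvalues of the Hermitian endomorphism $\omega_0|_W^{-1}\circ\chi|_W$ of $W$ — which is precisely the $\omega_0$-orthogonal compression of $\omega_0^{-1}\circ\chi$ to $W$. Simultaneously diagonalizing $\omega_0|_W$ and $\chi|_W$ gives
\[
(\omega_0+\sqrt{-1}\chi)^{p}\big|_{W}=\Big(\prod_{j=1}^{p}\big(1+\sqrt{-1}\,\mu_j(y)\big)\Big)\,\omega_0^{p}\big|_{W}=r(y)\,{\rm e}^{\sqrt{-1}\,\theta(y)}\,\omega_0^{p}\big|_{W},
\]
with $r(y)=\prod_j\sqrt{1+\mu_j(y)^2}\ge 1$ and $\theta(y)=\sum_{j=1}^{p}\arctan\mu_j(y)$. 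Integrating against the positive volume form $\omega_0^{p}|_Y$ (the singular locus being null for the current of integration) yields $\int_Y(\omega+\sqrt{-1}\psi)^{p}=\int_{Y^{\mathrm{sm}}}r(y){\rm e}^{\sqrt{-1}\theta(y)}\,\omega_0^{p}|_Y$.

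The eigenvalue input is Cauchy interlacing (Courant--Fischer): for the compression to a $p$-plane, $q_k\ge\mu_k(y)\ge q_{k+n-p}$ for $1\le k\le p$. Monotonicity of $\arctan$ then bounds $\theta(y)$ uniformly in $y$ by $\sum_{k=n-p+1}^{n}s_k\le\theta(y)\le\sum_{k=1}^{p}s_k$. Rewriting with the $t_k$, the lower bound reads $\theta(y)\ge\Theta_{X_P}-(n-p)\tfrac\pi2+\sum_{k=1}^{n-p}t_k$, which is \emph{strictly} larger than $\Theta_{X_P}-(n-p)\tfrac\pi2$ because $Y$ proper forces $n-p\ge 1$ and every $t_k>0$; and the oscillation of $\theta$ over $Y^{\mathrm{sm}}$ is at most $\sum_{k=1}^{p}s_k-\sum_{k=n-p+1}^{n}s_k=\sum_{k=n-p+1}^{n}t_k-\sum_{k=1}^{p}t_k\le\sum_{k=1}^{n}t_k<\pi$, where the supercritical hypothesis (\ref{conditioncot}) is used.

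To finish, I would invoke a convexity argument: since the values $r(y){\rm e}^{\sqrt{-1}\theta(y)}$ all lie in a closed convex sector of opening $<\pi$ whose lower edge has argument $\inf_{y}\theta(y)$, the integral $\int_{Y^{\mathrm{sm}}}r{\rm e}^{\sqrt{-1}\theta}\,\omega_0^{p}|_Y$ lies in that sector and is nonzero (its real part, after rotation by the mid-angle, is bounded below by $\cos(\tfrac12\cdot\text{oscillation})\int_Y\omega_0^{p}>0$). Hence $\Theta_Y=\mathrm{Arg}\int_Y(\omega+\sqrt{-1}\psi)^{p}$, with the lift determined by this sector, satisfies $\Theta_Y\ge\inf_{y}\theta(y)>\Theta_{X_P}-(n-\dim Y)\tfrac\pi2$, which is the assertion. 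I expect the only genuine obstacle to be this last step — upgrading the uniform pointwise bound $\theta(y)>\Theta_{X_P}-(n-p)\tfrac\pi2$ to the same bound for $\mathrm{Arg}$ of the integral, i.e. ruling out cancellation among the ${\rm e}^{\sqrt{-1}\theta(y)}$; this is exactly what the supercritical condition (\ref{conditioncot}) provides, through the oscillation bound $<\pi$.
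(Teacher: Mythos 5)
Your proposal is correct, but it runs along a genuinely different track than the paper's proof, even though both hinge on the same two facts: the invariant eigenvalues $q_\beta$ are constant, and the supercritical hypothesis $\sum_\beta \arccot(q_\beta)<\pi$ confines all contributions to the integral to a convex sector of opening $<\pi$, so no cancellation can rotate the argument below the threshold. You restrict $(\omega_0+\sqrt{-1}\chi)^p$ to the tangent planes of $Y^{\mathrm{sm}}$, identify the induced endomorphism with the $\omega_0$-orthogonal compression $P_W(\omega_0^{-1}\circ\chi)|_W$, and control its eigenvalues by Cauchy interlacing, which gives the uniform pointwise bounds $\sum_{k=n-p+1}^{n}\arctan q_k\le\theta(y)\le\sum_{k=1}^{p}\arctan q_k$ and then the sector/convexity argument. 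The paper instead keeps the phases attached to the ambient eigenvalues: it replaces $[Y_{\mathrm{reg}}]$ by a $G$-invariant positive current $T_Y$, expands $T_Y\wedge(\omega_0+\sqrt{-1}\chi)^p$ at $x_0$ over multi-indices $J$ with $|J|=p$, so the only phases occurring are the partial sums $\Theta_J=\sum_{j\in J}\arctan q_j$ weighted by the nonnegative diagonal coefficients $T_{J^c,J^c}(x_0)$ (at least one strictly positive by Wirtinger's theorem), and then uses the same inequality $0<\Theta_J-\hat\Theta+(n-p)\tfrac{\pi}{2}<n\tfrac{\pi}{2}-\hat\Theta<\pi$; no interlacing and no pointwise linear algebra on $Y$ are needed. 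What each route buys: yours is closer to the original Collins--Jacob--Yau Lemma 8.2 and would apply on any K\"ahler manifold where one has uniform two-sided bounds on the ambient eigenvalues (here automatic by invariance), and it even yields the slightly sharper lower bound $\Theta_Y\ge\Theta_{X_P}-(n-p)\tfrac{\pi}{2}+\sum_{k=1}^{n-p}\arccot q_k$; the paper's version exploits homogeneity to average the current of integration and reduce everything to a finite nonnegative combination of the fixed complex numbers $\prod_{j\in J}(1+\sqrt{-1}q_j)$, at the cost of being tied to the invariant setting. The only point to watch in your write-up is the choice of lift of $\mathrm{Arg}$ for $\Theta_Y$, but your sector pins down the natural lift, which is exactly the reading intended in the statement and used implicitly in the paper's final step.
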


\begin{proof}
At first, given $[\omega] \in \mathcal{K}(X_{P})$ and $[\psi] \in H^{1,1}(X_{P},\mathbbm{R})$, we have
\begin{equation}
\label{equivcot}
\sum_{\beta \in \Phi_{I}^{+}} \arccot \bigg( \frac{\langle \lambda([\psi]),\beta^{\vee} \rangle}{\langle \lambda([\omega]),\beta^{\vee} \rangle}\bigg) < \pi \iff \underbrace{n\frac{\pi}{2} - \sum_{\beta \in \Phi_{I}^{+}} \arctan \bigg( \frac{\langle \lambda([\psi]),\beta^{\vee} \rangle}{\langle \lambda([\omega]),\beta^{\vee} \rangle}\bigg)}_{= n\frac{\pi}{2} - \hat{\Theta} } < \pi.
\end{equation}
Given a proper irreducible analytic subvariety $Y \subset X_{P}$, with $\dim(Y) = p$, considering the positive current associated with the fundamental class $[Y]\in H^{n-p,n-p}(X_{P},\mathbbm{Z})$, it follows that\footnote{For more details, see \cite[Chapter 4]{Griffiths}, \cite[Chapter III]{Demailly_cplx}, and \cite[Chapter 3, \S 14]{Chirka_complex}.}
\begin{center}
$\displaystyle \int_{Y}(\omega + \sqrt{-1}\psi)^{p} := \int_{Y_{\text{reg}}}(\omega + \sqrt{-1}\psi)^{p} = \int_{X_{P}}[Y_{\text{reg}}]\wedge (\omega + \sqrt{-1}\psi)^{p}$.
\end{center}
Since the above integral depends only on the cohomology classes involved, we can consider $G$-invariant representatives $\omega_{0} \in [\omega]$, $\chi \in [\psi]$, and $T_{Y} \in [Y_{\text{reg}}]$. Let $\zeta_{1},\ldots,\zeta_{n}$, be an orthonormal coframe w.r.t. $\omega_{0}$ of $T^{\ast}_{x_{0}}X_{P}$, where $x_{0} = eP$, such that 
\begin{center}
$\displaystyle \omega_{0} = \sum_{j = 1}^{n} \frac{\sqrt{-1}}{2} \zeta_{j} \wedge \overline{\zeta_{j}}$ \ \ and \ \ $\displaystyle \chi = \sum_{j = 1}^{n} \frac{\sqrt{-1}}{2} {\bf{q}}_{j}\zeta_{j} \wedge \overline{\zeta_{j}},$
\end{center}
where ${\bf{q}}_{j} = \frac{\langle \lambda([\chi]),\beta_{j}^{\vee} \rangle }{\langle \lambda([\omega_{0}]),\beta_{j}^{\vee} \rangle}$, $\forall j = 1,\ldots,n$, are the eigenvalues of $\omega_{0}^{-1} \circ \chi$ at $x_{0}$, see for instance Remark \ref{origineigenvalues}. From this, we can represented $T_{Y}$ at $x_{0} = eP$ by
\begin{center}
$\displaystyle T_{Y} = \frac{(\sqrt{-1})^{(n-p)^{2}}}{2^{(n-p)}}\sum_{|A| = |B| = n-p}T_{A,B}(x_{0})\zeta_{A} \wedge \overline{\zeta_{B}}$,
\end{center}
where $\zeta_{A} = \zeta_{a_{1}} \wedge \cdots \wedge \zeta_{a_{n-p}}$, $\forall A = \{a_{1} < \cdots <a_{n-p}\}$. From above, we obtain at $x_{0}$ the following 
\begin{equation}
T_{Y}\wedge (\omega_{0} + \sqrt{-1}\chi)^{p} = \frac{(\sqrt{-1})^{p^{2}}}{2^{p}}\sum_{|J| = p}p! \prod_{j \in J}(1+\sqrt{-1}{\bf{q}}_{j})T_{Y} \wedge \zeta_{J} \wedge \overline{\zeta_{J}},
\end{equation}
see for instance \cite[Chapter III]{Demailly_cplx}. Since 
\begin{equation}
 \frac{(\sqrt{-1})^{p^{2}}}{2^{p}} T_{Y} \wedge  \zeta_{J} \wedge \overline{\zeta_{J}} = T_{J^{c},J^{c}}(x_{0}) \frac{(\sqrt{-1})^{n}}{2^{n}} \zeta_{1} \wedge \overline{\zeta_{1}} \wedge \cdots \wedge \zeta_{n} \wedge \overline{\zeta_{n}} = \frac{1}{n!}T_{J^{c},J^{c}}(x_{0}) \omega_{0}^{n},
\end{equation}
where $J^{c} \subset \{1,\ldots, n\}$ is the ordered complementary multi-indices of $J$, we conclude that 
\begin{equation}
T_{Y}\wedge (\omega_{0} + \sqrt{-1}\chi)^{p} = \sum_{|J| = p}p! \prod_{j \in J}(1+\sqrt{-1}{\bf{q}}_{j})\frac{T_{J^{c},J^{c}}(x_{0})}{n!} \omega_{0}^{n},
\end{equation}
at $x_{0} = eP$. Considering\footnote{Notice that we can identify $\Theta_{J}$ with its lift $\sum_{j \in J} \arctan({\bf{q}}_{j})$, $J = \{j_{1} < \cdots < j_{p}\}$.}
\begin{equation}
\Theta_{J} := {\rm{Arg}} \prod_{j \in J}(1+\sqrt{-1}{\bf{q}}_{j}) \ \ {\text{and}} \ \ r_{J}:= \big |\prod_{j \in J}(1+\sqrt{-1}{\bf{q}}_{j}) \big |,
\end{equation}
$\forall J = \{j_{1} < \cdots < j_{p}\}$, we obtain
\begin{equation}
\label{expressionorigin}
T_{Y}\wedge(\omega_{0} + \sqrt{-1}\chi)^{p} =  \sum_{|J| = p}p!r_{J}{\rm{e}}^{\sqrt{-1}\Theta_{J} }\frac{T_{J^{c},J^{c}}(x_{0})}{n!} \omega_{0}^{n},
\end{equation}
at $x_{0} = eP$. From the $G$-invariance\footnote{$(T_{Y} \wedge (\omega_{0} + \sqrt{-1}\chi)^{p})_{gx_{0}} = (g^{-1})^{\ast}\big ((T_{Y}\wedge (\omega_{0} + \sqrt{-1}\chi)^{p})_{x_{0}}\big), \forall g \in G$.} of $T_{Y}\wedge(\omega_{0} + \sqrt{-1}\chi)^{p}$ and Eq. (\ref{expressionorigin}), we have
\begin{equation}
\label{intrelcomp}
 \frac{\displaystyle {\rm{e}}^{\sqrt{-1}(n-p)\frac{\pi}{2}} \int_{X_{P}}T_{Y}\wedge(\omega_{0} + \sqrt{-1}\chi)^{p}}{\displaystyle  \int_{X_{P}}(\omega_{0} + \sqrt{-1}\chi)^{n}} =  \sum_{|J| = p}T_{J^{c},J^{c}}(x_{0})R_{J}{\rm{e}}^{\sqrt{-1}(\Theta_{J} - \hat{\Theta} + (n-p)\frac{\pi}{2})},
\end{equation}
such that 
\begin{equation}
R_{J}:= \frac{p! r_{J}}{n!r_{\omega_{0}}(\chi)},
\end{equation}
$\forall J = \{j_{1} < \cdots < j_{p}\}$, see Eq. (\ref{lagrangianphase}). Notice that, since $[Y_{\text{reg}}]$ is a positive current, we have $T_{J^{c},J^{c}}(x_{0}) \geq 0$, for all ordered sets $J$. Moreover, we can suppose that at least one $T_{J^{c},J^{c}}(x_{0})$ is strictly positive for some $J$. In fact, from Wirtinger's theorem and the Poincar\'{e} duality, it follows that\footnote{Notice that $T_{Y}\wedge \omega_{0}^{p} = p!\Big (\sum_{|J| = p} T_{J^{c},J^{c}}(x_{0})\Big )\frac{\omega_{0}^{n}}{n!}$, e.g. \cite[Chapter III]{Demailly_cplx}.}
\begin{center}
$\displaystyle {\rm{Vol}}(Y) = \frac{1}{p!}\int_{X_{P}}[Y_{\text{reg}}]\wedge [\omega_{0}]^{p} = \frac{1}{p!}\int_{X_{P}}T_{Y} \wedge \omega_{0}^{p} = \int_{X_{P}}\Big ( \sum_{|J| = p} T_{J^{c},J^{c}}(x_{0})\Big)\frac{\omega_{0}^{n}}{n!},$
\end{center}
see for instance \cite[Chapter 3, \S 13]{Chirka_complex} and \cite[Chapter 4]{Griffiths}, in other words, we have
\begin{equation}
0 \neq {\rm{Vol}}(Y) = \Big ( \sum_{|J| = p} T_{J^{c},J^{c}}(x_{0})\Big) {\rm{Vol}}(X_{P},\omega_{0}).
\end{equation}
Further, since $\Theta_{J} =  \sum_{i \in J} \arctan({\bf{q}}_{i}) < \frac{p\pi}{2}$, it follows that  
\begin{equation}
\label{ineqpbase}
\hat{\Theta} = \Theta_{J} + \sum_{i \notin J} \arctan({\bf{q}}_{i})< \Theta_{J} + (n-p)\frac{\pi}{2} \iff  \hat{\Theta} - (n-p)\frac{\pi}{2} < \Theta_{J} < \frac{p\pi}{2}
\end{equation}
$\forall J = \{j_{1} < \cdots < j_{p}\}$. Therefore, if Eq. (\ref{conditioncot}) holds, from Eq. (\ref{equivcot}), we have 
\begin{equation}
0 < \Theta_{J} - \hat{\Theta} + (n-p)\frac{\pi}{2} < n\frac{\pi}{2} - \hat{\Theta} < \pi.
\end{equation}
Combining the above fact with Eq. (\ref{intrelcomp}) and Eq. (\ref{ineqpbase}), we conclude that
\begin{equation}
{\rm{Im}} \Bigg (\frac{{\rm{e}}^{\sqrt{-1}(n-p)\frac{\pi}{2}} \int_{Y}(\omega_{0} + \sqrt{-1}\chi)^{p}}{\int_{X_{P}}(\omega_{0} + \sqrt{-1}\chi)^{n}}\Bigg ) > 0.
\end{equation}
From this, we have
\begin{equation}
{\rm{Arg}}  \Bigg ( \frac{{\rm{e}}^{\sqrt{-1}(n-p)\frac{\pi}{2}} \int_{Y} (\omega_{0} + \sqrt{-1}\chi)^{p}}{\int_{X_{P}}(\omega_{0} + \sqrt{-1}\chi)^{n}}\Bigg ) > 0,
\end{equation}
hence $\Theta_{Y} > \hat{\Theta} - (n-p)\frac{\pi}{2}$. 
\end{proof}

Considering $\mathbbm{P}_{\beta}^{1} = \overline{\exp(\mathfrak{g}_{-\beta})P} \subset X_{P}$, $\beta \in \Phi_{I}^{+}$, we have 
\begin{equation}
[\mathbbm{P}_{\beta}^{1}] = \sum_{\alpha \in \Delta \backslash I}\langle \varpi_{\alpha},\beta^{\vee} \rangle [\mathbbm{P}_{\alpha}^{1}] \ \ \ (\forall \beta \in \Phi_{I}^{+}),
\end{equation}
see Remark \ref{dykinlineroot}. Thus, we obtain
\begin{equation}
\int_{{\mathbbm{P}}^{1}_{\beta}}\big (\omega + \sqrt{-1}\psi \big ) = \sum_{\alpha \in \Delta \backslash I}\langle \varpi_{\alpha},\beta^{\vee} \rangle\int_{{\mathbbm{P}}^{1}_{\alpha}}\big (\omega + \sqrt{-1}\psi \big ) = \langle \lambda([\omega]),\beta^{\vee} \rangle + \sqrt{-1} \langle \lambda([\psi]),\beta^{\vee}\rangle,
\end{equation}
so
\begin{equation}
\Theta_{{\mathbbm{P}}^{1}_{\beta}} = {\rm{Arg}}\int_{{\mathbbm{P}}^{1}_{\beta}}(\omega + \sqrt{-1}\psi) = \arctan \bigg ( \frac{\langle \lambda([\psi]),\beta^{\vee} \rangle}{\langle \lambda([\omega]),\beta^{\vee} \rangle }\bigg),
\end{equation}
for every $\beta \in \Phi_{I}^{+}$. Therefore, we have the following corollary.
\begin{corollary}
In the setting of the previous theorem, the unique lifted angle $\hat{\Theta} \in (-n\frac{\pi}{2},n\frac{\pi}{2})$ of $[\psi] \in H^{1,1}(X_{P},\mathbbm{R})$ is given by
\begin{equation}
\hat{\Theta} = \sum_{\beta \in \Phi_{I}^{+}} \Theta_{{\mathbbm{P}}^{1}_{\beta}},
\end{equation}
such that $\mathbbm{P}_{\beta}^{1} = \overline{\exp(\mathfrak{g}_{-\beta})P} \subset X_{P}$, $\forall \beta \in \Phi_{I}^{+}$.
\end{corollary}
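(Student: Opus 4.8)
The plan is to reduce the statement to the explicit Lie-theoretic formula for the unique lift obtained in the Corollary following Theorem \ref{ProofTheo1} (Eq. \eqref{uniquelifted}), by identifying each summand $\arctan\big(\langle \lambda([\psi]),\beta^{\vee}\rangle/\langle\lambda([\omega]),\beta^{\vee}\rangle\big)$ with the phase $\Theta_{\mathbbm{P}^{1}_{\beta}}$ of the Schubert line $\mathbbm{P}^{1}_{\beta}=\overline{\exp(\mathfrak{g}_{-\beta})P}$.

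First I would recall, from Remark \ref{dykinlineroot}, the identity $[\mathbbm{P}^{1}_{\beta}]=\sum_{\alpha\in\Delta\backslash I}\langle\varpi_{\alpha},\beta^{\vee}\rangle[\mathbbm{P}^{1}_{\alpha}]$ in $H_{2}(X_{P},\mathbbm{Z})$, valid for every $\beta\in\Phi_{I}^{+}$. Pairing this class with an arbitrary $\xi\in H^{1,1}(X_{P},\mathbbm{R})$ and using the definition of $\lambda(\xi)$ in Eq. \eqref{weightcohomology} gives
\[
\int_{\mathbbm{P}^{1}_{\beta}}\xi=\big\langle\xi,[\mathbbm{P}^{1}_{\beta}]\big\rangle=\sum_{\alpha\in\Delta\backslash I}\langle\varpi_{\alpha},\beta^{\vee}\rangle\langle\xi,[\mathbbm{P}^{1}_{\alpha}]\rangle=\langle\lambda(\xi),\beta^{\vee}\rangle.
\]
Applying this with $\xi=[\omega]$ and $\xi=[\psi]$ yields
\[
\int_{\mathbbm{P}^{1}_{\beta}}(\omega+\sqrt{-1}\psi)=\langle\lambda([\omega]),\beta^{\vee}\rangle+\sqrt{-1}\,\langle\lambda([\psi]),\beta^{\vee}\rangle.
\]

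Next I would observe that, since $[\omega]\in\mathcal{K}(X_{P})$ is a Kähler class, $\int_{\mathbbm{P}^{1}_{\beta}}\omega=\langle\lambda([\omega]),\beta^{\vee}\rangle>0$ for every $\beta\in\Phi_{I}^{+}$; hence the complex number above has strictly positive real part, so its principal argument is exactly
\[
\Theta_{\mathbbm{P}^{1}_{\beta}}=\arctan\!\bigg(\frac{\langle\lambda([\psi]),\beta^{\vee}\rangle}{\langle\lambda([\omega]),\beta^{\vee}\rangle}\bigg)\in\Big(-\tfrac{\pi}{2},\tfrac{\pi}{2}\Big).
\]
Summing over $\beta\in\Phi_{I}^{+}$ and comparing with Eq. \eqref{uniquelifted} for the unique lift $\hat{\Theta}([\psi])$, we obtain $\sum_{\beta\in\Phi_{I}^{+}}\Theta_{\mathbbm{P}^{1}_{\beta}}=\hat{\Theta}([\psi])$. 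Since $|\Phi_{I}^{+}|=\dim_{\mathbbm{C}}(X_{P})=n$ and each $\Theta_{\mathbbm{P}^{1}_{\beta}}$ lies in $(-\pi/2,\pi/2)$, the sum lies in $(-n\pi/2,n\pi/2)$, which confirms that it is the distinguished lift $\hat{\Theta}$ and completes the proof.

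The only point requiring a bit of care — the ``main obstacle'', such as it is — is the branch issue: one must verify that $\Theta_{\mathbbm{P}^{1}_{\beta}}$, defined a priori only modulo $2\pi$ as ${\rm{Arg}}\int_{\mathbbm{P}^{1}_{\beta}}(\omega+\sqrt{-1}\psi)$, genuinely equals the principal value $\arctan(\cdot)\in(-\pi/2,\pi/2)$, and that summing these principal values does not overshoot the interval $(-n\pi/2,n\pi/2)$ that characterises the lift $\hat{\Theta}$. Both follow at once from the positivity $\langle\lambda([\omega]),\beta^{\vee}\rangle>0$ guaranteed by the Kähler hypothesis on $[\omega]$.
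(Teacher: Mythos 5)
Your proof is correct and follows essentially the same route as the paper: using Remark \ref{dykinlineroot} to compute $\int_{\mathbbm{P}^{1}_{\beta}}(\omega+\sqrt{-1}\psi)=\langle\lambda([\omega]),\beta^{\vee}\rangle+\sqrt{-1}\langle\lambda([\psi]),\beta^{\vee}\rangle$, identifying $\Theta_{\mathbbm{P}^{1}_{\beta}}$ with $\arctan\big(\langle\lambda([\psi]),\beta^{\vee}\rangle/\langle\lambda([\omega]),\beta^{\vee}\rangle\big)$, and comparing the sum with Eq. (\ref{uniquelifted}). Your explicit verification of the branch issue (positivity of the real part from the K\"ahler hypothesis, and the bound $|\Phi_{I}^{+}|=n$ keeping the sum in $(-n\frac{\pi}{2},n\frac{\pi}{2})$) is a point the paper leaves implicit, but it is the same argument.
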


Let us now restate and prove Theorem \ref{theoremC}.

\begin{theorem}
\label{TheoA1}
Let ${\bf{E}} \to X_{P}$ be a holomorphic vector bundle with $\rank({\bf{E}}) = r$. Then, the slope of ${\bf{E}}$ with respect to a K\"{a}hler class $\xi \in \mathcal{K}(X_{P})$ is given by
\begin{equation}
\mu_{\xi}({\bf{E}}) = \frac{(n-1)!}{r} \Bigg [ \sum_{\beta \in \Phi_{I}^{+} } \frac{\langle \lambda({\bf{E}}), \beta^{\vee} \rangle}{\langle \lambda(\xi), \beta^{\vee}\rangle} \Bigg ] \Bigg [ \prod_{\beta \in \Phi_{I}^{+}} \frac{\langle \lambda(\xi),\beta^{\vee} \rangle}{\langle \varrho^{+},\beta^{\vee} \rangle}\Bigg],
\end{equation}
such that $\lambda({\bf{E}}) \in \Lambda_{P}$, and $\lambda(\xi) \in \Lambda_{P} \otimes \mathbbm{R}$. Moreover, if the restriction of ${\bf{E}}$ to each generator $[{\mathbbm{P}}^{1}_{\alpha}]$, $\alpha \in \Delta \backslash I$, of the cone of curves ${\rm{NE}}(X_{P})$ is semistable, then ${\bf{E}}$ is $\xi$-semistable with respect to any K\"{a}hler class $\xi \in \mathcal{K}(X_{P})$.
\end{theorem}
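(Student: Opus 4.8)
The plan is to establish the degree (hence slope) formula first, and then to deduce $\xi$-semistability directly from it. Fix the unique $G$-invariant K\"{a}hler representative $\omega_{0}\in\xi$ and the unique $G$-invariant representative $\chi\in c_{1}({\bf{E}})$ guaranteed by Theorem \ref{AZADBISWAS} and Proposition \ref{C8S8.2Sub8.2.3P8.2.6}. By Proposition \ref{eigenvalueatorigin} the endomorphism $\omega_{0}^{-1}\circ\chi$ has constant eigenvalues ${\bf{q}}_{\beta}=\langle\lambda({\bf{E}}),\beta^{\vee}\rangle/\langle\lambda(\xi),\beta^{\vee}\rangle$, $\beta\in\Phi_{I}^{+}$, so by Remark \ref{primitivecalc} we have $n\,\chi\wedge\omega_{0}^{n-1}={\rm{tr}}(\omega_{0}^{-1}\circ\chi)\,\omega_{0}^{n}=\big(\sum_{\beta\in\Phi_{I}^{+}}{\bf{q}}_{\beta}\big)\omega_{0}^{n}$. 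Integrating, and using $\int_{X_{P}}\omega_{0}^{n}=n!\,{\rm{Vol}}(X_{P},\omega_{0})$ together with the volume identity \eqref{VolKahler}, we obtain
\begin{equation*}
\int_{X_{P}}c_{1}({\bf{E}})\wedge\xi^{n-1}=(n-1)!\Bigg[\sum_{\beta\in\Phi_{I}^{+}}\frac{\langle\lambda({\bf{E}}),\beta^{\vee}\rangle}{\langle\lambda(\xi),\beta^{\vee}\rangle}\Bigg]\Bigg[\prod_{\beta\in\Phi_{I}^{+}}\frac{\langle\lambda(\xi),\beta^{\vee}\rangle}{\langle\varrho^{+},\beta^{\vee}\rangle}\Bigg];
\end{equation*}
dividing by $r=\rank({\bf{E}})$ gives the asserted formula for $\mu_{\xi}({\bf{E}})$ (and in particular proves Eq.~\eqref{degreeVB}).

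For the semistability statement, let $0\neq\mathcal{F}\varsubsetneq{\bf{E}}$ be a subbundle with $\rank(\mathcal{F})=s$. As ${\bf{E}}/\mathcal{F}$ is locally free, restricting $0\to\mathcal{F}\to{\bf{E}}\to{\bf{E}}/\mathcal{F}\to0$ to a generator $\mathbbm{P}_{\alpha}^{1}$ ($\alpha\in\Delta\backslash I$) stays exact, so $\mathcal{F}|_{\mathbbm{P}_{\alpha}^{1}}$ is a subbundle of ${\bf{E}}|_{\mathbbm{P}_{\alpha}^{1}}$; combining the semistability of ${\bf{E}}|_{\mathbbm{P}_{\alpha}^{1}}$ with Eq.~\eqref{weightholomorphicvec} --- which gives $\langle\lambda(\,\cdot\,),\alpha^{\vee}\rangle=\langle c_{1}(\,\cdot\,),[\mathbbm{P}_{\alpha}^{1}]\rangle=\deg\big(\,\cdot\,|_{\mathbbm{P}_{\alpha}^{1}}\big)$ --- yields $\tfrac{1}{s}\langle\lambda(\mathcal{F}),\alpha^{\vee}\rangle\leq\tfrac{1}{r}\langle\lambda({\bf{E}}),\alpha^{\vee}\rangle$ for every $\alpha\in\Delta\backslash I$. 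By Remark \ref{dykinlineroot}, $[\mathbbm{P}_{\beta}^{1}]=\sum_{\alpha\in\Delta\backslash I}\langle\varpi_{\alpha},\beta^{\vee}\rangle[\mathbbm{P}_{\alpha}^{1}]$ for every $\beta\in\Phi_{I}^{+}$, with non-negative integer coefficients $\langle\varpi_{\alpha},\beta^{\vee}\rangle$ (equivalently, ${\rm{NE}}(X_{P})$ is generated by the $[\mathbbm{P}_{\alpha}^{1}]$, $\alpha\in\Delta\backslash I$), so the inequality propagates from the generators to all of $\Phi_{I}^{+}$:
\begin{equation*}
\frac{\langle\lambda({\bf{E}}),\beta^{\vee}\rangle}{r}-\frac{\langle\lambda(\mathcal{F}),\beta^{\vee}\rangle}{s}=\sum_{\alpha\in\Delta\backslash I}\langle\varpi_{\alpha},\beta^{\vee}\rangle\Bigg(\frac{\langle\lambda({\bf{E}}),\alpha^{\vee}\rangle}{r}-\frac{\langle\lambda(\mathcal{F}),\alpha^{\vee}\rangle}{s}\Bigg)\geq0,\qquad\forall\,\beta\in\Phi_{I}^{+}.
\end{equation*}

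Finally, since $\xi$ is K\"{a}hler, $\langle\lambda(\xi),\beta^{\vee}\rangle=\langle\xi,[\mathbbm{P}_{\beta}^{1}]\rangle>0$ for every $\beta\in\Phi_{I}^{+}$, hence $\prod_{\beta\in\Phi_{I}^{+}}\langle\lambda(\xi),\beta^{\vee}\rangle/\langle\varrho^{+},\beta^{\vee}\rangle$ and each $1/\langle\lambda(\xi),\beta^{\vee}\rangle$ are strictly positive; substituting the slope formula for both ${\bf{E}}$ and $\mathcal{F}$ gives
\begin{equation*}
\mu_{\xi}({\bf{E}})-\mu_{\xi}(\mathcal{F})=(n-1)!\Bigg[\prod_{\beta\in\Phi_{I}^{+}}\frac{\langle\lambda(\xi),\beta^{\vee}\rangle}{\langle\varrho^{+},\beta^{\vee}\rangle}\Bigg]\sum_{\beta\in\Phi_{I}^{+}}\frac{1}{\langle\lambda(\xi),\beta^{\vee}\rangle}\Bigg(\frac{\langle\lambda({\bf{E}}),\beta^{\vee}\rangle}{r}-\frac{\langle\lambda(\mathcal{F}),\beta^{\vee}\rangle}{s}\Bigg)\geq0,
\end{equation*}
which is precisely $\xi$-semistability of ${\bf{E}}$, for every $\xi\in\mathcal{K}(X_{P})$. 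I expect the only genuinely delicate point to be the normalization bookkeeping in the first step --- tracking the $2\pi$-factors implicit in the forms ${\bf{\Omega}}_{\alpha}$ and in the eigenvalue convention of Proposition \ref{eigenvalueatorigin}, and reconciling them with the volume formula \eqref{VolKahler}; the second step is conceptually immediate, its only input being that each $[\mathbbm{P}_{\beta}^{1}]$ lies in the monoid generated by the $[\mathbbm{P}_{\alpha}^{1}]$, $\alpha\in\Delta\backslash I$.
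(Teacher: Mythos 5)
Your proposal is correct and follows essentially the same route as the paper: the slope formula comes from the trace identity $n\,\chi\wedge\omega_{0}^{n-1}=\Lambda_{\omega_{0}}(\chi)\,\omega_{0}^{n}$ together with the constant eigenvalues of Proposition \ref{eigenvalueatorigin} and the volume formula \eqref{VolKahler} (the paper merely expands $c_{1}({\bf{E}})$ in the generators ${\bf{\Omega}}_{\alpha}$ before contracting, which is the same computation), and your semistability argument --- restriction to the generators $\mathbbm{P}^{1}_{\alpha}$, propagation to all $\beta\in\Phi_{I}^{+}$ via the non-negative coefficients $\langle\varpi_{\alpha},\beta^{\vee}\rangle$, then positivity of $\langle\lambda(\xi),\beta^{\vee}\rangle$ --- is exactly the paper's. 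Your explicit remark that exactness of $0\to\mathcal{F}\to{\bf{E}}\to{\bf{E}}/\mathcal{F}\to0$ is preserved under restriction is a small point the paper leaves implicit, but no new idea is involved.
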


\begin{proof}
By definition, given a K\"{a}hler class $\xi = [\omega]$, we have 
\begin{equation}
\mu_{[\omega]}({\bf{E}}) = \frac{\int_{X_{P}}c_{1}({\bf{E}}) \wedge [\omega]^{n-1}}{r} = \frac{\deg_{\omega}({\bf{E}})}{r}.
\end{equation}
Since $c_{1}({\bf{E}}) = \sum_{\alpha \in \Delta \backslash I} \langle c_{1}({\bf{E}}),[\mathbbm{P}_{\alpha}^{1}] \rangle [{\bf{\Omega_{\alpha}}}]$, we obtain the following 
\begin{center}
$\displaystyle\mu_{[\omega]}({\bf{E}}) = \sum_{\alpha \in \Delta \backslash I} \frac{\langle c_{1}({\bf{E}}),[\mathbbm{P}_{\alpha}^{1}] \rangle}{r}\int_{X_{P}}{\bf{\Omega}}_{\alpha} \wedge \omega^{n-1} = \sum_{\alpha \in \Delta \backslash I} \frac{\langle c_{1}({\bf{E}}),[\mathbbm{P}_{\alpha}^{1}] \rangle}{nr} \Lambda_{\omega_{0}}({\bf{\Omega}}_{\alpha})\int_{X_{P}}\omega^{n}.$
\end{center}
From Eq. (\ref{contractiongenerators}) and Eq. (\ref{VolKahler}), we have
\begin{equation}
\mu_{[\omega]}({\bf{E}}) = \sum_{\alpha \in \Delta \backslash I} \frac{\langle c_{1}({\bf{E}}),[\mathbbm{P}_{\alpha}^{1}] \rangle}{nr} \sum_{\beta \in \Phi_{I}^{+}} \frac{\langle \varpi_{\alpha}, \beta^{\vee} \rangle}{\langle \lambda([\omega]), \beta^{\vee}\rangle} n !\Bigg [\prod_{\beta \in \Phi_{I}^{+}} \frac{\langle \lambda([\omega]),\beta^{\vee} \rangle}{\langle \varrho^{+},\beta^{\vee} \rangle}\Bigg].
\end{equation}
Observing that $\lambda({\bf{E}}) = \sum_{\alpha \in \Delta \backslash I} \langle c_{1}({\bf{E}}),[\mathbbm{P}_{\alpha}^{1}] \rangle \varpi_{\alpha}$, rearranging the above expression, we conclude that 
\begin{equation}
\mu_{[\omega]}({\bf{E}}) = \frac{(n-1)!}{r} \Bigg [ \sum_{\beta \in \Phi_{I}^{+} } \frac{\langle \lambda({\bf{E}}), \beta^{\vee} \rangle}{\langle \lambda([\omega]), \beta^{\vee}\rangle} \Bigg ] \Bigg [ \prod_{\beta \in \Phi_{I}^{+}} \frac{\langle \lambda([\omega]),\beta^{\vee} \rangle}{\langle \varrho^{+},\beta^{\vee} \rangle}\Bigg].
\end{equation}
Suppose now that the restriction of ${\bf{E}}$ to each generator $[{\mathbbm{P}}^{1}_{\alpha}]$, $\alpha \in \Delta \backslash I$, of the cone of curves ${\rm{NE}}(X_{P})$ is semistable. Considering the inclusion $\iota_{\alpha} \colon {\mathbbm{P}}^{1}_{\alpha} \hookrightarrow X_{P}$, $\alpha \in \Delta \backslash I$, for every subbundle $0 \neq \mathcal{F} \varsubsetneq {\bf{E}}$, $\rank(\mathcal{F}) = s < r$, we have 
\begin{center}
$\displaystyle \frac{c_{1}(\iota_{\alpha}^{\ast}{\bf{E}})}{r} \geq  \frac{c_{1}(\iota_{\alpha}^{\ast}\mathcal{F})}{s} \Rightarrow \frac{1}{r}\int_{\mathbbm{P}_{\alpha}^{1}}c_{1}({\bf{E}}) \geq \frac{1}{s}\int_{\mathbbm{P}_{\alpha}^{1}}c_{1}(\mathcal{F}) \Rightarrow  \frac{\langle c_{1}({\bf{E}}),[\mathbbm{P}_{\alpha}^{1}] \rangle}{r} \geq  \frac{\langle c_{1}(\mathcal{F}),[\mathbbm{P}_{\alpha}^{1}] \rangle}{s},$
\end{center}
for every $\alpha \in \Delta \backslash I$. Since $\langle \varpi_{\alpha},\beta^{\vee} \rangle \geq 0$, $\forall \alpha \in \Delta \backslash I$ and $\forall \beta \in \Phi_{I}^{+}$, it follows that
\begin{center}
$\displaystyle \frac{\langle \lambda({\bf{E}}), \beta^{\vee} \rangle}{r} - \frac{\langle \lambda(\mathcal{F}), \beta^{\vee} \rangle}{s} = \sum_{\alpha \in \Delta \backslash I}\Bigg (\frac{\langle c_{1}({\bf{E}}),[\mathbbm{P}_{\alpha}^{1}] \rangle}{r} - \frac{\langle c_{1}(\mathcal{F}),[\mathbbm{P}_{\alpha}^{1}] \rangle}{s} \Bigg )\langle \varpi_{\alpha},\beta^{\vee} \rangle \geq 0$.
\end{center}
From above, given any $G$-invariant K\"{a}hler metric $\omega$ on $X_{P}$, since $\langle \lambda([\omega]), \beta^{\vee}\rangle > 0$, $\forall \beta \in \Phi_{I}^{+}$, we conclude that 
\begin{equation}
\frac{1}{s} \Bigg [\sum_{\beta \in \Phi_{I}^{+}}\frac{\langle \lambda(\mathcal{F}), \beta^{\vee} \rangle}{\langle \lambda([\omega]), \beta^{\vee}\rangle} \Bigg ] \leq \frac{1}{r} \Bigg [\sum_{\beta \in \Phi_{I}^{+}} \frac{\langle \lambda({\bf{E}}), \beta^{\vee} \rangle}{\langle \lambda([\omega]), \beta^{\vee}\rangle} \Bigg ] \Rightarrow \mu_{[\omega]}(\mathcal{F}) \leq \mu_{[\omega]}({\bf{E}}).
\end{equation}
Therefore, ${\bf{E}}$ is $[\omega]$-semistable. Since every K\"{a}hler class admits a $G$-invariant representative, we obtain the desired result.
\end{proof}

Now we are in position to prove Theorem \ref{theoremD}.

\begin{theorem}
\label{TheoA2}
Fixed a $G$-invariant K\"{a}hler metric $\omega_{0}$ on $X_{P}$, for every holomorphic vector bundle ${\bf{E}} \to X_{P}$ and every irreducible analytic subvariety $Y \subset X_{P}$, define
\begin{equation}
Z_{[\omega_{0}]}({\bf{E}},Y):= - \int_{Y}{\rm{e}}^{-\sqrt{-1}[\omega_{0}]}{\rm{ch}}({\bf{E}}).
\end{equation}
Then, the following hold:
\begin{enumerate}
\item In the particular case that ${\bf{E}} \in {\rm{Pic}}(X_{P})$ and $Y \in {\rm{Div}}(X_{P})$, we have
\begin{equation}
Z_{[\omega_{0}]}({\bf{E}},Y) = -\sum_{\alpha \in \Phi_{I}^{+}}  \Bigg [ \prod_{\beta \in \Phi_{I}^{+}\backslash \{\alpha\}}\Bigg (\frac{ \langle \lambda({\bf{E}}), \beta^{\vee} \rangle}{\langle \lambda([\omega_{0}]), \beta^{\vee} \rangle}\frac{ \langle \lambda_{Y}, \alpha^{\vee} \rangle}{\langle \lambda([\omega_{0}]), \alpha^{\vee} \rangle} - \sqrt{-1}\frac{ \langle \lambda_{Y}, \alpha^{\vee} \rangle}{\langle \lambda([\omega_{0}]), \alpha^{\vee} \rangle} \Bigg ) \Bigg ]V_{0},
\end{equation}
such that $V_{0} = {\rm{Vol}}(X_{P},\omega_{0})$, $\lambda({\bf{E}}),\lambda_{Y} \in \Lambda_{P}$, and $\lambda([\omega_{0}]) \in \Lambda_{P} \otimes \mathbbm{R}$;
\item For every ${\bf{E}} \in {\rm{Pic}}(X_{P})$, such that $c_{1}({\bf{E}}) \neq 0$, we have
\begin{equation}
\frac{|Z_{[\omega_{0}]}({\bf{E}},X_{P})|}{||{\rm{ch}}(\bf{E})||} > 0,
\end{equation}
where $||\cdot||$ is any norm on the finite dimensional vector space $H^{{\text{even}}}(X_{P},\mathbbm{R})$;
\item For every holomorphic vector bundle ${\bf{E}} \to X_{P}$, define
\begin{equation}
\hat{\mu}_{[\omega_{0}]}({\bf{E}}) := \sum_{\beta \in \Phi_{I}^{+} } \tan \big ( \Theta_{\omega_{0}}({\bf{E}},\mathbbm{P}_{\beta}^{1})\big),
\end{equation}
such that 
\begin{equation}
\Theta_{\omega_{0}}({\bf{E}},\mathbbm{P}_{\beta}^{1}) := {\rm{Arg}} \big (Z_{[\omega_{0}]}({\bf{E}},\mathbbm{P}_{\beta}^{1}) \big) - \frac{\pi}{2}, 
\end{equation}
where $\mathbbm{P}_{\beta}^{1} = \overline{\exp(\mathfrak{g}_{-\beta})P} \subset X_{P}$, $\forall \beta \in \Phi_{I}^{+}$. Then, we have that ${\bf{E}}$ is $[\omega_{0}]$-(semi)stable if, and only if, 
\begin{equation}
\hat{\mu}_{[\omega_{0}]}({\bf{E}}) \geoq \hat{\mu}_{[\omega_{0}]}(\mathcal{F}),
\end{equation}
for every subbundle $0 \neq \mathcal{F} \varsubsetneq {\bf{E}}$;
\item Given ${\bf{E}} \in {\rm{Pic}}(X_{P})$, if
\begin{equation}
\label{hypercharge}
\frac{\pi (n-2)}{2} < \Theta_{\omega_{0}}({\bf{E}}):= \sum_{\beta \in \Phi_{I}^{+} }  \Theta_{\omega_{0}}({\bf{E}},\mathbbm{P}_{\beta}^{1}) < \frac{n\pi}{2},
\end{equation}
then 
\begin{equation}
\label{inequalitycharge}
{\rm{Im}} \bigg( \frac{Z_{[\omega_{0}]}({\bf{E}},Y)}{Z_{[\omega_{0}]}({\bf{E}},X_{P})}\bigg) > 0,
\end{equation}
for every irreducible analytic subvariety $Y \varsubsetneq X_{P}$;
\item Given a holomorphic vector bundle ${\bf{E}} \to X_{P}$, if
\begin{equation}
{\rm{Arg}} \big (Z_{[\omega_{0}]}({\bf{E}},\mathbbm{P}_{\beta}^{1}) \big)  \geoq {\rm{Arg}} \big (Z_{[\omega_{0}]}(\mathcal{F},\mathbbm{P}_{\beta}^{1}) \big), 
\end{equation}
for every subbundle $0 \neq \mathcal{F} \varsubsetneq {\bf{E}}$ and every $\beta \in \Phi_{I}^{+}$, then ${\bf{E}}$ is $[\omega_{0}]$-(semi)stable.
\end{enumerate}

\end{theorem}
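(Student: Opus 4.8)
The plan is to reduce all five items to one computation: the value of $Z_{[\omega_{0}]}({\bf{E}},Y)$ evaluated on the unique $G$-invariant representatives, carried out in the orthonormal coframe $\zeta_{\beta}$, $\beta\in\Phi_{I}^{+}$, of Remark~\ref{origineigenvalues}, in which every closed invariant real $(1,1)$-form is simultaneously diagonal at $x_{0}$ with eigenvalues given by Proposition~\ref{eigenvalueatorigin}. For item~(1), writing $a_{\beta}=\langle\lambda({\bf{E}}),\beta^{\vee}\rangle/\langle\lambda([\omega_{0}]),\beta^{\vee}\rangle$ and $b_{\beta}=\langle\lambda_{Y},\beta^{\vee}\rangle/\langle\lambda([\omega_{0}]),\beta^{\vee}\rangle$, I would use ${\rm{ch}}({\bf{E}})\,{\rm{e}}^{-\sqrt{-1}[\omega_{0}]}={\rm{e}}^{c_{1}({\bf{E}})-\sqrt{-1}\omega_{0}}$ together with the factorization ${\rm{e}}^{c_{1}({\bf{E}})-\sqrt{-1}\omega_{0}}=\prod_{\beta}\big(1+\tfrac{\sqrt{-1}}{2}(a_{\beta}-\sqrt{-1})\,\zeta_{\beta}\wedge\overline{\zeta_{\beta}}\big)$ at $x_{0}$, wedge its degree $2(n-1)$ component with the $G$-invariant representative $\sum_{\gamma}\tfrac{\sqrt{-1}}{2}b_{\gamma}\,\zeta_{\gamma}\wedge\overline{\zeta_{\gamma}}$ of $[Y]$ (for each $\gamma$ only the complementary multi-index $\Phi_{I}^{+}\setminus\{\gamma\}$ contributes), and integrate using $\int_{X_{P}}\omega_{0}^{n}/n!=V_{0}$; this yields the stated formula. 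Item~(2) is immediate from Theorem~\ref{ProofTheo1}: since ${\rm{ch}}({\bf{E}})\,{\rm{e}}^{-\sqrt{-1}[\omega_{0}]}={\rm{e}}^{-\sqrt{-1}(\omega_{0}+\sqrt{-1}c_{1}({\bf{E}}))}$, one has $Z_{[\omega_{0}]}({\bf{E}},X_{P})=-(-\sqrt{-1})^{n}\int_{X_{P}}(\omega_{0}+\sqrt{-1}c_{1}({\bf{E}}))^{n}/n!\neq 0$, while $\|{\rm{ch}}({\bf{E}})\|<\infty$ because $H^{\mathrm{even}}(X_{P},\mathbbm{R})$ is finite dimensional, so the quotient is strictly positive.

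The structural heart is the restriction to the Schubert curves $\mathbbm{P}_{\beta}^{1}$. For any holomorphic vector bundle $\mathcal{E}$ of rank $s$, expanding ${\rm{ch}}(\mathcal{E})\,{\rm{e}}^{-\sqrt{-1}[\omega_{0}]}$ to degree two on the curve $\mathbbm{P}_{\beta}^{1}$ and using Lemma~\ref{funddynkinline} and Remark~\ref{dykinlineroot} gives
\[
Z_{[\omega_{0}]}(\mathcal{E},\mathbbm{P}_{\beta}^{1})=-\langle\lambda(\mathcal{E}),\beta^{\vee}\rangle+\sqrt{-1}\,s\,\langle\lambda([\omega_{0}]),\beta^{\vee}\rangle .
\]
Its imaginary part is strictly positive, so ${\rm{Arg}}\,Z_{[\omega_{0}]}(\mathcal{E},\mathbbm{P}_{\beta}^{1})\in(0,\pi)$, $\Theta_{\omega_{0}}(\mathcal{E},\mathbbm{P}_{\beta}^{1})\in(-\tfrac{\pi}{2},\tfrac{\pi}{2})$, and $\tan\Theta_{\omega_{0}}(\mathcal{E},\mathbbm{P}_{\beta}^{1})=-{\rm{Re}}/{\rm{Im}}=\langle\lambda(\mathcal{E}),\beta^{\vee}\rangle/\big(s\langle\lambda([\omega_{0}]),\beta^{\vee}\rangle\big)$. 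Summing over $\beta\in\Phi_{I}^{+}$ yields $\hat{\mu}_{[\omega_{0}]}(\mathcal{E})=\tfrac{1}{s}\sum_{\beta}\langle\lambda(\mathcal{E}),\beta^{\vee}\rangle/\langle\lambda([\omega_{0}]),\beta^{\vee}\rangle$; comparing with the slope formula of Theorem~\ref{TheoA1} and the volume formula \eqref{VolKahler} I get $\mu_{[\omega_{0}]}(\mathcal{E})=(n-1)!\,{\rm{Vol}}(X_{P},\omega_{0})\,\hat{\mu}_{[\omega_{0}]}(\mathcal{E})$, with proportionality constant positive and independent of $\mathcal{E}$. Since scaling both sides of a $\geoq$-inequality by a fixed positive number is harmless, item~(3) follows at once. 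Item~(5) then follows from item~(3): all the numbers $Z_{[\omega_{0}]}({\bf{E}},\mathbbm{P}_{\beta}^{1})$ and $Z_{[\omega_{0}]}(\mathcal{F},\mathbbm{P}_{\beta}^{1})$ lie in the open upper half-plane, where $\theta\mapsto\tan(\theta-\tfrac{\pi}{2})$ is strictly increasing, so the hypothesis ${\rm{Arg}}\,Z_{[\omega_{0}]}({\bf{E}},\mathbbm{P}_{\beta}^{1})\geoq{\rm{Arg}}\,Z_{[\omega_{0}]}(\mathcal{F},\mathbbm{P}_{\beta}^{1})$ for every $\beta$ forces $\tan\Theta_{\omega_{0}}({\bf{E}},\mathbbm{P}_{\beta}^{1})\geoq\tan\Theta_{\omega_{0}}(\mathcal{F},\mathbbm{P}_{\beta}^{1})$ and hence, after summation, $\hat{\mu}_{[\omega_{0}]}({\bf{E}})\geoq\hat{\mu}_{[\omega_{0}]}(\mathcal{F})$.

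For item~(4) I would take $s=1$ in the displayed formula, obtaining $\Theta_{\omega_{0}}({\bf{E}},\mathbbm{P}_{\beta}^{1})=\arctan\big(\langle\lambda({\bf{E}}),\beta^{\vee}\rangle/\langle\lambda([\omega_{0}]),\beta^{\vee}\rangle\big)$, so that $\Theta_{\omega_{0}}({\bf{E}})$ appearing in \eqref{hypercharge} is exactly the lifted phase angle $\hat{\Theta}$ associated with $c_{1}({\bf{E}})$ in the corollary to Theorem~\ref{ProofTheo1}; thus \eqref{hypercharge} is precisely the supercritical condition, equivalently $\sum_{\beta}\arccot\big(\langle\lambda({\bf{E}}),\beta^{\vee}\rangle/\langle\lambda([\omega_{0}]),\beta^{\vee}\rangle\big)<\pi$ by \eqref{equivcot}. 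On the other hand, for a $p$-dimensional subvariety $Y$ one has $Z_{[\omega_{0}]}({\bf{E}},Y)=-\tfrac{(-\sqrt{-1})^{p}}{p!}\int_{Y}(\omega_{0}+\sqrt{-1}c_{1}({\bf{E}}))^{p}$, so $Z_{[\omega_{0}]}({\bf{E}},Y)/Z_{[\omega_{0}]}({\bf{E}},X_{P})$ is a positive multiple of ${\rm{e}}^{\sqrt{-1}(n-p)\pi/2}\int_{Y}(\omega_{0}+\sqrt{-1}c_{1}({\bf{E}}))^{p}\big/\int_{X_{P}}(\omega_{0}+\sqrt{-1}c_{1}({\bf{E}}))^{n}$, and the strict positivity of the imaginary part of this quantity under the supercritical hypothesis is exactly what is proved in the course of Theorem~\ref{T2} with $[\psi]=c_{1}({\bf{E}})$. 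I expect item~(4) to be the main obstacle: one must match \eqref{hypercharge} with the supercritical/\eqref{equivcot} condition, carefully track arguments modulo $2\pi$ through the identification of $Z_{[\omega_{0}]}(-,Y)$ with the intersection numbers $\int_{Y}(\omega_{0}+\sqrt{-1}c_{1}({\bf{E}}))^{\dim Y}$, and invoke the full two-sided angular estimate produced \emph{inside} the proof of Theorem~\ref{T2} rather than only the one-sided inequality \eqref{phasecharge} in its statement — the latter gives just ${\rm{Arg}}\,Z_{Y}>{\rm{Arg}}\,Z_{X_{P}}$, whereas one also needs the upper bound by $\pi$, which is guaranteed by the supercritical estimate $n\tfrac{\pi}{2}-\hat{\Theta}<\pi$, to conclude that ${\rm{Im}}\big(Z_{[\omega_{0}]}({\bf{E}},Y)/Z_{[\omega_{0}]}({\bf{E}},X_{P})\big)>0$.
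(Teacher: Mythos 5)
Your proposal is correct and follows essentially the same route as the paper's proof: the same diagonalization of the invariant representatives at $x_{0}$ for items (1)--(2), the same computation $Z_{[\omega_{0}]}(\mathcal{E},\mathbbm{P}_{\beta}^{1})=-\langle\lambda(\mathcal{E}),\beta^{\vee}\rangle+\sqrt{-1}\,s\,\langle\lambda([\omega_{0}]),\beta^{\vee}\rangle$ and comparison with Theorem~\ref{TheoA1} for items (3) and (5), and for item (4) the identification of \eqref{hypercharge} with the supercritical condition followed by Theorem~\ref{T2}, where your insistence on the two-sided bound $0<\Theta_{J}-\hat{\Theta}+(n-p)\tfrac{\pi}{2}<\pi$ (the ${\rm{Im}}>0$ estimate established inside the proof of Theorem~\ref{T2}) rather than only the one-sided statement \eqref{phasecharge} is precisely how the paper's argument is meant to be read. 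One remark: your item-(1) computation gives $-\sum_{\alpha\in\Phi_{I}^{+}}{\bf{b}}_{\alpha}\big[\prod_{\beta\neq\alpha}({\bf{a}}_{\beta}-\sqrt{-1})\big]V_{0}$, with ${\bf{a}}_{\beta}=\langle\lambda({\bf{E}}),\beta^{\vee}\rangle/\langle\lambda([\omega_{0}]),\beta^{\vee}\rangle$ and ${\bf{b}}_{\alpha}=\langle\lambda_{Y},\alpha^{\vee}\rangle/\langle\lambda([\omega_{0}]),\alpha^{\vee}\rangle$, which agrees with the paper's own intermediate display; the appearance of the factor ${\bf{b}}_{\alpha}$ inside every factor of the product in the formula as stated in the theorem is a misprint, so ``this yields the stated formula'' should be read up to that typo.
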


\begin{proof}
In order to prove item (1), let us recall some basic facts. Given ${\bf{E}} \in {\rm{Pic}}(X_{P})$, we have
\begin{equation}
{\rm{ch}}({\bf{E}}) = {\rm{e}}^{c_{1}(\bf{E})} = \sum_{k = 0}^{\infty}\frac{c_{1}({\bf{E}})^{k}}{k!}.
\end{equation}
Thus, fixed a $G$-invariant K\"{a}hler metric $\omega_{0}$ on $X_{P}$, given $Y \in {\rm{Div}}(X_{P})$ and $\chi \in c_{1}({\bf{E}})$, we have
\begin{equation}
Z_{[\omega_{0}]}({\bf{E}},Y)= - \int_{Y}{\rm{e}}^{-\sqrt{-1}\big ([\omega_{0}]+\sqrt{-1}c_{1}(\bf{E})\big )} = -\frac{(-\sqrt{-1})^{n-1}}{(n-1)!}\int_{Y}(\omega_{0} + \sqrt{-1}\chi)^{n-1}.
\end{equation}
Since the integral above on the right-hand side depends only on the cohomology classes involved, we can suppose that $\chi \in c_{1}({\bf{L}})$ is also $G$-invariant. Also, let us consider the unique $G$-invariant representative $\omega_{Y} \in c_{1}(\mathcal{O}(D))$. Let $\zeta_{1},\ldots,\zeta_{n}$ be an orthonormal coframe w.r.t. $\omega_{0}$ of $T^{\ast}_{x_{0}}X_{P}$, where $x_{0} = eP$, such that 
\begin{equation}
\omega_{0} = \sum_{\beta \in \Phi_{I}^{+}}\frac{\sqrt{-1}}{2}\zeta_{\beta}\wedge \overline{\zeta_{\beta}}, \ \ \chi = \sum_{\beta \in \Phi_{I}^{+}}\frac{\sqrt{-1}}{2}{\bf{a}}_{\beta}\zeta_{\beta}\wedge \overline{\zeta_{\beta}}, \ \ \omega_{Y} = \sum_{\beta \in \Phi_{I}^{+}}\frac{\sqrt{-1}}{2}{\bf{b}}_{\beta}\zeta_{\beta}\wedge \overline{\zeta_{\beta}},
\end{equation}
where ${\bf{a}}_{\beta} = \frac{\langle \lambda([\chi]),\beta^{\vee} \rangle }{\langle \lambda([\omega_{0}]),\beta^{\vee} \rangle}$, ${\bf{b}}_{\beta} = \frac{\langle \lambda([\omega_{Y}]),\beta^{\vee} \rangle }{\langle \lambda([\omega_{0}]),\beta^{\vee} \rangle}$, $\beta \in \Phi_{I}^{+}$, are, respectively, the eigenvalues of $\omega_{0}^{-1} \circ \chi$ and $\omega_{0}^{-1} \circ \omega_{Y}$ at $x_{0}$ (see Remark \ref{origineigenvalues}). From these data, we obtain at $x_{0} = eP$ the following
\begin{equation}
\omega_{Y} \wedge (\omega_{0} + \sqrt{-1}\chi)^{n-1} =\sum_{\alpha \in \Phi_{I}^{+}}(n-1)! \prod_{\beta \in \Phi_{I}^{+}\backslash \{\alpha\}}(1+\sqrt{-1}{\bf{a}}_{\beta}){\bf{b}}_{\alpha}\frac{\omega_{0}^{n}}{n!}.
\end{equation}
From above, it follows that
\begin{equation}
 -(-\sqrt{-1})^{n-1}\int_{X_{P}}\frac{\omega_{Y} \wedge (\omega_{0} + \sqrt{-1}\chi)^{n-1}}{(n-1)!} = -\sum_{\alpha \in \Phi_{I}^{+}}\prod_{\beta \in \Phi_{I}^{+}\backslash \{\alpha\}}({\bf{a}}_{\beta}{\bf{b}}_{\alpha} - \sqrt{-1}{\bf{b}}_{\alpha})V_{0},
\end{equation}
such that $V_{0} := {\rm{Vol}}(X_{P},\omega_{0})$. Since $\lambda([\chi]) = \lambda({\bf{E}})$ and $\lambda([\omega_{Y}]) = \lambda_{Y}$, we conclude that  
\begin{equation}
Z_{[\omega_{0}]}({\bf{E}},Y) = -\sum_{\alpha \in \Phi_{I}^{+}}  \Bigg [ \prod_{\beta \in \Phi_{I}^{+}\backslash \{\alpha\}}\Bigg (\frac{ \langle \lambda({\bf{E}}), \beta^{\vee} \rangle}{\langle \lambda([\omega_{0}]), \beta^{\vee} \rangle}\frac{ \langle \lambda_{Y}, \alpha^{\vee} \rangle}{\langle \lambda([\omega_{0}]), \alpha^{\vee} \rangle} - \sqrt{-1}\frac{ \langle \lambda_{Y}, \alpha^{\vee} \rangle}{\langle \lambda([\omega_{0}]), \alpha^{\vee} \rangle} \Bigg ) \Bigg ]V_{0}.
\end{equation}

The proof of item (2) goes as follows. Given ${\bf{E}} \in {\rm{Pic}}(X_{P})$, and considering the unique $G$-invariant representative $\chi \in c_{1}({\bf{E}})$, it follows that
\begin{equation}
Z_{[\omega_{0}]}({\bf{E}},X_{P})=  -\frac{(-\sqrt{-1})^{n}}{n!}\int_{X_{P}}(\omega_{0} + \sqrt{-1}\chi)^{n} = -(-\sqrt{-1})^{n}r_{\omega_{0}}(\chi){\rm{e}}^{\sqrt{-1}\Theta_{\omega_{0}}(\chi)}{\rm{Vol}}(X_{P},\omega_{0}),
\end{equation}
see for instance Eq. (\ref{lagrangianphase}). Thus, since
\begin{equation}
r_{\omega_{0}}(\chi) = \prod_{\beta \in \Phi_{I}^{+}}\sqrt{\Big(1 + {\bf{q}}_{\beta}(\omega_{0}^{-1} \circ \chi)^{2}\Big )},
\end{equation}
see for instance Eq. (\ref{eigenvalues}), we have
\begin{equation}
|Z_{[\omega_{0}]}({\bf{E}},X_{P})| = r_{\omega_{0}}(\chi) {\rm{Vol}}(X_{P},\omega_{0}) > 0.
\end{equation}
For item (3), since 
\begin{equation}
{\rm{ch}}({\bf{E}}) = r + c_{1}({\bf{E}}) + \frac{1}{2} \big (c_{1}({\bf{E}})^{2} - 2c_{2}({\bf{E}}) \big ) + \cdots
\end{equation}
for every holomorphic vector bundle ${\bf{E}} \to X_{P}$, with $\rank({\bf{E}}) = r$, e.g. \cite{kobayashi_dif_cplx_vec}, it follows that 
\begin{equation}
Z_{[\omega_{0}]}({\bf{E}},\mathbbm{P}_{\beta}^{1})= -\int_{\mathbbm{P}_{\beta}^{1}} \big ( c_{1}({\bf{E}}) - r\sqrt{-1}[\omega_{0}]\big) = \int_{\mathbbm{P}_{\beta}^{1}} \big ( - c_{1}({\bf{E}}) + r\sqrt{-1}[\omega_{0}]\big),
\end{equation}
for every $\mathbbm{P}_{\beta}^{1} = \overline{\exp(\mathfrak{g}_{-\beta})P} \subset X_{P}$, $\beta \in \Phi_{I}^{+}$. Since 
\begin{equation}
[\mathbbm{P}_{\beta}^{1}] = \sum_{\alpha \in \Delta \backslash I}\langle \varpi_{\alpha},\beta^{\vee} \rangle [\mathbbm{P}_{\alpha}^{1}] \ \ \ (\forall \beta \in \Phi_{I}^{+}),
\end{equation}
see Remark \ref{dykinlineroot}, we obtain 
\begin{equation}
\label{chargecurve}
Z_{[\omega_{0}]}({\bf{E}},\mathbbm{P}_{\beta}^{1}) = \sum_{\alpha \in \Delta \backslash I}\langle \varpi_{\alpha},\beta^{\vee} \rangle\int_{\mathbbm{P}_{\alpha}^{1}}\big ( - c_{1}({\bf{E}}) + r\sqrt{-1}[\omega_{0}]\big) = - \langle \lambda({\bf{E}}),\beta^{\vee} \rangle + r\sqrt{-1}\langle \lambda([\omega_{0}]),\beta^{\vee} \rangle, 
\end{equation}
for every $\mathbbm{P}_{\beta}^{1} = \overline{\exp(\mathfrak{g}_{-\beta})P} \subset X_{P}$, $\beta \in \Phi_{I}^{+}$. Therefore, we have 
\begin{equation}
\label{phaseofbundle}
{\rm{Arg}} \big ( Z_{[\omega_{0}]}({\bf{E}},\mathbbm{P}_{\beta}^{1})\big ) = \underbrace{\arctan \bigg ( \frac{\langle \lambda({\bf{E}}),\beta^{\vee} \rangle}{r \langle \lambda([\omega_{0}]),\beta^{\vee} \rangle} \bigg )}_{\Theta_{\omega_{0}}({\bf{E}},\mathbbm{P}_{\beta}^{1})} + \frac{\pi}{2},
\end{equation}
for every $\mathbbm{P}_{\beta}^{1} = \overline{\exp(\mathfrak{g}_{-\beta})P} \subset X_{P}$, $\beta \in \Phi_{I}^{+}$. Since 
\begin{equation}
 - \frac{\langle \lambda({\bf{E}}),\beta^{\vee} \rangle}{r \langle \lambda([\omega_{0}]),\beta^{\vee} \rangle} = \frac{{\rm{Re}}\big (Z_{[\omega_{0}]}({\bf{E}},\mathbbm{P}_{\beta}^{1}))\big )}{{\rm{Im}}\big (Z_{[\omega_{0}]}({\bf{E}},\mathbbm{P}_{\beta}^{1}))\big )} = \cot \bigg ( \Theta_{\omega_{0}}({\bf{E}},\mathbbm{P}_{\beta}^{1}) + \frac{\pi}{2}\bigg ) = -  \tan \big ( \Theta_{\omega_{0}}({\bf{E}},\mathbbm{P}_{\beta}^{1}) \big ),
\end{equation}
for every $\beta \in \Phi_{I}^{+}$, from Theorem \ref{TheoA1}, it follows that 
\begin{equation}
\mu_{[\omega_{0}]}({\bf{E}}) = -(n-1)!\Bigg [ \sum_{\beta \in \Phi_{I}^{+} } \tan \big ( \Theta_{\omega_{0}}({\bf{E}},\mathbbm{P}_{\beta}^{1}) \big )\Bigg ] V_{0} = (n-1)!\hat{\mu}_{[\omega_{0}]}({\bf{E}})V_{0},
\end{equation}
such that $V_{0} = {\rm{Vol}}(X_{P},\omega_{0})$. From the above expression we conclude the proof of item (3). In order to prove item (4), we observe the following. Given ${\bf{E}} \in {\rm{Pic}}(X_{P})$, suppose that Eq. (\ref{hypercharge}) holds. Under this last hypothesis, considering $[\psi] = c_{1}({\bf{E}})$, from Eq. (\ref{phaseofbundle}), we have
\begin{equation}
\frac{\pi(n-2)}{2}<\Theta_{\omega_{0}}({\bf{E}}) = \sum_{\beta \in \Phi_{I}^{+}} \arctan \bigg( \frac{\langle \lambda([\psi]),\beta^{\vee} \rangle}{\langle \lambda([\omega_{0}]),\beta^{\vee} \rangle}\bigg) < \frac{n\pi}{2}.
\end{equation}
Thus, from Theorem \ref{T2}, it follows that
\begin{equation}
 \underbrace{{\rm{Arg}} \big ({\rm{e}}^{-\sqrt{-1}(\pi - \dim(Y) \frac{\pi}{2})}Z_{[\omega_{0}]}({\bf{E}},Y)\big)}_{{\rm{Arg}}\int_{Y}(\omega_{0} + \sqrt{-1}\psi)^{\dim(Y)}} > \underbrace{{\rm{Arg}} \big ({\rm{e}}^{-\sqrt{-1}(\pi - n \frac{\pi}{2})}Z_{[\omega_{0}]}({\bf{E}},X_{P})\big)}_{{\rm{Arg}}\int_{X_{P}}(\omega_{0} + \sqrt{-1}\psi)^{n}} - \big (n -\dim(Y) \big ) \frac{\pi}{2},
\end{equation}
for every irreducible analytic subvariety $Y \varsubsetneq X_{P}$. Rearranging the above expressions, we conclude that Eq. (\ref{hypercharge}) implies the following (equivalent) inequalities
\begin{equation}
{\rm{Arg}}\big (Z_{[\omega_{0}]}({\bf{E}},Y)\big)> {\rm{Arg}}\big (Z_{[\omega_{0}]}({\bf{E}},X_{P})\big) \iff {\rm{Im}} \bigg( \frac{Z_{[\omega_{0}]}({\bf{E}},Y)}{Z_{[\omega_{0}]}({\bf{E}},X_{P})}\bigg) > 0,
\end{equation}
for every irreducible analytic subvariety $Y \varsubsetneq X_{P}$. In order to prove item (5), we proceed as follows. Given a holomorphic vector bundle ${\bf{E}} \to (X_{P},\omega_{0}$), such that $r = \rank({\bf{E}})$, suppose that
\begin{equation}
{\rm{Arg}} \big (Z_{[\omega_{0}]}({\bf{E}},\mathbbm{P}_{\beta}^{1}) \big) \geoq {\rm{Arg}} \big (Z_{[\omega_{0}]}(\mathcal{F},\mathbbm{P}_{\beta}^{1}) \big), 
\end{equation}
for every subbundle $0 \neq \mathcal{F} \varsubsetneq {\bf{E}}$ and every $\beta \in \Phi_{I}^{+}$. From Eq. (\ref{phaseofbundle}), we have
\begin{equation}
\Theta_{\omega_{0}}({\bf{E}},\mathbbm{P}_{\beta}^{1}) \geoq \Theta_{\omega_{0}}({\bf{\mathcal{F}}},\mathbbm{P}_{\beta}^{1}),
\end{equation}
for every subbundle $0 \neq \mathcal{F} \varsubsetneq {\bf{E}}$ and every $\beta \in \Phi_{I}^{+}$. Since $\tan(-)$ is strictly increasing on $(-\frac{\pi}{2},\frac{\pi}{2})$, it follows that 
\begin{equation}
\hat{\mu}_{[\omega_{0}]}({\bf{E}}) = \sum_{\beta \in \Phi_{I}^{+} } \tan \big ( \Theta_{\omega_{0}}({\bf{E}},\mathbbm{P}_{\beta}^{1})\big) \geoq \sum_{\beta \in \Phi_{I}^{+} } \tan \big ( \Theta_{\omega_{0}}({\bf{\mathcal{F}}},\mathbbm{P}_{\beta}^{1})\big) = \hat{\mu}_{[\omega_{0}]}(\mathcal{F}), 
\end{equation}
for every subbundle $0 \neq \mathcal{F} \varsubsetneq {\bf{E}}$. Thus, from item (2), it follows that ${\bf{E}}$ is $[\omega_{0}]$-(semi)stable.
\end{proof}

\begin{remark}
\label{additivecentralcharge}
In the setting of Eq. (\ref{phaseofbundle}), since $ -\frac{\pi}{2} < \Theta_{\omega_{0}}({\bf{E}},\mathbbm{P}_{\beta}^{1}) < \frac{\pi}{2}$, $\forall \beta \in \Phi_{I}^{+}$, it follows that 
\begin{equation}
 0 < {\rm{Arg}} \big ( Z_{[\omega_{0}]}({\bf{E}},\mathbbm{P}_{\beta}^{1})\big ) < \pi, \ \ (\forall \beta \in \Phi_{I}^{+}).
\end{equation}
see also Eq. (\ref{chargecurve}). Thus, we have that $Z_{[\omega_{0}]}({\bf{E}},\mathbbm{P}_{\beta}^{1})$ lies in the strict upper half of $\mathbbm{C}$, for every $\beta \in \Phi_{I}^{+}$, and every holomorphic vector bundle ${\bf{E}} \to X_{P}$, such that $\rank({\bf{E}}) > 0$. Considering the associated Grothendieck group of coherent sheaves $K_{0}(X_{P})$, we have that
\begin{center}
$Z_{[\omega_{0}]}(-,\mathbbm{P}_{\beta}^{1}) \colon K_{0}(X_{P}) \to \mathbbm{C}$ 
\end{center}
defines an additive homomorphism explicitly given by
\begin{equation}
Z_{[\omega_{0}]}([\mathcal{E}],\mathbbm{P}_{\beta}^{1}) = -\int_{\mathbbm{P}_{\beta}^{1}} \big ( c_{1}(\det(\mathcal{E})) - \rank(\mathcal{E})\sqrt{-1}[\omega_{0}]\big),
\end{equation}
for every $[\mathcal{E}] \in  K_{0}(X_{P})$ and every $\beta \in \Phi_{I}^{+}$.
\end{remark}

From Kobayashi-Hitchin correspondence \cite{donaldson1985anti,donaldson1987infinite},\cite{uhlenbeck1986existence}, and item (5) of the last theorem, we obtain the following.
\begin{corollary}
Given a $G$-invariant K\"{a}hler metric $\omega_{0}$ on $X_{P}$ and a simple holomorphic vector bundle ${\bf{E}} \to X_{P}$, if
\begin{equation}
{\rm{Arg}} \big (Z_{[\omega_{0}]}({\bf{E}},\mathbbm{P}_{\beta}^{1}) \big) > {\rm{Arg}} \big (Z_{[\omega_{0}]}(\mathcal{F},\mathbbm{P}_{\beta}^{1}) \big), 
\end{equation}
for every subbundle $0 \neq \mathcal{F} \varsubsetneq {\bf{E}}$ and every $\beta \in \Phi_{I}^{+}$, then ${\bf{E}}$ admits a Hermitian metric $H$ solving the Hermitian-Yang-Mills equation
\begin{equation}
\sqrt{-1}\Lambda_{\omega_{0}}F(H) = c \mathbbm{1}_{{\bf{E}}}.
\end{equation}
\end{corollary}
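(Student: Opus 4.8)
The plan is to read this off from item~(5) of Theorem~\ref{TheoA2}, in its \emph{strict} incarnation, combined with the Kobayashi--Hitchin correspondence; the corollary is nothing but the differential--geometric shadow of that purely algebraic statement. Concretely, item~(5) already asserts that the inequalities ${\rm{Arg}}(Z_{[\omega_{0}]}(\mathbf{E},\mathbbm{P}_{\beta}^{1})) > {\rm{Arg}}(Z_{[\omega_{0}]}(\mathcal{F},\mathbbm{P}_{\beta}^{1}))$ for all $\beta \in \Phi_{I}^{+}$ force $\mathbf{E}$ to be $[\omega_{0}]$-stable (and not merely semistable); the only thing left is to convert stability into a solution of the Hermitian--Yang--Mills equation, which is exactly the content of the Donaldson--Uhlenbeck--Yau theorem.

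First I would recheck the stability step so that the statement is self-contained. By Eq.~(\ref{phaseofbundle}) one has ${\rm{Arg}}(Z_{[\omega_{0}]}(\mathbf{E},\mathbbm{P}_{\beta}^{1})) = \Theta_{\omega_{0}}(\mathbf{E},\mathbbm{P}_{\beta}^{1}) + \tfrac{\pi}{2}$, and likewise for any subbundle $0 \neq \mathcal{F} \varsubsetneq \mathbf{E}$, with both $\Theta_{\omega_{0}}(\mathbf{E},\mathbbm{P}_{\beta}^{1}), \Theta_{\omega_{0}}(\mathcal{F},\mathbbm{P}_{\beta}^{1})$ lying in $(-\tfrac{\pi}{2},\tfrac{\pi}{2})$. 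Hence the hypothesis is equivalent to the termwise strict inequalities $\Theta_{\omega_{0}}(\mathbf{E},\mathbbm{P}_{\beta}^{1}) > \Theta_{\omega_{0}}(\mathcal{F},\mathbbm{P}_{\beta}^{1})$; applying the strict monotonicity of $\tan$ on $(-\tfrac{\pi}{2},\tfrac{\pi}{2})$ and summing over $\Phi_{I}^{+}$ gives $\hat{\mu}_{[\omega_{0}]}(\mathbf{E}) > \hat{\mu}_{[\omega_{0}]}(\mathcal{F})$. By item~(3) of Theorem~\ref{TheoA2} (equivalently, by the slope formula of Theorem~\ref{TheoA1}) the quantities $\mu_{[\omega_{0}]}(\cdot)$ and $\hat{\mu}_{[\omega_{0}]}(\cdot)$ differ by the strictly positive constant $(n-1)!\,{\rm{Vol}}(X_{P},\omega_{0})$, so this reads $\mu_{[\omega_{0}]}(\mathbf{E}) > \mu_{[\omega_{0}]}(\mathcal{F})$ for every proper nonzero subbundle, i.e. $\mathbf{E}$ is $[\omega_{0}]$-stable in the sense of Mumford--Takemoto.

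Finally I would invoke the Kobayashi--Hitchin correspondence \cite{donaldson1985anti,donaldson1987infinite,uhlenbeck1986existence}: on the compact K\"{a}hler manifold $(X_{P},\omega_{0})$ a $[\omega_{0}]$-stable holomorphic vector bundle carries a Hermitian metric $H$, unique up to a positive scalar, whose Chern connection satisfies $\sqrt{-1}\Lambda_{\omega_{0}}F(H) = c\,\mathbbm{1}_{\mathbf{E}}$ with $c = \tfrac{2\pi\,\mu_{[\omega_{0}]}(\mathbf{E})}{(n-1)!\,{\rm{Vol}}(X_{P},\omega_{0})}$ in the normalization of Remark~\ref{primitivecalc}; this is the asserted solution. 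The assumption that $\mathbf{E}$ be simple is harmless here, since $[\omega_{0}]$-stability already implies simplicity, so it merely stresses indecomposability of $\mathbf{E}$. I do not expect any serious obstacle: every step cites an earlier result, and the only point needing a line of care is the upgrade from strict argument inequalities to strict slope inequalities, which rests on each $\Theta_{\omega_{0}}(\cdot,\mathbbm{P}_{\beta}^{1})$ lying in $(-\tfrac{\pi}{2},\tfrac{\pi}{2})$ and hence on the curvature and intersection computations of Section~\ref{generalities}.
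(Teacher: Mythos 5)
Your proposal is correct and follows essentially the same route as the paper: the paper obtains this corollary precisely by combining item (5) of Theorem \ref{TheoA2} (whose strict form, proved via Eq. (\ref{phaseofbundle}), the monotonicity of $\tan$ on $(-\tfrac{\pi}{2},\tfrac{\pi}{2})$, and the proportionality $\mu_{[\omega_{0}]} = (n-1)!\,{\rm{Vol}}(X_{P},\omega_{0})\,\hat{\mu}_{[\omega_{0}]}$, is exactly what you re-derive) with the Kobayashi--Hitchin correspondence. Your added remarks on the explicit value of the Einstein constant $c$ and on simplicity are consistent with the paper's setup and do not change the argument.
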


Combining \cite[Proposition 7.11]{kobayashi_dif_cplx_vec} with item (4) of the last theorem, we have the following.

\begin{corollary}
Let ${\bf{E}}$ and ${\bf{F}}$ be $[\omega_{0}]$-semistable holomorphic vector bundles over $X_{P}$, for some $G$-invariant K\"{a}hler metric $\omega_{0}$ on $X_{P}$. If
\begin{equation}
{\rm{Arg}} \big (Z_{[\omega_{0}]}({\bf{E}},\mathbbm{P}_{\beta}^{1}) \big) > {\rm{Arg}} \big (Z_{[\omega_{0}]}({\bf{F}},\mathbbm{P}_{\beta}^{1}) \big), 
\end{equation}
for every $\beta \in \Phi_{I}^{+}$, then ${\rm{Hom}}({\bf{E}},{\bf{F}}) = 0$.
\end{corollary}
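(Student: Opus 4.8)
The plan is to convert the hypothesis, which compares the \emph{arguments} of the central charges $Z_{[\omega_{0}]}(-,\mathbbm{P}^{1}_{\beta})$, into a strict inequality of ordinary $[\omega_{0}]$-slopes, and then to invoke the classical vanishing of homomorphisms between semistable bundles whose slopes strictly decrease, which is \cite[Proposition 7.11]{kobayashi_dif_cplx_vec}. First I would record, using Eq.~(\ref{chargecurve}) and Eq.~(\ref{phaseofbundle}), that for any holomorphic vector bundle ${\bf{G}} \to X_{P}$ with $\rank({\bf{G}}) = g$ and any $\beta \in \Phi_{I}^{+}$ one has $Z_{[\omega_{0}]}({\bf{G}},\mathbbm{P}^{1}_{\beta}) = -\langle \lambda({\bf{G}}),\beta^{\vee}\rangle + g\sqrt{-1}\,\langle \lambda([\omega_{0}]),\beta^{\vee}\rangle$, with $\langle \lambda([\omega_{0}]),\beta^{\vee}\rangle = \langle [\omega_{0}],[\mathbbm{P}^{1}_{\beta}]\rangle > 0$ since $\omega_{0}$ is K\"{a}hler; hence $Z_{[\omega_{0}]}({\bf{G}},\mathbbm{P}^{1}_{\beta})$ lies in the open upper half-plane and ${\rm{Arg}}\big(Z_{[\omega_{0}]}({\bf{G}},\mathbbm{P}^{1}_{\beta})\big) = \Theta_{\omega_{0}}({\bf{G}},\mathbbm{P}^{1}_{\beta}) + \tfrac{\pi}{2}$ with $\Theta_{\omega_{0}}({\bf{G}},\mathbbm{P}^{1}_{\beta}) \in (-\tfrac{\pi}{2},\tfrac{\pi}{2})$. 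Applying this to ${\bf{E}}$ and to ${\bf{F}}$, the hypothesis ${\rm{Arg}}(Z_{[\omega_{0}]}({\bf{E}},\mathbbm{P}^{1}_{\beta})) > {\rm{Arg}}(Z_{[\omega_{0}]}({\bf{F}},\mathbbm{P}^{1}_{\beta}))$ is equivalent to $\Theta_{\omega_{0}}({\bf{E}},\mathbbm{P}^{1}_{\beta}) > \Theta_{\omega_{0}}({\bf{F}},\mathbbm{P}^{1}_{\beta})$ for every $\beta \in \Phi_{I}^{+}$.

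Next I would use that $\tan$ is strictly increasing on $(-\tfrac{\pi}{2},\tfrac{\pi}{2})$: applying it to each of these inequalities and summing over $\beta \in \Phi_{I}^{+}$ yields $\hat{\mu}_{[\omega_{0}]}({\bf{E}}) = \sum_{\beta \in \Phi_{I}^{+}} \tan\big(\Theta_{\omega_{0}}({\bf{E}},\mathbbm{P}^{1}_{\beta})\big) > \sum_{\beta \in \Phi_{I}^{+}} \tan\big(\Theta_{\omega_{0}}({\bf{F}},\mathbbm{P}^{1}_{\beta})\big) = \hat{\mu}_{[\omega_{0}]}({\bf{F}})$. Since, by the proof of item (3) of Theorem~\ref{TheoA2}, the quantities $\mu_{[\omega_{0}]}(-)$ and $\hat{\mu}_{[\omega_{0}]}(-)$ differ only by the positive multiplicative constant $(n-1)!\,{\rm{Vol}}(X_{P},\omega_{0})$, this upgrades to the strict slope inequality $\mu_{[\omega_{0}]}({\bf{E}}) > \mu_{[\omega_{0}]}({\bf{F}})$.

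Finally, with ${\bf{E}}$ and ${\bf{F}}$ both $[\omega_{0}]$-semistable and $\mu_{[\omega_{0}]}({\bf{E}}) > \mu_{[\omega_{0}]}({\bf{F}})$, \cite[Proposition 7.11]{kobayashi_dif_cplx_vec} gives ${\rm{Hom}}({\bf{E}},{\bf{F}}) = 0$; for completeness I would recall the short reason: a nonzero $\phi \colon {\bf{E}} \to {\bf{F}}$ factors as ${\bf{E}} \twoheadrightarrow \mathcal{I} \hookrightarrow {\bf{F}}$ with $\mathcal{I} = \im \phi \neq 0$, and then $[\omega_{0}]$-semistability of ${\bf{E}}$ forces $\mu_{[\omega_{0}]}(\mathcal{I}) \geq \mu_{[\omega_{0}]}({\bf{E}})$ while $[\omega_{0}]$-semistability of ${\bf{F}}$ forces $\mu_{[\omega_{0}]}(\mathcal{I}) \leq \mu_{[\omega_{0}]}({\bf{F}})$, a contradiction. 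I do not expect a genuine obstacle here: the only subtlety is that $\mathcal{I}$ need not be locally free, so one must know that the defining slope inequalities extend to coherent sub- and quotient sheaves on the smooth projective variety $X_{P}$ — precisely the content packaged by Kobayashi's Proposition 7.11 — after which the argument is immediate.
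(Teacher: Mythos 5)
Your proposal is correct and follows essentially the same route the paper intends: you convert the hypothesis ${\rm{Arg}}(Z_{[\omega_{0}]}({\bf{E}},\mathbbm{P}^{1}_{\beta})) > {\rm{Arg}}(Z_{[\omega_{0}]}({\bf{F}},\mathbbm{P}^{1}_{\beta}))$ into $\mu_{[\omega_{0}]}({\bf{E}}) > \mu_{[\omega_{0}]}({\bf{F}})$ via the ${\rm{Arg}}$--slope dictionary established in the proof of item (3) of Theorem \ref{TheoA2} (monotonicity of $\tan$ plus $\mu_{[\omega_{0}]} = (n-1)!\,\hat{\mu}_{[\omega_{0}]}\,{\rm{Vol}}(X_{P},\omega_{0})$), and then apply \cite[Proposition 7.11]{kobayashi_dif_cplx_vec}, exactly as the paper does. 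Your added recollection of the standard image-factorization argument behind Kobayashi's proposition is a harmless elaboration, not a deviation.
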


\section{Applications and final comments}

In what follows,  in order to illustrate our main results, we provide some explicit computations through a detailed and constructive example. 
\begin{example} 
\label{computationWallach}
Consider the complex Lie group $G^{\mathbbm{C}} = {\rm{SL}}_{3}(\mathbbm{C})$. In this case, the structure of the associated Lie algebra $\mathfrak{sl}_{3}(\mathbbm{C})$ can be completely determined by means of its Dynkin diagram
\begin{center}
${\dynkin[labels={\alpha_{1},\alpha_{2}},scale=3]A{oo}} $
\end{center}
Fixed the Cartan subalgebra $\mathfrak{h} \subset \mathfrak{sl}_{3}(\mathbbm{C})$ of diagonal matrices, we have the associated simple root system given by $\Delta  = \{\alpha_{1},\alpha_{2}\}$, such that 
\begin{center}
$\alpha_{j}({\rm{diag}}(d_{1},d_{2},d_{3})) = d_{j} - d_{j+1}$, $j = 1,2$.
\end{center}
$\forall {\rm{diag}}(d_{1},d_{2},d_{3}) \in \mathfrak{h}$. The set of positive roots in this case is given by 
\begin{center}
$\Phi^{+} = \{\alpha_{1}, \alpha_{2}, \alpha_{3} = \alpha_{1} + \alpha_{2}\}$. 
\end{center}
Considering the Cartan-Killing form\footnote{In this case, we have $\kappa(X,Y) = 6{\rm{tr}}(XY), \forall X,Y \in \mathfrak{sl}_{3}(\mathbbm{C})$, see for instance \cite[Chapter 10, \S 4]{procesi2007lie}.} $\kappa(X,Y) = {\rm{tr}}({\rm{ad}}(X){\rm{ad}}(Y)), \forall X,Y \in \mathfrak{sl}_{3}(\mathbbm{C})$, it follows that $\alpha_{j} = \kappa(\cdot,h_{\alpha_{j}})$, $j =1,2,3$, such that\footnote{Notice that $\langle \alpha_{j},\alpha_{j} \rangle = \alpha_{j}(h_{\alpha_{j}}) = \frac{1}{3}, \forall j = 1,2,3.$} 
\begin{equation}
h_{\alpha_{1}} =\frac{1}{6}(E_{11} - E_{22}), \ \ h_{\alpha_{2}} =\frac{1}{6}(E_{22} - E_{33}), \ \ h_{\alpha_{3}} =\frac{1}{6}(E_{11} - E_{33}),
\end{equation}
here we consider the matrices $E_{ij}$ as being the elements of the standard basis of ${\mathfrak{gl}}_{3}(\mathbbm{C})$. Moreover, we have the following relation between simple roots and fundametal weights (Fig. \ref{diagram}):
\begin{center}
$\displaystyle{\begin{pmatrix}
\alpha_{1} \\ 
\alpha_{2}
\end{pmatrix} = \begin{pmatrix} \ \ 2 & -1 \\
-1 & \ \ 2\end{pmatrix} \begin{pmatrix}
\varpi_{\alpha_{1}} \\ 
\varpi_{\alpha_{2}}
\end{pmatrix}, \ \ \ \begin{pmatrix}
\varpi_{\alpha_{1}} \\ 
\varpi_{\alpha_{2}}
\end{pmatrix} = \frac{1}{3}\begin{pmatrix} 2 & 1 \\
1 & 2\end{pmatrix} \begin{pmatrix}
\alpha_{1} \\ 
\alpha_{2}
\end{pmatrix}},$
\end{center}
here we consider the Cartan matrix $C = (C_{ij})$ of $\mathfrak{sl}_{3}(\mathbbm{C})$ given by 
\begin{equation}
\label{Cartanmatrix}
C = \begin{pmatrix}
 \ \ 2 & -1 \\
-1 & \ \ 2 
\end{pmatrix}, \ \ C_{ij} = \frac{2\langle \alpha_{i}, \alpha_{j} \rangle}{\langle \alpha_{j}, \alpha_{j} \rangle},
\end{equation}
for more details on the above subject, see for instance \cite{Humphreys}. 
\begin{figure}[H]
\includegraphics[scale = .28]{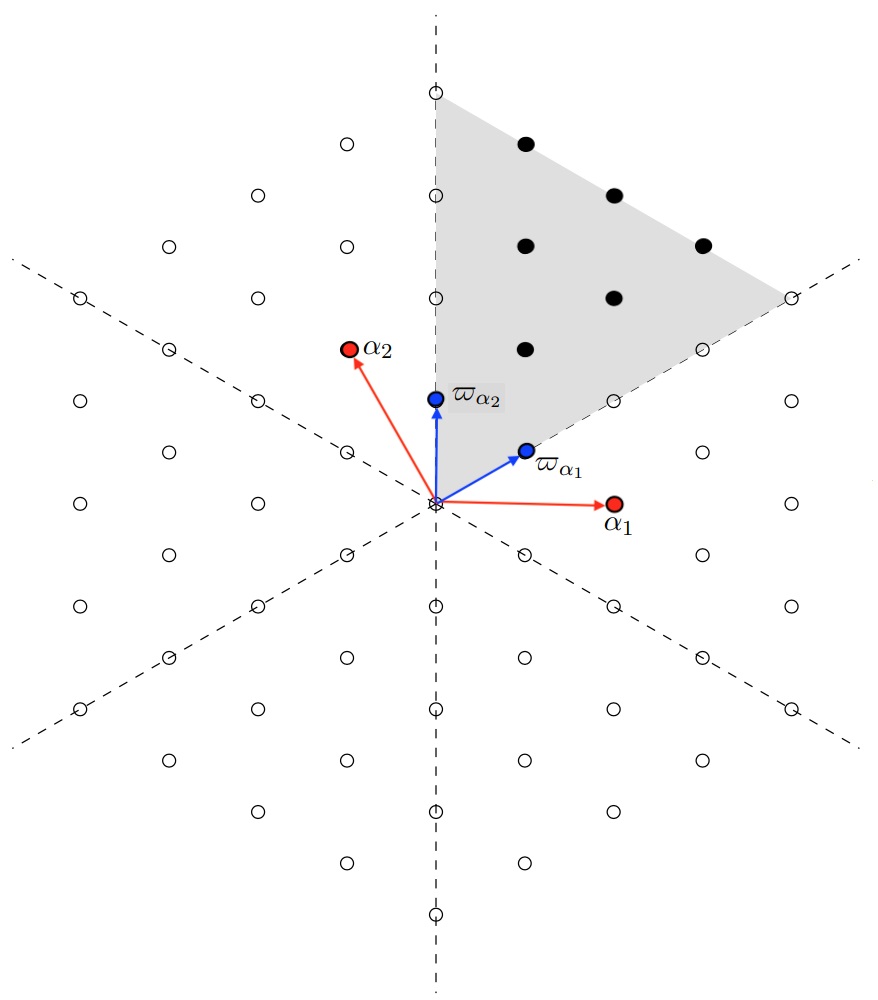}
\caption{Simple roots, dominant integral elements, and fundamental weights for $\mathfrak{sl}_{3}(\mathbbm{C})$.}
\label{diagram}
\end{figure}
Fixed the standard Borel subgroup $B \subset {\rm{SL}}_{3}(\mathbbm{C})$, i.e.,
\begin{center}
$B = \Bigg \{ \begin{pmatrix} \ast & \ast & \ast \\
0 & \ast & \ast \\
0 & 0 & \ast \end{pmatrix} \in {\rm{SL}}_{3}(\mathbbm{C})\Bigg\},$
\end{center}
we consider the flag variety obtained from $I = \emptyset$, i.e., the homogeneous Fano threefold given by the Wallach space ${\mathbbm{P}}(T_{{\mathbbm{P}^{2}}}) = {\rm{SL}}_{3}(\mathbbm{C})/B$. In this particular case, we have the following:
\begin{enumerate}
\item[(i)] $H^{2}({\mathbbm{P}}(T_{{\mathbbm{P}^{2}}}),\mathbbm{R}) = H^{1,1}({\mathbbm{P}}(T_{{\mathbbm{P}^{2}}}),\mathbbm{R}) = \mathbbm{R}[{\bf{\Omega}}_{\alpha_{1}}] \oplus \mathbbm{R}[{\bf{\Omega}}_{\alpha_{2}}]$;
\item[(ii)] $H_{2}({\mathbbm{P}}(T_{{\mathbbm{P}^{2}}}),\mathbbm{Z}) = \pi_{2}({\mathbbm{P}}(T_{{\mathbbm{P}^{2}}})) = \mathbbm{Z}[\mathbbm{P}_{\alpha_{1}}^{1}] \oplus \mathbbm{Z}[\mathbbm{P}_{\alpha_{2}}^{1}]$.
\end{enumerate}
Let $\omega_{0}$ be the unique ${\rm{SU}}(3)$-invariant K\"{a}hler metric on ${\mathbbm{P}}(T_{{\mathbbm{P}^{2}}})$, such that $[\omega_{0}] = c_{1}({\mathbbm{P}}(T_{{\mathbbm{P}^{2}}}))$\footnote{It is worth pointing out that there is nothing special with this choice. In fact, all the computations presented in this example work for an arbitrary choice of ${\rm{SU}}(3)$-invariant (integral) K\"{a}hler class on ${\mathbbm{P}}(T_{{\mathbbm{P}^{2}}})$.}. Since $\lambda({\bf{K}}_{{\mathbbm{P}}(T_{{\mathbbm{P}^{2}}})}^{-1}) = \delta_{B} = 2(\varpi_{\alpha_{1}} + \varpi_{\alpha_{2}})$, from Eq. (\ref{ChernFlag}), it follows that 
\begin{equation}
\omega_{0} = \langle \delta_{B}, \alpha_{1}^{\vee} \rangle {\bf{\Omega}}_{\alpha_{1}} + \langle \delta_{B}, \alpha_{2}^{\vee} \rangle {\bf{\Omega}}_{\alpha_{2}} = 2 \big ({\bf{\Omega}}_{\alpha_{1}} + {\bf{\Omega}}_{\alpha_{2}}\big),
\end{equation}
in particular, notice that $\lambda([\omega_{0}]) = \delta_{B}$. Given any $[\psi] \in H^{1,1}({\mathbbm{P}}(T_{{\mathbbm{P}^{2}}}),\mathbbm{R})$, from Theorem \ref{ProofTheo1}, we have
\begin{equation}
\hat{\Theta} = {\rm{Arg}}\int_{{\mathbbm{P}}(T_{{\mathbbm{P}^{2}}})}(\omega_{0} + \sqrt{-1}\psi)^{3} = \sum_{j = 1}^{3} \arctan \bigg( \frac{\langle \lambda([\psi]),\alpha_{j}^{\vee} \rangle}{\langle \delta_{B},\alpha_{j}^{\vee} \rangle}\bigg) \mod 2\pi,
\end{equation}
notice that, in this particular case, $I = \emptyset$, so $\Phi_{I}^{+} = \Phi^{+} = \{\alpha_{1}, \alpha_{2}, \alpha_{3} = \alpha_{1} + \alpha_{2}\}$. Therefore, if we suppose that $[\psi] = s_{1}[{\bf{\Omega}}_{\alpha_{1}}] + s_{2}[{\bf{\Omega}}_{\alpha_{2}}]$, for some $s_{1},s_{2} \in \mathbbm{R}$, by considering the Cartan matrix $C = (C_{ij})$ of $\mathfrak{sl}_{3}(\mathbbm{C})$ (see Eq. (\ref{Cartanmatrix})), we obtain the following:
\begin{enumerate}
\item $\langle \delta_{B},\alpha_{1}^{\vee} \rangle = \langle \delta_{B},\alpha_{2}^{\vee} \rangle = 2$ and $\langle \delta_{B},\alpha_{3}^{\vee} \rangle = 4$;
\item $\langle \lambda([\psi]),\alpha_{1}^{\vee} \rangle = s_{1}$, $\langle \lambda([\psi]),\alpha_{2}^{\vee} \rangle = s_{2}$, $\langle \lambda([\psi]),\alpha_{3}^{\vee} \rangle = s_{1} + s_{2}.$
\end{enumerate}
From above, we conclude that $\hat{\Theta}([\psi])\in (-\frac{3\pi}{2},\frac{3\pi}{2})$ is given by
\begin{equation}
\label{lagrangianphasesu3}
\hat{\Theta}([\psi])= \arctan \bigg ( \frac{s_{1}}{2}\bigg) + \arctan \bigg ( \frac{s_{2}}{2}\bigg) + \arctan \bigg(\frac{s_{1} + s_{2}}{4} \bigg).
\end{equation}
From Eq. (\ref{lagrangianphase}), given an arbitrary ${\rm{SU}}(3)$-invariant $(1,1)$-form $\chi = s_{1}{\bf{\Omega}}_{\alpha_{1}} + s_{2}{\bf{\Omega}}_{\alpha_{2}}$, we have the following concrete expression for its Lagrangian phase w.r.t. $\omega_{0}$:
\begin{equation}
\Theta_{\omega_{0}}(\chi) = \arctan \bigg ( \frac{s_{1}}{2}\bigg) + \arctan \bigg ( \frac{s_{2}}{2}\bigg) + \arctan \bigg(\frac{s_{1} + s_{2}}{4} \bigg).
\end{equation}
The $(1,1)$-classes $[\psi] = s_{1}[{\bf{\Omega}}_{\alpha_{1}}] + s_{2}[{\bf{\Omega}}_{\alpha_{2}}]$ which satisfy the inequality 
\begin{equation}
\pi <\arctan \bigg ( \frac{s_{1}}{2}\bigg) + \arctan \bigg ( \frac{s_{2}}{2}\bigg) + \arctan \bigg(\frac{s_{1} + s_{2}}{4} \bigg) < \frac{3\pi}{2},
\end{equation}
define the hypercritical solutions of the dHYM equation on $({\mathbbm{P}}(T_{{\mathbbm{P}^{2}}}),\omega_{0})$. If we consider, for instance, $[\psi] = 4\big ([{\bf{\Omega}}_{\alpha_{1}}] + [{\bf{\Omega}}_{\alpha_{2}}])$, since
\begin{equation}
\pi <3\arctan (2) < \frac{3\pi}{2},
\end{equation}
we have that $\chi = 4({\bf{\Omega}}_{\alpha_{1}} + {\bf{\Omega}}_{\alpha_{2}}) \in [\psi]$ defines a hypercritical solution to the dHYM on $({\mathbbm{P}}(T_{{\mathbbm{P}^{2}}}),\omega_{0})$. In this particular case, following \cite{Chu2021space}, we have that the Riemannian manifold with non-positive sectional curvature defined by the space of calibrated $(1,1)$-forms 
\begin{equation}
\mathcal{H} = \Big \{ \Upsilon \in [\psi] \ \Big | \ {\rm{Re}} \big ( {\rm{e}}^{\sqrt{-1} \hat{\Theta}([\psi])}(\omega_{0} + \sqrt{-1} \Upsilon)^{n} \big) > 0 \Big \},
\end{equation}
has a well-defined metric structure, and its completion is a ${\rm{CAT}}(0)$ geodesic metric space. The $(1,1)$-classes $[\psi] = s_{1}[{\bf{\Omega}}_{\alpha_{1}}] + s_{2}[{\bf{\Omega}}_{\alpha_{2}}]$ which satisfy the inequality 
\begin{equation}
\frac{\pi}{2} <\arctan \bigg ( \frac{s_{1}}{2}\bigg) + \arctan \bigg ( \frac{s_{2}}{2}\bigg) + \arctan \bigg(\frac{s_{1} + s_{2}}{4} \bigg) < \frac{3\pi}{2},
\end{equation}
define the supercritical solutions of the dHYM equation on $({\mathbbm{P}}(T_{{\mathbbm{P}^{2}}}),\omega_{0})$. From the above condition, it is straightforward to produce solutions of the dHYM equation which do not satisfy the supercritical condition. For instance, we can take the unique solution $\chi$ of the dHYM equation in the class $- ([{\bf{\Omega}}_{\alpha_{1}}] + [{\bf{\Omega}}_{\alpha_{2}}])$. In this last case, we obtain
\begin{equation}
 -\frac{3\pi}{2}<\Theta_{\omega_{0}}(\chi) = 3\arctan \bigg ( - \frac{1}{2}\bigg) < \frac{\pi}{2}.
\end{equation}
Consider now the following central charges associated with the aforementioned solution $\chi$:
\begin{enumerate}
\item[(a)] $Z_{{\mathbbm{P}}(T_{{\mathbbm{P}^{2}}})}([\chi]) = - \int_{{\mathbbm{P}}(T_{{\mathbbm{P}^{2}}})} {\rm{e}}^{-\sqrt{-1}(\omega_{0} +\sqrt{-1}\chi)} = -\frac{(-\sqrt{-1})^{3}}{3!} \int_{{\mathbbm{P}}(T_{{\mathbbm{P}^{2}}})}(\omega_{0} + \sqrt{-1}\chi)^{3}$;
\item[(b)] $Z_{\mathbbm{P}_{\alpha_{1}}^{1}}([\chi]) = - \int_{\mathbbm{P}_{\alpha_{1}}^{1}} {\rm{e}}^{-\sqrt{-1}(\omega_{0} +\sqrt{-1}\chi)} = \sqrt{-1} \int_{\mathbbm{P}_{\alpha_{1}}^{1}}(\omega_{0} + \sqrt{-1}\chi)$.
\end{enumerate}
Computing the above integrals, we obtain
\begin{enumerate}
\item[(c)] ${\rm{Arg}}(Z_{{\mathbbm{P}}(T_{{\mathbbm{P}^{2}}})}([\chi])) = \frac{3\pi}{2} + 3\arctan \big( - \frac{1}{2}\big)$;
\item[(d)] ${\rm{Arg}}(Z_{\mathbbm{P}_{\alpha_{1}}^{1}}([\chi])) = \frac{\pi}{2} + \arctan \big( - \frac{1}{2}\big)$.
\end{enumerate}
Hence, it follows that ${\rm{Arg}}(Z_{{\mathbbm{P}}(T_{{\mathbbm{P}^{2}}})}([\chi])) > {\rm{Arg}}(Z_{\mathbbm{P}_{\alpha_{1}}^{1}}([\chi])) $, in other words, we have 
\begin{equation}
{\rm{Im}} \bigg ( \frac{Z_{\mathbbm{P}_{\alpha_{1}}^{1}}([\chi])}{Z_{{\mathbbm{P}}(T_{{\mathbbm{P}^{2}}})}([\chi])}\bigg) = \Bigg | \frac{Z_{\mathbbm{P}_{\alpha_{1}}^{1}}([\chi])}{Z_{{\mathbbm{P}}(T_{{\mathbbm{P}^{2}}})}([\chi])}\Bigg| \sin \Big ( - \pi - 2\arctan \Big (-\frac{1}{2}\Big )\Big ) < 0.
\end{equation}
As in \cite[Remark 1.10]{Chen2021j}, the example above shows that the “easier” direction of Collins–Jacob–Yau’s conjecture (Conjectures \ref{conjecture1}) holds only in the supercritical case. 

By keeping the previous notation, let ${\bf{E}} \in {\rm{Pic}}({\mathbbm{P}}(T_{{\mathbbm{P}^{2}}}))$ be some holomorphic line bundle. Since ${\rm{Pic}}({\mathbbm{P}}(T_{{\mathbbm{P}^{2}}}))$ is generate by $\mathscr{O}_{\alpha_{1}}(1)$ and $\mathscr{O}_{\alpha_{2}}(1)$, without loss of generality, we can suppose that 
\begin{equation}
{\bf{E}} = \mathscr{O}_{\alpha_{1}}(a) \otimes \mathscr{O}_{\alpha_{2}}(b),
\end{equation}
for some $a,b \in \mathbbm{Z}$. From above, it follows that $\lambda({\bf{E}}) = a \varpi_{\alpha_{1}} + b\varpi_{\alpha_{2}}$. As before, considering the unique ${\rm{SU}}(3)$-invariant K\"{a}hler metric $\omega_{0}$ on ${\mathbbm{P}}(T_{{\mathbbm{P}^{2}}})$, such that $[\omega_{0}] = c_{1}({\mathbbm{P}}(T_{{\mathbbm{P}^{2}}}))$, we have 
\begin{equation}
Z_{\mathbbm{P}_{\alpha_{j}}^{1}}({\bf{E}}) = - \int_{\mathbbm{P}_{\alpha_{j}}^{1}}{\rm{e}}^{-\sqrt{-1}[\omega_{0}]}{\rm{ch}}({\bf{E}}) = - \langle \lambda({\bf{E}}),\alpha_{j} \rangle + \sqrt{-1}\langle \delta_{B},\alpha_{j} \rangle,
\end{equation}
$\forall j = 1,2,3$, recall that $\lambda([\omega_{0}]) = \delta_{B}$. From Theorem \ref{TheoA1} and item (2) of Theorem \ref{TheoA2}, respectively, we have 
\begin{enumerate}
\item $\displaystyle \mu_{[\omega_{0}]}({\bf{E}}) = 2!\Bigg[\sum_{j = 1}^{3} \frac{\langle \lambda({\bf{E}}),\alpha_{j}^{\vee} \rangle}{\langle \delta_{B},\alpha_{j}^{\vee} \rangle} \Bigg] \Bigg [ \prod_{j = 1}^{3}\frac{\langle \delta_{B},\alpha_{j}^{\vee} \rangle}{\langle \varrho^{+},\alpha_{j}^{\vee} \rangle}\Bigg] = 12(a+b)$;
\item $\displaystyle \hat{\mu}({\bf{E}}) = -\Bigg [ \sum_{j = 1}^{3} \frac{{\rm{Re}}\big (Z_{\mathbbm{P}_{\alpha_{j}}^{1}}({\bf{E}})\big )}{{\rm{Im}}\big (Z_{\mathbbm{P}_{\alpha_{j}}^{1}}({\bf{E}})\big )}\Bigg ] = \sum_{j = 1}^{3} \frac{\langle \lambda({\bf{E}}),\alpha_{j}^{\vee} \rangle}{\langle \delta_{B},\alpha_{j}^{\vee} \rangle} = \frac{3}{4}(a+b)$.
\end{enumerate}
In the above computation, for item (1), we have used that $\delta_{B} = 2\varrho^{+}$, notice that
\begin{equation}
{\rm{Vol}}({\mathbbm{P}}(T_{{\mathbbm{P}^{2}}}),\omega_{0}) = \prod_{j = 1}^{3}\frac{\langle \delta_{B},\alpha_{j}^{\vee} \rangle}{\langle \varrho^{+},\alpha_{j}^{\vee} \rangle} = 8,
\end{equation}
see Eq. (\ref{VolKahler}). The computations above show that, in general, it is more simple to compute $\hat{\mu}({\bf{E}})$ than $\mu_{[\omega_{0}]}({\bf{E}})$. In what follows, we present a constructive method to obtain non-trivial examples of Hermitian-Yang-Mills structures on certain holomorphic vector bundles over ${\mathbbm{P}}(T_{{\mathbbm{P}^{2}}})$. Fixed an integer number $m \in \mathbbm{Z}$, one can seek for solutions of the linear diophantine equation 
\begin{equation}
\label{diophantineslope}
\mu_{[\omega_{0}]}({\bf{E}}) = 12(a+b) = m, \ {\bf{E}} \in {\rm{Pic}}({\mathbbm{P}}(T_{{\mathbbm{P}^{2}}})),
\end{equation}
in order to construct example of polystable holomorphic vector bundles through Whitney sums. The diophantine equation above can be solved if, and only if, $12|m$ (e.g. \cite[Chapter 5]{mordelldiophantine}), so let us suppose that $m = 12k$, for some $k \in \mathbbm{Z}$. From this, the previous equation becomes
\begin{equation}
a + b = k.
\end{equation}
Therefore, given a particular solution $a_{0},b_{0} \in \mathbbm{Z}$, for the last equation above, we have an infinite number of solutions explicitly given by $a(s):= a_{0} + s$ and $b(s):= b_{0} - s$, $s \in \mathbbm{Z}$. For every $s \in \mathbbm{Z}$, let us define ${\bf{E}}(s) := \mathscr{O}_{\alpha_{1}}(a(s)) \otimes \mathscr{O}_{\alpha_{2}}(b(s))$, such that $a(s):= a_{0} + s$ and $b(s):= b_{0} - s$. By construction, we have
\begin{equation}
\mu_{[\omega_{0}]}({\bf{E}}(s)) = 12(a(s) + b(s)) = 12k = m.
\end{equation}
Given $s_{1},\ldots,s_{r} \in \mathbbm{Z}$, $r \in \mathbbm{Z}_{>0}$, we can define
\begin{equation}
{\bf{E}}(s_{1},\ldots,s_{r}):= \bigoplus_{j = 1}^{r}{\bf{E}}(s_{j}).
\end{equation}
If $r = 2$, since ${\bf{E}}(s)$ is $[\omega_{0}]$-stable for every $s \in \mathbbm{Z}$, and $\mu_{[\omega_{0}]}({\bf{E}}(s_{1})) = \mu_{[\omega_{0}]}({\bf{E}}(s_{2}))$, it follows that ${\bf{E}}(s_{1},s_{2})$ is $[\omega_{0}]$-semistable, see for instance \cite{kobayashi_dif_cplx_vec}. Observing that 
\begin{equation}
\mu_{[\omega_{0}]}({\bf{E}}(s_{1},\ldots,s_{r})) = \frac{1}{r} \sum_{j = 1}^{r}\mu_{[\omega_{0}]}({\bf{E}}(s_{j})) = m,
\end{equation}
for every $r \in \mathbbm{Z}_{>0}$, by an inductive argument, one can conclude that ${\bf{E}}(s_{1},\ldots,s_{r})$ is a direct sum of stable vector bundles of the same $[\omega_{0}]$-slope, i.e., ${\bf{E}}(s_{1},\ldots,s_{r})$ is $[\omega_{0}]$-polystable, for all  $s_{1},\ldots,s_{r} \in \mathbbm{Z}$, and all $r \in \mathbbm{Z}_{>0}$. From Kobayashi-Hitchin correspondence \cite{donaldson1985anti,donaldson1987infinite},\cite{uhlenbeck1986existence}, we conclude that ${\bf{E}}(s_{1},\ldots,s_{r})$ is Hermite-Einstein, for all  $s_{1},\ldots,s_{r} \in \mathbbm{Z}$, and all $r \in \mathbbm{Z}_{>0}$. Moreover, in this case, we can describe the associated Hermite-Einstein structure as follows. At first, we observe the following, given $s \in \mathbbm{Z}$, we have $c_{1}({\bf{E}}(s)) = [{\bf{\Omega}}(s)]$, such that
\begin{equation}
{\bf{\Omega}}(s) = a(s){\bf{\Omega}}_{\alpha_{1}} + b(s){\bf{\Omega}}_{\alpha_{2}}.
\end{equation}
Thus, by construction, we obtain
\begin{equation}
m = \mu_{[\omega_{0}]}({\bf{E}}(s)) = \int_{{\mathbbm{P}}(T_{{\mathbbm{P}^{2}}})} {\bf{\Omega}}(s) \wedge \omega_{0}^{2} = \frac{1}{3}\Lambda_{\omega_{0}}({\bf{\Omega}}(s)){\rm{Vol}}({\mathbbm{P}}(T_{{\mathbbm{P}^{2}}}),\omega_{0})3!.
\end{equation}
Since ${\rm{Vol}}({\mathbbm{P}}(T_{{\mathbbm{P}^{2}}}),\omega_{0}) = 8$, we have 
\begin{equation}
\Lambda_{\omega_{0}}({\bf{\Omega}}(s)) = \frac{m}{16}.
\end{equation}
Therefore, given ${\bf{E}} = {\bf{E}}(s_{1},\ldots,s_{r})$, as before, we can take a Hermitian structure $H$ on ${\bf{E}}$, such that the curvature $F(H)$ of the associated Chern connection $\nabla^{H} \myeq {\rm{d}} + H^{-1}\partial H$ satisfies
\begin{equation}
\frac{\sqrt{-1}}{2\pi}F(H) = {\rm{diag}} \bigg \{ {\bf{\Omega}}(s_{1}), \ldots,{\bf{\Omega}}(s_{r})\bigg \}.
\end{equation}
From above, we obtain
\begin{equation}
\sqrt{-1}\Lambda_{\omega_{0}}(F(H)) =  2\pi {\rm{diag}} \bigg \{ \Lambda_{\omega_{0}}({\bf{\Omega}}(s_{1})), \ldots,\Lambda_{\omega_{0}}({\bf{\Omega}}(s_{r}))\bigg \} = \frac{m\pi}{8} \mathbbm{1}_{{\bf{E}}}.
\end{equation}
Hence, we conclude that $\nabla^{H} \myeq {\rm{d}} + H^{-1}\partial H$ is a Hermitian Yang-Mills connection. We can still go one step further to describe the Hermitian Yang-Mills instanton $\nabla^{H}$ in a quite explicitly way. In fact, given an open set $U \subset {\mathbbm{P}}(T_{{\mathbbm{P}^{2}}})$ which trivializes both ${\bf{E}} \to {\mathbbm{P}}(T_{{\mathbbm{P}^{2}}})$ and $B \hookrightarrow {\rm{SL}}_{3}(\mathbbm{C}) \to {\mathbbm{P}}(T_{{\mathbbm{P}^{2}}})$, and considering fiber coordinates $(w_{1},\ldots,w_{r})$ in ${\bf{E}}|_{U}$, we can construct $H$ by gluing the local Hermitian structures 
\begin{equation}
H_{U} = \sum_{j = 1}^{r}\frac{w_{j}\overline{w_{j}}} {||s_{U}v_{\varpi_{\alpha_{1}}}^{+}||^{2a(s_{j})}||s_{U}v_{\varpi_{\alpha_{2}}}^{+}||^{2b(s_{j})}},
\end{equation}
for some local section $s_{U} \colon U \subset {\mathbbm{P}}(T_{{\mathbbm{P}^{2}}})\to {\rm{SL}}_{3}(\mathbbm{C})$, here we consider $||\cdot||$ defined by some fixed ${\rm{SU}}(3)$-invariant inner product on $V(\varpi_{\alpha_{k}})$, $k = 1,2$. From this, we have
\begin{equation}
\nabla^{H}|_{U} = {\rm{d}} + {\rm{diag}}\bigg \{ A_{U}^{(1)}, \ldots , A_{U}^{(r)}\bigg\},
\end{equation}
such that 
\begin{equation}
A_{U}^{(j)} = - \partial \log \Big ( ||s_{U}v_{\varpi_{\alpha_{1}}}^{+}||^{2a(s_{j})}||s_{U}v_{\varpi_{\alpha_{2}}}^{+}||^{2b(s_{j})}\Big), \ \ \forall j = 1,\ldots, r.
\end{equation}
In particular, consider $U = U^{-}(B)$, such that
\begin{equation}
U^{-}(B) = \Bigg \{ \begin{pmatrix}
1 & 0 & 0 \\
z_{1} & 1 & 0 \\                  
z_{2}  & z_{3} & 1
 \end{pmatrix}B \ \Bigg | \ z_{1},z_{2},z_{3} \in \mathbbm{C} \Bigg \} \ \ \ ({\text{opposite big cell}}),
\end{equation}
see Remark \ref{bigcellcosntruction}. By taking the local section $s_{U} \colon U^{-}(B) \to {\rm{SL}}_{3}(\mathbbm{C})$, such that $s_{U}(nB) = n$, $\forall nB \in U^{-}(B)$, and observing that 
\begin{center}
$V(\varpi_{\alpha_{1}}) = \mathbbm{C}^{3}$ \ \  and \ \ $V(\varpi_{\alpha_{2}}) = \bigwedge^{2}(\mathbbm{C}^{3}),$
\end{center}
where $v_{\varpi_{\alpha_{1}}}^{+} = e_{1}$, and $v_{\varpi_{\alpha_{2}}}^{+} = e_{1} \wedge e_{2}$, fixed $||\cdot||$ defined by the standard ${\rm{SU}}(3)$-invariant inner product on $\mathbbm{C}^{3}$ and $\bigwedge^{2}(\mathbbm{C}^{3})$, we obtain
\begin{equation}
A_{U}^{(j)} = - \partial \log \Bigg [ \bigg ( 1 + \sum_{i = 1}^{2}|z_{i}|^{2} \bigg )^{a(s_{j})} \bigg (1 + |z_{3}|^{2} + \bigg | \det \begin{pmatrix}
z_{1} & 1  \\                  
z_{2}  & z_{3} 
 \end{pmatrix} \bigg |^{2} \bigg )^{b(s_{j})} \Bigg ],
\end{equation}
for each $j = 1,\ldots,r$. From above we obtain infinitely many explicit examples of Hermitian Yang-Mills instantons. 
\end{example}

\appendix

\section{Line bundles with prescribed slope}
\label{appendix}

Let $X_{P}$ be a flag variety. Fixed some integral K\"{a}hler class $[\omega_{0}] \in \mathcal{K}(X_{P})$, and fixed some integer number $m_{0} \in \mathbbm{Z}$, in this appendix we investigate the problem related to the solvability of the equation
\begin{equation}
\label{slopeeqproblem}
\mu_{[\omega_{0}]}({\bf{E}}) = m_{0}, \ \ {\bf{E}} \in {\rm{Pic}}(X_{P}).
\end{equation}
The main purpose is to describe the constraints which should be imposed on $\omega_{0}$ and $m_{0}$ to ensure solvability of Eq. (\ref{slopeeqproblem}). As we shall see from the result bellow, the investigation of this problem leads us to a fruitful interaction between intersection theory and number theory. We start the investigation by proving the following fundamental result.
\begin{proposition}
\label{diophantineprescribedslope}
Given $m_{0} \in \mathbbm{Z}$ and an integral K\"{a}hler class $[\omega_{0}] \in \mathcal{K}(X_{P})$, then the equation
\begin{equation}
\mu_{[\omega_{0}]}({\bf{E}}) = m_{0}, \ \ {\bf{E}} \in {\rm{Pic}}(X_{P}),
\end{equation}
has a solution if, and only if, $\tau([\omega_{0}])\big |m_{0}$, such that  
\begin{equation}
\tau([\omega_{0}]) := {\rm{gcd}}\bigg \{ \int_{X_{P}}c_{1}(\mathscr{O}_{\alpha}(1)) \wedge [\omega_{0}]^{n-1} \  \bigg | \ \ \alpha \in \Delta \backslash I\bigg\},
\end{equation}
where $\mathscr{O}_{\alpha}(1)$, $\alpha \in \Delta \backslash I$, are the generators of ${\rm{Pic}}(X_{P})$.
\end{proposition}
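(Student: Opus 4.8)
The plan is to reduce the assertion to the classical solvability criterion for linear Diophantine equations. First I would invoke Proposition~\ref{C8S8.2Sub8.2.3P8.2.6}, which identifies $\mathrm{Pic}(X_P) = \bigoplus_{\alpha \in \Delta \setminus I}\mathbbm{Z}[\mathbf{\Omega}_\alpha]$, so that every $\mathbf{E} \in \mathrm{Pic}(X_P)$ is uniquely of the form $\mathbf{E} = \bigotimes_{\alpha \in \Delta \setminus I}\mathscr{O}_\alpha(k_\alpha)$ with $k_\alpha \in \mathbbm{Z}$. Since $\rank(\mathbf{E}) = 1$, the slope coincides with the degree, and by additivity of $c_1$ on tensor products of line bundles together with bilinearity of the intersection pairing,
\[
\mu_{[\omega_0]}(\mathbf{E}) \;=\; \deg_{\omega_0}(\mathbf{E}) \;=\; \int_{X_P} c_1(\mathbf{E}) \wedge [\omega_0]^{n-1} \;=\; \sum_{\alpha \in \Delta \setminus I} k_\alpha\, d_\alpha,
\]
where I set $d_\alpha := \deg_{\omega_0}(\mathscr{O}_\alpha(1)) = \int_{X_P} c_1(\mathscr{O}_\alpha(1)) \wedge [\omega_0]^{n-1}$. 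Thus the equation $\mu_{[\omega_0]}(\mathbf{E}) = m_0$ is literally the linear Diophantine equation $\sum_{\alpha \in \Delta \setminus I} d_\alpha k_\alpha = m_0$ in the integer unknowns $(k_\alpha)$.

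Next I would verify that each $d_\alpha$ is a strictly positive integer, so that $\gcd\{d_\alpha : \alpha \in \Delta \setminus I\}$ is well defined and equals $\tau([\omega_0])$ as in the statement. Integrality is immediate because $[\omega_0]$ and all the classes $c_1(\mathscr{O}_\alpha(1)) = [\mathbf{\Omega}_\alpha]$ are integral. Positivity follows from the explicit formula Eq.~(\ref{degreeVB}) applied to $\mathbf{E} = \mathscr{O}_\alpha(1)$, for which $\lambda(\mathscr{O}_\alpha(1)) = \varpi_\alpha$: in the resulting product each $\langle \varpi_\alpha, \beta^\vee \rangle$ is nonnegative (a dominant weight pairs nonnegatively with every positive coroot) and strictly positive for $\beta = \alpha \in \Phi_I^+$, where it equals $1$; moreover $\langle \lambda([\omega_0]), \beta^\vee \rangle > 0$ and $\langle \varrho^+, \beta^\vee \rangle > 0$ for all $\beta \in \Phi_I^+$, the former because $[\omega_0]$ is a K\"ahler class and the latter because $\varrho^+$ is regular dominant. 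Hence $d_\alpha > 0$ for every $\alpha \in \Delta \setminus I$, and $\tau([\omega_0]) = \gcd\{d_\alpha\}$.

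Finally I would apply B\'ezout's criterion: the equation $\sum_{\alpha} d_\alpha k_\alpha = m_0$ has an integer solution if and only if $\gcd\{d_\alpha\} \mid m_0$. The converse direction is trivial, since every element of the subgroup $\sum_\alpha d_\alpha \mathbbm{Z} \subset \mathbbm{Z}$ is a multiple of $\gcd\{d_\alpha\}$; in the forward direction, if $\tau([\omega_0]) \mid m_0$, writing $\tau([\omega_0]) = \sum_\alpha c_\alpha d_\alpha$ via the extended Euclidean algorithm and multiplying through by $m_0/\tau([\omega_0])$ exhibits explicit integers $(k_\alpha)$, hence a line bundle $\mathbf{E} = \bigotimes_\alpha \mathscr{O}_\alpha(k_\alpha)$ solving $\mu_{[\omega_0]}(\mathbf{E}) = m_0$. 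This yields the stated equivalence. There is no analytic or geometric difficulty in this argument; the only points that require a moment's care are the positivity of the coefficients $d_\alpha$ and the identification of $\tau([\omega_0])$ with $\gcd\{d_\alpha\}$, both of which are handled by the material already established — in particular Eq.~(\ref{degreeVB}) and Proposition~\ref{C8S8.2Sub8.2.3P8.2.6}.
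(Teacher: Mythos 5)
Your proposal is correct and follows essentially the same route as the paper: identify ${\rm{Pic}}(X_{P})$ with $\bigoplus_{\alpha \in \Delta \backslash I}\mathbbm{Z}[{\bf{\Omega}}_{\alpha}]$, rewrite $\mu_{[\omega_{0}]}({\bf{E}}) = m_{0}$ as the linear Diophantine equation $\sum_{\alpha} \deg_{\omega_{0}}(\mathscr{O}_{\alpha}(1))\,k_{\alpha} = m_{0}$ with integer coefficients (integrality coming from the integrality of $[\omega_{0}]$ and the classes $c_{1}(\mathscr{O}_{\alpha}(1))$), and apply B\'ezout's solvability criterion. Your extra verification that each $d_{\alpha}>0$ via Eq.~(\ref{degreeVB}) is a harmless addition that the paper leaves implicit.
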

\begin{proof}
Given a flag variety $X_{P}$, fixed an integral K\"{a}hler class $[\omega_{0}] \in \mathcal{K}(X_{P})$, and fixed some $m_{0} \in \mathbbm{Z}$, we seek for solutions of the equation
\begin{equation}
\mu_{[\omega_{0}]}({\bf{E}}) = m_{0}, \ \ {\bf{E}} \in {\rm{Pic}}(X_{P}). 
\end{equation}
Considering ${\bf{E}} = \bigotimes_{\alpha \in \Delta \backslash I}\mathscr{O}_{\alpha}(x_{\alpha})$, such that $x_{\alpha} \in \mathbbm{Z}$, $\forall \alpha \in \Delta \backslash I$, the equation above turns out to be the linear equation
\begin{equation}
\label{linearequationdegree}
\sum_{\alpha \in \Delta \backslash I}\deg_{\omega_{0}}(\mathscr{O}_{\alpha}(1))x_{\alpha} = m_{0},
\end{equation}
such that $\deg_{\omega_{0}}(\mathscr{O}_{\alpha}(1)) = \int_{X_{P}}c_{1}(\mathscr{O}_{\alpha}(1)) \wedge [\omega_{0}]^{n-1}$, $\forall \alpha \in \Delta \backslash I$. Since $c_{1}(\mathscr{O}_{\alpha}(1))$, $\alpha \in \Delta \backslash I$, and $[\omega_{0}]$ are integral classes, it follows that 
\begin{equation}
\deg_{\omega_{0}}(\mathscr{O}_{\alpha}(1)) = \int_{X_{P}}c_{1}(\mathscr{O}_{\alpha}(1)) \wedge [\omega_{0}] \wedge [\omega_{0}]^{n-2} = \mathcal{Q}_{\omega_{0}}\big (c_{1}(\mathscr{O}_{\alpha}(1)),[\omega_{0}] \big ) \in \mathbbm{Z}, 
\end{equation}
$\forall \alpha \in \Delta \backslash I$, where $\mathcal{Q}_{\omega_{0}} \colon H^{2}(X_{P},\mathbbm{Z}) \times H^{2}(X_{P},\mathbbm{Z}) \to \mathbbm{Z}$ is the Hodge-Riemann bilinear form associated with the underlying polarized Hodge structure (e.g. \cite{VoisinBook1}, \cite{peters2008mixed}). Thus, it follows that Eq. (\ref{linearequationdegree}) is equivalent to the linear diophantine equation
\begin{equation}
\sum_{\alpha \in \Delta \backslash I}\mathcal{Q}_{\omega_{0}}\big (c_{1}(\mathscr{O}_{\alpha}(1)),[\omega_{0}] \big ) x_{\alpha} = m_{0}.
\end{equation}
Since the linear diophantine equation above admits a solution if, and only if, 
\begin{equation}
{\rm{gcd}} \big \{ \mathcal{Q}_{\omega_{0}}\big (c_{1}(\mathscr{O}_{\alpha}(1)),[\omega_{0}] \big )  \ \big | \alpha \in \Delta \backslash I \big \} \Big | m_{0},
\end{equation}
see for instance \cite[Chapter 5]{mordelldiophantine}, \cite[\S 1.6]{nathanson2008elementary}, we obtain the desired result.
\end{proof}

\begin{remark} From the above result, we have that $\tau([\omega_{0}]) \big | \mu_{[\omega_{0}]}({\bf{E}})$, $\forall {\bf{E}} \in {\rm{Pic}}(X_{P})$.
\end{remark}

\begin{remark}
Notice that, by the generalized Bezout's identity \cite[\S 1.2]{nathanson2008elementary}, we have 
\begin{equation}
\tau([\omega_{0}])\mathbbm{Z} = \bigoplus_{\alpha \in \Delta \backslash I}\mathcal{Q}_{\omega_{0}}({\bf{\Omega}}_{\alpha},\omega_{0})\mathbbm{Z},
\end{equation}
here we have used the fact that 
\begin{equation}
\mathcal{Q}_{\omega_{0}}({\bf{\Omega}}_{\alpha},\omega_{0}) := \int_{X_{P}}{\bf{\Omega}}_{\alpha} \wedge \omega_{0}^{n-1} = \mathcal{Q}_{\omega_{0}}\big (c_{1}(\mathscr{O}_{\alpha}(1)),[\omega_{0}] \big ), 
\end{equation}
for every $\alpha \in \Delta \backslash I$.
\end{remark}

\begin{remark}
In the setting of Proposition \ref{diophantineprescribedslope}, given a prime number $p \in \mathbbm{Z}$, the equation
\begin{equation}
\mu_{[\omega_{0}]}({\bf{E}}) = p, \ \ {\bf{E}} \in {\rm{Pic}}(X_{P}),
\end{equation}
is solvable if, and only if, $\tau([\omega_{0}]) = p$ or $\tau([\omega_{0}]) = 1$.
\end{remark}

\begin{remark}
Notice that $\mathscr{O}_{\alpha}(1) = \mathcal{O}(D_{\alpha})$, for every Schubert divisor $D_{\alpha} \in {\rm{Div}}(X_{P})$, $\alpha \in \Delta \backslash I$, see Subsection \ref{divisorsandcycles}. Thus, fixed an integral K\"{a}hler class $[\omega_{0}] \in \mathcal{K}(X_{P})$, we have
\begin{equation}
\deg_{\omega_{0}}(\mathscr{O}_{\alpha}(1)) = \int_{D_{\alpha}}\omega_{0}^{n-1}  = \big \langle [\omega_{0}^{n-1}], [D_{\alpha}] \big \rangle.
\end{equation}
For every integer $n$ and every prime number $p \in \mathbbm{Z}$, let $v_{p}(n)$ be the greatest integer, such that $p^{v_{p}(n)} \big | n$. From the fundamental theorem of arithmetic, it follows that
\begin{equation}
\tau([\omega_{0}]) = \prod_{p  | \tau([\omega_{0}])}p^{\min \big \{v_{p}(\langle [\omega_{0}^{n-1}], [D_{\alpha}] \rangle) \ \big | \ \alpha \in \Delta \backslash I \big \}},
\end{equation}
see for instance \cite[Theorem 1.11]{nathanson2008elementary}. Thus, the prime factorization of $\tau([\omega_{0}])$ is completely determined by the intersection numbers $\langle [\omega_{0}^{n-1}], [D_{\alpha}] \rangle$, $\alpha \in \Delta \backslash I$.
\end{remark}

From Proposition \ref{diophantineprescribedslope}, we have the following corollary.

\begin{corollary}
If $[\omega_{0}] \in \mathcal{K}(X_{P})$ is an integral K\"{a}hler class, such that $\tau([\omega_{0}]) = 1$, and $m_{0} \in \mathbbm{Z}$, satisfies 
\begin{equation}
\label{nefcondition}
m_{0} \geq \big ( \mathcal{Q}_{\omega_{0}}({\bf{\Omega}}_{\gamma},\omega_{0}) - 1\big ) \sum_{\substack{\alpha \in \Delta \backslash I, \alpha \neq \gamma}}\mathcal{Q}_{\omega_{0}}({\bf{\Omega}}_{\alpha},\omega_{0}),
\end{equation}
for some $\gamma \in \Delta \backslash I$, then there exists ${\bf{E}} \in {\rm{Pic}}(X_{P})$, such that $H^{0}(X_{P},{\bf{E}}) \neq \{0\}$, and $\mu_{[\omega_{0}]}({\bf{E}}) = m_{0}$.
\end{corollary}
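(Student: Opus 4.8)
The statement reduces to a classical fact about the Frobenius/Chicken-McNugget problem once I translate the slope equation into a linear Diophantine equation, which the excerpt has already done in the proof of Proposition \ref{diophantineprescribedslope}. The plan is to combine that reduction with the hypothesis $\tau([\omega_{0}])=1$ (so the coefficients of the Diophantine equation are coprime) and the numerical bound \eqref{nefcondition} to produce a \emph{nonnegative} solution vector $(x_{\alpha})_{\alpha \in \Delta\setminus I}$, and then show that a line bundle ${\bf{E}}=\bigotimes_{\alpha}\mathscr{O}_{\alpha}(x_{\alpha})$ with all $x_\alpha \geq 0$ is globally generated (in particular has a nonzero section), because each $\mathscr{O}_{\alpha}(1)$ is globally generated on the flag variety $X_P$.

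First I would recall from the proof of Proposition \ref{diophantineprescribedslope} that $\mu_{[\omega_{0}]}({\bf{E}})=m_{0}$ for ${\bf{E}}=\bigotimes_{\alpha\in\Delta\setminus I}\mathscr{O}_{\alpha}(x_{\alpha})$ is equivalent to
\[
\sum_{\alpha\in\Delta\setminus I} a_{\alpha}\,x_{\alpha}=m_{0},\qquad a_{\alpha}:=\mathcal{Q}_{\omega_{0}}({\bf{\Omega}}_{\alpha},\omega_{0})=\deg_{\omega_{0}}(\mathscr{O}_{\alpha}(1))\in\mathbbm{Z}_{>0}.
\]
Since $\tau([\omega_{0}])=\gcd\{a_{\alpha}\}=1$, by the Chicken-McNugget / Frobenius theorem for $|\Delta\setminus I|$ coprime positive integers, every integer $m_{0}$ that is at least the Frobenius number of $\{a_{\alpha}\}$ admits a \emph{nonnegative} integer solution. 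The standard upper bound for the Frobenius number $g(a_{\gamma_{1}},\dots,a_{\gamma_{k}})$, when one distinguishes a coordinate $\gamma$, is $g \leq (a_{\gamma}-1)\big(\sum_{\alpha\neq\gamma}a_{\alpha}\big) - \sum_{\alpha\neq\gamma}a_{\alpha}$, or more crudely $(a_{\gamma}-1)\sum_{\alpha\neq\gamma}a_{\alpha}$; this is exactly the right-hand side of \eqref{nefcondition}. Hence the hypothesis $m_{0}\geq (a_{\gamma}-1)\sum_{\alpha\neq\gamma}a_{\alpha}$ guarantees the existence of $x_{\alpha}\in\mathbbm{Z}_{\geq 0}$ with $\sum_{\alpha}a_{\alpha}x_{\alpha}=m_{0}$.

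With nonnegative exponents in hand, the final step is cohomological: each generator $\mathscr{O}_{\alpha}(1)$ of $\mathrm{Pic}(X_{P})$ is globally generated — it is the pullback of $\mathcal{O}_{\mathbbm{P}(V(\varpi_{\alpha}))}(1)$ under the natural $G^{\mathbbm{C}}$-equivariant embedding $X_P \hookrightarrow \mathbbm{P}(V(\varpi_{\alpha}))$, equivalently the line bundle associated to the dominant weight $\varpi_{\alpha}$, whose space of sections is the irreducible module $V(\varpi_{\alpha})^{*}$ — so any tensor product $\bigotimes_{\alpha}\mathscr{O}_{\alpha}(x_{\alpha})$ with $x_{\alpha}\geq 0$ is globally generated, hence $H^{0}(X_{P},{\bf{E}})\neq\{0\}$ (indeed it is nonzero since $X_P$ is projective and ${\bf{E}}$ is nef with a nonzero section of $\bigotimes \mathscr{O}_\alpha(x_\alpha)$, e.g. the highest weight section). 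Setting ${\bf{E}}:=\bigotimes_{\alpha\in\Delta\setminus I}\mathscr{O}_{\alpha}(x_{\alpha})$ then gives $\mu_{[\omega_{0}]}({\bf{E}})=m_{0}$ and $H^{0}(X_{P},{\bf{E}})\neq\{0\}$, as required.

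The main obstacle — really the only nontrivial point — is pinning down the correct Frobenius-number estimate so that the stated bound \eqref{nefcondition} genuinely forces a nonnegative solution rather than merely an integer one; the case $|\Delta\setminus I|=1$ (where the condition reads $m_{0}\geq 0$ and one needs $a_{\gamma}\mid m_{0}$, automatic since $\tau=a_{\gamma}=1$) and the general $k\geq 2$ case should be handled by the elementary induction: write $m_0 = q a_\gamma + s$ with $0 \le s < a_\gamma$, solve $\sum_{\alpha\neq\gamma} a_\alpha y_\alpha \equiv s \pmod{a_\gamma}$ using $\gcd\{a_\alpha : \alpha \neq \gamma\}$ considerations together with coprimality to $a_\gamma$, and check the size bound ensures $x_\gamma := (m_0 - \sum_{\alpha\neq\gamma}a_\alpha y_\alpha)/a_\gamma \geq 0$. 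Everything else is bookkeeping plus the standard global generation of nef line bundles on flag varieties.
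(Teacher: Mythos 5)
Your proposal is correct and follows essentially the same route as the paper: reduce $\mu_{[\omega_{0}]}({\bf{E}})=m_{0}$ to the coprime linear Diophantine equation $\sum_{\alpha}\mathcal{Q}_{\omega_{0}}({\bf{\Omega}}_{\alpha},\omega_{0})x_{\alpha}=m_{0}$, use the bound \eqref{nefcondition} to obtain a \emph{nonnegative} solution (the paper simply cites \cite[Theorem 1.16]{nathanson2008elementary}, which is exactly the Frobenius-type representability estimate you sketch), and then conclude $H^{0}(X_{P},{\bf{E}})\neq\{0\}$ because the resulting weight $\sum_{\alpha}x_{\alpha}\varpi_{\alpha}$ is dominant — the paper invokes Borel--Weil, and your global-generation argument for the $\mathscr{O}_{\alpha}(1)$ is just a rephrasing of the same fact.
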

\begin{proof}
If the condition given in Eq. (\ref{nefcondition}) holds, it follows from \cite[Theorem 1.16]{nathanson2008elementary} that there exist integers $s_{\alpha} \in \mathbbm{Z}_{\geq 0}$, $\alpha \in \Delta \backslash I$, such that 
\begin{equation}
\sum_{\alpha \in \Delta \backslash I}\mathcal{Q}_{\omega_{0}}\big ({\bf{\Omega}}_{\alpha},\omega_{0} \big ) s_{\alpha} = m_{0}.
\end{equation}
Defining ${\bf{E}} :=  \bigotimes_{\alpha \in \Delta \backslash I} \mathscr{O}_{\alpha}(s_{\alpha})$, it follows that $\mu_{[\omega_{0}]}({\bf{E}}) = m_{0}$. In particular, we have
\begin{equation}
\lambda({\bf{E}}) = \sum_{\alpha \in \Delta \backslash I}s_{\alpha} \varpi_{\alpha} \in \Lambda^{+},
\end{equation}
i.e., $\lambda({\bf{E}})$ is an integral dominant weight of $\mathfrak{g}^{\mathbbm{C}}$. Therefore, from the Borel-Weil theorem, it follows that 
\begin{equation}
H^{0}(X_{P},{\bf{E}}) \cong V(\lambda({\bf{E}}))^{\ast} \neq \{0\},
\end{equation}
which concludes the proof.
\end{proof}

\begin{remark}
In the setting of the above corollary, we have that ${\bf{E}} \in {\rm{Pic}}(X_{P})$ constructed in the proof is a nef holomorphic line bundle (e.g. \cite{Snowhomovec}). Therefore, under the hypotheses of the previous corollary, for every sufficiently large integer $m_{0} \in \mathbbm{Z}$, the equation $\mu_{[\omega_{0}]}({\bf{E}}) = m_{0}$ can be solved by some nef holomorphic line bundle.
\end{remark}

Now we are in position to prove Theorem \ref{densityslope}. Fixed some integral K\"{a}hler class $[\omega_{0}] \in \mathcal{K}(X_{P})$, consider the following subset of $\mathbbm{Z}_{>0}$:
\begin{equation}
\mathcal{A}_{[\omega_{0}]}(n) := \Bigg \{ m \in  \mathbbm{Z}_{>0}  \ \ \Bigg | \ \ \begin{array}{l}
 \ \ \ \ 1 \leq m \leq n, \\
\exists \ {\bf{E}} \in {\rm{Pic}}(X_{P}), \ {\text{s.t.}} \ \mu_{[\omega_{0}]}({\bf{E}}) = m.
\end{array}\Bigg \}.
\end{equation}
From Proposition \ref{diophantineprescribedslope}, we have the following result.
\begin{theorem}
\label{naturaldensity}
For every integral K\"{a}hler class $[\omega_{0}] \in \mathcal{K}(X_{P})$, we have
\begin{equation}
\lim_{n \to +\infty}\frac{\big | \mathcal{A}_{[\omega_{0}]}(n)\big |}{n} = \frac{1}{\tau([\omega_{0}])},
\end{equation}
such that $\tau([\omega_{0}]):= {\rm{gcd}}\big \{ \deg_{\omega_{0}}(\mathscr{O}_{\alpha}(1)) \ \big | \ \alpha \in \Delta \backslash I\big\}$, where $\mathscr{O}_{\alpha}(1)$, $\alpha \in \Delta \backslash I$, are the generators of the Picard group ${\rm{Pic}}(X_{P})$.
\end{theorem}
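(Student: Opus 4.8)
The plan is to reduce the statement to an elementary counting argument, since all of the geometric content has already been packaged inside Proposition \ref{diophantineprescribedslope}. Write $\tau := \tau([\omega_{0}])$ for brevity. First I would observe that, by Proposition \ref{diophantineprescribedslope}, an integer $m$ lies in the image of the map ${\bf{E}} \mapsto \mu_{[\omega_{0}]}({\bf{E}})$ on ${\rm{Pic}}(X_{P})$ if and only if $\tau \mid m$: writing ${\bf{E}} = \bigotimes_{\alpha \in \Delta \backslash I}\mathscr{O}_{\alpha}(x_{\alpha})$, the equation $\mu_{[\omega_{0}]}({\bf{E}}) = m$ becomes the linear diophantine equation $\sum_{\alpha \in \Delta \backslash I} \deg_{\omega_{0}}(\mathscr{O}_{\alpha}(1)) x_{\alpha} = m$, whose solvability over $\mathbbm{Z}$ is governed precisely by the divisibility ${\rm{gcd}}\{ \deg_{\omega_{0}}(\mathscr{O}_{\alpha}(1)) \mid \alpha \in \Delta \backslash I\} \mid m$, that is, by $\tau \mid m$. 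Hence
\[
\mathcal{A}_{[\omega_{0}]} = \big\{ m \in \mathbbm{Z}_{>0} \ \big| \ \tau \mid m \big\} = \tau\,\mathbbm{Z}_{>0},
\]
the set of positive integer multiples of $\tau$.

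The next step is purely arithmetic. For each $n \in \mathbbm{Z}_{>0}$ the set $\mathcal{A}_{[\omega_{0}]}(n) = \mathcal{A}_{[\omega_{0}]} \cap [1,n]$ consists of $\tau, 2\tau, \ldots, \lfloor n/\tau \rfloor\,\tau$, so that $\big| \mathcal{A}_{[\omega_{0}]}(n)\big| = \lfloor n/\tau \rfloor$. Writing $\lfloor n/\tau \rfloor = n/\tau - \{ n/\tau \}$ with $0 \leq \{ n/\tau \} < 1$, we obtain
\[
\frac{\big| \mathcal{A}_{[\omega_{0}]}(n)\big|}{n} = \frac{1}{\tau} - \frac{\{ n/\tau \}}{n},
\]
and since the last term is bounded in absolute value by $1/n$, letting $n \to +\infty$ yields the limit $1/\tau([\omega_{0}])$, as claimed. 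The same computation simultaneously shows that the natural density $d(\mathcal{A}_{[\omega_{0}]})$ equals $1/\tau([\omega_{0}])$.

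There is essentially no genuine obstacle beyond Proposition \ref{diophantineprescribedslope} itself: once the image of the slope map has been identified with the arithmetic progression $\tau\,\mathbbm{Z}_{>0}$, the density computation is immediate. The one subtlety — already handled in the proof of Proposition \ref{diophantineprescribedslope} through the Hodge-Riemann bilinear form $\mathcal{Q}_{\omega_{0}}$ — is that each $\deg_{\omega_{0}}(\mathscr{O}_{\alpha}(1))$ must be an integer (which uses that both $c_{1}(\mathscr{O}_{\alpha}(1))$ and $[\omega_{0}]$ are integral classes), so that the relevant equation is honestly a linear diophantine equation and the classical gcd solvability criterion applies. I would therefore present the full argument in just these two short steps.
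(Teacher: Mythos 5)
Your proposal is correct and follows essentially the same route as the paper: invoke Proposition \ref{diophantineprescribedslope} to identify $\mathcal{A}_{[\omega_{0}]}$ with the positive multiples of $\tau([\omega_{0}])$, then count them up to $n$ and pass to the limit (your floor-function bookkeeping is just the paper's division-algorithm step $n = \tau([\omega_{0}])t(n)+r(n)$ in different notation). No gaps.
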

\begin{proof}
From Proposition \ref{diophantineprescribedslope}, we have 
\begin{equation}
\mathcal{A}_{[\omega_{0}]}(n) = \Big \{ m \in \mathbbm{Z}_{>0} \ \Big | \ 1 \leq m \leq n, \ \tau([\omega_{0}])|m \Big \},  
\end{equation}
$\forall n \in \mathbbm{Z}_{>0}$. Given $n \in \mathbbm{Z}_{>0}$, from the division algorithm, there exist unique integers $t(n)$ and $r(n)$, such that 
\begin{equation}
n = \tau([\omega_{0}])t(n) + r(n), \ \ \ 0 \leq r(n) < \tau([\omega_{0}]). 
\end{equation}
Therefore, for every $n \in \mathbbm{Z}_{>0}$, we have 
\begin{equation}
\big | \mathcal{A}_{[\omega_{0}]}(n)\big | = t(n) = \frac{n - r(n)}{\tau([\omega_{0}])}.
\end{equation}
Since $0 \leq r(n) < \tau([\omega_{0}])$, it follows that 
\begin{equation}
 0 \leq \frac{1}{\tau([\omega_{0}])} - \frac{\big | \mathcal{A}_{[\omega_{0}]}(n)\big |}{n} < \frac{\tau([\omega_{0}])}{n}.
\end{equation}
Hence, $\frac{|\mathcal{A}_{[\omega_{0}]}(n)|}{n} \to \frac{1}{\tau([\omega_{0}])}$ as $n \to +\infty$.
\end{proof}
\begin{remark}
The above result shows that the subset 
\begin{equation}
\mathcal{A}_{[\omega_{0}]} = \big \{m \in \mathbbm{Z}_{>0} \ \big | \ \mu_{[\omega_{0}]}({\bf{E}}) = m, \ {\text{for some}} \ {\bf{E}} \in {\rm{Pic}}(X_{P}) \big\},
\end{equation}
has asymptotic density (e.g. \cite[Chapter 16]{nathanson2008elementary}) 
\begin{equation}
d(\mathcal{A}_{[\omega_{0}]}) := {\displaystyle{\lim_{n\to +\infty}}}\frac{\big |\mathcal{A}_{[\omega_{0}]} \cap [1,n] \big |}{n} = \frac{1}{\tau([\omega_{0}])}.
\end{equation}
\end{remark}
\begin{remark}[Hodge-Riemann bilinear form]
\label{Hodge_Riemann}
 In the setting of Proposition \ref{linearequationdegree}, if we consider $\Delta \backslash I = \{\alpha_{1},\ldots,\alpha_{\rho}\}$, where $\rho$ is the Picard number of $X_{P}$, we have
 \begin{equation}
\deg_{\omega_{0}}({\bf{E}}) = \mathcal{Q}_{\omega_{0}}\big (c_{1}({\bf{E}}),[\omega_{0}] \big ) = \sum_{i,j = 1}^{\rho}\langle c_{1}({\bf{E}}),\mathbbm{P}_{\alpha_{i}}^{1} \rangle \langle [\omega_{0}],\mathbbm{P}_{\alpha_{j}}^{1} \rangle \underbrace{\mathcal{Q}_{\omega_{0}} \big ( {\bf{\Omega}}_{\alpha_{i}},{\bf{\Omega}}_{\alpha_{j}} \big )}_{(\mathcal{Q}_{\omega_{0}})_{ij}}, 
 \end{equation}
 such that 
 \begin{equation}
(\mathcal{Q}_{\omega_{0}})_{ij} =  \int_{X_{P}} {\bf{\Omega}}_{\alpha_{i}} \wedge {\bf{\Omega}}_{\alpha_{j}}  \wedge \omega_{0}^{n-2},
 \end{equation}
 $\forall i,j = 1,\ldots,\rho$. From \cite[Lemma 4.7]{szekelyhidi2014introduction}, we have 
 \begin{equation}
(\mathcal{Q}_{\omega_{0}})_{ij} = (n-2)!\Big ( \Lambda_{\omega_{0}}( {\bf{\Omega}}_{\alpha_{i}})\Lambda_{\omega_{0}}( {\bf{\Omega}}_{\alpha_{j}}) - \langle {\bf{\Omega}}_{\alpha_{i}}, {\bf{\Omega}}_{\alpha_{j}}\rangle_{\omega_{0}} \Big ){\rm{Vol}}(X_{P},\omega_{0}),
 \end{equation}
$\forall i,j = 1,\ldots,\rho$. By following Remark \ref{origineigenvalues}, it follows that 
\begin{equation}
\Lambda_{\omega_{0}}({\bf{\Omega}}_{\alpha})=\sum_{\beta \in \Phi_{I}^{+}} \frac{\langle \varpi_{\alpha}, \beta^{\vee} \rangle}{\langle \lambda([\omega_{0}]), \beta^{\vee}\rangle}, \ \langle {\bf{\Omega}}_{\alpha}, {\bf{\Omega}}_{\gamma}\rangle_{\omega_{0}} = \sum_{\beta \in \Phi_{I}^{+}} \frac{\langle \varpi_{\alpha}, \beta^{\vee} \rangle}{\langle \lambda([\omega_{0}]), \beta^{\vee}\rangle}  \frac{\langle \varpi_{\gamma}, \beta^{\vee} \rangle}{\langle \lambda([\omega_{0}]), \beta^{\vee}\rangle},
\end{equation}
$\forall \alpha,\gamma \in \Delta \backslash I$. Moreover, from Eq. (\ref{VolKahler}), we have 
\begin{equation}
{\rm{Vol}}(X_{P},\omega_{0}) = \prod_{\beta \in \Phi_{I}^{+}} \frac{\langle \lambda([\omega_{0}]),\beta^{\vee} \rangle}{\langle \varrho^{+},\beta^{\vee} \rangle}.
\end{equation}
Therefore, the Hodge-Riemann bilinear form $\mathcal{Q}_{\omega_{0}} \colon H^{2}(X_{P},\mathbbm{Z}) \times H^{2}(X_{P},\mathbbm{Z}) \to \mathbbm{Z}$ is completely determined by the relation between fundamental weights and simple roots provided by the Cartan matrix of $\mathfrak{g}^{\mathbbm{C}}$. Moreover, fixed an integral K\"{a}hler class $\omega_{0} = \sum_{\alpha \in \Delta \backslash I}s_{\alpha} {\bf{\Omega}}_{\alpha}$, and fixed $m_{0} \in \mathbbm{Z}$, the equation $\mu_{[\omega_{0}]}({\bf{E}}) = m_{0}$, ${\bf{E}} \in {\rm{Pic}}(X_{P})$, is equivalent to 
\begin{equation}
\label{linearequationslope}
\begin{pmatrix} \int_{\mathbbm{P}_{\alpha_{1}}^{1}}c_{1}({\bf{E}}) & \cdots & \int_{\mathbbm{P}_{\alpha_{\rho}}^{1}}c_{1}({\bf{E}})\end{pmatrix}\begin{pmatrix} (\mathcal{Q}_{\omega_{0}})_{11} & \cdots & (\mathcal{Q}_{\omega_{0}})_{1\rho} \\
 \vdots & \ddots & \vdots  \\ 
 (\mathcal{Q}_{\omega_{0}})_{\rho 1} & \cdots & (\mathcal{Q}_{\omega_{0}})_{\rho \rho}\end{pmatrix} \begin{pmatrix} s_{1} \\ \vdots \\ s_{\rho}\end{pmatrix} = m_{0}, \ {\bf{E}} \in {\rm{Pic}}(X_{P}).
\end{equation}
It is worth to point out that, if ${\bf{F}} \in {\rm{Pic}}(X_{P})$ satisfies $\mu_{[\omega_{0}]}({\bf{F}}) = m_{0}$, then all solutions of Eq. (\ref{linearequationslope}) can be written as ${\bf{F}} \otimes {\bf{G}}$, where ${\bf{G}} \in {\rm{Pic}}(X_{P})$ is a solution of the associated homogeneous problem $\mu_{[\omega_{0}]}({\bf{E}}) = 0$, ${\bf{E}} \in {\rm{Pic}}(X_{P})$. 
\end{remark}

Let us illustrate the ideas developed above by means of an example.

\begin{example}
Let $X_{P} = {\mathbbm{P}}(T_{{\mathbbm{P}^{2}}})$ as in Example \ref{computationWallach}. Given some integral K\"{a}hler metric $\omega_{0} = s_{1} {\bf{\Omega}}_{\alpha_{1}} + s_{2}{\bf{\Omega}}_{\alpha_{2}}$, a straightforward computation shows us that 
\begin{enumerate}
\item $(\mathcal{Q}_{\omega_{0}})_{11} = 1! \Big [ \Big ( \frac{1}{s_{1}} + \frac{1}{s_{1} + s_{2}}\Big)^{2} - \Big ( \frac{1}{s_{1}^{2}} + \frac{1}{(s_{1}+s_{2})^{2}}\Big )\Big] \frac{s_{1}s_{2}(s_{1}+s_{2})}{2} = s_{2}$;
\item $(\mathcal{Q}_{\omega_{0}})_{22} = 1! \Big [ \Big ( \frac{1}{s_{2}} + \frac{1}{s_{1} + s_{2}}\Big)^{2} - \Big ( \frac{1}{s_{2}^{2}} + \frac{1}{(s_{1}+s_{2})^{2}}\Big )\Big] \frac{s_{1}s_{2}(s_{1}+s_{2})}{2} = s_{1}$;
\item $(\mathcal{Q}_{\omega_{0}})_{12} = (\mathcal{Q}_{\omega_{0}})_{21} =  1! \Big [ \Big ( \frac{1}{s_{1}} + \frac{1}{s_{1} + s_{2}}\Big)\Big ( \frac{1}{s_{2}} + \frac{1}{s_{1} + s_{2}}\Big)- \frac{1}{(s_{1}+s_{2})^{2}}\Big] \frac{s_{1}s_{2}(s_{1}+s_{2})}{2} = s_{1} + s_{2}$.
\end{enumerate}
Therefore, for an arbitrary integral class K\"{a}hler metric $\omega_{0} = s_{1} {\bf{\Omega}}_{\alpha_{1}} + s_{2}{\bf{\Omega}}_{\alpha_{2}}$, we have
\begin{equation}
\label{HRbilinearmatrix}
\mathcal{Q}_{\omega_{0}} = \begin{pmatrix} s_{2} & s_{1} + s_{2} \\
 s_{1}+s_{2} & s_{1}\end{pmatrix} \in {\rm{GL}}_{2}(\mathbbm{Z}).
\end{equation}
From above, the general equation associated with the problem $\mu_{[\omega_{0}]}({\bf{E}}) = m_{0}$, such that $m_{0} \in \mathbbm{Z}$, and ${\bf{E}} \in {\rm{Pic}}({\mathbbm{P}}(T_{{\mathbbm{P}^{2}}}))$, can be explicitly written as follows: 
\begin{equation}
\label{explicitdiophantine}
(s_{1}s_{2} + s_{2}(s_{1}+s_{2}))x_{1} + (s_{1}s_{2} + s_{1}(s_{1}+s_{2}))x_{1} = m_{0},
\end{equation}
where ${\bf{E}} = \mathscr{O}_{\alpha_{1}}(x_{1}) \otimes \mathscr{O}_{\alpha_{2}}(x_{2}) \in {\rm{Pic}}({\mathbbm{P}}(T_{{\mathbbm{P}^{2}}}))$, cf. Eq. (\ref{linearequationslope}). Notice that, in the particular case that $s_{1} = s_{2} = 2$, we recover Eq. (\ref{diophantineslope}). From Proposition \ref{diophantineprescribedslope}, we conclude that the linear diophantine equation Eq. (\ref{explicitdiophantine}) can be solved if, and only if,
\begin{equation}
\tau([\omega_{0}]) = {\rm{gcd}} \big \{s_{1}s_{2} + s_{2}(s_{1}+s_{2}), s_{1}s_{2} + s_{1}(s_{1}+s_{2})\big \} \Big | m_{0}.
\end{equation}
As in Example \ref{computationWallach}, the above ideas allow us to construct several explicit non-trivial examples of Hermitian Yang-Mills instantons. Also, from Theorem \ref{naturaldensity}, for every integral K\"{a}hler class $[\omega_{0}] \in \mathcal{K}({\mathbbm{P}}(T_{{\mathbbm{P}^{2}}}))$, such that $\omega_{0} =  \omega_{0} = s_{1} {\bf{\Omega}}_{\alpha_{1}} + s_{2}{\bf{\Omega}}_{\alpha_{2}}$, we have the natural density of $\mathcal{A}_{[\omega_{0}]}$ given by
\begin{equation}
\label{naturaldensityexample}
d(\mathcal{A}_{[\omega_{0}]}) = {\displaystyle{\lim_{n\to +\infty}}}\frac{\big |\mathcal{A}_{[\omega_{0}]} \cap [1,n] \big |}{n} = \frac{1}{\tau(s_{1},s_{2})},
\end{equation}
such that $\tau(s_{1},s_{2}):= {\rm{gcd}} \big \{s_{1}s_{2} + s_{2}(s_{1}+s_{2}), s_{1}s_{2} + s_{1}(s_{1}+s_{2})\big \}$. Notice that, since $\tau(s_{1},s_{2})$ is symmetric in the variables $s_{1}$ and $s_{2}$, we can easily produce examples of integral K\"{a}hler classes $[\omega_{1}], [\omega_{2}] \in \mathcal{K}({\mathbbm{P}}(T_{{\mathbbm{P}^{2}}}))$, satisfying $\tau([\omega_{1}]) = \tau([\omega_{2}])$, with $[\omega_{1}] \neq [\omega_{2}]$.
\end{example}

In order to prove Theorem \ref{Kgroup}, we introduce some basic results concerned with primitive $(1,1)$-forms on flag varieties.

\begin{remark}[Primitive 2-forms] 
\label{primitivesplitting}
Given a flag variety $X_{P}$ with some fixed integral K\"{a}hler class $[\omega_{0}] \in \mathcal{K}(X_{P})$, and considering  $\Lambda_{\omega_{0}}$, i.e., the dual of the associated Lefschetz operator, we can describe explicitly the primitive submodule 
\begin{equation}
H^{2}_{\omega_{0}}(X_{P},\mathbbm{Z})_{{\text{prim}}} = \ker \Big (\Lambda_{\omega_{0}} \colon H^{2}(X_{P},\mathbbm{Z}) \to \mathbbm{Z} \Big ),
\end{equation}
in terms of the Cartan matrix of the complex simple Lie algebra underlying $X_{P}$. In fact, keeping the notation of Remark \ref{Hodge_Riemann}, given $[\psi] \in H^{2}(X_{P},\mathbbm{Z})$, we have 
\begin{equation}
\Lambda_{\omega_{0}}([\psi]) = \sum_{\alpha \in \Delta \backslash I}\Lambda_{\omega_{0}}([{\bf{\Omega}}_{\alpha}]) x_{\alpha},
\end{equation}
such that $x_{\alpha} = \big \langle \lambda([\psi]), [\mathbbm{P}^{1}_{\alpha}] \big \rangle$, $\forall \alpha \in \Delta \backslash I$. Since $\Lambda_{\omega_{0}}([{\bf{\Omega}}_{\alpha}]) = \mathcal{Q}_{\omega_{0}}({\bf{\Omega}}_{\alpha},\omega_{0})$, $\forall \alpha \in \Delta \backslash I$, we have
\begin{equation}
\Lambda_{\omega_{0}}([\psi]) = 0 \iff \sum_{\alpha \in \Delta \backslash I}\frac{\mathcal{Q}_{\omega_{0}}({\bf{\Omega}}_{\alpha},\omega_{0})}{\tau([\omega_{0}])}x_{\alpha}=0.
\end{equation}
Denoting $q_{\alpha}(\omega_{0})= \frac{\mathcal{Q}_{\omega_{0}}({\bf{\Omega}}_{\alpha},\omega_{0})}{\tau([\omega_{0}])}$, $\forall \alpha \in \Delta \backslash I$, and taking some $\gamma \in \Delta \backslash I$, we obtain a $\mathbbm{Z}$-basis for $H^{2}(X_{P},\mathbbm{Z})_{{\text{prim}}}$ by setting 
\begin{equation}
\label{basiszeroslope}
{\bf{\xi}}_{\alpha} := -q_{\alpha}(\omega_{0})[{\bf{\Omega}}_{\gamma}] + q_{\gamma}(\omega_{0})[{\bf{\Omega}}_{\alpha}], 
\end{equation}
for all $\forall \alpha \in \Delta \backslash I$, such that $\alpha \neq \gamma$. From this, we have
\begin{equation}
H^{2}(X_{P},\mathbbm{Z})_{{\text{prim}}} = \bigoplus_{\substack{\alpha \in \Delta \backslash I, \alpha \neq \gamma}}\mathbbm{Z}\xi_{\alpha}.
\end{equation}
By construction, we obtain a $\mathcal{Q}_{\omega_{0}}$-orthogonal decomposition 
\begin{equation}
H^{2}(X_{P},\mathbbm{Z}) = \mathbbm{Z}[\omega_{0}] \oplus H^{2}(X_{P},\mathbbm{Z})_{{\text{prim}}}.
\end{equation}
If we consider the extension $\mathcal{Q}_{\omega_{0}} \colon H^{2}(X_{P},\mathbbm{Q}) \times H^{2}(X_{P},\mathbbm{Q}) \to \mathbbm{Q}$, then we can apply the Gram–Schmidt process w.r.t. $\mathcal{Q}_{\omega_{0}}$ on the basis $\xi_{\alpha}$, $\alpha \in \Delta \backslash I, \alpha \neq \gamma$, in order to obtain a complete decomposition of $H^{2}(X_{P},\mathbbm{Q})$ as a direct sum of $\mathcal{Q}_{\omega_{0}}$-orthogonal $\mathbbm{Q}$-subspaces. From Eq. (\ref{basiszeroslope}), we can describe explicitly a set of generators for the kernel of the homomorphism $\mu_{[\omega_{0}]}\colon {\rm{Pic}}(X_{P}) \to \mathbbm{Z}$. In fact, denoting 
\begin{equation}
{\rm{Pic}}^{0}_{\omega_{0}}(X_{P}) = \Big \{ {\bf{E}} \in {\rm{Pic}}(X_{P}) \ \Big | \ \mu_{[\omega_{0}]}({\bf{E}}) = 0 \Big \},
\end{equation}
since ${\rm{Pic}}^{0}_{\omega_{0}}(X_{P}) \cong H^{2}_{\omega_{0}}(X_{P},\mathbbm{Z})_{{\text{prim}}}$, we have that
\begin{equation} \mathscr{O}_{\gamma}(-q_{\alpha}(\omega_{0})) \otimes \mathscr{O}_{\alpha}(q_{\gamma}(\omega_{0})), \ \ \forall \alpha \in \Delta \backslash I, \alpha \neq \gamma,
\end{equation}
define a set of generators for ${\rm{Pic}}^{0}_{\omega_{0}}(X_{P})$. Moreover, consider the finitely generated subgroup ${\rm{H}}_{\omega_{0}}\subset {\text{Hom}}(P,\mathbbm{C}^{\times})$, such that
\begin{equation}
{\rm{H}}_{\omega_{0}} := \Big \langle \vartheta_{\varpi_{\gamma}}^{-q_{\alpha}(\omega_{0})} \vartheta_{\varpi_{\alpha}}^{q_{\gamma}(\omega_{0})} \ \Big | \ \alpha \in \Delta \backslash I, \alpha \neq \gamma \Big \rangle,
\end{equation}
recall that ${\text{Hom}}(P,\mathbbm{C}^{\times}) = {\text{Hom}}(T(\Delta \backslash I)^{\mathbbm{C}},\mathbbm{C}^{\times})$, and $({\rm{d}}\vartheta_{\varpi_{\alpha}})_{e} = \varpi_{\alpha}$, $\forall \alpha \in \Delta$. The subgroup ${\rm{H}}_{\omega_{0}}$ constructed above completes the following commutative diagram: 
\begin{center}
\begin{tikzcd} &  1 \arrow[d]& 1 \arrow[d] & &  \\ 1 \arrow[r] &  {\rm{H}}_{\omega_{0}} \arrow[r,"\iota"] \arrow[d]& {\text{Hom}}(P,\mathbbm{C}^{\times}) \arrow[r,"\widehat{\mu}_{[\omega_{0}]}"] \arrow[d] & \tau([\omega_{0}])\mathbbm{Z} \arrow[r] & 1 \\
1 \arrow[r]  &  {\rm{Pic}}^{0}_{\omega_{0}}(X_{P}) \arrow[r,"\iota"]  \arrow[d] & {\rm{Pic}}(X_{P})  \arrow[r,"\mu_{[\omega_{0}]}"]  \arrow[d] & \tau([\omega_{0}])\mathbbm{Z} \arrow[r] \arrow[u,equals] & 1 \\
&  1 & 1  & &  
\end{tikzcd}
\end{center}
In the top line of the above diagram we consider $\widehat{\mu}_{[\omega_{0}]} \colon {\text{Hom}}(P,\mathbbm{C}^{\times}) \to \tau([\omega_{0}])\mathbbm{Z}$, such that 
\begin{equation}
\widehat{\mu}_{[\omega_{0}]}(\vartheta) := (n-1)!\Bigg [ \sum_{\beta \in \Phi_{I}^{+} } \frac{\langle ({\rm{d}}\vartheta)_{e}, \beta^{\vee} \rangle}{\langle \lambda([\omega_{0}]), \beta^{\vee}\rangle} \Bigg ] \Bigg [ \prod_{\beta \in \Phi_{I}^{+}} \frac{\langle \lambda([\omega_{0}]),\beta^{\vee} \rangle}{\langle \varrho^{+},\beta^{\vee} \rangle}\Bigg].
\end{equation}
$\forall \vartheta \in {\text{Hom}}(P,\mathbbm{C}^{\times})$. Thus, from Theorem \ref{TheoA1}, we have $\widehat{\mu}_{[\omega_{0}]}(\vartheta) = \mu_{[\omega_{0}]}({\bf{E}}_{\vartheta})$, where ${\bf{E}}_{\vartheta} \in {\rm{Pic}}(X_{P})$ is the holomorphic line bundle defined by $\vartheta \in {\text{Hom}}(P,\mathbbm{C}^{\times})$. Notice that Proposition \ref{diophantineprescribedslope} ensures that both $\widehat{\mu}_{[\omega_{0}]}$ and $\mu_{[\omega_{0}]}$ are surjective homomorphisms.
\end{remark}

\begin{example}
\label{generatorspic_0}
As before, consider the case that $X_{P} = {\mathbbm{P}}(T_{{\mathbbm{P}^{2}}})$. Given some integral K\"{a}hler class $[\omega_{0}] = s_{1}[{\bf{\Omega}}_{\alpha_{1}}] +  s_{2}[{\bf{\Omega}}_{\alpha_{2}}]$, it follows from Eq. (\ref{HRbilinearmatrix}) that
\begin{enumerate}
\item $\mathcal{Q}_{\omega_{0}}({\bf{\Omega}}_{\alpha_{1}},\omega_{0}) =  \begin{pmatrix} 1 & 0 \end{pmatrix}\begin{pmatrix} s_{2} & s_{1} + s_{2} \\
 s_{1}+s_{2} & s_{1}\end{pmatrix} \begin{pmatrix} s_{1} \\ s_{2}\end{pmatrix} = s_{1}s_{2} + s_{2}(s_{1} + s_{2})$;
 \item $\mathcal{Q}_{\omega_{0}}({\bf{\Omega}}_{\alpha_{2}},\omega_{0}) =  \begin{pmatrix} 0 & 1 \end{pmatrix}\begin{pmatrix} s_{2} & s_{1} + s_{2} \\
 s_{1}+s_{2} & s_{1}\end{pmatrix} \begin{pmatrix} s_{1} \\ s_{2}\end{pmatrix} = s_{1}s_{2} + s_{1}(s_{1} + s_{2})$.
\end{enumerate}
Let us denote $q_{j}(\omega_{0}) = \frac{\mathcal{Q}_{\omega_{0}}({\bf{\Omega}}_{\alpha_{j}},\omega_{0})}{\tau([\omega_{0}])}$, $j = 1,2$. From above, we have that 
\begin{equation}
\xi = -q_{2}(\omega_{0})[{\bf{\Omega}}_{\alpha_{1}}] + q_{1}(\omega_{0})[{\bf{\Omega}}_{\alpha_{2}}],
\end{equation}
generates $H_{\omega_{0}}^{2}({\mathbbm{P}}(T_{{\mathbbm{P}^{2}}}),\mathbbm{Z})_{{\text{prim}}}$. Hence, we have the ${\mathcal{Q}}_{\omega_{0}}$-orthogonal decomposition
\begin{equation}
H^{2}({\mathbbm{P}}(T_{{\mathbbm{P}^{2}}}),\mathbbm{Z}) = \mathbbm{Z}[\omega_{0}] \oplus \mathbbm{Z}\xi.
\end{equation}
In this particular case, we have
\begin{equation}
{\rm{H}}_{\omega_{0}} = \Big \{ \vartheta_{\varpi_{\alpha_{1}}}^{-nq_{2}(\omega_{0})}\vartheta_{\varpi_{\alpha_{2}}}^{nq_{1}(\omega_{0})} \ \Big | \ n \in \mathbbm{Z}\Big \} = \Big \langle \vartheta_{\varpi_{\alpha_{1}}}^{-q_{2}(\omega_{0})}\vartheta_{\varpi_{\alpha_{2}}}^{q_{1}(\omega_{0})} \Big \rangle .
\end{equation}
If we consider $\omega_{0} \in c_{1}({\mathbbm{P}}(T_{{\mathbbm{P}^{2}}}))$ as in Example \ref{computationWallach}, we obtain $q_{1}(\omega_{0})=q_{2}(\omega_{0})=1$. Thus, it follows that
\begin{equation}
{\rm{H}}_{\omega_{0}} = \Big \{ \vartheta_{\varpi_{\alpha_{1}}}^{-n}\vartheta_{\varpi_{\alpha_{2}}}^{n} \ \Big | \ n \in \mathbbm{Z}\Big \} = \Big \langle \vartheta_{\varpi_{\alpha_{1}}}^{-1}\vartheta_{\varpi_{\alpha_{2}}} \Big \rangle.
\end{equation}
In particular, we obtain
\begin{equation}
{\rm{Pic}}^{0}_{\omega_{0}}({\mathbbm{P}}(T_{{\mathbbm{P}^{2}}})) = \Big \{ \mathscr{O}_{\alpha_{1}}(-n)\otimes \mathscr{O}_{\alpha_{2}}(n) \ \Big | \ n \in \mathbbm{Z} \Big \}.
\end{equation}
The subgroups ${\rm{H}}_{\omega_{0}}$ and ${\rm{Pic}}^{0}_{\omega_{0}}({\mathbbm{P}}(T_{{\mathbbm{P}^{2}}}))$ described above complete the following commutative diagram: 
\begin{center}
\begin{tikzcd}&  1 \arrow[d]& 1 \arrow[d] & &  \\ 1 \arrow[r] &  {\rm{H}}_{\omega_{0}} \arrow[r,"\iota"] \arrow[d]& {\text{Hom}}(B,\mathbbm{C}^{\times}) \arrow[r,"\widehat{\mu}_{[\omega_{0}]}"] \arrow[d] & 12\mathbbm{Z} \arrow[r] & 1 \\
1 \arrow[r]  &  {\rm{Pic}}^{0}_{\omega_{0}}({\mathbbm{P}}(T_{{\mathbbm{P}^{2}}})) \arrow[r,"\iota"]  \arrow[d] & {\rm{Pic}}({\mathbbm{P}}(T_{{\mathbbm{P}^{2}}}))  \arrow[r,"\mu_{[\omega_{0}]}"]  \arrow[d] & 12\mathbbm{Z} \arrow[r] \arrow[u,equals] & 1 \\
&  1 & 1  & &  
\end{tikzcd}
\end{center}
Building on the above ideas, we can construct examples of $[\omega_{0}]$-polystable holomorphic vector bundles over ${\mathbbm{P}}(T_{{\mathbbm{P}^{2}}})$ in the following way: Given some ${\bf{F}}_{0} \in {\rm{Pic}}({\mathbbm{P}}(T_{{\mathbbm{P}^{2}}}))$, take a collection of distinct elements ${\bf{G}}_{1},\ldots,{\bf{G}}_{r} \in  {\rm{Pic}}^{0}_{\omega_{0}}({\mathbbm{P}}(T_{{\mathbbm{P}^{2}}}))$, and define
\begin{equation}
{\bf{E}}:= \bigoplus_{j = 1}^{r}\big ({\bf{F}}_{0}\otimes {\bf{G}}_{j}\big ).
\end{equation}
By construction, we have that ${\bf{E}}$ is $[\omega_{0}]$-polystable. Notice that $\mu_{[\omega_{0}]}({\bf{E}}) = \mu_{[\omega_{0}]}({\bf{F}}_{0})$. In summary, for every $m_{0} \in 12\mathbbm{Z}$ and every integer $r >0$, there exists a $[\omega_{0}]$-polystable holomorphic vector bundle ${\bf{E}}$ over ${\mathbbm{P}}(T_{{\mathbbm{P}^{2}}})$, such that $\rank({{\bf{E}}}) = r$ and $\mu_{[\omega_{0}]}({\bf{E}}) = m_{0}$.
\end{example}
From the ideas introduced in the above remark we can prove Theorem \ref{Kgroup}.
\begin{theorem}
Given an integral K\"{a}hler class $[\omega_{0}] \in \mathcal{K}(X_{P})$, then we have 
\begin{equation}
\label{splitting}
K_{0}(X_{P}) \cong SK_{0}(X_{P}) \oplus {\rm{Pic}}^{0}_{\omega_{0}}(X_{P}) \oplus \tau([\omega_{0}])\mathbbm{Z},
\end{equation}
such that 
\begin{enumerate}
\item $SK_{0}(X_{P}) := \ker \big ( \det \colon K_{0}(X_{P}) \to {\rm{Pic}}(X_{P})\big)$,
\item ${\rm{Pic}}^{0}_{\omega_{0}}(X_{P}) := \big \{ {\bf{E}} \in {\rm{Pic}}(X_{P}) \ \big | \ \deg_{\omega_{0}}({\bf{E}}) = 0 \big \}$,
\item $\tau([\omega_{0}]):= {\rm{gcd}}\big \{ \deg_{\omega_{0}}(\mathscr{O}_{\alpha}(1)) \ \big | \ \alpha \in \Delta \backslash I\big\}$,
\end{enumerate}
where $\mathscr{O}_{\alpha}(1)$, $\alpha \in \Delta \backslash I$, are the generators of ${\rm{Pic}}(X_{P})$. Moreover, the generators of ${\rm{Pic}}^{0}_{\omega_{0}}(X_{P})$ are completely determined by the Hodge-Riemann bilinear form ${\mathcal{Q}}_{\omega_{0}}$.
\end{theorem}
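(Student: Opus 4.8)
The plan is to obtain the stated decomposition by splitting two short exact sequences, each of which splits because its quotient is a free abelian group. First I would use that $X_{P}$ is smooth and projective, so every coherent sheaf has a finite locally free resolution; consequently the assignment $\mathcal{F} \mapsto \det(\mathcal{F})$ is additive on short exact sequences and descends to a group homomorphism $\det \colon K_{0}(X_{P}) \to {\rm{Pic}}(X_{P})$, the latter regarded as a group under $\otimes$. Since $\det([{\bf{L}}]) = {\bf{L}}$ for every ${\bf{L}} \in {\rm{Pic}}(X_{P})$, the map $\det$ is surjective, so we get
\begin{equation}
0 \longrightarrow SK_{0}(X_{P}) \longrightarrow K_{0}(X_{P}) \xrightarrow{\ \det\ } {\rm{Pic}}(X_{P}) \longrightarrow 0 .
\end{equation}
By Proposition \ref{C8S8.2Sub8.2.3P8.2.6}, ${\rm{Pic}}(X_{P}) = \bigoplus_{\alpha \in \Delta \backslash I}\mathbbm{Z}[{\bf{\Omega}}_{\alpha}]$ is free abelian, hence projective over $\mathbbm{Z}$, so the sequence splits and $K_{0}(X_{P}) \cong SK_{0}(X_{P}) \oplus {\rm{Pic}}(X_{P})$.

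Next I would split ${\rm{Pic}}(X_{P})$ itself using the degree. The homomorphism $\deg_{\omega_{0}} \colon {\rm{Pic}}(X_{P}) \to \mathbbm{Z}$, ${\bf{E}} \mapsto \int_{X_{P}}c_{1}({\bf{E}}) \wedge [\omega_{0}]^{n-1}$, has kernel ${\rm{Pic}}^{0}_{\omega_{0}}(X_{P})$ by definition; since $\mu_{[\omega_{0}]}$ agrees with $\deg_{\omega_{0}}$ on line bundles, Proposition \ref{diophantineprescribedslope} shows that its image is exactly $\tau([\omega_{0}])\mathbbm{Z}$. Hence
\begin{equation}
0 \longrightarrow {\rm{Pic}}^{0}_{\omega_{0}}(X_{P}) \longrightarrow {\rm{Pic}}(X_{P}) \xrightarrow{\ \deg_{\omega_{0}}\ } \tau([\omega_{0}])\mathbbm{Z} \longrightarrow 0 ,
\end{equation}
and because $\tau([\omega_{0}])\mathbbm{Z} \cong \mathbbm{Z}$ is free this splits as well, giving ${\rm{Pic}}(X_{P}) \cong {\rm{Pic}}^{0}_{\omega_{0}}(X_{P}) \oplus \tau([\omega_{0}])\mathbbm{Z}$. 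Combining the two splittings produces the asserted isomorphism $K_{0}(X_{P}) \cong SK_{0}(X_{P}) \oplus {\rm{Pic}}^{0}_{\omega_{0}}(X_{P}) \oplus \tau([\omega_{0}])\mathbbm{Z}$.

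For the final assertion, I would invoke Remark \ref{primitivesplitting}: using that $c_{1} \colon {\rm{Pic}}(X_{P}) \to H^{2}(X_{P},\mathbbm{Z})$ is an isomorphism and that $\deg_{\omega_{0}}({\bf{E}}) = 0$ if and only if $\Lambda_{\omega_{0}}(c_{1}({\bf{E}})) = 0$, one identifies ${\rm{Pic}}^{0}_{\omega_{0}}(X_{P})$ with the primitive submodule $H^{2}(X_{P},\mathbbm{Z})_{{\text{prim}}}$. The explicit $\mathbbm{Z}$-basis of the latter constructed in Remark \ref{primitivesplitting} then shows that, after fixing some $\gamma \in \Delta \backslash I$, the line bundles $\mathscr{O}_{\gamma}(-q_{\alpha}(\omega_{0})) \otimes \mathscr{O}_{\alpha}(q_{\gamma}(\omega_{0}))$ for $\alpha \in \Delta \backslash I$ with $\alpha \neq \gamma$ generate ${\rm{Pic}}^{0}_{\omega_{0}}(X_{P})$, where $q_{\alpha}(\omega_{0}) = \mathcal{Q}_{\omega_{0}}({\bf{\Omega}}_{\alpha},\omega_{0})/\tau([\omega_{0}])$ and $\tau([\omega_{0}]) = {\rm{gcd}}\{\mathcal{Q}_{\omega_{0}}({\bf{\Omega}}_{\alpha},\omega_{0}) \mid \alpha \in \Delta \backslash I\}$. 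As all of these data are read off from the Hodge--Riemann form $\mathcal{Q}_{\omega_{0}}$, the generators of ${\rm{Pic}}^{0}_{\omega_{0}}(X_{P})$ are completely determined by $\mathcal{Q}_{\omega_{0}}$.

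The main point requiring care is the very first step, namely that $\det$ is a well defined homomorphism on the Grothendieck group of \emph{coherent} sheaves; this is exactly where the smoothness of $X_{P}$ is used, via the existence of finite locally free resolutions and the multiplicativity of $\det$ on exact sequences. Everything after that is formal: all the quotient groups occurring are free abelian — indeed $K_{0}(X_{P})$ is itself free of finite rank because $X_{P}$ admits a cellular (Bruhat) decomposition — so there is no extension obstruction to the splittings, and in particular the internal structure of $SK_{0}(X_{P})$ never enters the argument.
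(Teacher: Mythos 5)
Your proposal is correct and follows essentially the same route as the paper: split the sequence $0 \to SK_{0}(X_{P}) \to K_{0}(X_{P}) \xrightarrow{\det} {\rm{Pic}}(X_{P}) \to 0$ using freeness of ${\rm{Pic}}(X_{P})$, split $0 \to {\rm{Pic}}^{0}_{\omega_{0}}(X_{P}) \to {\rm{Pic}}(X_{P}) \xrightarrow{\deg_{\omega_{0}}} \tau([\omega_{0}])\mathbbm{Z} \to 0$ using surjectivity onto $\tau([\omega_{0}])\mathbbm{Z}$ (Proposition \ref{diophantineprescribedslope}) and freeness of the quotient, and read off the generators of ${\rm{Pic}}^{0}_{\omega_{0}}(X_{P})$ from the primitive-class description in terms of $\mathcal{Q}_{\omega_{0}}$. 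Your extra remarks on why $\det$ descends to $K_{0}$ via finite locally free resolutions only make explicit a point the paper leaves implicit.
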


\begin{proof}
Given ${\bf{E}} \in {\rm{Pic}}(X_{P})$, we have ${\bf{E}} = \bigotimes_{\alpha \in \Delta \backslash I} \mathscr{O}_{\alpha}(n_{\alpha})$, such that $n_{\alpha} \in \mathbbm{Z}$, $\forall \alpha \in \Delta \backslash I$. Thus, if we define 
\begin{equation}
\mathcal{E}:= \bigoplus_{\alpha \in \Delta \backslash I} \mathscr{O}_{\alpha}(n_{\alpha}),
\end{equation}
it follows that $[\mathcal{E}] \in  K_{0}(X_{P})$ and $\det([\mathcal{E}]) = {\bf{E}}$. Hence, $\det \colon K_{0}(X_{P}) \to {\rm{Pic}}(X_{P})$ is a surjective homomorphism. Denoting $SK_{0}(X_{P}) := \ker \big ( \det \colon K_{0}(X_{P}) \to {\rm{Pic}}(X_{P})\big)$, we have the short exact sequence of abelian groups
\begin{center}
\begin{tikzcd}1 \arrow[r] &  SK_{0}(X_{P}) \arrow[r,"\iota"] & K_{0}(X_{P}) \arrow[r,"\det"]  & {\rm{Pic}}(X_{P}) \arrow[r] & 1 \end{tikzcd}
\end{center}
Once ${\rm{Pic}}(X_{P})$ is a free abelian group, the above short exact sequence of abelian groups splits, i.e.,
\begin{equation}
K_{0}(X_{P}) \cong SK_{0}(X_{P}) \oplus {\rm{Pic}}(X_{P}).
\end{equation}
From Remark \ref{primitivesplitting}, we also have the following short exact sequence of abelian groups
\begin{center}
\begin{tikzcd}1 \arrow[r] &  {\rm{Pic}}^{0}_{\omega_{0}}(X_{P}) \arrow[r,"\iota"] & {\rm{Pic}}(X_{P}) \arrow[r,"\deg_{\omega_{0}}"]  & \tau([\omega_{0}])\mathbbm{Z} \arrow[r] & 1 \end{tikzcd}
\end{center}
Since  $\tau([\omega_{0}])\mathbbm{Z} \subset \mathbbm{Z}$ is also a free abelian group, the above short exact sequence of abelian groups also splits, thus
\begin{equation}
{\rm{Pic}}(X_{P}) \cong {\rm{Pic}}^{0}_{\omega_{0}}(X_{P}) \oplus \tau([\omega_{0}])\mathbbm{Z}.
\end{equation}
Combining the above facts, we obtain 
\begin{equation}
K_{0}(X_{P}) \cong SK_{0}(X_{P}) \oplus {\rm{Pic}}(X_{P}) \cong SK_{0}(X_{P}) \oplus {\rm{Pic}}^{0}_{\omega_{0}}(X_{P}) \oplus \tau([\omega_{0}])\mathbbm{Z}.
\end{equation}
As we have seen, fixed some $\gamma \in \Delta \backslash I$, it follows that
\begin{equation}
{\rm{Pic}}^{0}_{\omega_{0}}(X_{P}) = \Big \langle \mathscr{O}_{\gamma}(-q_{\alpha}(\omega_{0})) \otimes \mathscr{O}_{\alpha}(q_{\gamma}(\omega_{0})) \ \Big | \  \alpha \in \Delta \backslash I, \alpha \neq \gamma \Big \rangle,
\end{equation}
such that $q_{\alpha}(\omega_{0})= \frac{\mathcal{Q}_{\omega_{0}}({\bf{\Omega}}_{\alpha},\omega_{0})}{\tau([\omega_{0}])}$, $\forall \alpha \in \Delta \backslash I$, i.e., the generators of ${\rm{Pic}}^{0}_{\omega_{0}}(X_{P})$ are completely determined by the Hodge-Riemann bilinear form ${\mathcal{Q}}_{\omega_{0}}$.
\end{proof}
\begin{remark}
In the setting of the above theorem, the decomposition provided in Eq. (\ref{splitting}) also can be obtained from the following split exact sequences of abelian groups:
\begin{center}
\begin{tikzcd}1 \arrow[r] &  \det^{-1}\big ( {\rm{Pic}}^{0}_{\omega_{0}}(X_{P})\big )  \arrow[r,"\iota"] & K_{0}(X_{P}) \arrow[r,"\deg_{\omega_{0}}\circ \det"]  & \tau([\omega_{0}])\mathbbm{Z} \arrow[r] & 1 \end{tikzcd}
\end{center}
and
\begin{center}
\begin{tikzcd}1 \arrow[r] &  SK_{0}(X_{P})  \arrow[r,"\iota"] & \det^{-1}\big ( {\rm{Pic}}^{0}_{\omega_{0}}(X_{P})\big )  \arrow[r," \det"]  & {\rm{Pic}}^{0}_{\omega_{0}}(X_{P}) \arrow[r] & 1 \end{tikzcd}
\end{center}
\end{remark}
Let us illustrate the results provided by the last theorems.
\begin{example}
As before, consider $X_{P} = {\mathbbm{P}}(T_{{\mathbbm{P}^{2}}})$. Given $\omega_{0} \in c_{1}({\mathbbm{P}}(T_{{\mathbbm{P}^{2}}}))$, such that $\omega_{0} = 2({\bf{\Omega}}_{\alpha_{1}} + {\bf{\Omega}}_{\alpha_{2}})$, it follows from the previous computations (see Example \ref{generatorspic_0}) that 
\begin{equation}
K_{0}({\mathbbm{P}}(T_{{\mathbbm{P}^{2}}})) \cong SK_{0}({\mathbbm{P}}(T_{{\mathbbm{P}^{2}}})) \oplus \Big \langle \mathscr{O}_{\alpha_{1}}(-1)\otimes \mathscr{O}_{\alpha_{2}}(1) \Big \rangle \oplus 12 \mathbbm{Z}.
\end{equation}
Let us consider now $[\omega_{0}] \in \mathcal{K}({\mathbbm{P}}(T_{{\mathbbm{P}^{2}}}))$, such that $\omega_{0} = 2{\bf{\Omega}}_{\alpha_{1}} + {\bf{\Omega}}_{\alpha_{2}}$. From Example \ref{generatorspic_0}, we obtain
\begin{equation}
\mathcal{Q}_{\omega_{0}}({\bf{\Omega}}_{\alpha_{1}},\omega_{0}) = 5 \ \ {\text{and}} \ \ \mathcal{Q}_{\omega_{0}}({\bf{\Omega}}_{\alpha_{1}},\omega_{0}) = 8.
\end{equation}
Therefore, we have $\tau([\omega_{0}]) = 1$, so
\begin{equation}
K_{0}({\mathbbm{P}}(T_{{\mathbbm{P}^{2}}})) \cong SK_{0}({\mathbbm{P}}(T_{{\mathbbm{P}^{2}}})) \oplus \Big \langle \mathscr{O}_{\alpha_{1}}(-8)\otimes \mathscr{O}_{\alpha_{2}}(5) \Big \rangle \oplus \mathbbm{Z}.
\end{equation}
\begin{figure}[H]
    \centering
    \subfloat[\centering Integral weight corresponding to $\omega_{0} = 2({\bf{\Omega}}_{\alpha_{1}} + {\bf{\Omega}}_{\alpha_{2}})$.]{{\includegraphics[width=6.5cm]{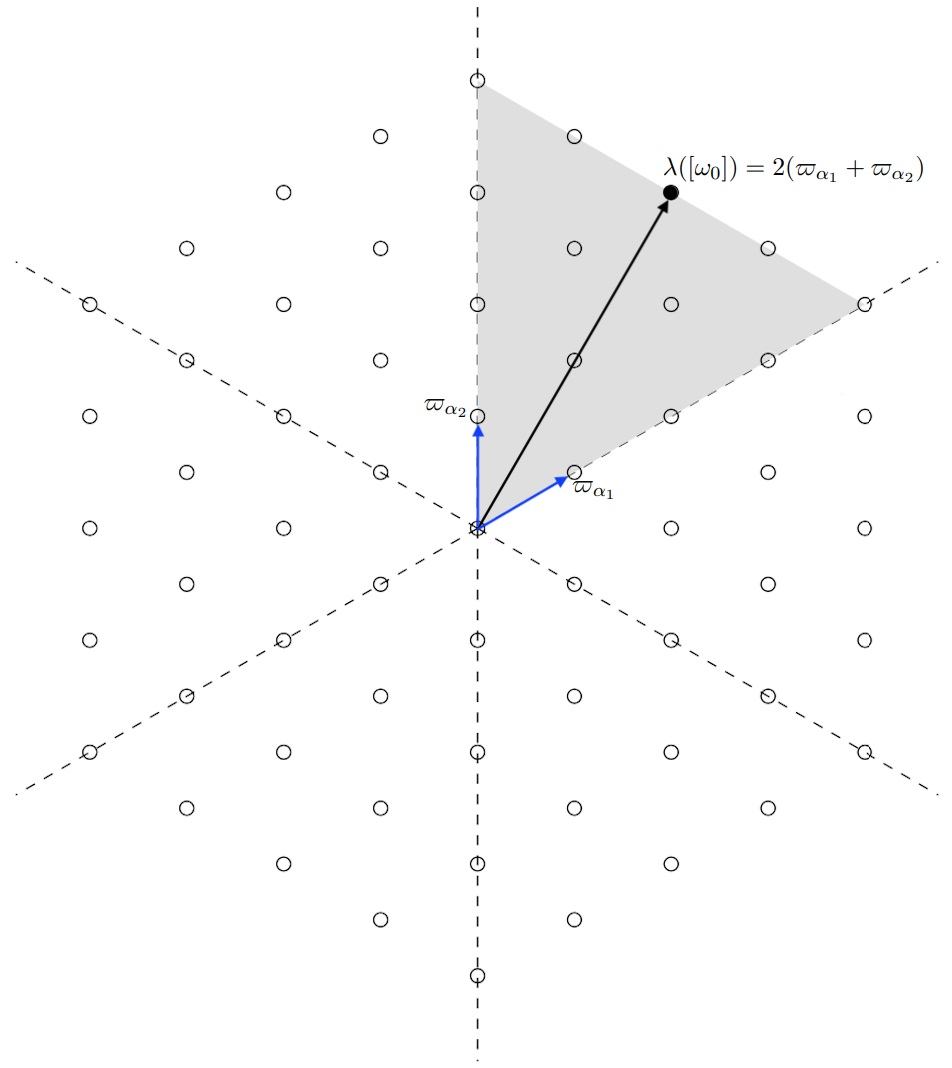} }}%
    \qquad
    \subfloat[\centering  Integral weight corresponding to $\omega_{0} = 2{\bf{\Omega}}_{\alpha_{1}} + {\bf{\Omega}}_{\alpha_{2}}$.]{{\includegraphics[width=6.5cm]{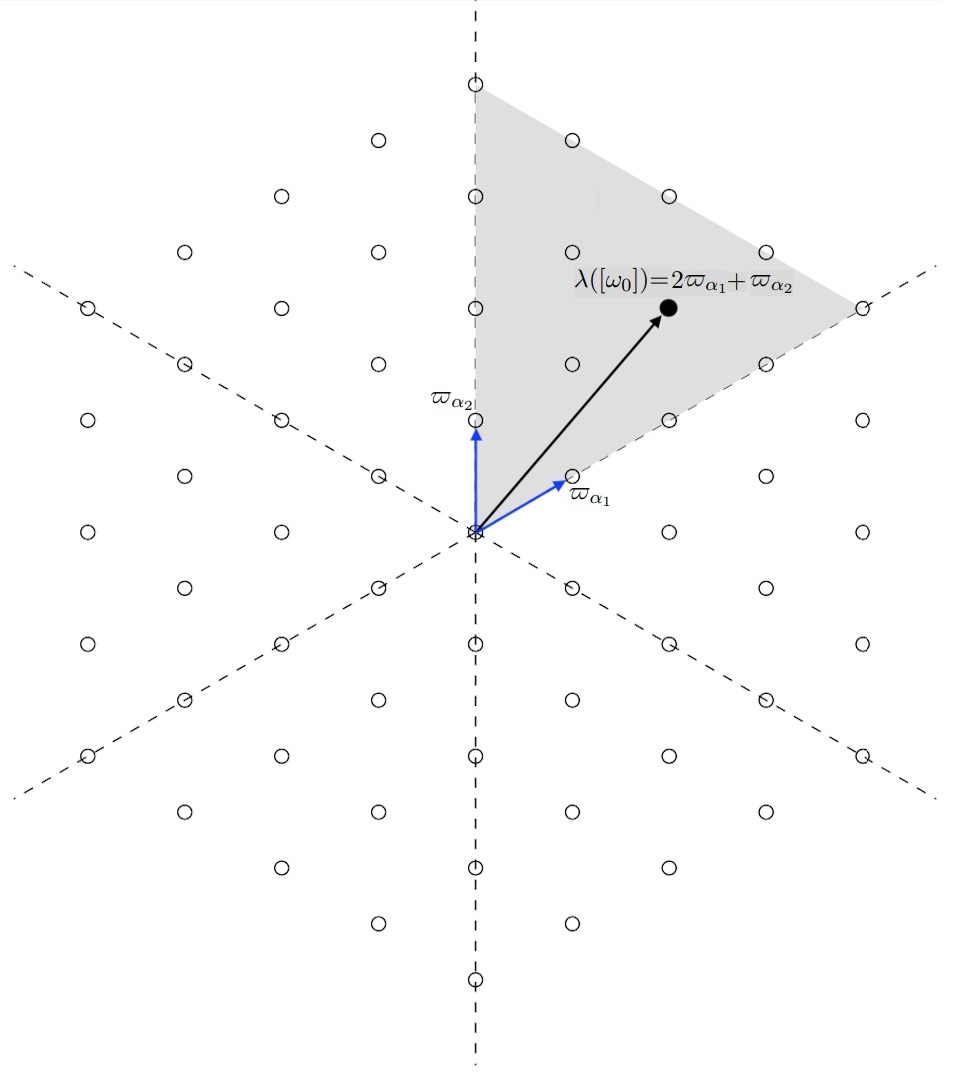} }}%
    \caption{Integral weights defined by integral K\"{a}hler classes.}%
    \label{fig:example}%
\end{figure}
Furthermore, from Eq. (\ref{naturaldensityexample}), it follows that:
\begin{enumerate}
\item $d(\mathcal{A}_{[\omega_{0}]}) = \frac{1}{12}$, if $\omega_{0} = 2({\bf{\Omega}}_{\alpha_{1}} + {\bf{\Omega}}_{\alpha_{2}})$;
\item $d(\mathcal{A}_{[\omega_{0}]}) = 1$, if $\omega_{0} = 2{\bf{\Omega}}_{\alpha_{1}} + {\bf{\Omega}}_{\alpha_{2}}$.
\end{enumerate}
As it can be seen, if we consider $\omega_{0} = 2{\bf{\Omega}}_{\alpha_{1}} + {\bf{\Omega}}_{\alpha_{2}}$, then the equation $\mu_{[\omega_{0}]}({\bf{E}}) = m_{0}$, ${\bf{E}} \in {\rm{Pic}}({\mathbbm{P}}(T_{{\mathbbm{P}^{2}}}))$, can be solved for every $m_{0} \in \mathbbm{Z}$. 
\end{example}





\bibliographystyle{alpha}
\bibliography{bibli}

\end{document}